\DeclareFontFamily{U}{solomos}{}
\DeclareFontShape{U}{solomos}{m}{n}{
  <-> s*[1.1]  gsolomos8r
}{}
\long\def\comment#1\endcomment{}
\theoremstyle{plain}
\newtheorem{theorem}{\sc Theorem}[section]
\newtheorem{lemma}[theorem]{\sc Lemma}
\newtheorem{prop}[theorem]{\sc Proposition}
\newtheorem{coroll}[theorem]{\sc Corollary}
\theoremstyle{plain}
\newtheorem{defn}[theorem]{\sc Definition}
\theoremstyle{exercise}
\newtheorem{remark}[theorem]{\sc Remark}
\newtheorem{example}[theorem]{\sc Example}
\tikzset{Rightarrow/.style={double equal sign distance,>={Implies},->},
triple/.style={-,preaction={draw,Rightarrow}},
quadruple/.style={preaction={draw,Rightarrow,shorten >=0pt},shorten >=1pt,-,double,double
distance=0.2pt}}
\makeatletter \@addtoreset{equation}{section} \makeatother
\def\eqref#1{\thetag{\ref{#1}}}
\let\latexref=\ref
\def\ref#1{{\normalfont{\latexref{#1}}}}
\newcommand{\ldot}{{\:\raisebox{2,3pt}{\text{\circle*{1.5}}}}}
\newcommand{\udot}{{\:\raisebox{3pt}{\text{\circle*{1.5}}}}}
\newcommand{\mb}{{\bullet}}
\def\dlim_#1{{\displaystyle\lim_{#1}}^\hdot}
\newcommand{\C}{{\mathcal C}}
\newcommand{\id}{\operatorname{\rm id}}
\newcommand{\eqto}{\mathrel{\stackrel{\sim}{\to}}}
\newcommand{\Ob}{\mathrm{Ob}}
\newcommand{\Ab}{\mathsf{Ab}}
\newcommand{\Mod}{\mathrm{Mod}}
\newcommand{\Hom}{\mathrm{Hom}}
\newcommand{\Hoch}{\mathrm{Hoch}}
\newcommand{\op}{\mathrm{op}}
\newcommand{\dg}{\mathrm{dg}}
\newcommand{\holim}{\mathrm{holim}}
\newcommand{\hocolim}{\mathrm{hocolim}}
\newcommand{\colim}{\mathrm{colim}}
\newcommand{\Id}{\mathrm{Id}}
\newcommand{\Sets}{\mathscr{S}ets}
\newcommand{\Cat}{{\mathscr{C}at}}
\newcommand{\Top}{{\mathscr{T}op}}
\renewcommand{\top}{\mathrm{top}}
\newcommand{\Fun}{{\mathrm{Fun}}}
\newcommand{\SSets}{{\mathscr{SS}ets}}
\newcommand{\diag}{\mathrm{diag}}
\renewcommand{\k}{\Bbbk}
\renewcommand{\Ab}{{\mathscr{A}b}}
\newcommand{\pprime}{{\prime\prime}}
\newcommand{\Ord}{\mathrm{Ord}}
\newcommand{\Tot}{\mathrm{Tot}}
\newcommand{\Sym}{\mathrm{Sym}}
\newcommand{\Disk}{\mathbf{Disk}}
\newcommand{\GDisk}{\mathbf{GDisk}}
\renewcommand{\min}{\mathrm{min}}
\renewcommand{\max}{\mathrm{max}}
\newcommand{\Op}{\mathrm{Op}}
\newcommand{\Tree}{\mathbf{Tree}}
\newcommand{\Out}{\mathrm{Out}}
\newcommand{\Symm}{\mathrm{Symm}}
\newcommand{\Des}{\mathrm{Des}}
\newcommand{\Int}{\mathrm{Int}}
\renewcommand{\kappa}{\varkappa}
\newcommand{\boxx}{\square}
\newcommand{\ns}{\mathrm{ns}}
\newcommand{\height}{\mathrm{ht}}
\newcommand{\codim}{\mathrm{codim}}
\newcommand{\bmu}{\boldsymbol{\mu}}
\newcommand{\bsigma}{\boldsymbol{\sigma}}
\newcommand{\simeqto}{{\overset{\scriptscriptstyle\sim}{\to}}}
\newcommand{\Sing}{\mathrm{Sing}}
\newcommand{\glob}{\mathrm{glob}}
\newcommand{\sevafigc}[4]{\begin{figure}[h]\centerline{
 \epsfig{file=#1,width=#2,angle=#3}}
\bigskip\caption{#4}\end{figure}}
\def\wtilde#1{\widetilde{#1}\vphantom{#1}}
\title{\sc{Generalised Joyal disks and $\Theta_d$-colored $(d+1)$-operads}}
\author{\sc{Boris Shoikhet}}
\date{}
\begin{document}\maketitle
{\footnotesize
\begin{center}{\parbox{4,5in}{{\sc Abstract.}
In this paper, we propose a method for constructing a colored $(d+1)$-operad $\mathbf{seq}_d$ in $\Sets$, in the sense of Batanin [Ba1,2], whose category of colors (=the category of unary operations) is the category $\Theta_d$, dual to the Joyal category of $d$-disks [J], [Be2,3].
For $d=1$ it is the Tamarkin $\Delta$-colored 2-operad $\mathbf{seq}$, playing an important role in his paper [T3] and in the solution loc.cit. to the Deligne conjecture for Hochschild cochains. We expect that for higher $d$ these operads provide a key to solution to the the higher Deligne conjecture, in the (weak) $d$-categorical context. In particular, our operads provide explicit higher analogues of the Gerstenhaber bracket. For $d=2$ these are ``two-dimensional braces'', which roughly are operations of an ``insertion'' of one two-dimensional cochain [PS] inside another. 

For general $d$ the construction is based on two combinatorial conjectures, which we prove to be true for $d=2,3$. 

We introduce a concept of a generalised Joyal disk, so that the category of generalised Joyal $d$-disks admits an analogue of the funny product of ordinary categories. (For $d=1$, a generalised Joyal disk is a category with a ``minimal'' and a ``maximal'' object).  It makes us possible to define a higher analog $\mathcal{L}^d$ of the lattice path operad [BB] with $\Theta_d$ as the category of unary operations. The $\Theta_d$-colored $(d+1)$-operad $\mathbf{seq}_d$ is found ``inside'' the desymmetrisation of the symmetric operad $\mathcal{L}^d$.
 
We construct ``blocks'' (subfunctors of $\mathcal{L}^d$) labelled by objects of the cartesian $d$-power of the Berger complete graph operad [Be1], and prove the contractibility of a single block in the topological and the dg condensations. In this way, we essentially upgrade the known proof given by McClure-Smith [MS3] for the case $d=1$, so that the refined argument is generalised to the case of $\Theta_d$. Then we prove that $\mathbf{seq}_d$ is contractible in topological and dg condensations (for $d=2,3$, and for general $d$ modulo the two combinatorial conjectures).

}}
\end{center}
}

\section*{\sc Introduction}
\subsection{\sc }
This paper is author's attempt to find explicitly higher structures on deformation complexes of a (weak) $n$-category (though no deformation complexes emerge here). This problem is called ``higher Deligne conjecture''. The name originates from the statement firstly conjectured by Deligne that for a dg algebra $A$ over $\k$ (or, more generally, for a small dg category $A$) the cohomological Hochschild complex $\Hoch^\udot(A)$ has a structure of $C_\ldot(E_2,\k)$-algebra. Nowadays this statement has many proofs, see [MS1-3], [KS], [T2], [T3] among others. It became a topic of active research after Tamarkin found [T1] a new proof of the famous Kontsevich formality theorem [K], which depended on the Deligne conjecture. There is a proof of general Deligne conjecture in the $\infty$-categorical setting [L], but, to the best of our knowledge, it (currently) can not be applied to the scheme of Tamarkin's proof of formality phenomena, even for the classical case of Hochschild cochains. 

The proof given by Tamarkin in [T3] (which was inspired by the previous proof [MS1-3]) is the most closed to this paper. One of the ideas was that, as the Hochschild complex is a dg totalization of a cosimplicial complex, whose degree $n$ component is formed by the length $n$ cochains, it is conceptually right to look for a {\it colored} operad with the category of unary operations $\Delta$, acting on this cosimplicial complex. One can ``condense'' the colors by the standard functor $\Delta\to C^\udot(\k)$ to get a single-color dg operad; similarly one considers the total complex by use of the same standard functor and get the cohomological Hochschild complex of $A$. It is a general fact, shown in greater generality in [BB] by an interpretation via the Day-Street convolution, that the condensed operad acts on the totalized complex.

Another important point is that [T3] deals with 2-operads rather than with symmetric operads, introducing a $\Delta$-colored 2-operad in $\Sets$ denoted by $\mathbf{seq}$. 2-operads are the $n=2$ case of Batanin $n$-operads [Ba1,2,3]. An advantage of $n$-operads over symmetric operads is that, by their definition and their very nature, $n$-operads act on globular objects, such as the underlying $n$-globular object of a given (possibly weak) $n$-category. In this way, an action of the terminal $n$-operad action on an $n$-globular object $G$ is the same that a strict $n$-category structure on $G$. The terminal $n$-operad in (a monoidal model) category $\mathcal{M}$ has the monoidal unit in each its arity component, and when each of these components weakly equivalent to the monoidal unit, an action of such operads on $\mathcal{M}$-enriched\footnote{By $\mathcal{M}$-enriched $n$-globular object (resp., $n$-category) we mean $\mathcal{M}$-enrichment only for the cells (resp., for the morphisms) of the top degree $n$.} $n$-globular object $G$ defines a weak $\mathcal{M}$-enriched $n$-category structure on $G$. The arity of $n$-operad is no longer a natural number, but an $n$-level tree. The $n$-level trees form a category, and an $n$-operadic composition is associated with a morphism on this category. (Likewise, the composition in a non-symmetric (1-)operad is associated with a morphism $[m]\to [n]$ of 1-ordinals, which are the same that $1$-level trees). A link between $n$-operads and $E_n$-algebras is given by a remarkable symmetrization theorem of Batanin [Ba2,3]. It says that the derived symmetrisation of a contractible (pruned, $(n-1)$-terminal) $n$-operad is homotopically the operad $E_n$. Here the contractibility of an $n$-operad means that there is a weak equivalence of operads from the operad to the final operad. For example, in [T3] one considers all small dg categories over a given field, they form a 2-globular dg-enriched object $\Cat_\dg^{\glob}$, whose 0-cells are small dg categories over $\k$, 1-cells are dg functors, and 2-cells are {\it coherent natural transformation} which are, for two dg functors $F,G\colon C\to D$, the (cosimplicial version of the) Hochschild cochains $\Hoch^\udot(C, _FD_G)$ with coefficients in bimodule $_FD_G(-,=)=D(F-, G=)$. The colored 2-operad $\mathbf{seq}$ acts on $\Cat_\dg^\glob$.
When one restricts to a single dg category $A$, and only its identity endofunctor, one gets a 1-terminal globular subobject $\overline{A}$ in the 2-globular object $\Cat_\dg^{\glob}$. The same 2-operad $\mathbf{seq}$ acts on this 1-terminal subobject as well. The object $\overline{A}$ is non-trivial only at level 2, where it is the (cosimplicial) Hochschild complex $\Hoch^\udot(A)$. In such case, the symmetrization of a (contractible) 1-terminal operad $\mathbf{seq}$ acts on the $\overline{A}$, as well as its symmetrisation.  That is, by Batanin symmetrisation theorem, a symmetric operad having homotopy type $C_\ldot(E_2; \k)$ acts on the Hochschild complex $\Hoch^\udot(A)$.

The results of [BM1,2] show ``universality'' of the simplicial Tamarkin operad, they show that its different condensations give rise to more general duoidal Deligne conjecture. One considers the question ``What do $\mathcal{V}$-enriched categories form?'', where $\mathcal{V}$ is a symmetric monoidal or, more generally, a duoidal category, and a suitable system of standard simplices in $\mathcal{V}$. Then the $\mathcal{V}$-condensation of $\mathbf{seq}$ acts on the $\mathcal{V}$-enriched 2-globular set. The question of contractibility of the obtained $\mathcal{V}$-2-operad is more subtle, and should be studied separately in each case of interest.

\subsection{\sc }
One can define analogues of ``derived natural transformations'' (given by Hochschild cochains as above) in more general context of strict dg $d$-categories (here one can consider any enrichment instead of complexes). The case $d=2$ was discussed in [PS], where the ``derived modifications'' between (classical)  natural transformations $\eta,\theta\colon F\Rightarrow G\colon C\to D$ were constructed. So these are derived 3-arrows, while in the lower dimensions $k=1,2$ one considers the classical $k$-morphisms. One important observation was that the complex of such derived modifications was a dg totalization of a functor $\Theta_2\to C^\udot(\k)$ where $\Theta_2$ is the category introduced by Joyal [J], it is dual to the category of Joyal 2-disks in $\Sets$. 
This fact is  a closed cousin of the construction of 2-nerve of a strict 2-category [J, Be2], which is a functor $\Theta_2^\op\to\Sets$. 
The cochains of derived modifications are given by a sort of ``2-dimensional Hochschild cochains'' (whose input arguments are 2-chains of 2-morphisms and the output is also a 2-morphism; the classical modifications are then degree 0 cohomology of the derived ones). One expects that similar ``derived higher modifications'' in the framework of dg strict $d$-categories are given as the totalization of functors $\Theta_d\to C^\udot(\k)$. 
Then the arguments discussed above indicate that an appropriate answer to the question ``What do dg $d$-categories form''
should be given by an action of a  contractible $\Theta_d$-colored $(d+1)$-operad $\mathbf{seq}_d$, in dg condensation. Moreover,  results [BM1,2] (or rather a possible generalisation thereof) indicate that the case of weak $n$-categories can also be treated by the same operad $\mathbf{seq}_d$, via suitable ``system of $d$-cells'' by which the condensation of the operad is defined; the same system of cells is used in totalization of ``higher modifications''. Shortly, these $\Theta_d$-colored $(d+1)$-operads $\mathbf{seq}_d$ (which are pruned and $d$-terminal) presumably have the same universality (in the sense of loc.cit.) for higher categorical questions as the Tamarkin 2-operad $\mathbf{seq}$ has for problems admitting the duoidal interpretation.

This paper is devoted to a construction of the $(d+1)$-operads $\mathbf{seq}_d$. The construction is given explicitly for any $d$, though several claims rely on two combinatorial conjectures, which we check so far only for $d=2$ and $d=3$. 

The Batanin symmetrisation theorem is applied to these operads, giving $E_{d+1}$-algebra acting on the (dg enriched) $d$-categorical `` derived modifications''. It gives an explicit form of higher Deligne conjecture. In particular, one gets shifted by $-d$ $L_\infty$ algebra structure, providing higher analogs of the Gerstenhaber bracket. 

We would like to point out a link between our paper and [BD].
In [BD], a $E_3$-algebra structure on the Davydov-Yetter complex of a monoidal (dg) category is constructed, by a tricky use of $\Delta$-colored complexity 3 suboperad of the lattice path operad $\mathcal{L}$. However, the Davydov-Yetter complex is a truncated complex of the PS-complex. The DY-complex governs deformations of a monoidal category keeping the underlying 1-category fixed, whence the PS-complex governs the deformations of the entire structure.  Roughly, our complex [PS] is the total complex of a bicomplex, and the DY-complex is the kernel of the vertical differential of this bicomplex at degree 0 row, which results in naturality condition for the DY-cochains (which lacks for the PS-complex). Consequently, the DY-complex (or rather the underlying cosimplicial monoid) enjoys the property of ``2-commutativity'' in terminology of [BD], but the PS-complex literally lacks it, but the idea was that it is still ``homotopically 2-commutative''. We do not know how do define a homotopy $n$-commutative cosimplicial monoid in general, but the idea (for $n=2$ case) was that the ``two-dimensional'' brace (see the first row in Figure \ref{figureintro} below) provides a homotopy up to which the 2-commutativity holds. This idea was one of the starting points for this project.

\subsection{\sc  }
For $d=2$, Figure \ref{figureintro} visualises how the quadratic part of the $L_\infty$ structure of degree -2 looks like. This shifted $L_\infty$ structure is a part of $E_3$-algebra on 2-dimensional cochains [PS], obtained from the dg condensation of the operad $\mathbf{seq}_2$. Here in the Figure \ref{figureintro} the light grey area represents a cochain $D_1$ and the dark-grey area represents a cochain $D_2$ (the cochains are understood in the sense of the complex introduced in [PS]). (The area shown in white does not mean any third cochain, it is used to schematically display the operations). At the left-hand side we display two elements in the 3-operad $\mathbf{seq}_2$, corresponded to the 3-graph $T^3_0$ with two leaves 1 and 2 such that $1<_02$. 
We define an operation $D_1\{\{D_2\}\}$ by taking the sum with appropriate signs of the two operations in the left-hand side.
After application of the symmetrization functor of Batanin, that is, the skew-symmetrization by $D_1$ and $D_2$, it gives a closed skew-symmetric operation of degree -2, which is the quadratic part of the corresponding $L_\infty$ structure of degree -2. 
We call this operation {\it the 2-dimensional brace}, as its ``principal term'', the operation in the upper-left corner in the figure, is given by insertion of a 2-dimensional cochain inside another. However, the 2-dimensional brace alone does not descents to a closed operation on cohomology.

\sevafigc{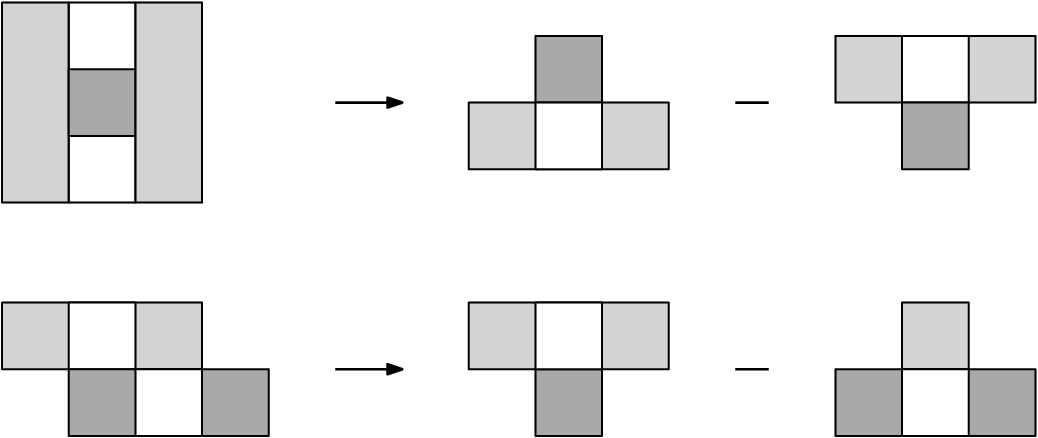}{100mm}{0}{The (skew-)symmetrization of the weighted sum $L_1\pm  L_2$  gives a closed element  $D_1\{\{D_2\}\}$, where $L_i$ is the element corresponded to the $i$-th line of the l.h.s.\label{figureintro}}

What is shown in Figure \ref{figureintro} is a two-dimensional analogue of the well-known brace formula of Getzler-Jones for Hochschild cochains:
$$
[d, D_1\{D_2\}]=D_1\cup D_2\mp D_2\cup D_1
$$
and after the skew-symmetrization by $D_1$ and $D_2$ it gives a closed operation, which is the Gerstenhaber bracket.

The entire $L_\infty$ structure of degree -2 contains higher non-trivial Taylor components (it is not just a Lie bracket). These components can be expressed in the similar flavour. 

The reader is referred to Section \ref{sectiond=2} for more detail on this particular case. 

\subsection{\sc  }
Here we outline the methods and the results of this paper. 

A new concept introduced in the paper is the one of {\it a generalised Joyal $d$-disk}. The motivation came from an attempt to generalise the lattice path operad $\mathcal{L}$ of Batanin-Berger [BB], which is a colored symmetric operad  in $\Sets$ with the category of unary operations $\Delta$. In the case of $\Delta$ the approach of lattice paths is equivalent to the approach of shuffles adopted in [T3], though even for this case the action of the unary operations from $\Delta$ become more natural and direct in the lattice path description. For dimensions $d\ge 2$, that is, for analogous operads with the category of unary operations $\Theta_d$, $d\ge 2$, the two approaches (via ``higher shuffles'' and via higher lattice paths) are {\it not} equivalent, and the former one does not give rise fully to a desired $\Theta_d$-functoriality. 

The definition of the classical lattice path operad (which we recall in in Section \ref{sectionlp1}) uses the funny product of ordinary categories, and an interpretation of the (Joyal dual) ordinals as categories (generated by linear posets) with fixed end-objects. It was quite clear that for higher $d$ one has to consider Joyal $d$-disks in $\Sets$ (the dual category to $\Theta_d$ [J]), but we need a ``funny-like'' product on the category of $d$-disks (which it lacks) to mimic the definition of the lattice path operad. To fix it, we introduce generalised $d$-disks, in which, roughly, the linear intervals in fibres are replaced by categories with two distinguished objects, called the source and the target objects, which we call {\it generalised intervals}. Then we define the higher lattice path operad $\mathcal{L}^d$ (Section \ref{sectiongenlp1}), it is a $\Theta_d$-colored symmetric operad in $\Sets$. Like for the case $d=1$, it contains a family of suboperads depending on the complexity. However, for $d>1$ one considers ``multi-complexity'', specifying the complexity for each of $d$ levels. We have ``blocks'' $\mathcal{L}^d_{(\bmu,\bsigma)}$ which are subfunctors of $\mathcal{L}^d(k)\colon (\Theta_d^\op)^{\times k}\times\Theta_d\to\Sets$, depending on $(\bmu,\bsigma)\in \mathcal{K}(k)^{\times d}$ (for a given arity $k$), the $d$-cartesian power of the arity $k$ poset $\mathcal{K}(k)$. The poset $\mathcal{K}(k)$ was introduced in [Be1] (we call it the Berger poset), it is the arity $k$ component of his complete graph operad. 
Thus $(\bmu,\bsigma)=((\mu_1,\sigma_1),\dots, (\mu_d,\sigma_d))$. The subfunctor $\mathcal{L}^d_{(\bmu,\bsigma)}$ consists of all generalised lattice paths whose $(\sigma, \mu)$-parameters at level $i$ less or equal to $(\mu_i,\sigma_i)$. These are elementary blocks by which all our suboperads of interest in $\mathcal{L}^d$ are built, in the sense they are colimit by some posets of these elementary blocks $\mathcal{L}^d_{(\bmu,\bsigma)}$. 
Our first main result is:
\vspace{1mm}

\noindent {\sc Theorem 1.} {\it For each $(\bmu,\bsigma)\in \mathcal{K}(k)^{\times d}$, the topological and the dg condensations of the functor $\mathcal{L}^d_{(\bmu,\bsigma)}\colon (\Theta_d^\op)^{\times d}\times \Theta_d\to \Sets$ is contractible.}

\vspace{1mm}

In Theorem 1, by abuse of terminology, we call ``condensation'' application of the realization by the $\Theta_d^\op$-arguments followed by the totalization by the $\Theta_d$-argument. The dg realization/totalization is understood in the category of $R$-modules, for an arbitrary ring $R$, and ``contractibility'' in this dg case means that the corresponding complex of $R$-modules is quasi-isomorphic to $R[0]$. 

Theorem 1 is stated as Theorems  \ref{propthetatop} and \ref{propthetadg}, the proofs are in Section 4.
Similar results for $d=1$ (the case of category $\Delta$) are proven in [MS3] for the topological case, and in [BB], [BBM] for the dg case. Our first idea was to generalise this approach for higher $d$. However,  the proof of $d=1$ case is based on [MS3, Prop. 12.7, Prop. 13.4] which roughly say that the topological realization  $|\mathcal{L}_{(\mu,\sigma)}[n]|$ for fixed cosimplicial argument $[n]$ is homeomorphic to 
$|\mathcal{L}_{(\mu,\sigma)}[0]|\times \Delta^n$, in the way that the two cosimplicial topological spaces are homeomorphic. This way of arguing reduces the computation of the totalization to computation of the totalization of standard cosimplicial topological space $[n]\to\Delta^n$. Following [BBM], we refer to this reduction as {\it whiskering}. As we said, it fails for $d>1$. Our method is an essential refinement of the methods of [MS3], even when $d=1$. To make the proof more accessible, we start with re-proving Theorem 1 for the case $d=1$ by our method, in Section 3. At the beginning of Section 3, the reader will find a more detailed outline of the strategy of the proof, which we apply in Section 4 for the case of general $d$. 

Recall that for $d=1$ the lattice path symmetric operad contains the complexity $\le n$ suboperads $\mathcal{L}_{\le n}$ [BB]. ``Inside'' the desymmetrisation of $\mathcal{L}_{\le n}$ there is a $\Delta$-colored $n$-operad $\mathrm{Tam}_n$, whose arity component associated with an  $n$-level tree $T$ is defined as a single block $\mathcal{L}_{\mu,\id}$, formed by all lattice paths $\omega$ whose $(i,j)$-projection has parameters $(\mu,\sigma)(\omega_{ij})\le (k-1,\id)$, where $\le$ is understood in the sense of Berger posets $\mathcal{K}(|T|)$, and $i<_{n-k+1}j$ in $T$ (we recall definitions of all concepts mentioned here in Sections 1 and 2). That is, $\mu_{ij}=k-1$ if $i<_{n-k+1}j$ in $T$, $\sigma_{ij}=\id$ if $i<j$. It follows that $\mathrm{Tam}_n(T)$ is a single block $\mathcal{L}_{(\mu,\id)}$. For $n=2$ it is the Tamarkin 2-operad $\mathrm{Tam}_2=\mathbf{seq}$. 

The idea of construction of the $(d+1)$-operad $\mathbf{seq}_d$ is to find it ``inside'' the desymmetrisation of $\mathcal{L}^d$ such that the arity components $\mathbf{seq}_d(T)$ are subfunctors $(\Theta_d^\op)^{\times |T|}\times\Theta_d\to\Sets$, where $T$ is a (pruned) $(d+1)$-level tree. The difference with $d=1$ case is that, for $d>1$, some components $\mathbf{seq}_d(T)$ are formed {\it by more than 1 blocks} $\mathcal{L}^d_{(\bmu,\bsigma)}$. Namely, $\mathbf{seq}_d(T)$ may be formed by several such blocks, being their union, and thus $\mathbf{seq}_d(T)$ is a colimit over a poset whose objects are corresponded to some such blocks. 
This phenomenon emerges at first place for $\mathbf{seq}_2(T^3_0)$, where $T^3_0$ is a pruned 3-level graph with vertices 1 and 2 such that $1<_02$.  

The two problems arise: (1) what is a requirement on the collections of such $(\bmu(T),\bsigma(T))$ which constitute $\mathbf{seq}_d(T)$ for a $(d+1)$-level tree $T$ so that all together it gives rise to a $(d+1)$-operad, and (2) how to prove the contractibility of this $(d+1)$-operad. (Recall that each single block is contractible by Theorem 1, so we need to arrange it such a way that a poset the colimit of which gives $\mathbf{seq}_d(T)$ is contractible, for any $T$). 

The first question is rather easy to answer, see Proposition \ref{proptwoleaves}. Namely, in the category of pruned $\ell$-level trees there are morphisms which are isomorphisms on the sets of leaves. They are called {\it quasi-bijections}.  When one also orders the leaves, the quasi-bijections with a given number of leaves form a poset, called {\it the Milgram poset} [BFSV], [Be1]. When the number of leaves is $k$, this poset $\mathcal{M}^\ell_k$ realises to the arity $k$ component $E_\ell(k)$ of the topological little  $\ell$-disk operad $E_\ell$ loc.cit. 
In particular, the Milgram poset $\mathcal{M}^\ell_2$, for the pruned $\ell$-level trees with 2 leaves, realises to a sphere $S^{\ell-1}$. 
Although these posets do not form an operad,  the arity components of the Berger complete graph operad in posets can be regarded as the Milgram posets ``operadic completed'' with ``limit'' elements, so that all together one gets an operad in posets. Indeed, as we know e.g. from the example of the Stasheff polyhedra operad $A_\infty$, the closeness under the operadic composition results in necessity of the limit strata, when points in the configuration space approach each other; here we witness a similar phenomenon. In this analogy, the Milgram posets (for all arity, as a sub-poset of the Berger complete graph operad) is analogous to the open top dimension stratum (such strata do not constitute and operad, but are homotopically equivalent to the components of the Stasheff operad).

It turns out, that, to get a $(d+1)$-operad, our conditions on $(\bmu,\bsigma)$ should be given only for 2-leaf $(d+1)$-trees
in a way compatible with quasi-bijections of two-leaf trees (for the precise statement, the reader is referred to Proposition \ref{proptwoleaves}). Then one canonically extends this data for $(d+1)$-level trees with two leaves to arbitrary $(d+1)$-level trees, so that it forms a (pruned, $d$-terminal) $(d+1)$-operad. At this moment, the reader may guess that there are many trivial examples of such assignment for 2-leaf trees. That is correct, but we sweep most of them out by the following requirement, motivated by what we really want from our operads: for the ``deepest'' $(d+1)$-level tree $T^{d+1}_0$ with two leaves  (so that $1<_02$), the ``leading component'' has to be 
$(\bmu,\bsigma)=(121)|(121)|(121)|\dots|(121)$ (with notations explained in Section \ref{sectiond=2}). It means that we want to have a ``$d$-dimensional brace operation'' for the ``deepest'' tree with two leaves $T^{d+1}_0$. It forces some constraints on other trees $T^{d+1}_a$ with two leaves, for which $1<_a2$, and then the compatibility condition for quasi-bijections leads, for general $d$, to a rather complicated combinatorics. We discuss in detail how it works for $d=2$ in Section \ref{sectiond=2}, and for $d=3$ in Section \ref{sectiond=3}. 

We state two combinatorial conjectures, Conjecture 1 and Conjecture 2 (which we prove for $d=2$ and $d=3$), which guarantee at once the compatibility with respect to quasi-bijections, and also the contractibility of $\mathbf{seq}_d$, see Section \ref{sectiongen}. The two conjectures predict some explicit combinatorial properties for an explicit combinatorial sequences, which the author believes are true in general. 

We can state our second main result as follows:

\vspace{1mm}

\noindent {\sc Theorem 2.} {\it 
Assume Conjectures 1 and 2 holds for all $d^\prime\le d$. Then the constructions of Section 5 give rise to a $\Theta_d$-colored pruned $d$-terminal $(d+1)$-operad 
$\mathbf{seq}_d$ in $\Sets$, contractible in topological and in dg condensation (for the dg case, over any ring $R$). }

\vspace{1mm}

As we said, the two conjectures are known to be true for $d=2,3$, which provides a 3-operad $\mathbf{seq}_2$ and a 4-operad $\mathbf{seq}_3$. For $d=3$, the combinatorics is already rather involved, the reader may like to look at the diagram \eqref{eqd4}.

The main potential application of the operads $\mathbf{seq}_d$, to an explicit solution to the generalised Deligne conjecture, is not discussed in this paper, as it has already grown too much in length. We hope to discuss these applications elsewhere.

\subsection{\sc Organisation of the paper}
In Section 1 we recall $d$-level trees, $d$-disks, and define the category of generalised Joyal $d$-disks, as well as a monoidal category stucture on it, given by a generalisation of the funny product of ordinary categories.

In Section 2 we define the generalised lattice path operad, a symmetric $\Theta_d$-colored operad. We recall the Berger complete graph operad posets $\mathcal{K}(k)$ (where $k$ is the arity), and associate with any $(\bmu,\bsigma)\in \mathcal{K}(k)^{\times d}$ a subfunctor $\mathcal{L}^d_{(\bmu,\bsigma)}$. We state Theorems \ref{propthetatop} and \ref{propthetadg}, saying that each $\mathcal{L}_{(\bmu,\bsigma)}$ is contractible in the topological and the dg condensations (see Theorem 1 above).

The reader who agrees to take Theorems \ref{propthetatop} and \ref{propthetadg} for granted, and who is mainly interested in the construction and combinatorics of the operads $\mathbf{seq}_d$, can skip Sections 3 and 4, devoted to their proofs, at least during the first reading, and switch directly to Section 5. 

The proofs of Theorems \ref{propthetatop} and \ref{propthetadg} are rather technical, we make use of Quillen's model categories, Reedy model structures for Reedy category indexed diagrams in a monoidal model category, homotopy properties of the realization and the totalization as developed in the Appendix to [BM]. In Section 3, we re-prove the contractibility results for $d=1$ by our method. The same but technically more involved way of arguing proves Theorems \ref{propthetatop} and \ref{propthetadg} for general $d$ is employed in Section 4. 
Section 4 is the technical core of the paper.

Section 5 is a culmination of the paper, where the previous results are used for construction of the operads $\mathbf{seq}_d$ and for a proof of their contractibility in the topological and the dg condensations, see Theorem 2. For $d=2$ and $d=3$ we provide complete proofs. However, we need some combinatorial properties of the constructions, conjectural for general $d>3$, see Conjecture 1 and Conjecture 2 in Section \ref{sectiongen}. 

In Appendix we recall basic definitions and facts on Batanin $n$-operads.

\subsection{\sc Notations}
We denote by $\Top$ any of the nice categories of topological spaces, e.g. the category of compactly generated topological spaces. Then the category $\Top$ is closed symmetric monoidal category. Moreover, $\Top$ is a monoidal Quillen model category in the sense of [Ho, Ch.4]. 
We adopt the algebraic convention on differentials in complexes, by which all differentials have degree +1.

\subsection*{}
\subsubsection*{\sc Acknowledgements}
The author is thankful to Michael Batanin for many discussions, for his interest and encouragement, along several years.

The author is grateful to PDMI RAS and to EIMI (Saint-Petersburg, Russia) for the excellent working conditions and wonderful atmosphere, which were instrumental in bringing this project from vague ideas to fruition. 

The work was partially supported by the Support grant
for International Mathematical Centres Creation and Development, issued by the Ministry of Higher Education and Science
of Russian Federation and PDMI RAS, agreement 
075-15-2025-344 on April 29, 2025.

\section{\sc Generalised Joyal disks and their funny product}\label{section1}
\subsection{\sc Joyal disks and the categories $\Theta_d$}\label{section11}
Denote by $\overline{\mathbb{G}}_d$ the following reflexive (co)globular diagram
$$
\xymatrix{
0\ar@<1ex>[r]^{t_0}\ar@<-1ex>[r]_{s_0}&1\ar@<1ex>[r]^{t_1}\ar@<-1ex>[r]_{s_1}\ar[l]&2\ar[l]&\dots&\ar@<1ex>[r]^{t_{d-2}}\ar@<-1ex>[r]_{s_{d-2}}&d-1\ar[l]\ar@<1ex>[r]^{t_{d-1}}\ar@<-1ex>[r]_{s_{d-1}}&d\ar[l]
}
$$
where the middle arrows in backward direction are $i_k\colon k+1\to k$, and the relations are
$$
s_{k+1}s_k=t_{k+1}s_k, s_{k+1}t_k=t_{k+1}t_k, i_ks_k=i_kt_k=\id_k
$$
An interval in $\Sets$ is a totally ordered finite set, denote by $\min$ and $\max$ its minimal and maximal elements, correspondingly. An interval is degenerated if $\min=\max$, in this case the interval is a single point. 

A point $x$ is singular if $D(s_k)(x)=D(t_k)(x)$, or, equivalently, $D(i_k)^{-1}(x)$ is a single point. It is clear from the relations above that any point in $\partial X_{k-1}:=D(s_{k-1})(X_{k-1})\cup D(t_{k-1})(X_{k-1})\subset X_k$ is singular. A point which is not singular is called internal. 

\begin{defn}{\rm
A Joyal $d$-disk in a category $\Sets$ is a functor $D\colon \overline{\mathbb{G}}_d\to\Sets$ such that, for each $k\le d-1$, $x\in X_k=D(k)$, the fibre $D(i_k)^{-1}(x)$ is an interval with minimum $D(s_k)(x)$ and maximum $D(t_k)$, $X_0$ is a singleton, and for $k>0$ the set of singular points in $X_k$ is $\partial X_{k-1}$. A map $f: D\to D^\prime$ of disks in $\Sets$ is a natural transformations of functors $\overline{\mathbb{G}}_d\to\Sets$, that is, it is given by its components $f_k\colon D(k)\to D^\prime(k)$ which commute with the structure maps. 
}
\end{defn}
It follows in particular that $X_1$ is an interval. We similarly define a disk in the category $\Top$.\\

The category of $d$-disks in $\mathcal{E}$ is denoted by $\Disk_d(\mathcal{E})$, where $\mathcal{E}$ is any category where one can define 1-intervals, in particular $\mathcal{E}$ can be $\Sets$ or $\Top$.

\begin{defn}\label{defntree}
{\rm A $d$-level tree $T$ is a sequence of 1-ordinals (some of which may be empty ordinals) and their maps
\begin{equation}\label{eqntree}
t_d\xrightarrow{i_{d-1}}t_{d-1}\xrightarrow{i_{d-1}}t_{d-2}\to\dots\xrightarrow{i_0}t_0
\end{equation}
where $t_0$ is $[0]$ and the maps $\{i_k\}$ are not necessarily surjective. (When all ordinals are non-empty and the maps $\{i_k\}$ are surjective the $n$-level tree is called {\it pruned}). For two $d$-level trees $T$ and $S$, a map $\phi\colon T\to S$ is defined in a bit tricky way, not just a component-wise maps of ordinals commuting with the maps $i_s$. Namely, a map $\phi\colon T\to S$ is defined as a collection of maps $\{\phi_k\colon t_k\to s_k\}_{1\le k\le n}$ of the {\it underlying sets}, commuting with the maps $i_s$, and such that for any $a\in t_k$ the restriction of $\phi_{k+1}$ to $i_k^{-1}(a)$ preserves the order. The category of $d$-level trees is denoted by $\Tree_d$.
}
\end{defn}
The reader is referred to [Ba2], Sect.4 and [Ba3], Sect.2 for more detail on this definition. The category of $d$-trees in fundamental for the definition of a $d$-operad by Batanin. We recall $d$-operads in Appendix A. The reader is referred to [Ba1-3] for a thorough treatment. 

\vspace{1mm}

Let $T$ be a $d$-level tree. Define a disk $\overline{T}$ whose components $\overline{t}_k$ are the sets obtained from $T_k$ by adding two extra (distinct) points, called the minimum and the maximum, to any set $i_{k}^{-1}(x), x\in T_{k-1}$. It gives rise to a $d$-disk in $\Sets$ structure on $\overline{T}$. The set of internal points in $\overline{t}_k$ is $t_k$. Any $d$-disk in $\Sets$ is of the form $\overline{T}$ for a $d$-level tree in $\Sets$. The added minima and maxima are called external points. In Figure \ref{figure2} all added external points are shown by little circles. Note that any map of $d$-disks respects the minima and the maxima, but in general internal points can map to external ones. 

\sevafigc{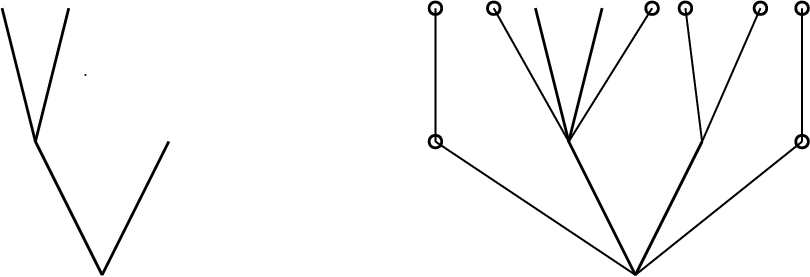}{70mm}{0}{A 2-level graph $T$ (left) and the disk $\overline{T}$ (right)\label{figure2}}

Note that the assignment $T\mapsto \overline{T}$ embeds $\Tree_d$ as a subcategory of $\Disk_d(\Sets)$ having the same objects and those morphisms which map internal points to internal (that is, only external points are mapped to external points). 

Denote $B^k=\{x\in \mathbb{R}^k| \sum x_i^2\le 1\}$ the standard topological $k$-dimensional ball, let $i_k\colon B^{k+1}\to B^k$ be the projection along the $(k+1)$-st axis, and let $s_k, t_k\colon B^k\to B^{k+1}$ be the maps $s_k\colon x\mapsto 
(x, -\sqrt{1-\sum x_i^2})$ and $t_k\colon x\mapsto (x\colon \sqrt{1-\sum x_i^2})$. We get a $d$-disk in $\Top$, called the standard topological $d$-disk. The set of internal points in $B_k$ is $\mathrm{Int} B_k$.

\vspace{1cm}

We are going to define the category $\Theta_d$ which is in fact dual to the category $\Disk_d(\Sets)$. This category was defined in [J], but the definition via the wreath product provided below is due to C.Berger [Be3]. 

For any ordinary category $\mathscr{A}$, one defines the wreath-product category $\Delta\wr \mathscr{A}$, as follows. 
Its objects are tuples $([n]; A_1,\dots,A_n)$, $[n]\in \Delta, A_i\in \mathscr{A}$. A morphism $$
\Phi\colon ([n];A_1,\dots,A_n)\to ([m];B_1,\dots,B_m)$$
is a tuple $(\phi;\{\phi_i^j\})$, where 
\begin{equation}\label{thetamorph}
\phi\colon [n]\to [m]\in \Delta,\ \  
\phi_i^j\colon A_i\to B_j\in \mathscr{A}, 1\le i\le n, \phi(i-1)+1\le j\le \phi(i) 
\end{equation}
The composition is defined naturally.

By definition 
\begin{equation}\label{thetadef}
\Theta_1=\Delta \text{  and  }\Theta_\ell=\Delta\wr\Theta_{\ell-1}
\end{equation}
In particular, $\Theta_2=\Delta\wr\Delta$. 

Recall that a $d$-globular set is a functor $\mathbb{G}_d^\op\to \Sets$, where $\mathbb{G}_d\subset \overline{\mathbb{G}}_d$ is the non-reflexive subcategory, that is, it is the subcategory having the same objects, and the morphisms are only $s_k,t_k$ but not $i_k$. A $d$-globular set is what we get from a strict $d$-category forgetting the compositions and the unit morphisms. A 1-globular set is just an oriented graph. 

The objects of $\Theta_d$ can be directly identified with $d$-level trees, as is explained in Remark \ref{remthetaordinals}. On the other hand, there is another way to look at this correspondence, as follows.  One associates a $d$-globular set $T^*$ (of a rather special type, which we call a globular diagram) to a $d$-level tree $T$. In Figure \ref{figure3} we show an example for $d=2$. 

\sevafigc{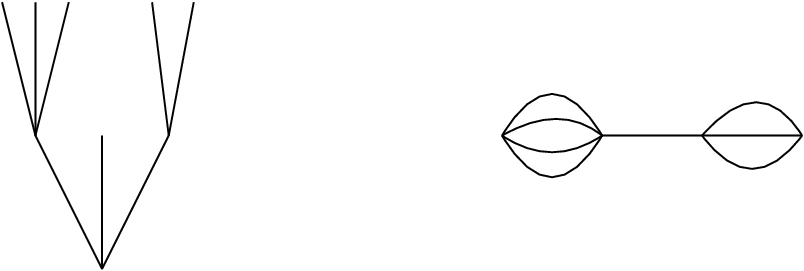}{80mm}{0}{A 2-level tree $T$ (left) and the corresponding 2-globular diagram $T^*$ (right)\label{figure3}}

In general the construction of a $d$-globular set $T^*$ for a $d$-level tree $T$ is due to Batanin  [Ba1], Sect. 4, see also [Be2], Sect. 1. The reader can easily get how it acts on objects from Figure \ref{figure3}. However, an important non-trivial point within this construction is Proposition \ref{bergerprop} below.  With a $d$-globular diagram $D$ one associates the free strict $d$-category generated by this diagram, denote it by $\omega_d(D)$ (see [Ba1], Sect. 4, [Be2], Def.1.8, it is an analogue for higher $d$ of the well-known construction of the free ordinary category generated by a 1-globular set, or, what is the same, by an oriented graph). One can easily see that 
\begin{equation}
\Theta_d(S,T)=\Cat_n(\omega_d(S^*), \omega_d(T^*))
\end{equation}
see [Be2, Sect.1].

The following result is proven in [Be2, Prop. 2.2]:
\begin{prop}\label{bergerprop}
The category $\Disk_d(\Sets)$ is equivalent to the category $\Theta_d^\op$:
\begin{equation}\label{thetancat}
\Theta_d(S,T)=\Disk_d(\overline{T},\overline{S})
\end{equation}
\end{prop}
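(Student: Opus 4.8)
The plan is to prove the equivalence by induction on $d$, exploiting the inductive definition $\Theta_\ell=\Delta\wr\Theta_{\ell-1}$ together with a parallel ``interval-fibred'' decomposition of $\Disk_d(\Sets)$. On objects the correspondence is already in hand: every $d$-disk in $\Sets$ is of the form $\overline{T}$ for a unique $d$-level tree $T$, and $d$-level trees are precisely the objects of $\Theta_d$ (Remark \ref{remthetaordinals}); this handles essential surjectivity of the comparison functor. The real content is the natural bijection on hom-sets $\Theta_d(S,T)\cong\Disk_d(\overline{T},\overline{S})$, compatible with composition contravariantly. Concretely, I would build the functor $\Theta_d^\op\to\Disk_d(\Sets)$ sending $T\mapsto\overline{T}$ and a morphism $S\to T$ to a disk map $\overline{T}\to\overline{S}$, and prove it is fully faithful.

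For the base case $d=1$ we have $\Theta_1=\Delta$, and $\overline{T}$ for the $1$-ordinal $T=[n]$ is the interval with $n$ internal points and two external endpoints $\min,\max$. The required identity $\Delta([n],[m])=\Disk_1(\overline{[m]},\overline{[n]})$ is the classical Joyal interval duality: a disk map is exactly an order- and endpoint-preserving map of finite intervals, and such a map $\overline{[m]}\to\overline{[n]}$ is determined by, and freely determines, the monotone map $\phi\colon[n]\to[m]$ that records, for each internal point, the ``cut'' it induces (equivalently, via the preimage structure of internal points). I would spell this out as the standard contravariant equivalence between $\Delta$ and finite intervals.

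For the inductive step I would decompose both sides along level $1$. A $d$-disk $\overline{S}$ has $X_1$ an interval; writing its internal points as $1<\dots<n$ (external points $\min,\max$), the fibration $i_1$ restricted over each internal point, together with all higher levels, assembles into $(d-1)$-disks $\overline{A_1},\dots,\overline{A_n}$, while the fibres over $\min$ and $\max$ are degenerate. By the induction hypothesis this exhibits $S$ as $([n];A_1,\dots,A_n)\in\Delta\wr\Theta_{d-1}$, and likewise $T=([m];B_1,\dots,B_m)$. A disk map $\overline{T}\to\overline{S}$ then unpacks into: (i) an endpoint-preserving monotone map $f_1$ of the level-$1$ intervals, which by the $d=1$ case corresponds to a $\Delta$-morphism $\phi\colon[n]\to[m]$ in the opposite direction; and (ii) over each internal point $j$ of $\overline{T}_1$ with internal image, a map of the $(d-1)$-disk fibres $\overline{B_j}\to\overline{A_i}$, corresponding by induction to a $\Theta_{d-1}$-morphism $\phi_i^j\colon A_i\to B_j$. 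The key point is that an internal point of $\overline{T}_1$ may be sent to an external (singular) point of $\overline{S}_1$, in which case the target fibre degenerates and no component is recorded; the internal points $j$ that $f_1$ sends to a given internal point $i$ of $\overline{S}_1$ form exactly the block $\phi(i-1)+1\le j\le\phi(i)$, which is precisely the index set carrying the components $\phi_i^j$ in the wreath-product morphism (\ref{thetamorph}). Collating (i) and (ii) yields the bijection $\Disk_d(\overline{T},\overline{S})\cong(\Delta\wr\Theta_{d-1})(S,T)=\Theta_d(S,T)$.

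The main obstacle is the fibrewise bookkeeping in (ii): I must verify that the disk axiom that for $k>0$ the singular points of $X_k$ are exactly $\partial X_{k-1}$ forces disk maps to be compatible with the collapsing of internal onto external points, so that the available index ranges match (\ref{thetamorph}) on the nose and no spurious maps appear and none are missed. Equivalently, collapsing an entire $(d-1)$-disk fibre onto a singular point must be correctly accounted for by the boundary conditions at $\min$ and $\max$. Naturality of the bijection in $S$ and $T$, and strict compatibility with composition in $\Theta_d$ versus $\Disk_d$ (with reversed variance), then follow from the level-$1$ statement and, inductively, from the fibres. Alternatively, one can bypass much of the fibre bookkeeping by invoking the identity $\Theta_d(S,T)=\Cat_d(\omega_d(S^*),\omega_d(T^*))$ and matching $d$-functors of the free strict $d$-categories on the globular diagrams directly with disk maps $\overline{T}\to\overline{S}$.
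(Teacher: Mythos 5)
Your argument is correct, but it does not (and cannot) follow the paper's treatment, because the paper offers no proof of Proposition \ref{bergerprop} at all: it is quoted from [Be2, Prop.~2.2], where $\Theta_d$ is defined as a full subcategory of strict $d$-categories via the free functor $\omega_d$, and the duality is obtained by analysing $d$-functors $\omega_d(S^*)\to\omega_d(T^*)$ --- essentially the alternative you sketch in your final sentence. Your main route instead takes the wreath-product formula \eqref{thetadef} as the definition of $\Theta_d$ and argues by induction on $d$: the classical Joyal interval duality as base case, and in the inductive step the decomposition of a $d$-disk into its level-$1$ interval together with the $(d-1)$-disk fibres over its internal points. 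This is in substance the comparison carried out in [Be3] (the source of the definition \eqref{thetadef} used here), so for the conventions of this paper it is the natural self-contained proof: it stays purely combinatorial and never invokes $\omega_d$, at the cost of the fibrewise bookkeeping you identify --- and that bookkeeping does close up. Under interval duality an internal point $j$ of $\overline{T}_1$ has image the internal point $i$ of $\overline{S}_1$ precisely when $\phi(i-1)+1\le j\le\phi(i)$, which is exactly the index range in \eqref{thetamorph}; when $j$ is sent to $\min$ or $\max$, the axiom that the singular points of $X_k$ are exactly $\partial X_{k-1}$ makes the target fibre degenerate at every level, so the whole fibre over $j$ collapses and correctly records no datum; and the same axiom, together with the relations $i_ks_k=i_kt_k=\id_k$, shows that the fibre over an internal point of $X_1$, with all higher levels, is again a $(d-1)$-disk, so the induction hypothesis applies to the components $\phi_i^j$. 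One point you should make explicit: for the hom-set count to come out right already at $d=1$, morphisms of disks must be required to be order-preserving on each fibre (as in Joyal, and as this paper itself demands for generalised disks, where fibre restrictions are required to be functors); a bare natural transformation of functors $\overline{\mathbb{G}}_d\to\Sets$, which is what the paper's definition of a disk map literally says, does not see the fibrewise orders --- for instance the transposition of the two internal points of a four-element interval commutes with all structure maps --- and with that reading $\Disk_d(\overline{T},\overline{S})$ would be strictly larger than $\Theta_d(S,T)$. Since you work throughout with ``order- and endpoint-preserving'' maps, this is a matter of pinning down the convention rather than a gap in your proof.
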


\qed

A functor $\Theta_d^\op\to\mathcal{E}$ is called {\it a $d$-cellular object} in $\mathcal{E}$, a functor $\Theta_d\to\mathcal{E}$ is called {\it a $d$-cocellular object} in $\mathcal{E}$.

\begin{remark}\label{remthetaordinals}
{\rm
This remark clarifies the statement that objects of the category $\Theta_d$ are $d$-level trees. Both the definition of a $d$-level tree \eqref{eqntree} and the wreath product definition of $\Theta_d$ involve 1-ordinals, and one has to specify how these ordinals are related to each other. Assume a $d$-level tree $T$ is given by a sequence of maps of ordinals \eqref{eqntree}. To $T$ is associated an object $D(T)\in\Theta_d$. Then $D(T)=([k]; D_1,\dots, D_k)$ where $D_i\in\Theta_{d-1}$. Then, by definition, the ordinal $t_1$ in \eqref{eqntree} is {\it the ordinal of elementary intervals} of $[k]$, and the $(d-1)$-level trees $T_1,\dots,T_k$, which are preimages of $k$ elements of $t_1$, are related to the objects $D_1,\dots,D_k\in\Theta_{n-1}$ as $D_i=D(T_i)$. By induction, is gives the association of an object of $\Theta_d$ to a $d$-level tree. 
}
\end{remark}

\begin{remark}\label{remthetagraphs}
{\rm
On the other hand, the category $\Tree_d$ is a subcategory of the category $\Disk_d$ having the same objects, but only those morphisms which map the inner vertices to inner vertices. Thus one can ask ourselves what is corresponded to 
$\Tree_d(T,S)\subset \Disk_d(\overline{T},\overline{S})$ in the l.h.s. of \eqref{thetancat}. The answer is that it is exactly maps in $\Theta_d(S,T)\simeq \Cat_d(\omega_d(S^*),\omega_d(T^*))$ which preserve all minima and all maxima.  (Thus, for $d=2$, it means, in notations of \eqref{thetamorph}, that $\phi(0)=0, \phi(n)=m$, and each $\phi_i^j$ maps the minimum (resp., the maximum) of the 1-ordinal $A_i$ to the minimum (resp., maximum) of the 1-ordinal $B_j$). It can be proven similarly to the proof of Proposition \ref{bergerprop} given in [Be2]. 

In fact, this statement provides an explanation why the category of $d$-level trees appears in the theory of Batanin $d$-operads. Namely, it shows that the morphisms of level trees correspond to the $d$-categorical pasting compositions. 
}
\end{remark}

Proposition \ref{bergerprop}, as well as its supplemental part stated in Remark \ref{remthetagraphs}, can be considered as the Joyal duality [J] for higher $d$.
Recall that for the category $\Delta$ ($d=1$ case) the Joyal duality is the following equivalence:
\begin{equation}
\Delta\simeq \Delta_{**,\ge 1}^\op
\end{equation}
where $\Delta_{**,\ge 1}$ has objects $[[1]],[[2]],[[3]],\dots$ (the ordinal $[[0]]$ is excluded), where $[[k]]$ is a finite interval of $k+1$ elements, and the morphisms are those morphisms in $\Delta$ which preserve the minima and maxima of the ordinals. 
The functor $\phi\colon \Delta\to \Delta_{**,\ge 1}$ is defined as $[k]\to \Delta([k],[1])=[[k+1]]$. The set $\Delta([k],[1])$ has a natural structure of ordinal with $k+2$ elements, its minimum and maximum are those maps $[k]\to [1]$ for which all elements of $[k]$ are mapped to $[0]$ and to $[1]$ , correspondingly. Clearly these extreme maps are preserved by any morphism $[k]\to [\ell]$ in $\Delta$. The $n=1$ case of Proposition \ref{bergerprop} is exactly this equivalence. It is instructive to think of elements of $[[k+1]]$ as the ordinal of elementary intervals of $[k]_+$ where $[k]_+$ is the ordinal $-1<0<1<\dots<k<k+1$, that is it is obtained from $[k]$ by adding the minimum and the maximum elements.

\subsection{\sc Generalised disks}
Let $C$ be a small ordinary category. We call an object $c\in C$ (generalised) minimum if there are no arrows distinct from $\id_c$  having $c$ as its target. Similarly we call an object $d\in C$ (generalised) maximum if there are no arrows distinct from $\id_d$ having $d$ as its source. A {\it generalised interval} is a connected small category having a unique minimum and a unique maximum. (By definition the minimum and the maximum are distinct objects, except for the case when a generalised interval is the final category, the latter case is referred to as the degenerate generalised interval).

\begin{defn}{\rm
A generalised $n$-disk is a functor $D\colon \overline{\mathbb{G}}_d\to\Sets$, $D(k)=X_k$, such that $X_0$ is a singleton, and for any $x\in X_{k}$ the set $i_{k}^{-1}(x)\subset X_{k+1}$ has a structure of generalised interval, whose generalised minimum and generalised maximum are $s_k(x)$ and $t_k(x)$, correspondingly. The generalised interval $i_{k}^{-1}(x)$ is degenerate (if and) only if $x\in \partial X_{k-1}$.
A morphism $f$ of generalised $n$-disks is a natural transformation of functors $\overline{\mathbb{G}}_d\to\Sets$, whose restriction  to the corresponding fibres $i_k^{-1}(x)\to i_k^{-1}(f(x))$ is a functor, for any $x\in X_k$. (This functor preserves generalised minima and maxima from the naturality). The category of generalised $n$-disks is denoted by $\GDisk_n$.
}
\end{defn}
It is clear that the $n$-disk $\overline{T}$ is a generalised disk, the fibres are standard interval categories associated to ordinals. 

Our motivation for introducing generalised disks here is existence of a funny product of generalised $n$-disks which is a generalised $n$-disk again, defined as follows.

Recall that the funny product $C\boxx D$ of ordinary categories is an ordinary category having $\Ob(C)\times \Ob(D)$ as objects, and the morphisms are generated by morphisms $f\boxx \id_d\in (C\boxx D)((c_1,d),(c_2,d))$ and $\id_c\boxx g\in (C\boxx D)((c,d_1),(c,d_2))$, where $c,d$ are objects of $C$ and $D$, correspondingly, $f\colon c_1\to c_2$, $g\colon d_1\to d_2$ are morphisms in $C$ and $D$, with only relations $(f_2\boxx\id_d)\circ (f_1\boxx \id_d)=f_2f_1\boxx\id_d$, $(\id_c\boxx g_2)\circ (\id_c
\boxx g_1)=\id_c\boxx g_2g_1$. The main point is that the following relation in $C\times D$ is {\it dropped}: 
$(\id_{c_2}\times g)\circ (f\times\id_{d_1})=(f\times\id_{d_2})\circ (\id_{c_1}\times g)$. 

Clearly the funny product endows the category $\Cat$ of ordinary categories with a symmetric monoidal structure. It has a right adjoint internal Hom, the category of functors and non-natural transformations. 

\begin{lemma}
Let $C,D$ be generalised intervals. Then the funny product $C\boxx D$ is again a generalised interval, which is degenerate iff $C$ and $D$ are both degenerate. Thus the funny product defines a symmetric monoidal structure on generalised intervals, whose unit is the degenerate generalised interval. 
\end{lemma}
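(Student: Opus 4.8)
The plan is to make the morphisms of $C\boxx D$ fully explicit, guess the extremal objects, check the generalised-interval axioms one at a time, and then obtain the monoidal statement formally by restriction. First I would record the \emph{normal form} for morphisms in the funny product: the only relations imposed are those composing consecutive morphisms coming from the same factor, while the interchange law of $C\times D$ is \emph{dropped}. Hence every morphism $(c_1,d_1)\to(c_2,d_2)$ is uniquely a reduced alternating composite of non-identity horizontal steps $u\boxx\id$ (with $u$ a non-identity arrow of $C$) and non-identity vertical steps $\id\boxx v$ (with $v$ a non-identity arrow of $D$), the two types strictly alternating, the identity being the empty composite. The absence of an interchange relation is exactly what makes this reduced form canonical.

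Write $m_C,M_C$ (resp. $m_D,M_D$) for the unique minimum and maximum of $C$ (resp. $D$). I claim $(m_C,m_D)$ is the unique minimum of $C\boxx D$. For the defining property, a non-identity arrow into $(m_C,m_D)$ would have a non-empty reduced form whose last step is a non-identity arrow either into $m_C$ in $C$ or into $m_D$ in $D$, both impossible by minimality; so the only arrow targeting $(m_C,m_D)$ is the identity. For uniqueness, if $(a,b)\neq(m_C,m_D)$ then $a\neq m_C$ or $b\neq m_D$; since $m_C$ is the \emph{unique} minimum of $C$, any $a\neq m_C$ fails to be a minimum and so carries a non-identity incoming arrow $u$, whence $u\boxx\id_b$ is a non-identity arrow into $(a,b)$, showing $(a,b)$ is not a minimum (symmetrically if $b\neq m_D$). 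The dual argument, using the last step read from the source or outgoing arrows, shows that $(M_C,M_D)$ is the unique maximum.

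Connectedness is then easy: lifting an undirected path $c_1\leadsto c_2$ in the connected category $C$ to horizontal steps at a fixed second coordinate, and an undirected path $d_1\leadsto d_2$ in $D$ to vertical steps at a fixed first coordinate, and concatenating, joins any two objects of $C\boxx D$. Thus $C\boxx D$ is a generalised interval. It is degenerate precisely when its minimum and maximum coincide, i.e. $m_C=M_C$ and $m_D=M_D$, which by definition means exactly that both $C$ and $D$ are degenerate. Finally, the funny product is already a symmetric monoidal structure on $\Cat$ with unit the final category $\mathbf{1}$, and $\mathbf{1}$ is precisely the (one-object) degenerate generalised interval; since the generalised intervals form a full subcategory of $\Cat$ that is closed under $\boxx$ by the above and contains this unit, the symmetric monoidal structure restricts to them, with unit the degenerate interval, as asserted.

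The only genuinely non-formal point — and the step I expect to be the main obstacle — is ruling out ``hidden'' non-identity arrows into $(m_C,m_D)$ (and out of $(M_C,M_D)$). A naive coordinatewise argument through the projections $C\boxx D\to C$ and $C\boxx D\to D$ is insufficient, because funny-product morphisms are long alternating zig-zags that the projections collapse, and because one must scrupulously avoid invoking the dropped interchange relation. The normal form above is exactly what closes this gap: reading off the last (respectively first) letter of a reduced word reduces the question to the minimality of $m_C,m_D$ (respectively maximality of $M_C,M_D$) in the individual factors.
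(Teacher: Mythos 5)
Your proof is correct, and it is in fact more than the paper provides: the paper disposes of this lemma with ``It is clear'' and no argument at all, so there is no official proof to diverge from. What you supply --- the unique reduced alternating word form for morphisms of $C\boxx D$ (valid precisely because the interchange relation is dropped, making $C\boxx D$ the amalgam of $C$ and $D$ over their common discrete category of objects), the last-letter argument ruling out non-identity arrows into $(m_C,m_D)$ and out of $(M_C,M_D)$, the incoming/outgoing-arrow argument for uniqueness of the extrema, and the zig-zag lifting for connectedness --- is exactly the implicit content the author is waving at, and your observation that the bare projections $C\boxx D\to C,D$ do not immediately settle the extremal-object question (they collapse alternating words) is a fair diagnosis of why the normal form, or at least its last letter, is the right tool. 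One small correction: generalised intervals do \emph{not} form a full subcategory of $\Cat$ --- the paper's category $\Cat_{**}$ takes only functors preserving the source and target objects --- so the monoidal structure does not restrict ``for free'' by fullness; you need the (immediate) additional check that $f\boxx g$ preserves the endpoints $(m_C,m_D)$ and $(M_C,M_D)$ whenever $f$ and $g$ preserve endpoints, and that the associativity, unit and symmetry isomorphisms of the funny product on $\Cat$ are themselves endpoint-preserving, which they are since the endpoints of a funny product are the pairs of endpoints. With that one-line repair your argument is complete.
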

It is clear.
\qed

Let now $X$ and $Y$ be two generalised $n$-disks. Define their funny product $X\boxx Y$ as the generalised $n$-disk with the underlying sets of objects
$(X\boxx Y)_k=X_k\times Y_k$, such that the generalised interval structure on $i_k^{-1}(x,y)=i_k^{-1}(x)\boxx i_{k}^{-1}(y)$ is the funny product of the corresponding generalised intervals. One easily shows that the funny product gives rise to a bifunctor on the category $\GDisk_n$, for any $n\ge 1$. 

\begin{prop}\label{propfunnyn}
The funny product defined as above endows the category of generalised disks $\GDisk_n$ with a symmetric monoidal structure, for any $n\ge 1$. Its unit is the degenerate $n$-disk. 
\end{prop}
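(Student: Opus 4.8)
The plan is to upgrade the preceding Lemma (funny product of generalised intervals is a generalised interval) to the level of generalised $n$-disks, and then verify the monoidal axioms componentwise. First I would confirm that $X \boxx Y$ is a well-defined generalised $n$-disk. The underlying globular diagram is defined by $(X\boxx Y)_k = X_k \times Y_k$ with structure maps $s_k, t_k, i_k$ taken as the products of the corresponding structure maps of $X$ and $Y$; one checks the cosimplicial/coglobular relations $s_{k+1}s_k = t_{k+1}s_k$, etc., hold since they hold in each factor and products of maps satisfying them satisfy them. The fibre over $(x,y)\in X_k\times Y_k$ under $i_k$ is exactly $i_k^{-1}(x)\times i_k^{-1}(y)$ as a set, and we equip it with the funny product of the two generalised interval structures; by the Lemma this is again a generalised interval, with generalised minimum $(s_k(x),s_k(y))=s_k(x,y)$ and maximum $(t_k(x),t_k(y))=t_k(x,y)$, as required by the definition of a generalised $n$-disk. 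The singularity/degeneracy condition also transfers: by the Lemma the funny product fibre is degenerate iff both factor fibres are, i.e.\ iff $x\in\partial X_{k-1}$ and $y\in\partial Y_{k-1}$, and one must check this matches $(x,y)\in\partial(X\boxx Y)_{k-1}$.

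Next I would verify functoriality, i.e.\ that $\boxx$ is a bifunctor $\GDisk_n\times\GDisk_n\to\GDisk_n$; the excerpt already asserts this, so I would simply invoke it, noting that a pair of disk morphisms $f\colon X\to X'$, $g\colon Y\to Y'$ induces $f\times g$ at each level, which restricts on fibres to the funny product of the two fibre-functors (functoriality of $\boxx$ on $\Cat$), and this is again a functor preserving generalised minima and maxima.

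For the symmetric monoidal structure proper, the strategy is to reduce every coherence datum to the known symmetric monoidal structure $(\Cat,\boxx)$ and to the cartesian product on $\Sets$, working level by level and fibre by fibre. The associator, unitors, and braiding are defined on underlying sets using the cartesian-product associator/symmetry in $\Sets$ (which on the nose just reshuffles tuples); one then checks that on each fibre these set maps upgrade to the corresponding funny-product associator/unitor/braiding of $\Cat$ and hence are functors respecting generalised minima and maxima, so that the resulting set-level natural isomorphisms are in fact morphisms of generalised $n$-disks. The unit is the degenerate $n$-disk $\mathbf{1}$ (all $X_k$ singletons, all fibres the degenerate generalised interval = terminal category), and the Lemma gives $C\boxx\mathbf{1}\cong C$ for generalised intervals, hence $X\boxx\mathbf{1}\cong X$ fibrewise. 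The pentagon and hexagon identities, together with the triangle identity and the symmetry condition $\beta^2=\id$, then hold because they hold fibrewise in $(\Cat,\boxx)$ (and on the underlying point-sets in $\Sets$), and a morphism of generalised $n$-disks is an equality precisely when it is so on every level and every fibre.

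The main obstacle, and the one point deserving care rather than a blanket ``it is clear'', is the compatibility of the boundary/degeneracy condition with $\boxx$: one must confirm that the generalised-disk axiom ``$i_k^{-1}(x,y)$ is degenerate iff $(x,y)\in\partial(X\boxx Y)_{k-1}$'' is genuinely inherited, which requires knowing $\partial(X\boxx Y)_{k-1}=\partial X_{k-1}\times\partial Y_{k-1}$ as subsets of $X_k\times Y_k$ rather than, say, a union. This follows from $\partial(X\boxx Y)_{k-1}=\Im\,s_{k-1}\cup\Im\,t_{k-1}$ computed with the product maps together with the Lemma's ``degenerate iff both degenerate'', but it is exactly the step where the nontrivial content of the Lemma is used, so I would spell it out. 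Everything else is a routine, if slightly tedious, transport of the $(\Cat,\boxx)$ coherences through the forgetful-to-fibres description, which is why the author can legitimately close with ``It is clear''.
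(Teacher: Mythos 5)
Your overall strategy (componentwise verification plus transport of the $(\Cat,\boxx)$-coherences level by level and fibre by fibre) is reasonable, and since the paper offers no proof at all for this proposition, the only substantive question is whether your verification is correct. It is not, and the failure occurs exactly at the step you yourself single out as the crux. With the product structure maps $s_{k-1}^{X\boxx Y}=s_{k-1}\times s_{k-1}$ and $t_{k-1}^{X\boxx Y}=t_{k-1}\times t_{k-1}$ one gets
\[
\partial(X\boxx Y)_{k-1}\;=\;\bigl(\Im\, s_{k-1}^X\times \Im\, s_{k-1}^Y\bigr)\,\cup\,\bigl(\Im\, t_{k-1}^X\times \Im\, t_{k-1}^Y\bigr),
\]
i.e.\ only the two \emph{diagonal} pieces, whereas $\partial X_{k-1}\times\partial Y_{k-1}$ consists of four pieces; the claimed identity $\partial(X\boxx Y)_{k-1}=\partial X_{k-1}\times\partial Y_{k-1}$ is false in general. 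Concretely, take $n=2$ and $X=Y=\overline{T}$ for a $2$-level tree with nondegenerate level-$1$ interval, and consider the mixed corner $(\min_X,\max_Y)=(s_0(\ast),t_0(\ast))\in (X\boxx Y)_1$. Its fibre is the funny product of two degenerate generalised intervals, hence degenerate by the Lemma; but it lies in neither $\Im(s_0\times s_0)=\{(\min_X,\min_Y)\}$ nor $\Im(t_0\times t_0)=\{(\max_X,\max_Y)\}$, so it is not in $\partial(X\boxx Y)_0$. Thus your verification of the degeneracy axiom does not go through, and cannot be repaired as stated.

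What the counterexample actually shows is that under the strict reading of the definition --- ``$i_k^{-1}(x)$ is degenerate \emph{only if} $x\in\partial X_{k-1}$'' --- the funny product of two generalised disks is \emph{not} a generalised disk, so no proof along these lines exists; the degeneracy clause has to be read as imposing only the ``if'' direction, which is automatic from the coglobular relations $s_{k+1}s_k=t_{k+1}s_k$, $s_{k+1}t_k=t_{k+1}t_k$ (if $x=s_{k-1}(y)$ or $x=t_{k-1}(y)$ then the fibre over $x$ has coinciding minimum and maximum, hence is the final category) and therefore requires no verification for $X\boxx Y$ at all. That the lax reading is the intended one is corroborated by the proposition itself: the monoidal unit is the everywhere-degenerate disk, whose degenerate fibres are likewise not controlled by a strict boundary condition at the bottom level, and the same mixed-corner phenomenon is precisely why the category of ordinary Joyal disks, where the singular locus is required to equal $\partial X_{k-1}$ exactly, ``lacks'' a funny product in the first place. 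So you should delete the false boundary identity, note explicitly that degenerate fibres of $X\boxx Y$ may occur over points outside $\partial(X\boxx Y)_{k-1}$ (the mixed corners), and retain the remainder of your argument --- the fibrewise application of the Lemma, functoriality, and the transport of associator, unitors, symmetry and their coherence identities from $(\Cat,\boxx)$ and $(\Sets,\times)$ --- which is correct as written.
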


\qed

These simple definitions are however crucial for the definition of the higher lattice path operads, and for the paper at all. 

\section{\sc Higher lattice path operads}\label{sectionlp}
\subsection{\sc The classical lattice path operad}\label{sectionlp1}

\subsubsection{}
Recall the definition of the (classical) lattice path operad $\mathcal{L}$ [BB]. It is a colored symmetric operad with the category of unary operations $\Delta$, whose components are defined as
\begin{equation}\label{latticepath1}
\mathcal{L}([n_1],\dots,[n_k]; [n])=\Cat_{**}([[n+1]], [[n_1+1]]\boxx[[n_2+1]]\boxx\dots\boxx[[n_k+1]])
\end{equation}
where in the r.h.s. $\Cat_{**}$ denotes the category of generalised intervals (so the morphisms preserve the source and the target objects). 

Fix an element $\omega\in \mathcal{L}([n_1],\dots,[n_k]; [n])$ and $1\le i<j\le k$. Define $\omega_{ij}\in \mathcal{L}([n_i],[n_j];[n])$ as the composition of $\omega$ with the projection $p_{ij}\colon [[n_1+1]]\boxx\dots\boxx [[n_k+1]]\to [[n_i+1]]\boxx[[n_j+1]]$. We say that $\omega_{ij}$ has {\it complexity} $\ell$ if the lattice path $\omega_{ij}$ changes the direction exactly $\ell$ times, and we say that $\omega$ has complexity $\le \ell$ if for any $i<j$ the lattice path $\omega_{ij}$ has complexity $\le \ell$. Denote by $\mathcal{L}_\ell\subset \mathcal{L}$ the subset of all lattice paths having complexity $\le \ell$. One checks $\mathcal{L}_\ell$ is a suboperad of $\mathcal{L}$. Note that the simplicial unary operations preserve or lower complexity, so we get a filtration by $\Delta$-colored symmetric operads 
\begin{equation}\label{lpfiltration}
\mathcal{L}_1\subset \mathcal{L}_2\subset\dots\subset \mathcal{L}
\end{equation}
It is proven in [BB], Sect.3 that the topological (resp., dg over any ring $\k$) condensation of the operad $\mathcal{L}_\ell$ is weakly equivalent to the little disk operad $E_\ell$ (resp., to its chain complex operad $C_\ldot(E_\ell,\k)$). It is proved using the {\it cellular 
structure} on the lattice path operad $\mathcal{L}$, defined in terms of Berger complete graph operad.

Each lattice path $\omega$ as above defines a permutation $\sigma\in \Sigma_k$, called {\it the first order movement}. 
It is defined as follows: $\sigma(i)<\sigma(j)$ if the lattice path moves along the $i$-th direction at first time earlier than it moves along the $j$-th direction. So the permutation $\sigma$ encodes the order in which the lattice move along the coordinate axes the first time (the lattice path since it moves along the $i$-th direction the first time can change direction and return to direction $i$ many more times, but only the first movement matters for $\sigma$). Note that a lattice path in $\mathcal{L}([0],\dots,[0];[0])$ is uniquely defined by its first order movement.

\subsubsection{\sc ``Contractible blocks'' of $\mathcal{L}_\ell$}\label{sectionposet}
The symmetric operad $\mathcal{L}$ admits a cellular structure in the sense of C.Berger [Be1]. Namely, Berger introduced a symmetric operad in posets $\mathcal{K}$, and studies topological symmetric operads which admit ``cell decomposition'' into closed subsets, such that the morphisms in the poset correspond to cofibrations, such that each particular cell (whose ``interior'' is non-empty) is contractible, and such that the cells with non-empty interior cover our operad. In this way he proves that the topological operad is $E_\infty$, and the canonical filtration on it, coming from the cellular structure, is a filtration by operads $E_n$. This beautiful theory is a powerful method to prove that a given operad is an $E_n$ operad: the computation reduces to the nerve of complete graph operad, which is computed for a cellular structure on the little $n$-cubes operad, and then any operad admitting $\mathcal{K}^{\le n-1}$-cellular structure is an $E_n$ operad. Although it is a proper context for what we call ``contractible blocks'' here (which are in turn the cells in Berger's cell decomposition), we do not go into detail on the complete graph operad here, referring the reader to [Be1] and [BeM] for more detail. See also remarks at the end of [Ba2, Sect.11] for a link between the complete graph operad and Batanin classificators, from which it follows that the relative Batanin classificator of internal $n$-operads in a categorical symmetric operad is mapped canonically to the complete graph operad.

Assume $k\ge 1$ is given. Consider a pair $(\mu,\sigma)$, where $\sigma\in\Sigma_k$ is a permutation, and $\mu=\{\mu_{ij}\}_{1\le i<j\le k}$ where $\mu_{ij}\ge 0$ are integral numbers. Let $i<j$, we define $\sigma_{ij}=1$ if $\sigma(i)<\sigma(j)$, and $\sigma_{ij}=-1$ if $\sigma(i)>\sigma(j)$.  

We say that 
\begin{equation}\label{bergerposet}
(\mu,\sigma)\le (\mu^\prime,\sigma^\prime)\text{ if for any }i<j\text{ one has either }\sigma_{ij}=\sigma^\prime_{ij}, \mu_{ij}\le \mu_{ij}^\prime, \text{  or  }\sigma_{ij}\ne\sigma^\prime_{ij}, \mu_{ij}<\mu^\prime_{ij}
\end{equation}
 For $k=2$ this is poset of closed hemispheres which form a cell decomposition of $S^\infty$. For given $k\ge 1$, we denote by $\mathcal{K}(k)$ the constructed poset. 

For a lattice path $\omega\in \mathcal{L}([n_1],\dots,[n_k]; [n])$ denote by $(\mu,\sigma)(\omega)\in\mathcal{K}(k)$ the element 
with $\mu_{ij}+1$ equal to the complexity of $\omega_{ij}$, and $\sigma$ equal to the first order movement permutation of $\omega$. 

Let $(\mu,\sigma)\in\mathcal{K}(k)$.
Define 
\begin{equation}\label{latticepathcell}
\mathcal{L}_{(\mu,\sigma)}(k)=\big\{
\omega\in \mathcal{L}([n_1],\dots,[n_k]; [n])|\ (\mu,\sigma)(\omega)\le (\mu,\sigma)\big\}
\end{equation}
One sees directly that in this way we obtain a functor $\mathcal{L}^{(\mu,\sigma)}\colon (\Delta^\op)^{\times k}\times \Delta\to\Sets$. 
For fixed $[n]\in\Delta$, denote by $\mathcal{L}_{(\mu,\sigma)}([n])$ the corresponding functor $(\Delta^\op)^{\times k}\to\Sets$.

The following statement (the contractibility of the cells) is fundamental:

\begin{prop}\label{propcelltop}
Fix $(\mu,\sigma)\in\mathcal{K}(k)$. The following statements are true:
\begin{itemize}
\item[(i)] For a fixed $n$, the topological realization of the polysimplicial set $\mathcal{L}^{(\mu,\sigma)}[n]$ is a contractible topological space, denoted by $|\mathcal{L}_{(\mu,\sigma)}[n]|$.
\item[(ii)] The topological totalization of the cosimplicial topological space $[n]\mapsto |\mathcal{L}_{(\mu,\sigma)}[n]|$, is contractible.
\end{itemize}
\end{prop}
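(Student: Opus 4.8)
The plan is to work throughout in the Reedy model structures on polysimplicial and on cosimplicial objects in $\Top$, exploiting the homotopy properties of the two operations that constitute the condensation: geometric realization, which is a homotopy colimit and hence preserves weak equivalences between Reedy cofibrant diagrams, and totalization $\Tot$, which computes the homotopy limit $\holim$ on Reedy fibrant cosimplicial spaces (these facts are the Appendix to [BM]). The preliminary step is to make the combinatorics explicit: an element $\omega\in\mathcal{L}([n_1],\dots,[n_k];[n])$ is a monotone staircase path from the source corner to the target corner of the grid $[[n_1+1]]\boxx\dots\boxx[[n_k+1]]$, and the constraint defining $\mathcal{L}_{(\mu,\sigma)}$ bounds, pair by pair, the number of direction changes of the planar projection $\omega_{ij}$ together with the first-order movement recorded by $\sigma$. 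I would keep the two functorialities strictly separate: the contravariant (polysimplicial) dependence on the input colors $[n_1],\dots,[n_k]$, to be absorbed by realization, and the covariant (cosimplicial) dependence on the output color $[n]$, to be absorbed by totalization.

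For part (i), with $n$ fixed, I would prove contractibility of $|\mathcal{L}_{(\mu,\sigma)}[n]|$ by induction on the total complexity $\sum_{i<j}\mu_{ij}$. In the base case $\mu=0$ each projection $\omega_{ij}$ is forced to be monotone in the direction prescribed by $\sigma$, so the admissible paths are directed towards a unique minimal path; I would promote this to an explicit extra degeneracy, i.e. a contracting homotopy of the polysimplicial set onto the constant subset at that path, which survives realization. For the inductive step I would write the inclusion of the lower-complexity block into $\mathcal{L}_{(\mu,\sigma)}$ as a pushout of polysimplicial sets whose newly attached stratum (the paths of exactly the top complexity in one pair) is contractible and meets the previous complex in a contractible subcomplex along a weak equivalence. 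Since realization carries these monomorphisms to cofibrations and preserves the pushout, the homotopy pushout of two contractible pieces along a contractible one is again contractible, propagating the conclusion. This argument also yields a filtered, cellular model of $|\mathcal{L}_{(\mu,\sigma)}[n]|$ that is uniform in $n$, which is exactly what part (ii) consumes.

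For part (ii) the target is the honest totalization of the cosimplicial space $Y^\bullet\colon[n]\mapsto|\mathcal{L}_{(\mu,\sigma)}[n]|$. Here I would deliberately refuse the McClure--Smith whiskering homeomorphism $Y^n\cong Y^0\times\Delta^n$, so that the scheme transfers to the $\Theta_d$-indexed Theorem 1 where no such factorization exists, and instead prove directly that $Y^\bullet$ is \emph{Reedy fibrant}, i.e. that each matching map $Y^n\to M^nY$ assembled from the codegeneracy operators is a Serre fibration. Granting this, $\Tot\,Y^\bullet\simeq\holim\,Y^\bullet$; and by part (i) the map $Y^\bullet\to\ast$ to the terminal cosimplicial space is a levelwise weak equivalence of Reedy fibrant objects, so $\Tot\,Y^\bullet\simeq\Tot(\ast)=\ast$. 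Reedy fibrancy reduces to a combinatorial lifting statement: every compatible family in the matching object $M^nY$ must lift to a lattice path of the permitted complexity, with fibers varying fibrantly, and I would extract this from the filtered model built in part (i), showing that the codegeneracies respect the complexity filtration stratum by stratum.

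The main obstacle is precisely this Reedy fibrancy of $Y^\bullet$. In the classical $d=1$ treatment the matching-map condition was sidestepped by whiskering, which trivializes the cosimplicial structure up to a standard factor $\Delta^\bullet$; replacing that by an intrinsic fibrancy argument is the whole point of the refinement, since the analogue for $\Theta_d$ has no whiskering at its disposal. I therefore expect the bulk of the work to lie in verifying that the output-color codegeneracies act compatibly with the complexity filtration and induce fibrations on the matching objects; once this is secured, contractibility of the totalization follows formally, and the same filtered, Reedy-theoretic mechanism is what later generalizes to the $\Theta_d$-case.
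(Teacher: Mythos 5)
Your part (ii) is essentially the paper's own route: it likewise refuses the McClure--Smith whiskering homeomorphism, proves Reedy fibrancy of the cosimplicial object by analyzing the matching maps \emph{before} realization (the matching map $m_\ell$ is an isomorphism for $\ell\ge 1$ by the marked-points description of codegeneracies, $m_0$ is the forgetful map whose diagonal restriction is a Kan fibration, $m_{-1}$ is the projection to a point), and then concludes via preservation of finite limits and fibrations by realization together with the right-Quillen property of $\Tot$ on Reedy fibrant objects. Note, though, that the paper's fibrancy check is a direct combinatorial argument about marked points and needs no complexity filtration, so your plan to ``extract it from the filtered model built in part (i)'' imports an unnecessary (and, as it turns out, broken) dependency.

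The genuine gap is in your part (i). Your inductive step asserts that the lower-complexity block includes into $\mathcal{L}_{(\mu,\sigma)}[n]$ via a pushout in which the newly attached stratum is contractible and meets the old part in a \emph{contractible} subcomplex. Both halves fail. First, the set of paths of exactly top complexity is not a sub-polysimplicial set: the simplicial operators can lower complexity, so this ``stratum'' is open, and the subcomplex it generates is all of $\mathcal{L}_{(\mu,\sigma)}[n]$. Second, and more seriously, the intersections of lower blocks are not contractible: already for $k=2$, $n=0$, the realization of $\mathcal{L}_{(m,\sigma)}[0]$ is a closed hemisphere $D^m$ in Berger's cell decomposition of $S^\infty$, whose boundary is the union of the two blocks $\mathcal{L}_{(m-1,\mathrm{id})}$ and $\mathcal{L}_{(m-1,(21))}$ meeting in a copy of $S^{m-2}$. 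That the union of contractible cells along sphere-like intersections produces spheres is precisely the point of the complete-graph poset; it is exactly why contractibility of a closed block cannot be propagated by a homotopy-pushout of contractible pieces along contractible intersections, and no rearrangement of your filtration avoids this. The paper instead proves (i) by a retraction trick (its Proposition \ref{propms}, upgrading [MS3, Lemma 14.8] from $n=0$ to all $n$): reduce to $\sigma=\mathrm{id}$ by relabeling, send a path $\omega$ to the path $c(\omega)$ obtained by prepending one initial step in the first direction --- the crucial observation being that $c(\omega)$ stays in the same block when $\sigma=\mathrm{id}$ --- note that $\partial_0^{(1)}c(\omega)=\omega$, and use the functor $C([n])=[n+1]$ with the $\pi_0$-collapse of the first simplicial argument (Lemma \ref{lemmac}) to exhibit $|\mathcal{L}_{(\mu,\mathrm{id})}[n]|$ as a retract of the realization of a $(k-1)$-argument block; the induction is on the number of arguments $k$, not on the total complexity $\sum_{i<j}\mu_{ij}$. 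You would need this (or an equivalent) cone argument to make part (i) sound; your base case $\mu=0$ with an explicit extra degeneracy is fine, but it is the only part of your induction that survives.
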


The proof goes back to [MS3, Sect. 12-15]. The statement (i) follows from combination of [MS3], Prop. 12.7, Cor. 13.3, and Lemma 14.8. Then the statement (ii) is proved in [MS3], Sect. 15.

We provide a more direct approach, see Section \ref{topcond1}, which generalises to higher lattice path operads with the categories $\Theta_d$ as unary operations. For $d>1$ [MS3, Prop. 12.7] fails, so one had to find a way to overcome it. Our approach is much simpler and is based on a generalisation of [MS3, Lemma 14.8], see Proposition \ref{propms}. 

Note that, based on the scheme of proof given in [MS3], Batanin and Berger gave [BB, 3.5-3.9] a general homotopical approach for computing different condensations of the filtration components $\mathcal{L}_\ell$ of the lattice path operad.

The following statement is an analogue of Proposition \ref{propcelltop} for dg condensation. 

\begin{prop}\label{propcelldg}
Fix $(\mu,\sigma)\in\mathcal{K}(k)$. The following statements are true:
\begin{itemize}
\item[(i)] For a fixed $n$, the realization in $C^\udot(\mathbb{Z})$ of the polysimplicial set $\mathcal{L}_{(\mu,\sigma)}[n]$, denoted by $|\mathcal{L}_{(\mu,\sigma)}[n]|_{\dg}$, is quasi-isomorphic to $\mathbb{Z}[0]$.
\item[(ii)] The totalization in $C^\udot(\mathbb{Z})$ of the cosimplicial dg abelian group $[n]\mapsto |\mathcal{L}_{(\mu,\sigma)}[n]|_\dg$, is quasi-isomorphic to $\mathbb{Z}[0]$.
\end{itemize}
\end{prop}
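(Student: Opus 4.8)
The plan is to obtain Proposition~\ref{propcelldg} as a formal homological consequence of its topological counterpart, Proposition~\ref{propcelltop}, via the standard comparison between the normalized chain complex of a (poly)simplicial set and the singular homology of its geometric realization. For a fixed $n$, the dg realization $|\mathcal{L}_{(\mu,\sigma)}[n]|_{\dg}$ is, by definition, the coend in $C^\udot(\mathbb{Z})$ computing the normalized Moore chain complex of the polysimplicial set $\mathcal{L}_{(\mu,\sigma)}[n]$. By the Eilenberg--Zilber theorem (equivalently, because the monoidal realization functor is compatible with passage to homology), the cohomology of $|\mathcal{L}_{(\mu,\sigma)}[n]|_{\dg}$ is the integral singular homology of the topological realization $|\mathcal{L}_{(\mu,\sigma)}[n]|$. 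Since that space is contractible by Proposition~\ref{propcelltop}(i), it is in particular non-empty and path-connected with vanishing reduced homology, so $H^\udot(|\mathcal{L}_{(\mu,\sigma)}[n]|_{\dg})$ equals $\mathbb{Z}$ in degree $0$ and $0$ otherwise. This is exactly part~(i). Working over $\mathbb{Z}$ is harmless here: contractibility yields integral homology $\mathbb{Z}[0]$ directly, with no universal-coefficient correction.

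For part~(ii) I would pass to the double complex whose total complex is the dg totalization: the cosimplicial direction contributes the conormalized cochain complex, with the sign-alternating coface differential $\sum_i (-1)^i \delta^i$, and the internal direction is the dg differential of $[n]\mapsto |\mathcal{L}_{(\mu,\sigma)}[n]|_{\dg}$. Filtering by the cosimplicial degree $p$ and taking internal cohomology first, part~(i) shows that the $E_1$-page is concentrated in the single row of internal degree $0$, with $E_1^{p,0}=\mathbb{Z}$ for every $p\ge 0$. Because each levelwise realization is connected, the cosimplicial operators act as the identity on $H^0$, so this row is precisely the conormalized cochain complex of the \emph{constant} cosimplicial abelian group $\mathbb{Z}$, whose cohomology is $\mathbb{Z}$ in degree $0$ and $0$ elsewhere. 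Hence $E_2=E_\infty=\mathbb{Z}[0]$, the spectral sequence collapses, and $\Tot\simeq\mathbb{Z}[0]$.

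Since $E_1$ is supported in a single row, there are no higher differentials and no convergence subtlety: in each total degree only the term with $p=n$ and internal degree $0$ survives, so the product-versus-sum ambiguity in the definition of $\Tot$ is immaterial. The one genuine point to verify is the identification of the $d_1$-differential with that of the constant cosimplicial group $\mathbb{Z}$, i.e.\ that the cosimplicial structure induced on $H^0$ of the levelwise realizations is trivial; this uses connectivity of each $|\mathcal{L}_{(\mu,\sigma)}[n]|$, and is the one place where Proposition~\ref{propcelltop}(i) is invoked beyond mere acyclicity. Accordingly I expect the substantive difficulty of the statement to reside entirely in the topological Proposition~\ref{propcelltop}, which we are assuming; the dg statement itself is essentially a transcription. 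As an alternative closer to the methods of the paper, one could instead mimic the direct argument of Section~\ref{topcond1} at the chain level, replacing topological realization and totalization by their $C^\udot(\mathbb{Z})$-analogues throughout; this has the advantage of generalising verbatim to the $\Theta_d$-setting needed later, but for the present proposition the spectral-sequence transfer is the most economical route.
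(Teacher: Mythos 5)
Your proof is correct, and it splits naturally against the paper's. For part (i) you are on essentially the paper's route: Proposition \ref{propmsdg} identifies $|\mathcal{L}_{(\mu,\sigma)}[n]|_\dg$ with the CW chain complex of the topological realization and concludes by homotopy invariance of CW homology together with Proposition \ref{propms}; your Eilenberg--Zilber/normalized-chains phrasing is the same comparison in different clothing. For part (ii) you genuinely diverge. The paper writes down no spectral sequence: it proves (Corollary \ref{corfibdg}) that the cosimplicial complex $[n]\mapsto|\mathcal{L}_{(\mu,\sigma)}[n]|_\dg$ is Reedy fibrant for the projective model structure, using the explicit computation of the matching maps of the cosimplicial \emph{polysimplicial set} in Proposition \ref{reedytot} ($m_\ell$ an isomorphism for $\ell\ge 1$, and level-wise surjectivity, hence a projective fibration, for $\ell=0,-1$, after observing that the total-sum complex coincides here with the total-product complex and so commutes with the relevant finite limits), and then invokes the right-Quillen property of $\Tot$ (Proposition \ref{propbem}(1)) to compare with the constant cosimplicial complex $\mathbb{Z}[0]$. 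Your filtration-by-cosimplicial-degree argument buys elementarity: it needs only part (i) plus connectivity of each realization (to see that $d_1$ is the alternating-sum differential of the constant cosimplicial group), and the concentration of $E_1$ in one row settles convergence of the total-product complex, as you note. What the paper's heavier route buys is precisely what your closing sentence anticipates: the fibrancy argument transfers to the Reedy category $\Theta_d$ in Section \ref{sectioncontr2}, where there is no naive ``filter by codegree'' double complex, and a spectral-sequence argument would have to be rebuilt on the $\Theta_d$ Reedy filtration --- i.e., it would reproduce the matching-map analysis in disguise. (The paper also notes that the statement itself is in [BB, 3.10(c)] and [BBM, Th.~3.10]; its Section \ref{dgcond1} proof exists for the sake of this generalisation, so yours is a third, and the most economical, route for $d=1$ only.)

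One cosmetic inconsistency worth fixing: you assert $E_1^{p,0}=\mathbb{Z}$ for every $p\ge 0$ and simultaneously call this row ``the conormalized cochain complex of the constant cosimplicial abelian group $\mathbb{Z}$''. The conormalization of a constant cosimplicial group vanishes in positive codegrees, so that row would be $\mathbb{Z}[0]$ on the nose; what you describe with $\mathbb{Z}$ in every codegree is the unnormalized complex. Either version has cohomology $\mathbb{Z}[0]$ (and which one appears depends on whether the system of cells used in $\Tot$ is taken normalized), so the conclusion is unaffected; just make the two statements match.
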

The proof is given in [BB, 3.10(c)] and [BBM, Th.3.10].

Again, our approach provides a more direct argument, see Section \ref{dgcond1}, which also generalises to the higher lattice path operads. 

\subsubsection{\sc Lattice path operad $\mathcal{L}$ and a Batanin $n$-operads}\label{rembb}
We refer the reader to Appendix \ref{appendixa} and the references therein for basic definitions and facts on Batanin higher operads. Recall that a pruned $\ell$-tree $T$ is the same as an $\ell$-ordinal structure on the set of leaves $|T|$, and a map of pruned $\ell$-trees is the same as a map of $\ell$-ordinals (that is, \eqref{nordinals} holds), see Remark \ref{remprune}(2), (3). 

The $\ell$-th filtration component $\mathcal{L}_\ell$ of the lattice path operad is a colored symmetric operad. One can define similarly a Batanin colored $n$-operad. 

We construct a $\Delta$-colored pruned $(\ell-1)$-terminal $\ell$-operad $\mathcal{L}_{\ell, B}$ in $\Sets$, as follows.
Let $T$ be a pruned $\ell$-level tree (which is the same as an $\ell$-ordinal), whose vertices are canonically ordered. Let $T$ have $k$ leaves, the set of all leaves is denoted by $|T|$, the leaves are labelled by ordinals $[n_1],\dots,[n_k]$. Define $\mu(T)_{ab}=\ell-1-i$ if $a<_i b$, $a,b\in |T|$, $a<b$. Define
\begin{equation}\label{latticepathlop}
\mathcal{L}_{\ell,B}(T)([n_1],\dots,[n_k]; [n])=\mathcal{L}_{(\mu(T),\id)}([n_1],\dots,[n_k]; [n])
\end{equation}
\begin{prop}\label{proptamop}
$\{\mathcal{L}_{\ell,B}(T)([n_1],\dots,[n_k]; [n])\}$ are components of a $\Delta$-colored $(\ell-1)$-terminal $\ell$-operad. 
\end{prop}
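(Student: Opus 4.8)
The plan is to realise $\mathcal{L}_{\ell,B}$ as a sub-collection of the desymmetrisation of the symmetric operad $\mathcal{L}_\ell$ and to check that it is closed under the inherited $\ell$-operadic composition; the operad axioms and prunedness then follow from the corresponding properties of $\mathcal{L}$, and $(\ell-1)$-terminality is a short computation on a distinguished family of trees. Each component $\mathcal{L}_{\ell,B}(T)=\mathcal{L}_{(\mu(T),\id)}$ is already a functor $(\Delta^\op)^{\times k}\times\Delta\to\Sets$, being a single block. Recall (Remark \ref{remprune}, Appendix \ref{appendixa}) that a pruned $\ell$-tree is an $\ell$-ordinal on its leaves and that the $\ell$-operadic substitution is the substitution of $\ell$-ordinals: for an outer ordinal $T$ on $\{1,\dots,k\}$ and inner ordinals $S_1,\dots,S_k$, the composite $U=\gamma(T;S_1,\dots,S_k)$ has underlying set $\bigsqcup_r|S_r|$, with $a<_i b$ in $U$ iff $a<_i b$ in $S_r$ when $a,b\in S_r$, and iff $r<_i s$ in $T$ when $a\in S_r$, $b\in S_s$ with $r\ne s$. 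Reading off branching levels yields the key identity
\begin{equation}
\mu(U)_{ab}=
\begin{cases}
\mu(S_r)_{ab} & \text{if } a,b\in S_r,\\
\mu(T)_{rs} & \text{if } a\in S_r,\ b\in S_s,\ r\ne s.
\end{cases}
\end{equation}

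I would take the composition of $\mathcal{L}_{\ell,B}$ to be the restriction of the colored composition \eqref{latticepath1} of $\mathcal{L}$, with the inputs taken in the canonical left-to-right order of the leaves of $U$ (so that the target block carries the permutation parameter $\id$ by definition). The crux is to show this restricts to a map $\mathcal{L}_{(\mu(T),\id)}\times\prod_r\mathcal{L}_{(\mu(S_r),\id)}\to\mathcal{L}_{(\mu(U),\id)}$. For a pair $a<b$ of leaves of $U$ one must bound $(\mu,\sigma)\big(\gamma(\omega_T;\omega_{S_1},\dots,\omega_{S_k})\big)_{ab}$ in the Berger poset \eqref{bergerposet}. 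Projecting the composite path to the coordinates $a,b$, it is assembled from the $(a,b)$-projection of $\omega_{S_r}$ (if $a,b\in S_r$), or from the $(r,s)$-projection of $\omega_T$ (if $a\in S_r$, $b\in S_s$), while the remaining inner paths contribute only single-coordinate, monotone cofactors. Inspecting the funny product of generalised intervals, such an insertion reparametrises the projected path without creating new turns, so its complexity equals that of the relevant projection of $\omega_{S_r}$, resp.\ of $\omega_T$; combined with the identity above this gives $(\mu,\sigma)(\cdots)_{ab}\le(\mu(U)_{ab},\id)$, i.e.\ the composite lies in the target block. \emph{This complexity bookkeeping --- proving that gluing creates no spurious direction changes in a two-coordinate projection --- is the step I expect to be the main obstacle}, since it is the only place where the fine combinatorics of $\Cat_{**}$-morphisms between funny products of intervals really enters.

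Granting closure, the unit is the image of the unit of $\mathcal{L}$ for the one-leaf tree (which imposes no pairwise constraint and recovers the $\Delta$-colored unit), while associativity, unitality, and equivariance under tree automorphisms are inherited from $\mathcal{L}$ once one notes that $\ell$-ordinal substitution is associative and matches the regrouping of the projections; prunedness holds by construction. For $(\ell-1)$-terminality, the definition recalled in Appendix \ref{appendixa} reduces the claim to the values of $\mathcal{L}_{\ell,B}$ on those trees whose leaves all branch at the top level $\ell-1$; on such a tree every $\mu(T)_{ab}=\ell-1-(\ell-1)=0$, so $\mathcal{L}_{\ell,B}(T)=\mathcal{L}_{(\mathbf 0,\id)}$, and a direct comparison identifies this block of lexicographically straight generalised lattice paths with the corresponding component of the terminal $\Delta$-colored $(\ell-1)$-operad. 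This establishes $(\ell-1)$-terminality and completes the plan.
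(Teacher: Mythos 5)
There is a genuine gap, and it sits exactly where your construction quietly restricts the structure you must build. You define the composition only for substitution-type composites $U=\gamma(T;S_1,\dots,S_k)$ in which the cross-fibre relations of $U$ are inherited verbatim from the outer ordinal ($a<_i b$ in $U$ iff $r<_i s$ in $T$). But a Batanin $\ell$-operad has a composition $m_\phi$ for \emph{every} morphism $\phi\colon T\to S$ of pruned $\ell$-trees, see \eqref{opcompgen}, and by \eqref{nordinals2} such a morphism may relate leaves $a\in\phi^{-1}(s_1)$, $b\in\phi^{-1}(s_2)$ with $a<_i b$ in $T$ while $s_2<_j s_1$ in $S$ for some $j>i$: an order-reversing, level-raising twist that is not generated by your substitutions (it is also why the desymmetrisation functor $\Des$ carries the shuffle-permutation twist $\pi(\sigma_n)$, which your ``inputs in canonical left-to-right order'' sidesteps). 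In that case your key identity $\mu(U)_{ab}=\mu(T)_{rs}$ fails: the target constraint is $(\ell-1-i,\id)$, while the $(a,b)$-projection of the composite has complexity governed by $\mu(S)_{s_2s_1}=\ell-1-j$ and \emph{reversed} first-order movement. Closure then holds not by equality of parameters but by the strict-inequality clause of the Berger poset \eqref{bergerposet}: $\sigma_{ij}\ne\sigma'_{ij}$ forces $\mu_{ij}<\mu'_{ij}$, which is supplied precisely by $j>i$. This is the heart of the paper's proof (its case (2.2)), and it is the reason the labels are $\mu(T)_{ab}=\ell-1-i$ with permutation $\id$; your proposal never confronts it. Conversely, the step you flag as the main obstacle --- that gluing creates no spurious direction changes in a two-coordinate projection --- is the easy part: projecting the superposed path to the coordinates $(a,b)$, its complexity and first movement visibly coincide with those of the relevant projection of the inner path (same fibre) or of the outer path (different fibres), which the paper records in one line.

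Two secondary points. First, ``equivariance under tree automorphisms'' is vacuous here: pruned $\ell$-ordinals have no nontrivial automorphisms, and $\ell$-operads carry no symmetric-group actions --- the twisting lives exactly in the tree morphisms you excluded. Second, your treatment of $(\ell-1)$-terminality misreads the framework: in the pruned reduced formalism of Definition \ref{prunedop}, $(\ell-1)$-terminality is built into the setup (the operad is given on pruned trees, with values regarded in $\Sigma^\ell V$), not a condition to be verified on trees branching at the top level; moreover, $\mathcal{L}_{(\mathbf{0},\id)}([n_1],\dots,[n_k];[n])$ is not a singleton for general colors (complexity-$1$ paths with marked points are plentiful), so your claimed identification of these blocks with components of the terminal $(\ell-1)$-operad is false --- they are contractible, not terminal.
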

For $\ell=2$ the 1-terminal 2-operad $\{\mathcal{L}_{2,B}=\mathbf{seq}$ is the Tamarkin 2-operad, introduced in [T3]. It plays an important role in the proof [T3] of the Deligne conjecture for Hochschild cochains. Moreover, the papers [BM1,2] show that the Tamarkin 2-operad is universal for questions like ``What do fg categories form?'' for different enrichments. With different condensations, the same 2-operad answers the questions like ``What do Gray categories form?'' or presumably even ``What do Crans categories form?''. 

On the other hand, some ``higher categorical'' problems of the same type require the higher operads $\mathbf{seq}_d$ we construct here. Thus, the $\Theta_2$-colored 3-operad $\mathbf{seq}_2$ plays a similar role for the question ``What do dg bicategories form?" [PS]. 

\begin{proof}
We only need to show that these components are closed under the $\ell$-operadic composition associated with a map $\phi\colon T\to S$ of (labelled, pruned) $\ell$-trees (in this case we assume that the input labels of $S$ are equal to the corresponding output labels of the fibres $\{\phi^{-1}(s)$), as well as under unary operations acting on labels. The latter case is a direct check, so we consider the former one. Let $a,b\in |T|$, $a<b$, there are two cases: (1) $a,b\in \phi^{-1}(s), s\in |S|$, (2) $a\in \phi^{-1}(s_1), b\in \phi^{-1}(s_2)$, $s_1\ne s_2$. Take $\omega\in \mathcal{L}_{\ell,\ns}(S), \omega_s\in \mathcal{L}_{\ell,\ns}(\phi^{-1}(s))$. Denote by $\tilde{\omega}$ the operadic composition of $\omega,\{\omega_s\}$ (given by the corresponding superposition of the lattice paths). In case (1), the complexity of $\tilde{\omega}_{ab}$ is equal to the complexity of $(\omega_s)_{a^\prime b^\prime}$, where $a^\prime$ and $b^\prime$ are the corresponding leaves of $\phi^{-1}(s)$; at the same time, $a<_ib$ in $T$ iff $a^\prime<_i b^\prime$, so this case is clear. In case (2), we consider two subcases: (2.1) the points $a<b$ in $T$ and $s_1<s_2$ in $S$, (2.2) $a<b$ in $T$, $s_2<s_1$ in $S$. In case (2.1), $a<_ib$ implies $s_1<_j s_2$ for $j\ge i$ by \eqref{nordinals2}, at the same time the complexity of $\tilde{\omega}_{ab}$ is equal to the complexity of $\omega_{s_1s_2}$ (and the first order movements are the same), so this case is clear. In case (2.2), $a<_ib$, $s_2<_js_1$ for $j>i$ by \eqref{nordinals2}, the complexities of $\tilde{\omega}_{ab}$ and $\omega_{s_1s_2}$ are the same, but the first order movement are different. So $j>i$ guarantees the claim in this case, despite the fact that the first order movements are different, by \eqref{bergerposet}.

\end{proof}

\begin{coroll}
The topological and the dg condensations of the $\Delta$-colored $\ell$-operad $\mathcal{L}_{\ell,\ns}$ are contractible $\ell$-operads. 
\end{coroll}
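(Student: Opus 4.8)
The plan is to deduce the statement directly from the operad structure of Proposition~\ref{proptamop} together with the blockwise contractibility results of Propositions~\ref{propcelltop} and \ref{propcelldg}. The one structural point to establish first is that condensation is monoidal, so that it carries a $\Delta$-colored $\ell$-operad to an $\ell$-operad in the target category; once this is in place, contractibility is read off arity by arity from the fact that each arity component is a single block.

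First I would recall that the passage from a $\Delta$-colored collection to its condensation --- geometric realization (resp.\ dg realization) over the $|T|$ contravariant $\Delta$-arguments followed by totalization over the covariant $\Delta$-argument --- is a (lax) symmetric monoidal functor with respect to the relevant convolution monoidal structures, by the Day--Street convolution argument of [BB]. Applying this functor to the $\Delta$-colored $\ell$-operad $\mathcal{L}_{\ell,\ns}=\{\mathcal{L}_{\ell,\ns}(T)\}$ supplied by Proposition~\ref{proptamop} then yields a single-color $\ell$-operad in $\Top$ (resp.\ in $C^\udot(\mathbb{Z})$), whose composition maps along morphisms $\phi\colon T\to S$ of pruned $\ell$-trees are the condensations of the $\Sets$-level composition maps. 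This shows that the two condensations are genuinely $\ell$-operads, so that only the contractibility of their arity components remains to be checked.

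Second, by the defining equation~(\ref{latticepathlop}), each arity component $\mathcal{L}_{\ell,\ns}(T)$ is precisely the single block $\mathcal{L}_{(\mu(T),\id)}$, with $\mu(T)_{ab}=\ell-1-i$ whenever $a<_i b$. Hence the condensation of the arity-$T$ component is exactly the topological (resp.\ dg) condensation of one block, to which Proposition~\ref{propcelltop}(ii) (resp.\ Proposition~\ref{propcelldg}(ii)) applies: the condensed arity-$T$ component is contractible (resp.\ quasi-isomorphic to $\mathbb{Z}[0]$). Since this holds for every pruned $\ell$-tree $T$, the canonical map from the condensed operad to the final $\ell$-operad --- which carries the monoidal unit in every arity --- is a weak equivalence in each arity, which is exactly the assertion that the condensation is a contractible $\ell$-operad.

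I expect the only genuine content beyond bookkeeping to lie in the monoidality of the first step: one must verify that realization/totalization commutes suitably with the Batanin $\ell$-operadic composition along $\ell$-tree maps, rather than merely with symmetric/$1$-operadic composition as phrased in [BB]. The underlying combinatorial compatibility --- that superposition of lattice paths respects color substitution and keeps the $(\mu,\sigma)$-parameters inside the prescribed block --- is already the content of the proof of Proposition~\ref{proptamop}; what is left is the formal homotopical statement that the condensation functor is (lax) monoidal for these structures, and here I would invoke [BB] together with the homotopy properties of realization and totalization from the Appendix to [BM] rather than reprove it from scratch.
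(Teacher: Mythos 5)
Your topological half is essentially the paper's own argument: since in $(\Top,\times)$ the monoidal unit is also the terminal object, the canonical projection from the condensed operad to the final $\ell$-operad exists, and it is a weak equivalence exactly because each condensed arity component is a single block $\mathcal{L}_{(\mu(T),\id)}$ (by \eqref{latticepathlop} and Proposition \ref{proptamop}) whose condensation is contractible by Proposition \ref{propcelltop}(ii). Your preliminary point that condensation is lax monoidal, so that it outputs a genuine $\ell$-operad, is also in the spirit of the paper, which takes this from [BB] via the Day--Street convolution.

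In the dg case, however, your final step has a genuine gap. You assert that ``the canonical map from the condensed operad to the final $\ell$-operad --- which carries the monoidal unit in every arity --- is a weak equivalence in each arity.'' In $C^\udot(\mathbb{Z})$ no such canonical map exists: the constant operad $\underline{\mathbb{Z}}$ has the monoidal unit $\mathbb{Z}[0]$ in each arity, but $\mathbb{Z}[0]$ is not the terminal object of complexes (that is $0$), so $\underline{\mathbb{Z}}$ is not terminal among dg $\ell$-operads. Knowing that each component $|\mathcal{L}_{\ell,\ns}|_\dg(T)$ is abstractly quasi-isomorphic to $\mathbb{Z}[0]$ produces no operad map, let alone a weak equivalence of operads; note also that the condensed components are not concentrated in degrees $\le 0$ (totalization contributes positive degrees), so one cannot even project onto $H^0$ directly, as that projection is not a chain map on an unbounded complex. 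This is precisely the point the paper flags (``for the dg case, one has to argue why the contractibility of the components implies the contractibility of the operad'') and resolves by an extra construction: applying the truncation $\tau_{\le 0}(-)$ arity-wise yields a dg \emph{suboperad} $\tau_{\le 0}|\mathcal{L}_{\ell,\ns}|_\dg$; by Proposition \ref{propcelldg} the inclusion $i\colon \tau_{\le 0}|\mathcal{L}_{\ell,\ns}|_\dg\to |\mathcal{L}_{\ell,\ns}|_\dg$ is a quasi-isomorphism, and the projection $p\colon \tau_{\le 0}|\mathcal{L}_{\ell,\ns}|_\dg\to H^0(|\mathcal{L}_{\ell,\ns}|_\dg)\simeq\mathbb{Z}$ is an operad map and a quasi-isomorphism (this uses a small degree-$0$ computation), so that contractibility is witnessed by the zig-zag $\mathbb{Z}[0]\leftarrow \tau_{\le 0}|\mathcal{L}_{\ell,\ns}|_\dg\rightarrow |\mathcal{L}_{\ell,\ns}|_\dg$. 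Your dg half needs this zig-zag (or an equivalent replacement argument) inserted; as written, it silently transports the topological ``map to the point'' argument to a setting where it is unavailable.
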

\begin{proof}
For topological condensation, it follows immediately from the contractibility of the components, which is Proposition \ref{propcelltop}. For dg case, one has to argue why the contractibility of the components (proved in Proposition \ref{propcelldg}) implies the contractibility of the operad.

For a $\mathbb{Z}$-graded complex $K$ denote by $\tau{\le 0}(K)$ the $\mathbb{Z}_{\le 0}$-graded complex whose cohomology in degrees $\le 0$ coincide with those of $K$. Explicitly,
$$
\tau_{\le 0}(K)=\dots\to K^{-2}\xrightarrow{d}K^{-1}\xrightarrow{d}Z^0\to 0
$$
where $Z^0=\mathrm{Ker}(d: K^0\to K^1)$. 

Applying $\tau_{\le 0}(-)$ to the components of the operad $|\mathcal{L}_{\ell,\ns}|_\dg(k)$, we get a dg {\it suboperad} $\tau_{\le 0}|\mathcal{L}_{\ell,\ns}|_\dg$. By Proposition \ref{propcelldg}, the embedding
$$
i\colon \tau_{\le 0}|\mathcal{L}_{\ell,\ns}|_\dg\to |\mathcal{L}_{\ell,\ns}|_\dg
$$
is a quasi-isomorphism od dg operads. 

On the other hand, the same Proposition \ref{propcelldg} and a simple direct computation in degree 0 show that the projection to $H^0(-)$
$$
p\colon \tau_{\le 0}|\mathcal{L}_{\ell,\ns}|_\dg\to H^0(|\mathcal{L}_{\ell,\ns}|_\dg)\simeq \mathbb{Z}
$$
is a quasi-isomorphism of dg operads.

We get a zig-zag of quasi-isomorphisms 
$$
\xymatrix{
&\tau_{\le 0}|\mathcal{L}_{\ell,\ns}|_\dg\ar[ld]_{i}\ar[rd]^p\\
\mathbb{Z}[0]&&|\mathcal{L}_{\ell,\ns}|_\dg
}
$$
which gives the claim. 
\end{proof}

\subsection{\sc Higher lattice path operads}\label{sectiongenlp1}
The funny product of generalised disks makes it possible to define higher generalisations of the lattice path operad. Then we define  a colored $(d+1)$-operad whose category of unary operations (also called the category of colors) is $\Theta_d$ sitting inside it. 
For $d=1$ we get a 2-operad with the category of colors $\Delta$, which is the Tamarkin 2-operad $\mathbf{seq}$ [T3], defined via shuffles, in its lattice path form it is the 2-operad $\mathcal{L}_{2,\ns}$ (see Remark \ref{rembb}).

Let $T_1,\dots,T_k; T$ be pruned $d$-level trees. Recall that to them are associated the corresponding objects of the category $\Theta_d$ (the object of $\Theta_d$ corresponded to $T$ is denoted by $T^*$), and of the category $\Disk_d$ (the $d$-disk corresponded to $T$ is denoted by $\overline{T}$). Note that we have a fully faithful embedding $\Disk_d(\Sets)\to \GDisk_d$, with the ordinal category structure on the fibres. Therefore, we may consider each $d$-disk $\overline{T}$ as a generalised disk. 

Define the components of a symmetric $\Theta_d$-colored operad (called the $d$-higher lattice path operad) by
\begin{equation}\label{latticepath2}
\mathcal{L}^{d}(T_1,\dots, T_k; T)=\GDisk_d(\overline{T}, \overline{T}_1\boxx \overline{T}_2\boxx\dots\boxx \overline{T}_k)
\end{equation}
The operadic compositions are straightforward. Note that for $d=1$ we get back to the lattice path operad, see \eqref{latticepath1}. We refer to elements of \eqref{latticepath2} as $d$-lattice paths. 

Next, we introduce a concept of complexity for $\omega\in \mathcal{L}^d(T_1,\dots, T_k; T)$ as an integral vector $(c_1(\omega),\dots,c_d(\omega))$ where $c_i(\omega)$ is roughly the complexity at level $i$, defined as follows.

Here $c_1(\omega)$ is the ``classical'' complexity of the corresponding lattice path at level 1, defined as the maximum of the 
$c_1(i,j)$, where $c_1(i,j)$ is the complexity at level 1 of the $(i,j)$-projection of the lattice path, that is
the number of times the $(i,j)$-projection of the lattice path changes the direction (also equal to the number of angles).

To define the higher complexities $c_i(\omega)$ we use induction and presentation of a classical $d$-disk $\overline{T}$ as 
$\overline{T}=([[m]], t^1,\dots, t^{m-1})$, where $[[m]]$ is the level 1 ordinal of $\overline{T}$, $t^i$ are classical $d-1$ disks, corresponded to levels $2,\dots,d$ of $T$,  $t^i$ is attached to the element $i\in [[m]]$ (in this notation, we ignore some boundary elements). 

Let $\omega$ be as above, use notations $\overline{T}_i=([[m_i]], t_i^1,\dots, t_i^{m_i-1})$. Then $\omega$ is given by generalised disk map $\phi\colon \overline{T}\to \overline{T}_1\boxx\dots\boxx \overline{T}_k$, denote by $\phi_i$ the level $i$ component of $\phi$. In particular, $\phi_1\colon [[m]]\to [[m_1]]\boxx\dots\boxx [[m_k]]$ is a usual lattice path, and $c_1(\omega)=c_1(\phi_1)$. 
The map $\phi_1$ is given by its projections $\phi_1^i\colon [[m]]\to [[m_i]]$, $1\le i\le k$.

For each $a=1,\dots,m-1$, we get a generalised lattice path $\omega_a\colon t^a\to t_1^{\phi_1^1(a)}\boxx t_2^{\phi_1^2(a)}\boxx\dots\boxx t_{n-1}^{\phi_1^{n-1}(a)}$.
Define by induction $c_i(\omega)=\max_{a}c_{i-1}(\omega_a)$, $a\ge 2$.

In this way the total complexity $c(\omega)$ becomes a string of integral numbers $(c_1(\omega),\dots,c_d(\omega))$.

\begin{prop}\label{symglpop}
Let $d\ge 1$ be fixed. The sequence of sets $\{\mathcal{L}^{d}(T_1,\dots, T_k; T)\}$ are components of a $\Theta_d$-colored symmetric operad $\mathcal{L}^d$. For each complexity $c=(c_1,\dots,c_d)$ the subset $$\mathcal{L}^{d}_c(T_1,\dots, T_k; T)\subset \mathcal{L}^{d}(T_1,\dots, T_k; T)$$ formed by lattice paths of total complexity component-wise $\le c$, are components of a symmetric suboperad 
$\mathcal{L}^d_c\subset \mathcal{L}^d$. 
\end{prop}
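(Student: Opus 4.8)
The plan is to verify two things: first, that the sets $\mathcal{L}^d(T_1,\dots,T_k;T)$ assemble into a $\Theta_d$-colored symmetric operad, and second, that the complexity filtration is preserved by the operadic structure maps, so that $\mathcal{L}^d_c$ is a suboperad. For the first part, I would proceed by directly checking the operad axioms. The colors are objects of $\Theta_d$, equivalently pruned $d$-level trees, and by Proposition \ref{bergerprop} the category $\Theta_d$ is dual to $\Disk_d(\Sets)$, which sits fully faithfully inside $\GDisk_d$. The crucial ingredient is Proposition \ref{propfunnyn}, giving the symmetric monoidal structure $(\GDisk_d,\boxx)$. I would define the operadic composition by combining two things: functoriality of $\GDisk_d(\overline{T},-)$ in its first argument (this handles the unary $\Theta_d$-action and the change of output color) and the monoidal functoriality of $\boxx$ together with the generalised-disk maps $\overline{T}_i\to(\text{composite})$ in the slots (this handles insertion). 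Concretely, given $\omega\in\mathcal{L}^d(S_1,\dots,S_k;S)$ and $\omega_i\in\mathcal{L}^d(T_{i,1},\dots,T_{i,r_i};S_i)$, one forms $\omega_1\boxx\cdots\boxx\omega_k$ as a map into the iterated funny product of all the $\overline{T}_{i,j}$ (using that $\boxx$ is a bifunctor on $\GDisk_d$), and precomposes with $\omega$. Associativity, unitality, and $\Sigma_k$-equivariance then follow from the corresponding coherences of the symmetric monoidal structure $\boxx$; these are formal and I would treat them as routine.

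For the equivariance and unit, the symmetric group acts by permuting the factors of the funny product, and the symmetry isomorphism of $(\GDisk_d,\boxx)$ supplied by Proposition \ref{propfunnyn} makes this action compatible with composition; the unit is the identity disk map $\overline{T}\to\overline{T}$ viewed as an element of $\mathcal{L}^d(T;T)$. Since $d=1$ recovers the classical lattice path operad \eqref{latticepath1} (a fact I would note explicitly as a sanity check), the general argument is a direct upgrade of that case, with the single interval $[[n+1]]$ replaced by the generalised disk $\overline{T}$ and the ordinal funny product replaced by the generalised-disk funny product.

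The substantive content is the second assertion, that composition and the $\Theta_d$-action respect the complexity bound. The complexity $c(\omega)=(c_1(\omega),\dots,c_d(\omega))$ is defined inductively by projecting onto pairs of factors and onto the fibres $t^a$ over the level-1 ordinal. The plan is to show $c_i$ is subadditive-free under composition in the appropriate sense: that each level-$i$ complexity of a composite $\tilde\omega$ equals the corresponding level-$i$ complexity of one of the constituents $\omega$ or $\omega_s$. This is exactly the mechanism already exploited in the proof of Proposition \ref{proptamop} for $d=1$, where one distinguishes whether two leaves $a<b$ lie over the same node $s$ or over distinct nodes $s_1,s_2$: in the first case the complexity of $\tilde\omega_{ab}$ equals that of the inner path, in the second it equals that of the outer path. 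I would run the same case analysis level by level, using the inductive definition of $c_i$ via the fibrewise paths $\omega_a$, so that a bound $c(\omega)\le c$ and $c(\omega_i)\le c$ propagates to $c(\tilde\omega)\le c$.

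The main obstacle I anticipate is verifying that the inductive, fibrewise definition of the higher complexities $c_i$ genuinely interacts well with the funny product and with the operadic superposition of generalised lattice paths. For $d=1$ the only complexity is the classical one, and the direction-changing count behaves transparently under superposition of ordinary lattice paths; but for $d\ge 2$ the fibres $t^a$ over the level-1 ordinal are themselves $(d-1)$-disks, and one must check that the level-1 composition $\phi_1$ correctly reindexes the fibrewise paths $\omega_a$ so that the recursion $c_i(\tilde\omega)=\max_a c_{i-1}(\tilde\omega_a)$ matches the recursion for the constituents. In other words, the delicate point is the commutation of ``take the fibre over $a$'' with ``compose'', i.e. that the fibre of a composite generalised lattice path over a level-1 element is the composite of the corresponding fibrewise paths. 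Once this compatibility is established, the complexity bound reduces to the inductive hypothesis on $(d-1)$-disks, and the suboperad claim follows; I would therefore set up this fibre-of-composite identity carefully as the key lemma and let the rest proceed by induction on $d$.
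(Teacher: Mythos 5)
Your proposal is correct and follows essentially the same route as the paper: the paper declares the operadic compositions (set up exactly as you do, via the symmetric monoidal funny product of Proposition \ref{propfunnyn}) ``straightforward'', and its proof of the proposition consists solely of the complexity-closure check, carried out by the same same-fibre/different-fibre case analysis you import from Proposition \ref{proptamop}, with the unary case checked directly. Your ``fibre-of-composite equals composite-of-fibres'' key lemma is precisely the implicit mechanism behind the paper's terse inductive claim --- it holds essentially by the definition of the funny product on fibres, $i_k^{-1}(x,y)=i_k^{-1}(x)\boxx i_k^{-1}(y)$ --- so your write-up is a more explicit rendering of the same argument rather than a different one.
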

The only thing one has to check is closeness of $\mathcal{L}^d_c$ under the operadic composition, for any total complexity $c$.
It is enough to show for the compositions having two non-identity arguments, which is clear for operations with $k\ge 2$, and is checked directly for the case when one operation is an unary operation. 

\qed

\subsection{\sc Products of Berger posets and contractible blocks}\label{sectiongenlp2}
Recall the poset $\mathcal{K}(k)$, see Section \ref{sectionposet}. Fix $d\ge 1$ and $k\ge 1$. 
Consider the poset $\mathcal{K}(k)^{\times d}$. Its element is a tuple 
$$
(\boldsymbol{\mu},\boldsymbol{\sigma})=\big((\mu^1,\sigma_1),\dots,(\mu^d,\sigma_d)\big)
$$
where $\mu^s=\{\mu^s_{ij}\}_{1\le i<j\le n}$, $\sigma_s\in \Sigma _k$. The $\le$ relation is defined component-wise via \eqref{bergerposet}. 

Define ``blocks'' $\mathcal{L}^{d}_{(\boldsymbol{\mu},\boldsymbol{\sigma})}$ as follows. 

For a generalised lattice path $\omega\in 
\GDisk_d(\overline{T}, \overline{T}_1\boxx \overline{T}_2\boxx\dots\boxx \overline{T}_k)$ we get
a single ordinary lattice path at level 1, and in general more ordinary lattice paths at higher levels, which number depends on the level, as it is explained in Section \ref{sectiongenlp1}.  For level $1\le \ell\le d$, we get several ordinary lattice paths $\omega^\ell_a$ at level $\ell$, $a \in S_\ell$, the set $S_\ell$ depends on $\ell$, each of which has its own pair $(\mu^\ell_a(\omega), \sigma_{\ell,a})(\omega)$, defined as $\mu^\ell_a(\omega)_{ij}+1=c((\omega^\ell_a)_{ij})$, $\sigma_{\ell,a}$ is defined as the first order movement of the ordinary lattice path $\omega^\ell_a$. We require that that level $\ell$ component $(\mu^\ell,\sigma_\ell)$ of $(\mu,\sigma)$ should be $\ge (\mu^\ell_a(\omega),\sigma_{\ell,a}(\omega))$ in the sense of \eqref{bergerposet}:

\begin{equation}
\mathcal{L}^{d}_{(\boldsymbol{\mu},\boldsymbol{\sigma})}(T_1,\dots,T_k; T)=\{\omega\in \mathcal{L}^{d}(T_1,\dots,T_k; T)| \forall \ell, \forall a\in S_\ell, (\mu^\ell_a(\omega),\sigma_{\ell,a}(\omega))\le (\mu^\ell,\sigma_\ell)\}
\end{equation}

One checks directly that $\mathcal{L}^{d}_{(\boldsymbol{\mu},\boldsymbol{\sigma})}$ gives rise to a functor 
$$
\mathcal{L}^{d}_{(\boldsymbol{\mu},\boldsymbol{\sigma})}\colon (\Theta_d^\op)^{\times k}\times\Theta_d\to\Sets
$$
For $T\in\Theta_d$, denote by $\mathcal{L}_{(\boldsymbol{\mu},\boldsymbol{\sigma})}(T)$
the functor 
$$
\mathcal{L}_{(\boldsymbol{\mu},\boldsymbol{\sigma})}(-,\dots,-;T)\colon (\Theta_d^\op)^{\times k}\to \Sets
$$

Similarly to the case $d=1$, considered in Propositions \ref{propcelltop} and \ref{propcelldg}, one has the following contractibility results:

\begin{theorem}\label{propthetatop}
Fix $(\boldsymbol{\mu},\boldsymbol{\sigma})\in\mathcal{K}(k)^{\times d}$. The following statements are true:
\begin{itemize}
\item[(i)] For a fixed $T$, the topological realization of the poly-$d$-cellular set $\mathcal{L}_{(\boldsymbol{\mu},\boldsymbol{\sigma})}(T)$ is a contractible topological space, denoted by $|\mathcal{L}_{(\bmu,\bsigma)}(T)|$.
\item[(ii)] The topological totalization of the $d$-cocellular topological space $T\mapsto |\mathcal{L}_{(\bmu,\bsigma)}(T)|$, is contractible.
\end{itemize}
\end{theorem}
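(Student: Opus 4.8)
The plan is to treat both statements through the homotopy theory of $\Theta_d$-indexed diagrams, using the Reedy model structures and the homotopy properties of realization and totalization recalled from the appendix to [BM]. First I would fix the (generalised) Reedy structure on $\Theta_d$ (degree $=$ total number of cells of the corresponding $d$-level tree) and regard the poly-$d$-cellular set $\mathcal{L}_{(\bmu,\bsigma)}(T)$ as a diagram $(\Theta_d^\op)^{\times k}\to\Sets$, whose realization produces $|\mathcal{L}_{(\bmu,\bsigma)}(T)|$. The crucial structural observation is that the level-$1$ projection $\phi\mapsto\phi_1$ exhibits $\mathcal{L}^d_{(\bmu,\bsigma)}$ as fibred over the classical level-$1$ lattice-path block, with the fibre over each level-$1$ cell built out of lower blocks $\mathcal{L}^{d-1}_{(\bmu',\bsigma')}$ attached through the funny product, exactly as the complexity was defined inductively in Section \ref{sectiongenlp1}. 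This suggests an induction on $d$ whose base case is the re-proved Proposition \ref{propcelltop}.

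For part (i), fixing $T$, I would prove contractibility of $|\mathcal{L}_{(\bmu,\bsigma)}(T)|$ by this induction. The level-$1$ factor is a classical lattice-path block, contractible by the $d=1$ case; over it, the higher-level data assembles (via the funny product, which introduces no cross-relations between distinct directions) into a family of $(d-1)$-blocks, contractible by the inductive hypothesis. The generalisation of [MS3, Lemma 14.8] stated as Proposition \ref{propms} is what makes this fibrewise contractibility coherent, replacing the geometric product decomposition [MS3, Prop.\ 12.7] that is no longer available for $d>1$: it should provide an explicit deformation retraction of a single block onto its lexicographically minimal path, namely the one dictated by the first-order movements $\bsigma$.

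For part (ii), I would \emph{not} attempt the whiskering reduction used for $d=1$, since $|\mathcal{L}_{(\bmu,\bsigma)}(T)|$ no longer splits as $|\mathcal{L}_{(\bmu,\bsigma)}(T_0)|\times(\text{standard cell of }T)$. Instead I would observe that the totalization of the $d$-cocellular space $T\mapsto|\mathcal{L}_{(\bmu,\bsigma)}(T)|$ computes the homotopy limit over $\Theta_d$ once this cocellular object is \emph{Reedy fibrant}, i.e.\ once its matching maps are fibrations. Granting Reedy fibrancy, the natural map to the terminal cocellular object is a pointwise weak equivalence by part (i), both sides are Reedy fibrant, and totalization, being right Quillen, carries it to a weak equivalence $\Tot\bigl(|\mathcal{L}_{(\bmu,\bsigma)}(-)|\bigr)\simeq\Tot(\ast)=\ast$, which is the asserted contractibility. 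The identical scheme, with $\Top$ replaced by complexes of $R$-modules and ``contractible'' by ``quasi-isomorphic to $R[0]$'', yields the dg statement (Theorem \ref{propthetadg}).

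The main obstacle I expect is precisely this Reedy fibrancy, equivalently the fibration property of the matching maps of $T\mapsto|\mathcal{L}_{(\bmu,\bsigma)}(T)|$ along the codegeneracies and outer cofaces of $\Theta_d$. For $d=1$ this was concealed inside the whiskering isomorphism; for $d>1$ it must be established directly, and it is here that the refined argument of Proposition \ref{propms} does the real work, controlling how a generalised lattice path restricts to the boundary disk $\partial\overline{T}$ uniformly across all levels. Verifying this compatibility simultaneously for every level $\ell$ and every component $a\in S_\ell$, while respecting the funny-product structure of the target, is the technical core that Section 4 must carry out.
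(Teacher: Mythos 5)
Your global scaffolding does coincide with the paper's: for (ii) you correctly abandon whiskering in favour of proving Reedy fibrancy of $T\mapsto|\mathcal{L}_{(\bmu,\bsigma)}(T)|$ and then invoking the right-Quillen property of totalization against Reedy cofibrant cells (Propositions \ref{propbemtheta}(1) and \ref{propberger2}(2)), and for (i) you correctly aim at an induction on $d$ hinging on the generalisation of [MS3, Lemma 14.8]. But your proof of (i) has a genuine gap. From ``the level-$1$ block is contractible'' and ``the fibres over level-$1$ cells are assembled from contractible $(d-1)$-blocks'' you infer contractibility of the total realization; this is a fibration-type conclusion that is not available, since a map with contractible base and contractible fibres need not have contractible total space, and nothing in the funny-product structure makes the level-$1$ projection a fibration or quasi-fibration. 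The paper's Proposition \ref{propmsd} argues differently: it first transports the $\Theta_d$-cellular realization to the ordinary polysimplicial realization along $\delta_d^\wr$ via Berger's homeomorphism (Proposition \ref{propberger3}) --- an ingredient absent from your sketch, without which the level-by-level manipulation cannot even be formulated --- and then contracts starting from the \emph{highest} level downwards (not from level $1$ upwards), using the cone operation $\omega\rightsquigarrow c(\omega)$ of Proposition \ref{propms} to exhibit the realization as a \emph{retract} of the realization in which one simplicial argument has been collapsed by $\pi_0$, with an inner induction on the number $k$ of arguments and an outer induction on $d$ by truncating the top level. In particular Proposition \ref{propms} is a retract argument, not a deformation retraction onto a distinguished ``lexicographically minimal'' path, and it also requires the separate check that the relevant $\pi_0$ is a point.

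For (ii) you have named the right obstacle but not overcome it, and you locate the work in the wrong place: Proposition \ref{propms} plays no role in the fibrancy proof. In the paper, Reedy fibrancy (Proposition \ref{propreedyfibranttheta}) is established by a combinatorial analysis of the matching maps \emph{before} realization: a cofinal subcategory $\mathcal{S}^\vee_T\subset\partial(T/\Theta_d^-)$ computes the matching object (Proposition \ref{propmatchingcofinal}); if $T$ has at least two leaves the matching map is an isomorphism, via the marked-points description of codegeneracies acting on generalised lattice paths; and if $T$ is a linear tree truncated at level $\ell$, the (restricted) matching map factors as a product whose only nontrivial factor is $\mathcal{L}_{(\mu_\ell,\sigma_\ell)}[1]\to\mathcal{L}_{(\mu_\ell,\sigma_\ell)}[0]$, shown to be a Kan fibration by the explicit Joyal-dual horn-lifting of Proposition \ref{reedytot}(2); one then concludes using preservation of finite limits and of fibrations under realization (Proposition \ref{reedytottheta} and the facts (a)--(f) following it). Note also that Reedy matching maps involve only the inverse subcategory $\Theta_d^-$, so your mention of ``outer cofaces'' is off. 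In sum: correct strategy, but the two load-bearing steps --- the retract mechanism plus Berger's comparison in (i), and the matching-map analysis in (ii) --- are respectively invalid as stated and deferred.
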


\begin{theorem}\label{propthetadg}
Fix $(\boldsymbol{\mu},\boldsymbol{\sigma})\in\mathcal{K}(k)^{\times d}$. The following statements are true:
\begin{itemize}
\item[(i)] For a fixed $T$, the realization in $C^\udot(\mathbb{Z})$ of the poly-$d$-cellular set $\mathcal{L}_{(\boldsymbol{\mu},\boldsymbol{\sigma})}(T)$, denoted by $|\mathcal{L}_{(\bmu,\bsigma)}(T)|_\dg$, is quasi-isomorphic to $\mathbb{Z}[0]$.
\item[(ii)] The totalization in $C^\udot(\mathbb{Z})$ of the cosimplicial topological space $T\mapsto |\mathcal{L}_{(\bmu,\bsigma)}(T)|_\dg$, is quasi-isomorphic to $\mathbb{Z}[0]$.
\end{itemize}
\end{theorem}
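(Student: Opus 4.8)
The plan is to obtain part (i) directly from the topological Theorem \ref{propthetatop}(i), and then to upgrade levelwise contractibility to contractibility of the totalization by a Reedy/spectral-sequence argument; the only genuinely homotopy-theoretic content beyond Theorem \ref{propthetatop} is the homotopy invariance of the totalization, which I isolate below as the main obstacle. As in the topological case, I regard the realization in $C^\udot(\mathbb{Z})$ (more generally $C^\udot(R)$) of a poly-$d$-cellular set as the left derived tensor of the presheaf against the standard cocellular complex $N\colon\Theta_d\to C^\udot(R)$ in each of the $k$ variables, and the totalization as the right derived $\Tot=\holim_{\Theta_d}$, both taken in the Reedy model structures on diagrams in the monoidal model category $C^\udot(R)$, using the homotopy theory of realization and totalization from the Appendix to [BM]. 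Recall that $\Theta_d$ is a Reedy category and, crucially for part (ii), that it has a \emph{terminal} object $([0];\,)$ (the empty tree), so that $B\Theta_d$ is contractible.

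For part (i) I would argue as follows. By construction (standard comparison), $N$ is the integral cellular chain complex of the standard cocellular space used in the topological realization $|\cdot|$; hence $|\mathcal{L}^{d}_{(\bmu,\bsigma)}(T)|_\dg$ computes the integral homology of the contractible space $|\mathcal{L}^{d}_{(\bmu,\bsigma)}(T)|$ of Theorem \ref{propthetatop}(i), and is therefore quasi-isomorphic to $\mathbb{Z}[0]$. Since $N$ is degreewise free, $|\mathcal{L}^{d}_{(\bmu,\bsigma)}(T)|_\dg$ is a bounded-above complex of free abelian groups quasi-isomorphic to $\mathbb{Z}[0]$, hence \emph{chain homotopy equivalent} to $\mathbb{Z}[0]$; base change along $\mathbb{Z}\to R$ then gives the statement over an arbitrary ring $R$. (Alternatively, and more in the spirit of the uniform method of Sections 3--4, Proposition \ref{propms} provides an explicit cellular extra degeneracy on $\mathcal{L}^{d}_{(\bmu,\bsigma)}(T)$, which upon applying normalized chains yields an integral chain contraction directly, again base-changing to any $R$.)

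For part (ii), the normalized-chains augmentation $N(X)\to\mathbb{Z}$ is natural in the cellular set $X$, so composing with $X=\mathcal{L}^{d}_{(\bmu,\bsigma)}(-)$ produces a natural transformation of $d$-cocellular complexes
\[
\varepsilon\colon |\mathcal{L}^{d}_{(\bmu,\bsigma)}(-)|_\dg\Rightarrow \underline{R[0]}
\]
to the constant cocellular complex at $R[0]$. By part (i), $\varepsilon_T$ is a quasi-isomorphism for every $T\in\Theta_d$. Since $\Theta_d$ has a terminal object, $\Tot(\underline{R[0]})=\holim_{\Theta_d}R[0]\simeq R[0]$ (as $B\Theta_d\simeq\ast$), so it suffices to show that $\Tot$ carries the levelwise quasi-isomorphism $\varepsilon$ to a quasi-isomorphism. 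I would prove this with the spectral sequence of the associated conormalized double complex, filtered by the cellular degree: its $E_1$-page is the conormalized $\Theta_d$-cochain complex of the internal cohomology $T\mapsto H^\bullet(|\mathcal{L}^{d}_{(\bmu,\bsigma)}(T)|_\dg)$, which by part (i) is concentrated in internal degree $0$ with value the constant functor $R$. Thus $\varepsilon$ induces an isomorphism on $E_1$, hence on $E_\infty$, and on the abutment once convergence is established; the abutment on the target is the conormalized cochain complex of the constant functor $R$ over $\Theta_d$, quasi-isomorphic to $R[0]$.

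The main obstacle is precisely the homotopy invariance of $\Tot$ under the levelwise quasi-isomorphism $\varepsilon$, uniformly over an arbitrary ring $R$ --- equivalently, the strong convergence of the above spectral sequence. The difficulty is that $|\mathcal{L}^{d}_{(\bmu,\bsigma)}(T)|_\dg$ is unbounded in the internal (cellular) direction as $T$ ranges over $\Theta_d$, so the double complex occupies a half-plane and its filtration is neither finite nor automatically complete; one must check that the relevant $\Tot$ (product totalization) computes the derived $\holim$ and that the spectral sequence converges. I would handle this either by exhibiting a Reedy-fibrant model for $|\mathcal{L}^{d}_{(\bmu,\bsigma)}(-)|_\dg$, so that right-derived $\Tot$ is computed on the nose and preserves objectwise weak equivalences by the machinery of the Appendix to [BM], or by a direct conditional-convergence argument exploiting the collapse of the internal cohomology to degree $0$, which forces the half-plane spectral sequence to degenerate to the first row at $E_2$ and thereby yields strong convergence. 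Establishing this convergence (or Reedy fibrancy) over a general $R$ --- rather than over a field, where it is immediate --- is the technically delicate step.
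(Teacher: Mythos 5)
Your part (i) is correct and is essentially the paper's own argument: Lemma \ref{lemmarealthetadgtop} identifies the dg realization with the CW chain complex of the topological realization, and contractibility of the latter (Proposition \ref{propmsd}, i.e.\ Theorem \ref{propthetatop}(i)) plus homotopy invariance of CW homology gives $\mathbb{Z}[0]$; your base-change remark and the extra-degeneracy alternative are harmless additions.

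For part (ii), however, you have correctly located but not closed the decisive step, and that step is the entire technical content of the paper's proof. The paper neither passes to a fibrant replacement nor runs a spectral sequence: it proves (Proposition \ref{propreedyfibrantthetadg}) that the given cocellular complex $T\mapsto|\mathcal{L}_{(\bmu,\bsigma)}(T)|_{\Theta_d,\dg}$ is \emph{itself} Reedy fibrant. This rests on an explicit combinatorial computation of the matching maps \emph{before} realization (Proposition \ref{reedytottheta}, using the cofinal subcategory $\mathcal{S}^\vee_T$ of Proposition \ref{propmatchingcofinal} and the marked-point description of codegeneracies acting on lattice paths): the matching map is an isomorphism whenever $T$ has at least two leaves, and in the remaining linear-tree cases the map of poly-$d$-cellular sets is componentwise surjective. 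Since the dg realization is simultaneously a sum- and a product-total complex (each polydiagonal is finitely generated free, and the complexes are concentrated in degrees $\le 0$), it commutes with the relevant finite limits, and surjections of complexes are fibrations in the projective structure. None of this is carried out in your proposal; you defer it as ``the technically delicate step,'' which is precisely the theorem's core.

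Your two fallback options also do not substitute for it. Replacing the diagram by a Reedy-fibrant model computes the derived homotopy limit, but the theorem asserts something about $\Tot$ of the \emph{given} diagram; without fibrancy of that diagram the comparison between its $\Tot$ and the derived limit is circular. And your computation $\Tot(\underline{R[0]})\simeq R[0]$ ``since $\Theta_d$ has a terminal object'' conflates the cell-system totalization $\underline{\Hom}_{\Theta_d}(C^\dg,-)$ with the nerve-theoretic $\holim$; for $\Theta_d$ this identification is not automatic (the Joyal cells $C^\dg$ are not the cells $|N(\Theta_d/T)|$ of the Bousfield--Kan formula), and it is exactly the content of the paper's Lemma \ref{lemmatotthetaconst}, proved there by duality with the dg realization of the point via Lemma \ref{lemmarealthetadgtop}. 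Once fibrancy and that lemma are in place, the paper concludes by Proposition \ref{propbemtheta}(2) (with $C^\dg$ Reedy cofibrant by Proposition \ref{propcofthetadg}) and Ken Brown's lemma; the half-plane convergence problem you flag for the product-totalization spectral sequence is real, and the paper's route avoids it entirely rather than resolving it.
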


We have to explain what we mean by the realization of a functor $(\Theta_d^\op)^{\times k}\to Sets$ and by the totalization 
of a functor $\Theta_d\to \Top (C^\udot(\mathbb{Z}))$. We do that, and prove Theorems \ref{propthetatop} and \ref{propthetadg}, in Section \ref{sectioncontr2} below.

\section{\sc The case $d=1$: a revision of the McClure-Smith approach}\label{sectioncontr1}
Here we prove Propositions \ref{propcelltop} and \ref{propcelldg}. The statements are known (see [MS3] for the topological case, and [BB], [BBM] for the dg case), however, we prove them by a new method, which, unlike the proofs in loc.cit., works for the case of higher lattice path operad, see Theorems \ref{propthetatop} and \ref{propthetadg}. The approach (in the topological case) is as follows:
\begin{itemize}
\item[(a)] It is rather straightforward that the proof of Lemma 14.8 of [MS3] can be generalised to a proof of the analogous statement statement for any cosimplicial degree, see  Proposition \ref{propms}, that is, that for a fixed $n$ the realization $|\mathcal{L}_{(\mu,\sigma)}[n]|$ is contractible.
\item[(b)] After that, the problem is to compute the totalization of the cosimplicial space $[n]\mapsto |\mathcal{L}_{(\mu,\sigma)}[n]|$ (each of which component is contractible by (a)), and to prove its contractibility. It is enough to show that this cosimplicial space is Reedy fibrant (Proposition \ref{propreedyfibrantdelta}), which we establish by showing in Proposition \ref{reedytot} and Corollary \ref{corfibtop} the Reedy fibrancy of the corresponding cosimplicial object in polysimplicial sets, that is, before taking the realization. Then well-known commutativity of the realization with finite limits and preservation of fibrations by the realization gives the result.
\end{itemize}
The main point is that we avoid using ``whiskering'' argument [MS3, Prop. 12.7, Prop. 13.4], which seemingly is special for the case of $\Delta$, and is hardly generalised to the case of $\Theta_d$, $d>1$.

We prove general Theorems \ref{propthetatop} and \ref{propthetadg} for $\Theta_d$ in Section \ref{sectioncontr2}, by a similar method.

\subsection{\sc The topological condensation}\label{topcond1}
We start with the topological case of Proposition \ref{propcelltop}.

Consider the functor $C\colon \Delta\to \Delta$, defined as $C([n])=[n+1]$, for $\phi\colon [m]\to [n]$, $C(\phi)(i)=\phi(i-1)+1$ for $i>0$, $C(\phi(0))=0$. There is a natural transformation $\alpha\colon \Id\Rightarrow C\colon \Delta\to\Delta$, such that $\alpha_{[n]}\colon [n]\to[n+1]$ is the map $i\mapsto i+1$. This map is given by the extreme face map $\partial_0$, and the naturality is clear.

Consider the standard cosimplicial topological space, $\Delta_\top([n])=\Delta^n$. One has the composition 
$\Delta_\top(C\circ -)$, which is a contractible topological space. Its contraction can be chosen to commute with all cosimplicial operators. The natural transformation $\alpha$ gives a map of cosimplicial spaces $\alpha_*\colon \Delta_\top\to \Delta_\top\circ C$.
\comment
There is another natural transformation $\beta\colon C\Rightarrow\Id\colon \Delta\to \Delta$, given by the extreme degenerace operator $\sigma_0$. For a simplicial object $X$, define $\tilde{X}=X\circ C$, then $\beta$ induces a simplicial map
$\beta^*\colon X\to \tilde{X}$.
\endcomment

The following lemma is well-known:
\begin{lemma}\label{lemmac}
The following statements are true:
\begin{itemize}
\item[1.] Let $X\colon \Delta^\op\to \Sets$ be a simplicial set. 
Then 
$$
X\otimes_{\Delta} (\Delta_\top\circ C)\simeq \pi_0(X)=X\otimes_{\Delta}*
$$
\item[2.] Let $Y\colon (\Delta^\op)^{\times k}\to \Sets$ be a polysimplicial set. Then
$$
Y\otimes_{\Delta^{\times k}}(\Delta_\top\circ C)\times \Delta_\top^{\times (k-1)}=|X|_\top
$$
where $X\colon (\Delta^\op)^{\times (k-1)}\to \Sets$ is
$$
X([n_1]\times\dots\times [n_{k-1}])=\pi_0(Y([-]\times [n_1]\times\dots\times [n_{k-1}])
$$
\end{itemize}
\end{lemma}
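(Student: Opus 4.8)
The plan is to establish part~1 first and then deduce part~2 by isolating the first simplicial variable. For part~1 I would begin by identifying the cosimplicial space $\Delta_\top\circ C$ geometrically. Since $C([n])=[n+1]$ adjoins a new bottom vertex $0$, and since $C(\phi)(0)=0$ while $C(\phi)(i)=\phi(i-1)+1\ge 1$ for every $\phi$ and every $i>0$, the vertex $0$ of $\Delta^{n+1}=\Delta_\top(C[n])$ is fixed by all cosimplicial operators, whereas the remaining vertices $1,\dots,n+1$ carry, through $\alpha$ (whose components are the extreme cofaces $\partial_0$), exactly the standard cosimplicial structure of $\Delta_\top$. Writing a point of $\Delta^{n+1}$ as $(t_0,t_1,\dots,t_{n+1})$ and reading $s:=t_0$ as a cone coordinate exhibits a cosimplicial homeomorphism
$$\Delta_\top\circ C\;\cong\;\Cone(\Delta_\top),$$
where the cone is formed fibrewise with apex (at $s=1$) the constant cosimplicial point and base (at $s=0$) the standard $\Delta_\top$. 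The contraction sliding $s\to 1$ is precisely the cosimplicial-operator-commuting contraction noted before the lemma.

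Next I would use that the coend $X\otimes_\Delta(-)$ preserves colimits in its second argument. Presenting the cone as the pushout $\Cone(\Delta_\top)=(\Delta_\top\times[0,1])\cup_{\Delta_\top\times\{1\}}\ast$, with $[0,1]$ and $\ast$ constant cosimplicial spaces, and applying $X\otimes_\Delta(-)$, the constant factors pull out of the coend, so that
$$X\otimes_\Delta(\Delta_\top\circ C)\;=\;\bigl(|X|\times[0,1]\bigr)\cup_{|X|\times\{1\}}\pi_0(X),$$
using $X\otimes_\Delta\Delta_\top=|X|$ and $X\otimes_\Delta\ast=\colim_{\Delta^\op}X=\pi_0(X)$. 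The right-hand side is the mapping cylinder of the canonical projection $|X|\to\pi_0(X)$, which deformation-retracts onto its target; concretely, over each point of $\pi_0(X)$ it is the cone on the corresponding path-component of $|X|$, hence contractible. This gives $X\otimes_\Delta(\Delta_\top\circ C)\simeq\pi_0(X)$ and proves part~1.

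For part~2 I would isolate the first simplicial variable. Since coends in independent variables commute, performing the coend first in the $0$-th variable yields, for each fixed $([n_1],\dots,[n_{k-1}])$, the expression $\int^{[n_0]}Y(-,n_1,\dots,n_{k-1})\times\Delta^{n_0+1}$, which by part~1 is the mapping cylinder of $|Y(-,n_1,\dots,n_{k-1})|\to\pi_0\bigl(Y(-,n_1,\dots,n_{k-1})\bigr)=X(n_1,\dots,n_{k-1})$. Because the mapping-cylinder retraction is functorial, these retractions assemble into a levelwise homotopy equivalence of $(k-1)$-fold polysimplicial spaces from $[n_1,\dots,n_{k-1}]\mapsto\int^{[n_0]}Y\times\Delta^{n_0+1}$ onto $X$; realizing in the remaining $k-1$ variables then produces $|X|_\top$.

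The main obstacle is this last step: passing from a levelwise homotopy equivalence of coefficient systems to a homotopy equivalence after the remaining realization, which is where the stated ``$=$'' should be read as a natural homotopy equivalence. I would handle it with a cofibrancy (Reedy/goodness) input, namely that the standard polysimplicial coefficients $\Delta_\top^{\times(k-1)}$ are cofibrant and that geometric realization of (poly)simplicial sets preserves degreewise weak equivalences between Reedy-cofibrant diagrams, so that the coend is homotopy-invariant and the equivalence of part~1 propagates through the realization.
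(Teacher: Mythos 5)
Your proof is correct. Note that the paper offers no argument for this lemma at all (it is dismissed with ``It is clear''), so there is nothing to compare against; your write-up is a sound substantiation of exactly what the surrounding text needs. Two remarks. First, you are right that the ``$=$'' in part 2 can only be read as a natural homotopy equivalence: already for $k=2$ and $Y$ the external product $\Delta[m]\times\Delta[p]$, co-Yoneda gives
$$
Y\otimes_{\Delta^{\times 2}}\bigl((\Delta_\top\circ C)\times\Delta_\top\bigr)\;\cong\;\Delta^{m+1}\times\Delta^{p},
\qquad |X|_\top=\Delta^p,
$$
which are homotopy equivalent but not homeomorphic; the same computation for representables confirms your identification $\Delta_\top\circ C\cong\Cone(\Delta_\top)$, the cone coordinate $t_0$ being preserved by every operator $C(\phi)$ precisely because $C(\phi)^{-1}(0)=\{0\}$. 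Second, the obstacle you flag at the end can be settled more cheaply than via Reedy cofibrancy: the deformation retraction of the mapping cylinder onto its target is given by an explicit formula natural in all variables, and since $-\times[0,1]$ commutes with the coend (the interval enters as a constant cosimplicial factor, and $\Top$ is cartesian closed), the levelwise natural homotopy realizes directly to a deformation retraction of $Y\otimes_{\Delta^{\times k}}\bigl((\Delta_\top\circ C)\times\Delta_\top^{\times(k-1)}\bigr)$ onto $|X|_\top$. This produces a genuine homotopy equivalence, which is what the retract argument in Proposition \ref{propms} actually consumes; the Reedy-cofibrancy route also works but a priori yields only a weak equivalence unless supplemented by fibrancy/cofibrancy bookkeeping, so the naturality argument is both simpler and stronger here.
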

\qed

Now we prove Proposition \ref{propcelltop} (i). We can assume that $\sigma=\id$, as the polysimplicial set $\mathcal{L}_{(\mu,\sigma)}[n]$ is isomorphic to $\mathcal{L}_{(\sigma^{-1}(\mu),\id)}[n]$. 

The $n=0$ case of the following Proposition is [MS3, Lemma 14.8]. The only our originality here is the remark that the same argument is applied for any $n$.

\begin{prop}\label{propms}
The topological realization $\mathcal{L}_{(\mu,\id)}[n]$ is contractible for any $n$.
\end{prop}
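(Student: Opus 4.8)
The plan is to induct on the number $k$ of lattice directions, peeling off one direction at a time with the functor $C$ and Lemma \ref{lemmac}; the reduction to $\sigma=\id$ is already available (via $\mathcal{L}_{(\mu,\sigma)}[n]\cong\mathcal{L}_{(\sigma^{-1}(\mu),\id)}[n]$), so I treat only blocks $\mathcal{L}_{(\mu,\id)}$. The base case $k=1$ carries no complexity constraint (there are no pairs $i<j$), and the realization of $[n_1]\mapsto\mathcal{L}_{(\mu,\id)}([n_1];[n])$ is contractible by a direct check.

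For the inductive step I would single out the first lattice direction and compare the honest realization $|\mathcal{L}_{(\mu,\id)}[n]|$, i.e. the tensor with $\Delta_\top^{\times k}$, against the $C$-twisted realization obtained by replacing the first $\Delta_\top$ factor by $\Delta_\top\circ C$. The natural transformation $\alpha\colon\Id\Rightarrow C$ induces a comparison map $\alpha_*\colon |\mathcal{L}_{(\mu,\id)}[n]|\to |X|_\top$, where by Lemma \ref{lemmac}(2) one has $X([n_2],\dots,[n_k])=\pi_0\bigl(\mathcal{L}_{(\mu,\id)}([-],[n_2],\dots,[n_k];[n])\bigr)$. The next step is to identify $X$: collapsing $\pi_0$ in the first direction should amount to forgetting that direction altogether, so that $X$ is again a block $\mathcal{L}_{(\mu',\id)}$ in the remaining $k-1$ directions, with $\mu'$ the restriction of $\mu$ to indices $\ge 2$. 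The inductive hypothesis then gives that $|X|_\top$ is contractible, and it remains to see that $\alpha_*$ is a weak equivalence, whence $|\mathcal{L}_{(\mu,\id)}[n]|$ is contractible as well.

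The hard part will be the weak-equivalence statement for $\alpha_*$, which is equivalent to saying that for each fixed $[n_2],\dots,[n_k]$ the connected components of $\mathcal{L}_{(\mu,\id)}([-],[n_2],\dots,[n_k];[n])$ are contractible and are indexed exactly by their projections to the directions $2,\dots,k$. This is precisely the content that upgrades [MS3, Lemma 14.8] from $n=0$ to arbitrary $n$. I would describe a path's behaviour in the first direction as a monotone interleaving of the first coordinate into the fixed data of the other directions, and show that those interleavings compatible with the $(\mu,\id)$-constraint form a poset with an initial (or terminal) element — the canonical ``straightened'' path that moves first in direction $1$ — so that each component has contractible nerve. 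The extra cosimplicial parameter $[n]$ only bounds the ambient interval $[[n+1]]$ and does not interact with the sliding moves in the first direction; this is exactly the observation that the MS argument is insensitive to $n$, so verifying it should be bookkeeping rather than a new idea.

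Finally I would assemble the pieces: $\alpha_*$ is a weak equivalence onto the contractible $|X|_\top$, which gives the claim. Should the poset-with-initial-object argument prove awkward to make natural in all variables at once, the fallback is a direct construction — exhibit the straightened staircase as a cone point and build an explicit polysimplicial contraction onto it, bypassing Lemma \ref{lemmac} entirely — but I expect the $C$-twist route to be cleaner and, crucially, to be the one that survives to the $\Theta_d$ generalisation carried out in Section \ref{sectioncontr2}.
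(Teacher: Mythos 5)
Your skeleton coincides with the paper's: reduce to $\sigma=\id$, induct on $k$, use the functor $C$ and Lemma \ref{lemmac}(2) to identify the $C$-twisted realization with the realization of the $(k-1)$-polysimplicial set obtained by applying $\pi_0$ to the first argument, and recognize that collapse as a smaller block $\mathcal{L}_{(\mu^\prime,\sigma^\prime)}[n]$. But at the decisive step you diverge, and there is a genuine gap: you require $\alpha_*$ to be a weak equivalence, which you translate into the assertion that every connected component of each column $\mathcal{L}_{(\mu,\id)}([-],[n_2],\dots,[n_k];[n])$ is contractible, and you only sketch this (``poset with an initial element'', ``bookkeeping''). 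That per-component contractibility is not bookkeeping; it is essentially the cell-contractibility content of [MS3] itself, which is obtained there precisely from the whiskering results [MS3, Prop.\ 12.7, Cor.\ 13.3] combined with Lemma 14.8 --- so your inductive step would be re-proving a statement of the same depth as the proposition, using the very kind of input the paper is engineered to avoid (the whiskering argument being the part that does not survive to $\Theta_d$).

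The paper's proof needs no weak-equivalence statement for $\alpha_*$ at all. It uses your ``straightened staircase'' differently: define $c(\omega)$ by prepending an initial move in direction $1$ (so $n_1$ becomes $n_1+1$); the crucial observations are that $c$ preserves the block $\mathcal{L}_{(\mu,\id)}[n]$ (this is exactly where $\sigma=\id$ is used), that $\omega\rightsquigarrow c(\omega)$ is natural, giving $\beta\colon \mathcal{L}_{(\mu,\id)}[n]\to\mathcal{L}_{(\mu,\id)}[n]$ with the first argument shifted by $C$, and that $\partial_0^{(1)}(c(\omega))=\omega$ as in \eqref{eqnicems}. By the coend relations the composite $\beta_*\circ\alpha_*$ is then the identity, so $|\mathcal{L}_{(\mu,\id)}[n]|$ is a \emph{retract} of the $C$-twisted realization, which by Lemma \ref{lemmac}(2) is $|\mathcal{L}_{(\mu^\prime,\sigma^\prime)}[n]|$ with $k-1$ arguments, contractible by induction; a retract of a contractible space is contractible, and the only hands-on check left is that the relevant $\pi_0$ is a single point (reducing to $\pi_0(\Delta(?,[n]))=*$, which also disposes of your base case $k=1$). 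In short: the missing idea is to use the straightened path as a \emph{section} of the $\pi_0$-collapse rather than as a cone point for a component-by-component contraction; your fallback paragraph names the right object but not its correct role, and without the retract trick (or a full proof of component contractibility) the argument as proposed is incomplete.
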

\begin{proof}
Let $\omega\in \mathcal{L}_{(\mu,\id)}[n]$ be a lattice path, given by a map $\omega\colon [[n+1]]\to [[n_1+1]]\boxx[[n_2+1]]\boxx\dots\boxx[[n_k+1]]$. Define $$c(\omega)\colon [[n+1]]\to [[n_1+2]]\boxx[[n_2+1]]\boxx\dots\boxx[[n_k+1]]$$
with $n_1$ increased by 1, such that its first movement goes along the first coordinate axis, see Figure \ref{figlp}. A simple and crucial remark is that if $\omega\in \mathcal{L}_{(\mu,\id)}[n]$, $c(\omega)\in \mathcal{L}_{(\mu,\id)}[n]$ as well (here we use the assumption $\sigma=\id$). 

\sevafigc{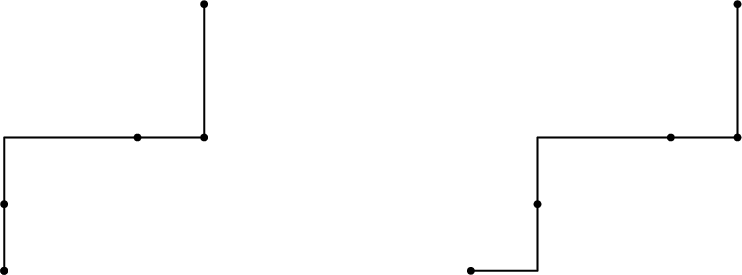}{80mm}{0}{A lattice path $\omega$ (left) and the lattice path $c(\omega)$ (right) \label{figlp}}

Note that $\omega\rightsquigarrow c(\omega)$ extends to a natural transformation of functors denoted by $\beta\colon \mathcal{L}_{(\mu,\id)}[n]\to\mathcal{L}_{(\mu,\id)}[n]$.
Note also that 
\begin{equation}\label{eqnicems}
\partial_0^{(1)}(c(\omega))=\omega
\end{equation}
where $\partial_0^{(1)}$ is the 0-th face map $[n_1]\to [n_1+1]$ (or, equivalently, $[[n_1+2]]\to[[n_1+1]]$), acting on the first argument.

We use induction by number $k$ of simplicial arguments. 

The natural transformation $\alpha\colon \Id\Rightarrow C$ gives a map
\begin{equation}
|\mathcal{L}_{\mu,\id}[n]|_\top=\mathcal{L}_{(\mu,\id)}[n]\otimes_{\Delta^{\times k}}(\Delta_\top\times\dots\times \Delta_\top)\xrightarrow{\alpha_*}\mathcal{L}_{(\mu,\id)}[n]\otimes_{\Delta^{\times k}}((\Delta_\top\circ C)\times \Delta_\top^{\times (k-1)}
\end{equation}
\comment
Denote by $\tilde{\mathcal{L}}_{(\mu,\id)}[n]$ the polysimplicial set whose first argument is precomposed with $C$, so the map $\beta^*$ applied to the first argument gives a map of polysimplicial sets $\beta^*\colon \mathcal{L}_{\mu,\id}[n]\to \tilde{\mathcal{L}}_{(\mu,\id)}[n]$.
\endcomment

Consider the composition
\begin{equation}
\begin{aligned}
\ &|\mathcal{L}_{\mu,\id}[n]|_\top=\mathcal{L}_{(\mu,\id)}[n]\otimes_{\Delta^{\times k}}(\Delta_\top\times\dots\times \Delta_\top)\xrightarrow{\alpha_*}\mathcal{L}_{(\mu,\id)}[n]\otimes_{\Delta^{\times k}}((\Delta_\top\circ C)\times \Delta_\top^{\times (k-1)})
\xrightarrow{\beta_*}\\
&{\mathcal{L}}_{(\mu,\id)}[n]\otimes_{\Delta^{\times k}}((\Delta_\top\circ C)\times \Delta_\top^{\times (k-1)})
\end{aligned}
\end{equation}
By \eqref{eqnicems} and the relations in the coend, the composition is the identity map. Therefore, $|\mathcal{L}_{\mu,\id}[n]|_\top$ is a retract of 
$\mathcal{L}_{(\mu,\id)}[n]\otimes_{\Delta^{\times k}}((\Delta_\top\circ C)\times \Delta_\top^{\times (k-1)})$, which is the realization of $(k-1)$-polysimplicial set obtained by application of $\pi_0(-)$ to the first simplicial argument of $\mathcal{L}_{(\mu,\id)}[n]$. 
This $(k-1)$-polysimplicial set is of the form $\mathcal{L}_{(\mu^\prime,\sigma^\prime)}[n]$, $(\mu^\prime,\sigma^\prime)\in\mathcal{K}(k-1)$, and $|\mathcal{L}_{(\mu^\prime,\sigma^\prime)}[n]|$ is contractible by induction assumption. Thus its retract is also contractible. 

In this argument, when $n>0$ one has to check ``by hands'' that $\pi_0(-)$ consists of a single element. It reduces, by induction on $k$, to the following check. Let $n$ be fixed, then  $\pi_0$ of the simplicial set $\Delta(?,[n])$ is a single element set. 
\end{proof}

It remains to prove Proposition \ref{propcelltop} (ii). 

We have to prove that the topological totalization 
\begin{equation}
\underline{\Hom}_{\Delta}(\Delta^-_\top,|\mathcal{L}_{(\mu,\sigma)}[-]|)
\end{equation}
is contractible. 

We know that $|\mathcal{L}_{(\mu,\sigma)}[n]|$ is contractible for any $n$. By Proposition \ref{propbem}(1), it is enough to prove that the cosimplicial topological space 
$$
[n]\mapsto |\mathcal{L}_{(\mu,\sigma)}[n]|
$$
is Reedy fibrant, which we show in Proposition \ref{propreedyfibrantdelta} below.

\begin{prop}\label{propreedyfibrantdelta}
The cosimplicial topological space 
$$
[n]\mapsto |\mathcal{L}_{(\mu,\sigma)}[n]|
$$
is Reedy fibrant. 
\end{prop}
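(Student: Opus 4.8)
The plan is to verify Reedy fibrancy by the standard matching-object criterion and then to push the entire computation through the realization functor, so that the problem reduces to a purely combinatorial statement about polysimplicial sets. Recall that a cosimplicial object $X$ in $\Top$ is Reedy fibrant exactly when, for every $n$, the $n$-th matching map
\[
X^n \longrightarrow M^n X, \qquad M^n X = \lim_{[n]\onto[k],\, k<n} X^k ,
\]
is a Serre fibration, where the limit runs over the (finite) category of non-identity surjections of $\Delta$ out of $[n]$ and the map is induced by the codegeneracy operators $X^n\to X^k$. In our situation $X^n = |\mathcal{L}_{(\mu,\sigma)}[n]|_\top$.

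First I would isolate the two formal properties of the topological realization $|\cdot|_\top$ of (poly)simplicial sets that drive the reduction: it commutes with finite limits (the classical left-exactness of geometric realization in compactly generated spaces, applied in each of the $k$ simplicial directions), and it carries Kan fibrations to Serre fibrations (Quillen's theorem). Since $M^n$ is a \emph{finite} limit and $|\cdot|_\top$ preserves finite limits, there is a canonical identification $M^n\big(|\mathcal{L}_{(\mu,\sigma)}[-]|_\top\big)\cong \big|M^n(\mathcal{L}_{(\mu,\sigma)}[-])\big|_\top$, and under it the topological matching map is precisely the realization of the matching map of cosimplicial polysimplicial sets
\[
\mathcal{L}_{(\mu,\sigma)}[n] \longrightarrow M^n(\mathcal{L}_{(\mu,\sigma)}[-]).
\]
Consequently, once this latter map is known to be a Kan fibration of polysimplicial sets, its realization is automatically a Serre fibration, and Reedy fibrancy of $[n]\mapsto |\mathcal{L}_{(\mu,\sigma)}[n]|_\top$ follows for every $n$. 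This is exactly the point at which the ``commutativity of realization with finite limits'' and ``preservation of fibrations'' are used.

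It therefore remains to establish Reedy fibrancy before realization, i.e.\ that each map $\mathcal{L}_{(\mu,\sigma)}[n] \to M^n(\mathcal{L}_{(\mu,\sigma)}[-])$ is a Kan fibration of polysimplicial sets; this is the content I would prove as Proposition \ref{reedytot} and Corollary \ref{corfibtop}, and it is the genuine obstacle. The task is to solve the relevant lifting problems directly in terms of lattice paths: a compatible family of codegenerated paths determines a point of the matching object, and one must produce an honest lattice path in $\mathcal{L}_{(\mu,\sigma)}[n]$ degenerating to the prescribed boundary data. The delicate part is that the codegeneracy operators on the cosimplicial argument $[n]$ can only preserve or lower complexity, so the chosen fillers must respect the Berger-poset bound $(\mu,\sigma)(\omega)\le(\mu,\sigma)$; verifying that such fillers exist for \emph{every} matching datum, uniformly in $(\mu,\sigma)$, is where the real combinatorial work lies, and is the step I expect to be the main difficulty.
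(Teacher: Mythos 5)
Your formal reduction is exactly the one the paper uses: Reedy fibrancy via matching maps, realization commuting with finite limits ([May, Th.\ 14.3]) and sending Kan fibrations to Serre fibrations ([GoJa, Th.\ 10.10]), so that everything is checked on the cosimplicial polysimplicial set before realization. But the proposal stops precisely where the proof begins: you defer the entire substantive content (``this is the content I would prove as Proposition \ref{reedytot}'') without supplying the idea that makes it work. That idea is the concrete description, from [BBM, 2.4--2.5], of how cosimplicial operators act on lattice paths: they never alter the underlying \emph{geometric} lattice path, only the multiplicities of the marked points (the $i$-th codegeneracy removes the $i$-th marked point or lowers its multiplicity). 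From this one reads off the matching maps directly: for $\ell\ge 1$ a compatible matching datum of $\ell+1$ paths with $\ell$ marked points each forces a common geometric path and \emph{uniquely} reconstructs a path with $\ell+1$ marked points, so $m_\ell$ is an \emph{isomorphism} --- no lifting problem at all; $m_{-1}$ is the projection to a point, which is fine since every space is fibrant; and the only genuine lifting occurs at $\ell=0$, where $m_0\colon \mathcal{L}_{(\mu,\sigma)}[1]\to\mathcal{L}_{(\mu,\sigma)}[0]$ forgets the single marked point. There the paper does not prove the polysimplicial map is a ``Kan fibration of polysimplicial sets'' (a notion your sketch leaves undefined); it restricts to the diagonal $\Delta^\op\hookrightarrow(\Delta^\op)^{\times k}$, reduces to $k=1$ by composing with the projections, and solves an explicit horn-filling problem for maps of ordinals $[n]\to[1]$ over $[n]\to[0]$ in the Joyal-dual language, then invokes the homeomorphism between the realization of a polysimplicial set and that of its diagonal.

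Your diagnosis of where the difficulty lies is also off in an instructive way. You anticipate that the ``real combinatorial work'' is producing fillers that respect the Berger-poset bound $(\mu,\sigma)(\omega)\le(\mu,\sigma)$ uniformly in $(\mu,\sigma)$. In the actual argument this is a non-issue: since matching data and lifts only create or remove marked points while the geometric lattice path --- and hence the invariant $(\mu,\sigma)(\omega)$ --- is untouched, the complexity constraint is automatically preserved by any filler. (The complexity bound does real work elsewhere, in Proposition \ref{propms}, where the operation $\omega\rightsquigarrow c(\omega)$ must stay inside the block; that is the contractibility statement, not the fibrancy one.) So the gap is not a minor omission of routine verifications: without the marked-point analysis, your plan faces an apparently hard lifting problem for all $\ell$, whereas the correct observation collapses all but the $\ell=0$ case to isomorphisms and trivialities.
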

\begin{proof}
Recall that the statement that a cosimplicial topological space $X^\udot$ is Reedy fibrant means the following:

For any $n$, consider the $n$-th matching map
$$
m_n\colon X^{n+1}\to M^n X^\udot
$$
where 
\begin{equation}\label{eqmatching}
M^nX^\udot=\mathrm{eq}\big(\prod_{i=0}^nX^n\rightrightarrows\prod_{0\le i<j\le n} X^{n-1}\big)
\end{equation}
where the maps two landing at $i<j$ factor are 
$$
\prod_{i=0}^nX^n\xrightarrow{p_j}X^n\xrightarrow{\sigma^i}\text{   and   }\prod_{i=0}^n X^n\xrightarrow{p_i}X^n\xrightarrow{\sigma^{j-1}}X^{n-1}
$$
($p_i$ is the projector to the $j$-th factor, and $\sigma^i$ is the $i$-th codegeneracy map).

The $j$-th component of the map $m_n\colon X^{n+1}\to M^n X^\udot$ is given by the $j$-th codegeneracy map. 

The cosimplicial space $X^\udot$ is called Reedy fibrant if, for any $n\ge -1$, the map $m_n$ is a fibration of topological spaces.

Note that our cosimplicial topological space $|\mathcal{L}_{(\mu,\sigma)}[-]|$ comes as the realization of a polysimplicial cosimplicial  set. The realization functor preserves finite limits [May, Th.14.3] and fibrations [GoJa, Th.10.10], so it is enough to investigate the matching maps $m_n$ for the cosimplicial polysimplicial set $\mathcal{L}_{(\mu,\sigma)}[-]$. Then we have:

\begin{prop}\label{reedytot}
For the cosimplicial polysimplicial space $\mathcal{L}_{(\mu,\sigma)}[-]$ the following statements are true:
\begin{itemize}
\item[(1)] the matching map $m_{-1}$ is the projection of $\mathcal{L}_{(\mu,\sigma)}[0]$ to the constant polysimplicial space, equal to a point,
\item[(2)] the matching map $m_0\colon \mathcal{L}_{(\mu,\sigma)}[1]\to \mathcal{L}_{(\mu,\sigma)}[0]$ is the map forgetting the only marked point at the lattice path; its restriction to the diagonal $\Delta^\op\hookrightarrow (\Delta^\op)^{\times k}$ is a Kan fibration. 
\item[(3)] for $\ell\ge 1$, the matching map $m_\ell$ is an isomorphism. 
\end{itemize}
\end{prop}
\begin{proof}
(1) is a tautology, (3) follows from the description of the cosimplicial operators acting on the lattice paths, given in [BBM, 2.4-2.5].
Namely, the the composition of the unique map $[[1]]\to [[\ell]]$ with a lattice path $[[\ell]]\to [[n_1]]\boxx\dots\boxx [[n_k]]$ results in forgetting the ``marked points'', and produces what we call a ``geometric lattice path''. The cosimplicial maps do not change the underlying geometric lattice path and only affect the marked points, the $i$-th coface maps increases the multiplicity of the $i$-th marked point by 1, the $i$-th codegeneracy decreases the multiplicity of the $i$-th marked point by 1, or removes it if the multiplicity is equal to 1. It follows that $M^\ell\mathcal{L}_{(\mu,\sigma)}[-]$ is given by a tuple of $\ell+1$ lattice paths with $\ell$ marked points each, such that the removing of $j$-th marked point from $i$-th factor (or decreasing its multiplicity by 1, if it is $>1$) equals to removing of $i$-th marked point from the $j$-th factor (or decreasing of its multiplicity). It follows that the geometric lattice paths in the tuple are equal, and if $\ell\ge 1$, there is a unique lattice path $\omega$ with $\ell+1$ marked points, whose underlying geometric lattice path is equal to the ones from the tuple, such that $\sigma^i\omega$ is equal to the $i$-th component of the tuple. Therefore, the matching map $m_\ell$ is the identity map, for $\ell\ge 1$. 

(2): For the case $\ell=0$ we have to show that the restriction $\delta^*$ to the diagonal of the map of polysimplicial sets $m_0\colon \mathcal{L}_{(\mu,\sigma)}[1]\to \mathcal{L}_{(\mu,\sigma)}[0]$  is a Kan fibration of simplicial sets.
Consider the diagram 
$$
\xymatrix{
\Lambda^n_i\ar[r]\ar@{^{(}->}[d]&\delta^*\mathcal{L}_{(\mu,\sigma)}[1]\ar[d]\\
\Delta^n\ar[r]\ar@{.>}[ur]&\delta^*\mathcal{L}_{(\mu,\sigma)}[0]
}
$$
One has to prove it has a filling shown by dashed arrow. 

First of all, the projections $\omega_s$ of the lattice path $\omega\colon [[2]]\to [[n]]^{\boxx k}$ to the $k$ components reduce the lifting problem to the case $k=1$. In the Joyal dual language, we are given a map
$\alpha\colon \Lambda^n_i([n])\to [1]$ and a map $\beta\colon[n]\to [0]$, one has to lift $\beta$ to a map $\hat{\beta}\colon 
[n]\to [1]$ such that $\hat{\beta}|_{\Lambda^n_i}=\alpha$. We are given an element $i\in [n]$, and any subset of $[n]$ containing  $n$ elements including  $i$ is mapped to the two-element ordinal $[1]$, in compatible way. That is, for any $a\in [n], a\ne i$ there is an ordinal map $\alpha_a\colon [n]\setminus\{a\}\to [1]$ such that for $a\ne b\ne i$, one has $\alpha_a|_{[n]\setminus\{a\sqcup b\}}=\alpha_b|_{[n]\setminus\{a\sqcup b\}}$, one has to define $\hat{\beta}\colon [n]\to [1]$ extending all $\alpha_a$. When $n\le 1$ one checks it by hand, when $n\ge 2$ for any $j\in [n]$ there is some $\alpha_a$ defined at $j$ (it is so for any $j\ne a$), if $\alpha_b$ is also defined 
at $j$ ($j\ne a,b$), the compatibility condition tells us that $\alpha_a(j)=\alpha_b(j)$, so we can define $\hat{\beta}(j)$ as any of such $\alpha_a(j)$. It is clear that $\hat{\beta}$ is a map of ordinals solving the lifting problem.

\end{proof}

Now Proposition \ref{propreedyfibrantdelta} follows from 
Proposition \ref{reedytot} and the following well-known facts:
\begin{itemize}
\item[(a)]  the commutativity of the realization of a polysimplicial set with finite limits (it already proves the Corollary for $\ell\ne -1,0$),
\item[(b)] any topological space is fibrant (it completes the case $\ell=-1$),
\item[(c)] the realization of a Kan fibration is a Serre fibration,
\item[(d)] the realization of the diagonal of a polysimplicial set is homeomorphic to the successive realization by each simplicial argument ((c) and (d) are used for the case $\ell=0$).
\end{itemize}

\end{proof}

It follows from these results that the totalization of the cosimplicial space $|\mathcal{L}_{(\mu,\sigma)}[-]|$ is weakly equivalent to the to the totalization of the constant cosimplicial space at a point. The latter totalization is equal to a point, therefore, the totalization of $|\mathcal{L}_{(\mu,\sigma)}[-]|$ is contractible. 

Proposition \ref{propcelltop} is proved.

\begin{remark}{\rm
Regarding the case (2) of Lemma \ref{reedytot}, notice that [MS3, Prop.12.7 and Cor. 13.3] prove that the realization of the map of polysimplicial sets $\mathcal{L}_{(\mu,\sigma)}[1]\to \mathcal{L}_{(\mu,\sigma)}[0]$ is a trivial bundle over the realization $|\mathcal{L}_{(\mu,\sigma)}[0]|$ with fibre $\Delta^1_\top$. Moreover, loc.cit. proves more generally that the realization of the projection $\mathcal{L}_{(\mu,\sigma)}[m]\to \mathcal{L}_{(\mu,\sigma)}[0]$ (defined via the final map $[[1]]\to [[m+1]]$, that is, it is the map forgetting all marked points at the lattice path) is the trivial bundle over  $|\mathcal{L}_{(\mu,\sigma)}[0]|$ with fibre $\Delta^m$. 
On the other hand, it seems that [MS3, Prop.12.7] can not be generalised (or even stated) for the case of $\Theta_d$-lattice paths with $d>1$. It motivated us to find an alternative approach, as we show it here for the case $d=1$. We employ it for the case of general $d$ in Section \ref{sectioncontr2}.
}
\end{remark}

\subsection{\sc The dg condensation}\label{dgcond1}
In this Subsection we apply results of Section \ref{topcond1} to prove Proposition \ref{propcelldg}. The argument is very short now. 
Although we state the results below for complexes in $\mathbb{Z}$-$\Mod$, the same results with the same arguments hold true for modules over any commutative ring $R$. 

We have:
\begin{prop}\label{propmsdg}
For a fixed $n$, the dg realization of $\mathcal{L}_{(\mu,\sigma)}[n]$ is quasi-isomorphic to $\mathbb{Z}[0]$. 
\end{prop}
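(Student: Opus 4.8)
The plan is to deduce Proposition \ref{propmsdg} directly from its topological counterpart, Proposition \ref{propms}, using the elementary fact that the dg realization of a (poly)simplicial set computes the (integral) homology of its topological realization. Since Proposition \ref{propms} already gives that $|\mathcal{L}_{(\mu,\sigma)}[n]|$ is contractible for every fixed $n$, the only thing to do is to identify the cohomology of the complex $|\mathcal{L}_{(\mu,\sigma)}[n]|_\dg$ with the homology of the space $|\mathcal{L}_{(\mu,\sigma)}[n]|$.

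First I would recall that the realization in $C^\udot(\mathbb{Z})$ of a polysimplicial set $Y\colon(\Delta^\op)^{\times k}\to\Sets$ is, by definition, the total complex of the $k$-fold multicomplex obtained by applying the free abelian group functor and the normalized Moore complex in each of the $k$ simplicial directions. By the iterated Eilenberg--Zilber theorem this total complex is chain homotopy equivalent to the normalized chain complex of the diagonal simplicial set $\diag Y$, while by fact (d) recalled in the previous subsection one has a homeomorphism $|\diag Y|\cong|Y|$. Consequently, up to the homological/cohomological regrading fixed by our convention (the $n$-chains sitting in cohomological degree $-n$), the cohomology of $|\mathcal{L}_{(\mu,\sigma)}[n]|_\dg$ is the simplicial homology of $\mathcal{L}_{(\mu,\sigma)}[n]$, which coincides with the singular homology of the space $|\mathcal{L}_{(\mu,\sigma)}[n]|$. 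As the latter is contractible by Proposition \ref{propms}, this homology is $\mathbb{Z}$ concentrated in degree $0$; that is, $|\mathcal{L}_{(\mu,\sigma)}[n]|_\dg$ is quasi-isomorphic to $\mathbb{Z}[0]$, as required.

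To obtain the statement over an arbitrary ground ring $R$ (as in Proposition \ref{propcelldg}), I would observe that the normalized Moore complex is degreewise free and bounded below (concentrated in non-positive cohomological degrees). Hence it suffices to combine the integral result with freeness: a quasi-isomorphism $|\mathcal{L}_{(\mu,\sigma)}[n]|_\dg\to\mathbb{Z}[0]$ between bounded-below complexes of free, hence projective, abelian groups is automatically a chain homotopy equivalence, and chain homotopy equivalences are preserved by the functor $-\otimes_{\mathbb{Z}}R$. This is cleaner than a naive ``tensoring of a quasi-isomorphism'', which would require a flatness hypothesis one does not want to impose on $R$.

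There is no genuine obstacle here beyond the bookkeeping of the Eilenberg--Zilber reduction and the mild care needed in the passage to general $R$ just described. As an alternative route that bypasses Eilenberg--Zilber entirely, I would note that one can transcribe the topological argument verbatim at the chain level: the natural transformation $\beta$ together with the relation $\partial_0^{(1)}(c(\omega))=\omega$ from the proof of Proposition \ref{propms} exhibits $|\mathcal{L}_{(\mu,\id)}[n]|_\dg$ as a retract of the dg realization of the $(k-1)$-polysimplicial set obtained by applying $H_0(-)$ to the first simplicial argument (the dg analogue of applying $\pi_0$ in Lemma \ref{lemmac}), whence one concludes by the same induction on the number $k$ of simplicial arguments.
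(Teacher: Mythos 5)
Your argument is correct and takes essentially the same approach as the paper: both deduce the dg statement from the topological contractibility established in Proposition \ref{propms} by identifying the cohomology of $|\mathcal{L}_{(\mu,\sigma)}[n]|_\dg$ with the singular homology of $|\mathcal{L}_{(\mu,\sigma)}[n]|$ — the paper does this by observing that the dg realization is the CW chain complex of the topological realization, while your Eilenberg--Zilber/diagonal reduction is an interchangeable piece of bookkeeping. Your added care over a general ground ring $R$ (a quasi-isomorphism of bounded-below complexes of projectives is a chain homotopy equivalence, hence survives $-\otimes_{\mathbb{Z}}R$) merely makes explicit what the paper asserts without proof at the start of its Section on the dg condensation.
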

\begin{proof}
\comment

For a simplicial set $X$, the adjunction unit $X\to\Sing|X|$ is a weak equivalence. That is, they have homotopically equivalent realizations: $|X|\sim |\Sing|X||$. The dg realization of $X$ is isomorphic to the CW chain complex of $|X|$. It follows that the chain complexes of $X$ and of $\Sing|X|$ are quasi-isomorphic, in particular, the dg realization of $X$ is quasi-isomorphic to $\mathbb{Z}[0]$ if $|X|$ is contractible. The same argument is applied to polysimplicial sets. 
Thus, the statement follows directly from Proposition \ref{propms}.

v2:
\endcomment
It is well-known that for a simplicial set $X$, the realization $|X|$ has a CW-complex structure, whose cells of dimension $i$ are in 1-to-1 correspondence with non-degenerate simplices in $X_i$. That is, the CW cell complex of $|X|$ is identified with the dg realization of $X$. On the other hand, the CW homology of a CW complex is isomorphic to its singular homology, which is homotopy invariant. Thus, the statement follows directly from Proposition \ref{propms}.
\end{proof}

To compute the dg totalization of the cosimplicial complex $[n]\mapsto |\mathcal{L}_{(\mu,\sigma)}[n]|_\dg$, we need to know that this cosimplicial complex is Reedy fibrant (with respect to the projective model structure on $C^\udot(\mathbb{Z})$).
It is, as well as in the topological case, a corollary of Proposition \ref{reedytot}.

\begin{coroll}\label{corfibdg}
The cosimplicial complex $|\mathcal{L}_{(\mu,\sigma)}[-]|_\dg$ is Reedy fibrant with respect to projective model structure on $C^\udot(\mathbb{Z})$. 
\end{coroll}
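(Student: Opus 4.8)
The plan is to mirror the topological argument of Proposition \ref{propreedyfibrantdelta}, replacing the four facts (a)--(d) used there by their counterparts for the dg realization and for the projective model structure on $C^\udot(\mathbb{Z})$, in which the fibrations are exactly the degreewise surjections. Thus I must show that for each $\ell\ge -1$ the $\ell$-th matching map
\[
\tilde m_\ell\colon \big|\mathcal{L}_{(\mu,\sigma)}[\ell+1]\big|_\dg\longrightarrow M^\ell\big(|\mathcal{L}_{(\mu,\sigma)}[-]|_\dg\big)
\]
is a degreewise surjection, reading off the combinatorial input from Proposition \ref{reedytot}. The two extreme cases are immediate. For $\ell=-1$ the matching object $M^{-1}$ is the terminal object, i.e. the zero complex, so $\tilde m_{-1}$ is trivially surjective; this is the dg incarnation of fact (b), namely that every object of $C^\udot(\mathbb{Z})$ is fibrant in the projective structure. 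For $\ell=0$ one has $M^0(|\mathcal{L}_{(\mu,\sigma)}[-]|_\dg)=|\mathcal{L}_{(\mu,\sigma)}[0]|_\dg$, since the second product in \eqref{eqmatching} is empty, and $\tilde m_0$ is simply the dg realization of the codegeneracy $\sigma^0\colon\mathcal{L}_{(\mu,\sigma)}[1]\to\mathcal{L}_{(\mu,\sigma)}[0]$. By Proposition \ref{reedytot}(2) this map forgets the unique marked point and is therefore levelwise surjective on polysimplicial cells; as the free-abelian-group and (total) normalization functors preserve surjections, $\tilde m_0$ is a degreewise surjection. Note that, in contrast with the topological case, I do not need the Kan-fibration refinement of \ref{reedytot}(2) here: for the projective structure levelwise surjectivity already suffices.

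The only genuinely new point is $\ell\ge 1$. By Proposition \ref{reedytot}(3) the set-level matching map $m_\ell\colon\mathcal{L}_{(\mu,\sigma)}[\ell+1]\to M^\ell\mathcal{L}_{(\mu,\sigma)}[-]$ is an isomorphism of polysimplicial sets, so $|m_\ell|_\dg$ is an isomorphism of complexes and $\tilde m_\ell$ factors as $\tilde m_\ell=c\circ|m_\ell|_\dg$, where
\[
c\colon \big|M^\ell\mathcal{L}_{(\mu,\sigma)}[-]\big|_\dg\longrightarrow M^\ell\big(|\mathcal{L}_{(\mu,\sigma)}[-]|_\dg\big)
\]
is the canonical comparison between the dg realization of the set-theoretic matching object and the matching object of the realized cosimplicial complex (the identification $\tilde m_\ell=c\circ|m_\ell|_\dg$ holds because the $j$-th component of each side is $|\sigma^j|_\dg$). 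Hence it suffices to prove that $c$ is a degreewise surjection. This is precisely the step at which the dg argument diverges from the topological one: geometric realization commutes with finite limits, so the topological comparison is an \emph{isomorphism} (fact (a)), whereas the dg realization turns cartesian products into tensor products by Eilenberg--Zilber, so $c$ is only a surjection and never an isomorphism for $\ell\ge 1$.

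To prove surjectivity of $c$ I would argue as follows. The codegeneracies act only on the marked points and never alter the underlying geometric lattice path, so the cosimplicial complex splits as a finite direct sum $|\mathcal{L}_{(\mu,\sigma)}[-]|_\dg=\bigoplus_\gamma|\mathcal{L}_{(\mu,\sigma)}[-]_\gamma|_\dg$ indexed by the (finitely many) geometric lattice paths $\gamma$; since $M^\ell$ is a finite limit it commutes with this finite direct sum, and one may treat each $\gamma$ separately. For fixed $\gamma$ the matching object is an iterated fibre product of the free abelian groups $\mathbb{Z}[(\mathcal{L}_{(\mu,\sigma)}[\ell]_\gamma)]$ along the maps induced by the codegeneracies, all of which are levelwise surjective. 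The basic case is a cospan of finite sets $A\xrightarrow{f}C\xleftarrow{g}B$ with $f,g$ surjective: there the map $\mathbb{Z}[A\times_C B]\to \mathbb{Z}[A]\times_{\mathbb{Z}[C]}\mathbb{Z}[B]$ is surjective, because over each $c\in C$ one must realize a tuple with prescribed row and column marginals by an integral matrix, and such a matrix exists exactly when both fibres $f^{-1}(c)$ and $g^{-1}(c)$ are non-empty and the total marginals agree (the elementary integral ``transportation'' statement). Surjectivity of $c$ is then the propagation of this fact along the iterated-fibre-product presentation of $M^\ell$.

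The main obstacle I anticipate is exactly this last point: controlling $c$ for the full matching diagram, where the single fibre product is replaced by the equalizer of the several codegeneracy conditions in \eqref{eqmatching}, so that the transportation argument becomes a higher-dimensional integral-feasibility question (integer points of a transportation polytope). The reduction to a fixed geometric path $\gamma$ is designed to tame this, since for fixed $\gamma$ the cosimplicial structure in the marked-point direction is that of a standard, nerve-type cosimplicial set — for which Reedy fibrancy of the $\mathbb{Z}$-linearization is classical — and the set-level isomorphism of \ref{reedytot}(3) guarantees there are ``enough'' compatible tuples of cells for the induction to close. Once $c$ is shown to be a degreewise surjection, all three cases give Reedy fibrancy of $|\mathcal{L}_{(\mu,\sigma)}[-]|_\dg$; combined with Proposition \ref{propmsdg} this is what then feeds, exactly as in the topological case, into the computation of the dg totalization in Proposition \ref{propcelldg}(ii).
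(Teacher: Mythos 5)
Your handling of $\ell=-1$ and $\ell=0$ is correct and coincides with the paper's: the paper likewise disposes of $\ell=0$ by noting that $\mathcal{L}_{(\mu,\sigma)}[1]\to\mathcal{L}_{(\mu,\sigma)}[0]$ is surjective in every polydegree, so its dg realization is a degreewise surjection, hence a projective fibration, with no Kan-type input needed. At $\ell\ge 1$ you diverge from the paper, and this is where your proposal has a genuine gap. The paper argues that in this situation the dg realization commutes with the finite limits defining $M^\ell$ (the total sum complex coincides with the total product complex on these diagonally finite, non-positively graded polycomplexes, so the totalization is simultaneously a left and a right adjoint) and then invokes Proposition \ref{reedytot}(3). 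You instead claim the comparison $c$ is only a surjection and undertake to prove surjectivity by integral transportation — but you never do prove it: your last paragraph concedes that propagating the two-marginal lemma through the full equalizer \eqref{eqmatching} is a higher-dimensional integral-feasibility question, and the appeal to a ``classical'' fact about nerve-type cosimplicial sets is left unidentified and unproven. This is exactly the hard point: the matching object imposes all pairwise compatibilities $\sigma^i x_j=\sigma^{j-1}x_i$ simultaneously and is not an iterated binary fibre product over independent bases, and pairwise-consistent marginals need not be jointly realizable (multi-marginal transportation can be infeasible, even over $\mathbb{Q}$, in three or more directions), so the induction ``along the iterated-fibre-product presentation of $M^\ell$'' cannot close as stated.

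The gap is real but easy to fill — by a different mechanism than the one you propose, namely by using the cofaces, which your marked-points analysis (codegeneracies only) never touches. For any cosimplicial object $A^\udot$ in an abelian category the matching map $A^{\ell+1}\to M^\ell A$ is a split epimorphism; this is the cosimplicial Dold--Kan decomposition, and concretely for $\ell=1$, given $(x_0,x_1)$ with $\sigma^0x_0=\sigma^0x_1$, the element $x=d^0x_0+d^2(x_1-d^0\sigma^0x_0)$ satisfies $\sigma^0x=x_0$ and $\sigma^1x=x_1$, with analogous coface corrections for all $\ell$. Since limits in $C^\udot(\mathbb{Z})$ are computed degreewise, applying this degreewise to the cosimplicial complex $|\mathcal{L}_{(\mu,\sigma)}[-]|_\dg$ gives surjectivity of every matching map at once, hence Reedy fibrancy — making your $\gamma$-decomposition, the transportation lemma, and even Proposition \ref{reedytot}(3) unnecessary for this step. (Your observation that $c$ fails to be injective is correct as far as it goes — the linearization $\mathbb{Z}[-]$ does not preserve the products appearing in \eqref{eqmatching}, which is also why the paper's commutation argument should be read as a statement about the passage from polycomplexes to complexes — but fibrancy only requires surjectivity, and that comes for free from the standard split-epi fact rather than from any feasibility combinatorics.)
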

\begin{proof}
The dg realization of a polysimplicial set is the total sum complex of its chain polycomplex. We have to show that the total sum complex of the chain polycomplex of a polysimplicial set commutes with finite limits of polysimplicial sets. In fact, the total sum complex is a left adjoint and the total product complex is right adjoint. As in our situation the total sum complex is the same as the total product complex (because at each diagonal $i_1+\dots +i_k=N$ our polycomplex restricted to this diaginal is finitely-generated free abelian group, and in each polydegree the terms in positive degrees are 0\footnote{Recall that by the convention we adopt the differential has degree +1}, our total complex commutes with finite limits. Then the statement for the matching maps $m_\ell$ with $\ell\ge 1$ follows from Proposition \ref{reedytot} (3). The case $\ell=-1$ follows tautologically. 
The case $\ell=0$ follows from the fact that the map of polysimplicial sets $\mathcal{L}_{(\mu,\sigma)}[1]\to\mathcal{L}_{(\mu,\sigma)}[0]$ is surjective in each simplicial  polydegree, so the map of their chain complexes is surjective, which is a fibration in the projective model structure. 

\end{proof}

From the results of Proposition \ref{propmsdg} and Corollary \ref{corfibdg} is follows that the dg totalization of the cosimplicial complex $[n]\mapsto |\mathcal{L}_{(\mu,\sigma)}[n]|_\dg$ is quasi-isomorphic to the dg totalization of the constant cosimplicial complex equal to $\mathbb{Z}[0]$ in any cosimplicial degree. The totalization of the latter complex is $\mathbb{Z}$. 

Proposition \ref{propcelldg} is proved. 

\section{\sc The contractibility of a single block: the case of general $d$}\label{sectioncontr2}
The goal of this Section is to prove Theorems \ref{propthetatop} and \ref{propthetadg}, by a method analogous to the one employed for the case $d=1$ in Section \ref{sectioncontr1}. 

We need some preparation as we have to use homotopy properties of the topological and the dg realization (resp. totalization) of presheaves (resp., copresheaves) on $\Theta_d$. We work with Quillen model categories, and, in particular, with Reedy model structures on diagrams indexed by a Reedy category. 
The Reedy category structure on $\Theta_d$ is due to C.Berger [Be2, Sect. 2], we also refer to [BR] for alternative treatment. 
The Reedy category structure on $\Theta_d$ is {\it elegant}, which implies that the $d$-cocellular topological space $T\mapsto C_T$, where $C_T$ is the standard Joyal topological cell [J], see Section \ref{sectionjoyalcells}, is Reedy cofibrant. This is the system of cells used in topological realization (resp. totalization) of a $d$-cellular (resp., $d$-cocellular) topological space. 
A general formalism of [BeM, Sect. 9] is applied for realizations (resp., totalizations) of presheaves (resp., precosheaves) on $\Theta_d$ with values in a monoidal model category.
We also essentially use a result of Berger [Be3, Prop. 3.9] saying that the topological realization of a presheaf on $\Theta_d$ is homeomorphic to the standard multi-simplicial realization of the restriction from $\Theta_d$ to $\Delta^{\times d}$. 

\subsection{\sc A Reedy category structure on $\Theta_d$}\label{sectionreedy}
Recall the wreath product definition of the category $\Theta_d$, see \eqref{thetadef}.

Recall that a {\it Reedy category structure} $\mathcal{R}$ is a small category, equipped with two subcategories $\mathcal{R}^+$ and $\mathcal{R}^-$, containing all objects (they are called the direct and the inverse subcategories), and with a degree function 
$\deg\colon \mathcal{R}\to\mathbb{N}$ such that the following conditions hold:
\begin{itemize}
\item[(1)] every morphism $\alpha$ in $\mathcal{R}$ admits a unique factorization $\alpha=\alpha^+\alpha^-$, with $\alpha^+\in \mathcal{R}^+$, $\alpha^-\in\mathcal{C}^-$,
\item[(2)] for every morphism $\alpha\colon c\to d$ in $\mathcal{R}^+$ one has $\deg c\le \deg d$, with $=$ only when $\alpha=\id$,
for every morphism $\alpha\colon c\to d$ in $\mathcal{R}^-$  one has $\deg c\ge \deg d$, with $=$ only 
when $\alpha=\id$.
\end{itemize}

As a consequence, $\mathcal{R}^+\cap\mathcal{C}^-$ consists of the identity morphisms of all objects, and any isomorphism in $\mathcal{R}$ is an identity morphism.  (Indeed, assuming $\alpha=f_+f_-$ is an isomorphism, $f_\pm\in\mathcal{C}^\pm$, one finds $\alpha^{-1}=g_+g_-$, $g_\pm\in\mathcal{C}^\pm$, that is $f_+f_-g_+g_-=\id=f_+g_+^\prime f_-^\prime g_-=\id\circ\id$, and from the uniqueness of the decomposition it follows that $f_+$ and 
$g_-$ are identity morphisms, similarly $f_-$ and $g_+$ also are). 

The standard example of a Reedy category is the simplicial category $\Delta$, where $\deg [n]=n$, and $\Delta^+$ (resp. $\Delta^-$) is the subcategory generated by the face (resp., the degeneracy) maps. 

Also if $\mathcal{R}$ is a Reedy category, $\mathcal{R}^\op$ is a Reedy category, with the same $\deg$ function, and with $(\mathcal{R}^\op)^\pm=\mathcal{R}^\mp$. Also, if $\mathcal{R}_1$ and $\mathcal{R}_2$ are Reedy categories, the product $\mathcal{R}_1\times\mathcal{C}_2$ also is, with the degree function $\deg(c_1,c_2)=\deg c_1+\deg c_2$, $(\mathcal{R}_1\times\mathcal{C}_2)^\pm=\mathcal{R}_1^\pm\times \mathcal{R}_2^\pm$.

A less trivial example is the category $\Theta_2=\Delta\wr\Delta$.
An object of $\Theta_2$ is a tuple $([m]; [\ell_1],\dots,[\ell_m])$ and a morphism 
$\Phi\colon ([m]; [\ell_1],\dots, [\ell_m])\to ([n]; [k_1],\dots, [k_n])$ is given by $(\phi, \phi_i^j)$ where 
$\phi\colon [m]\to [n]$ a morphism in $\Delta$, and for a given $1\le i\le m$, $\phi_i^j\colon [\ell_i]\to [k_j]$ a morphism in $\Delta$, where $j=\phi(i-1)+1,\dots,\phi(j)$. 

We say that $\Phi\in \Theta_2^-$ if $\phi\in \Delta^-$, and for each $i$ such that $\phi(i-1)\ne \phi(i)$, the maps $\phi_i^j\in \Delta^-$.
The direct subcategory $\Theta_2^+$ is more tricky: we require $\phi\in \Delta^+$, and for each $i$ the maps $\{\phi_i^j\}_{j=\phi(i-1)+1\dots \phi(i)}$ are {\it jointly injective}, meaning that for any two element $a\ne b\in [\ell_i]$ there is at least one $j$, $\phi(i-1)+1\le j\le \phi(i)$ for which $\phi_i^j(a)\ne \phi_i^j(b)$. The fact that it is a Reedy category is firstly proven in [Be2, Sect.2].

Berger [Be2] gives a similar construction for $\Theta_d$ for any $d$. Bergner and Rezk [BR] developed an approach via {\it multi-Reedy categories} which makes this construction more transparent. The answer is as follows.

We have $\Theta_d=\Delta\wr(\Delta\wr\dots (\Delta\wr\Delta)\dots)$ ($d$ factors). 

An object of $\Theta_d$ is given by tuple $\underline{m}=([m]; \{[m_i]\}; \{[m_{ij}]\};\dots,\{[m_{i_1\dots i_{d-1}}]\})$. 
Its degree is equal to
$$
\deg\underline{m}=m+\sum_{i}m_i+\sum_{i,j}m_{ij}+\dots+\sum_{i_1,\dots,i_{d-1}}m_{i_1i_2\dots i_{d-1}}
$$
A morphism $\Phi\colon \underline{m}\to\underline{n}$ is given by the following maps in $\Delta$:
\begin{description}
\item a morphism $\phi\colon [m]\to [n]$,
\item (multi-)morphisms $\phi_i\colon [m_i]\to\{[n_j\}_{j=\phi(i-1)+1\dots\phi(i)}$ (here a multi-morphism is just a sequence of morphisms from the same source but with different targets), denote the $j$-th component of the multi-morphism by $\phi_i^j$,
\item (multi-)morphisms $\phi_{i_1i_2}\colon [m_{i_1i_2}]\to \{[n_{j_1j_2}]\}_{j_1=\phi(i_1-1)+1\dots\phi(i_1), j_2=\phi_{i_1}^{j_1}(i_2-1)+1\dots\phi_{i_1}^{j_1}(i_2)}$, denote the $(j_1,j_2)$-component of the multi-morphism by $\phi_{i_1i_2}^{j_1j_2}$,
\item $\dots$
\item (multi-)morphisms $\phi_{i_1\dots i_{d-1}}\colon [m_{i_1\dots i_{d-1}}]\to\{[n_{j_1\dots j_d}]\}_{\dots}$
\end{description}
We say that such a morphism $\Phi$ belongs to $\Theta_d^-$ if each of the morphisms $\phi_{i_1\dotsi_k}^{j_1\dots j_k}\colon [m_{i_1\dots i_k}]\to [n_{j_1\dots j_k}$ belongs to $\Delta^-$ unless the range for $j_1,\dots,j_k$ is not empty, $k=0,\dots, d-1$.
We say that $\Phi\in \Theta_d^+$ if for each $k=0,\dots,d-1$ and each multi-index $\{i_1,\dots, i_k\}$ the components of the multimorphism $\phi_{i_1\dots i_k}$ are {\it jointly injective}, in the sense explained above.

For a proof that in this way one gets a Reedy category structure the reader is referred to [Be2, Sect. 2] or [BR, Sect. 2]. 

Next, the Reedy category $\Theta_d$ is {\it elegant}, [Be2, Sect.2], [BR, Sect. 3,4]. This property says that for any presheaf $X$ on an elegant Reedy category $\mathcal{R}$ and for any element $x\in X(c)$ there is a {\it unique} morphism $\phi\colon c\to d$ in $\mathcal{R}^-$ and a unique element $y\in X(d)$ such that $x=\phi^*y$, and $y$ is {\it non-degenerate}, meaning that it is not an image of $z\in X(e)$ under a morphism $d\to e$ in $\mathcal{R}^-$ unless the morphism $d\to e$ is identity. This property is very useful when one considers (topological) realizaions of presheaves, and want they to be $CW$-complexes. 
The property follows from a generalised Eilenberg-Zilber lemma [BR, Sect. 4] (see [GZ, Sect. II.3] for the classical case of $\Delta$), which in general Reedy category is a condition which implies the elegancy. 

Let $\mathcal{M}$ be a Quillen model category [Q], [Hi], [Ho], $\mathcal{R}$ a Reedy category. Then the diagram category $\mathcal{M}^{\mathcal{R}}$ is again a model category whose weak equivalences are point-wise weak equivalences of the $\mathcal{R}$-diagrams, called the Reedy model structure [Ree], [Hi, Ch.15]. The cofibrations and the fibrations for Reedy model structure on $\mathcal{M}^\mathcal{R}$ are as follows. 

For an object $X\in \mathcal{M}^{\mathcal{R}}$ and an object $c\in \mathcal{R}$ define the {\it latching object} $L_cX$ as
$$
L_cX=\colim_{d\to c\in \partial(\mathcal{R}^+/c)}X_d
$$
where $\partial(\mathcal{R}^+/c)$ is the category $\mathcal{R}^+/c$ with excluded identity morphism.  The universal property of colimit morphism 
$$
L_cX\to X_c
$$
called {\it the latching map} at $c\in\mathcal{C}$.

Similarly, define the {\it matching object} $M_c$ as
$$
M_cX=\lim_{c\to d\in \partial({c/\mathcal{R}^-})}X_d
$$
and the universal property of limit gives a morphism
$$
X_c\to M_cX
$$
called the {\it matching map} at $c$. 

Let $f\colon X\to Y$ be a map in $\mathcal{M}^{\mathcal{R}}$.

The map $f$ is called {\it a Reedy cofibration} if for any $c\in\mathcal{C}$ the relative latching map
$$
X_c\amalg_{L_cX}L_cY\to Y_c
$$
is a cofibration in $\mathcal{M}$.

The map $f$ is called {\it a Reedy fibration} id for any $c\in\mathcal{C}$ the relative matching map
$$
X_c\to Y_c\Pi_{M_cY}M_cX
$$
is a fibration in $\mathcal{M}$.

For any Reedy category $\mathcal{R}$, the category of diagrams $\mathcal{M}^\mathcal{R}$ with object-wise weak equivalences, Reedy cofibrations, and Reedy fibrations is a Quillen model category.

\comment
??? Below in ??? we prove a more explicit formulas for the latching and the matching objects for $\mathcal{R}=\Theta_d$, analogous to the well-known formula for the latching (resp., the matching objects )??? for the case of $\mathcal{R}=\Delta$, which uses only the face (resp., the degeneracy) maps changing the degree by 1 and 2. 
\endcomment

We will need a cofinal subcategory $\mathcal{S}^\vee_T\subset \partial(T/\Theta_d^-)$ so that the matching object $M_TX$ of a functor $X\colon \Theta_d\to\mathcal{M}$ can be expressed as a limit
$$
M_TX=\lim_{T\to T^\prime\in \mathcal{S}^\vee_T}X_{T^\prime}
$$
over a ``smaller'' category. 

Let $T\in \Theta_d$ be a $d$-level tree, see Definition \ref{defntree}. In general it can be degenerate, that is, some of ordinals $t_j$ in \eqref{eqntree} can be empty ordinals. Denote by $S^\vee_T$ the full subcategory in $\partial(T/\Theta_d^-)$ whose objects 
$\phi\colon T\to T^\prime$ are the morphisms in $\Theta_d^-$ defined as follows (here we think of such morphisms as maps of disks $\overline{T}^\prime\to\overline{T}$; the morphisms in $\Theta_d^-$ correspond to {\it face maps} $\overline{T}^\prime\to\overline{T}$, so they can be considered as face maps $T^\prime\to T$ of the original $d$-level trees).  The objects $\phi$ we consider lower $\Theta_d$-degree by 1 or by 2. If $a$ is a leaf of $T$ at level $k$ (that is, in notations of Definition \ref{defntree}, $i_k^{-1}(a)$ is an empty set), we allow $\phi$ which just removes $a$ (getting $T^\prime$) and embeds $\overline{T}^\prime\to \overline{T}$ (in the dual language, it is the contraction of the corresponding elementary interval of the corresponding ordinal). Such morphisms for different $a$ exhaust all objects $\phi\colon T\to T^\prime$ in $\mathcal{S}^\vee_T$ lowering $\Theta_d$-degree by 1 (note however that different leaves $a$ may be at different levels). The objects $\phi\colon T\to T^\prime$ lowering the $\Theta_d$-degree by 2 are obtained by such removal of two different leaves $a,a^\prime$ of $T$ (note that saying that is not the same as removing a leaf $a$ followed by removing a leaf of $T\setminus\{a\}$). 

\comment
Assume that $0\le j\le d$ is the maximal number such that the ordinal $t_j$ is non-empty. Consider the map of ordinals $i_{j-1}\colon t_j\to t_{j-1}$. Thus we have the set of 1-ordinals $\{i_{j-1}^{-1}(a)\}_{a\in |t_{j-1}|}$, some of which may be empty (but not all, by the assumption made on $j$). These ordinals are {\it the ordinals of elementary intervals} of 1-ordinals $[\ell_1],\dots, [\ell_N]$, see Remark \ref{remthetaordinals}. Consider the maps $\Phi\colon T\to T^\prime$ in $\partial(T/\Theta_d^-)$ whose image under the projection $\Theta_j\to\Theta_{j-1}$ is the identity map, and which are thus $\Delta$-degeneracy operators on each ordinal $[\ell_s]$ at level $j$. Define the category $\mathcal{S}_T^\vee$ as the full subcategory of $\partial(T/\Theta_d^-)$ whose objects are maps $\Phi\colon T\to T^\prime$ of this type, such that the total shift of degree by $\Phi$ is equal to 1 or 2. That is, a morphism $\Phi$ is given either (1) by {\it elementary} degeneracy maps in two distinct ordinals $[\ell_a], [\ell_b]$, $a\ne b$, and the identity maps in $[\ell_s]$ for $s\ne a,b$, or (2) by a composition of {\it two elementary} degeneracy maps in some $[\ell_a]$, and the identity maps in $[\ell_s]$ for $s\ne a$. 
\endcomment

\begin{prop}\label{propmatchingcofinal}
The subcategory $i_T\colon \mathcal{S}_T^\vee\subset \partial(T/\Theta_d^-)$ is cofinal, that is, for any functor $X\colon \Theta_d\to\mathcal{M}$ the natural map 
$$
\lim_{\partial(T/\Theta_d^-)}X\to \lim_{\mathcal{S}_T^\vee}i^*X
$$
is an isomorphism.
\end{prop}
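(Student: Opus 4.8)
The plan is to verify the standard cofinality criterion for limits (Quillen's initiality condition): the inclusion $i_T\colon \mathcal{S}_T^\vee\hookrightarrow \partial(T/\Theta_d^-)$ of a full subcategory induces an isomorphism on the limit of \emph{every} diagram $X$ precisely when, for each object $\psi\colon T\to T''$ of $\partial(T/\Theta_d^-)$, the comma category $i_T\downarrow\psi$ is non-empty and connected. Here, since $i_T$ is a full inclusion, the objects of $i_T\downarrow\psi$ are the pairs $(\phi,g)$ with $\phi\colon T\to T'$ in $\mathcal{S}_T^\vee$ and $g\colon T'\to T''$ in $\Theta_d^-$ satisfying $g\phi=\psi$, and a morphism $(\phi_1,g_1)\to(\phi_2,g_2)$ is a map $h\colon T'_1\to T'_2$ in $\Theta_d^-$ with $h\phi_1=\phi_2$ and $g_2h=g_1$. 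Thus the whole proof reduces to two elementary combinatorial checks about leaf-prunings of $T$.

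For \emph{non-emptiness} I would use the fact, immediate from the explicit description of $\Theta_d^-$ in Section \ref{sectionreedy} (see [Be2]), that every degree-lowering morphism factors as a composite of \emph{elementary} degeneracies, each of which lowers the $\Theta_d$-degree by $1$ and is, in the disk picture, the removal of a single leaf of its source. Writing $\psi=\phi_r\circ\cdots\circ\phi_1$ with each $\phi_s$ elementary, the first factor $\phi_1\colon T\to T_1$ removes a single leaf $a_1$ of the \emph{original} tree $T$, so $\phi_1\in\mathcal{S}_T^\vee$ and $\psi=(\phi_r\circ\cdots\circ\phi_2)\circ\phi_1$ exhibits an object of $i_T\downarrow\psi$.

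For \emph{connectedness}, which is where the degree-$2$ part of $\mathcal{S}_T^\vee$ enters as a bridge, note first that every object of $i_T\downarrow\psi$ is either a single-leaf removal or a removal of two original leaves $\{a,b\}$; in the latter case there are morphisms in $i_T\downarrow\psi$ to it from the two single-leaf objects removing $a$ and removing $b$, given by removing the remaining leaf from $T_a$, resp.\ $T_b$ (this map commutes over $\psi$ because degeneracy operators are epimorphisms, so a morphism of $\Theta_d^-$ with prescribed precomposite is unique, and both composites equal $\psi$). Hence it suffices to connect any two single-leaf objects, removal of $a_1$ and removal of $a_2$, both of which are leaves removed by $\psi$ since they factor it. For $a_1\ne a_2$ I would use the double removal $\Phi_{12}\colon T\to T_{12}$ of $\{a_1,a_2\}$ — a legitimate object of $\mathcal{S}_T^\vee$ precisely because $a_1,a_2$ are \emph{both} leaves of $T$ (this is exactly the point emphasised after the definition of $\mathcal{S}_T^\vee$, that removing two original leaves differs from removing one and then a newly exposed leaf) — which, by the same construction, receives morphisms from both single-leaf objects; if $a_1=a_2$ the two objects coincide. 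This connects all objects, so $i_T\downarrow\psi$ is connected.

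Assembling these, $i_T$ is initial and the natural map $\lim_{\partial(T/\Theta_d^-)}X\to\lim_{\mathcal{S}_T^\vee}i^*X$ is an isomorphism for every $X\colon\Theta_d\to\mathcal{M}$. The conceptual content is that $i_T\downarrow\psi$ is, up to equivalence, the poset of non-empty subsets of size $\le 2$ of the set of leaves removed by $\psi$, which is always connected. The main thing to get right — and the only place needing care — is the bookkeeping of elementary degeneracies together with the verification that the connecting double removals genuinely lie in $\mathcal{S}_T^\vee$ (not merely in $\partial(T/\Theta_d^-)$) and that the bridging maps really live over $\psi$.
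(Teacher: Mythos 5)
Your proof is correct and takes essentially the same route as the paper: the paper likewise invokes the criterion of [ML, IX.3], reduces the statement to the non-emptiness and connectedness of the comma categories $\mathcal{S}_T^\vee/A$ for each object $A=(T\to T'')$ of $\partial(T/\Theta_d^-)$, and then declares this ``elementary and left to the reader''. Your verification --- peeling off a first elementary degeneracy (a removal of an \emph{original} leaf of $T$) for non-emptiness, and bridging any two single-leaf objects through the double removal of two original leaves for connectedness --- is exactly the intended check (compare the paper's remark following the proposition, which explains why removals of newly exposed leaves must be excluded); the one step you gloss with ``by the same construction'', namely that $\psi$ factors through the double removal $T\to T_{12}$ whenever it factors through each single removal separately, is an elementary componentwise fact about degeneracies in $\Delta$ and is easily supplied.
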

\begin{proof}
By [ML, IX.3], we have to prove the following:

For any object $A=(T\to T^\prime)\in \partial(T/\Theta_d^-)$, the comma category $\mathcal{S}_T^\vee/A$ is non-empty and connected. 

This is elementary and is left to the reader.
\end{proof}

\begin{remark}{\rm
The statement of Proposition \ref{propmatchingcofinal} would be false if we instead defined $\tilde{S}^\vee_T$ with the same objects lowering the degree by 1, but for the objects lowering degree by 2 we allowed removing a leaf $a$ of $T$ followed by removing a leaf in $T\setminus\{a\}$. In this way, some categories $\mathcal{S}_T^\vee/A$ may be not connected. 
}
\end{remark}

For further reference, note a simple lemma, which describes all $T$ for which the category $S^\vee_T$ has a single object. 
\begin{lemma}\label{lemmaoneobject}
Assume $T$ is a $d$-level tree such that the category $S^\vee_T$ has a single object. Then $T$ is a linear tree, having all ordinals $[0]$ in Definition \ref{defntree}, up to some level $k\le d$, and empty above level $k$.
\end{lemma}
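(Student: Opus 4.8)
The plan is to reduce the statement to a count of the objects of $\mathcal{S}_T^\vee$ followed by a short downward induction on levels. By construction of $\mathcal{S}_T^\vee$, its objects are exactly the face maps removing a single leaf of $T$ (these lower the $\Theta_d$-degree by $1$) together with those removing an unordered pair of distinct leaves of $T$ (lowering the degree by $2$). Hence, writing $L$ for the total number of leaves of $T$, the objects of $\mathcal{S}_T^\vee$ are in bijection with the disjoint union of the set of leaves and the set of two-element subsets of leaves, so their number is exactly $L+\binom{L}{2}$. First I would record that $L\ge 1$ for every $d$-level tree: since $t_0=[0]$ is non-empty, there is a maximal level $k$ with $t_k\ne\emptyset$, and then $t_{k+1}=\emptyset$ forces $i_k^{-1}(a)=\emptyset$ for every $a\in t_k$ (notation as in Definition \ref{defntree}), so every element of $t_k$ is a leaf.

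Granting the count $L+\binom{L}{2}$, the hypothesis that $\mathcal{S}_T^\vee$ has a single object gives $L+\binom{L}{2}=1$; since $L\ge 1$ (and $L\ge 2$ already produces at least two leaf-removal objects), the only possibility is $L=1$. It then remains to show that a $d$-level tree with exactly one leaf has the asserted linear shape, which I would prove by downward induction on the level. Let $k$ again be the maximal level with $t_k\ne\emptyset$; all $|t_k|$ elements of $t_k$ are leaves, so $L=1$ forces $|t_k|=1$, i.e. $t_k=[0]$, while $t_j=\emptyset$ for $j>k$. Moreover $L=1$ says there is no leaf at any level $j<k$, which is precisely the statement that each structure map $i_j\colon t_{j+1}\to t_j$ is surjective for $j<k$. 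Assuming inductively that $|t_{j+1}|=1$, surjectivity of $i_j$ together with $|\operatorname{im} i_j|\le |t_{j+1}|=1$ yields $|t_j|=1$, i.e. $t_j=[0]$. Running this from $j=k-1$ down to $j=0$ gives $t_j=[0]$ for all $0\le j\le k$, which is exactly the linear tree of the lemma.

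Since the whole argument is a bijective count plus an elementary induction, I do not anticipate a serious obstacle. The one point needing care is the very first step, namely that the enumeration of the objects of $\mathcal{S}_T^\vee$ is complete: that there is no degree-$1$ or degree-$2$ lowering morphism in $\Theta_d^-$ out of $T$ other than the leaf- and pair-of-leaves-removals. This is how $\mathcal{S}_T^\vee$ was defined, so it may simply be cited; if one wanted a self-contained check, the work would go into verifying that, in the dual disk picture, a morphism in $\Theta_d^-$ dropping the $\Theta_d$-degree by one is the contraction of a single elementary interval sitting over a leaf, which follows from the componentwise description of $\Theta_d^-$ recalled in Section \ref{sectionreedy}.
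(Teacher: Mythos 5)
Your proof is correct, and in fact the paper offers no argument at all for this lemma (it is dispatched with ``It is clear''), so your counting argument supplies precisely the elementary verification the author omits: by the very definition of $\mathcal{S}^\vee_T$ its objects are the single-leaf removals and the removals of unordered pairs of distinct leaves of $T$, giving $L+\binom{L}{2}$ objects, forcing $L=1$, after which the downward induction on levels (surjectivity of the $i_j$ below the top nonempty level, plus $|t_{j+1}|=1$) yields the linear shape. One minor imprecision to flag: in your lower bound $L\ge 1$ you count the root as a leaf when $T$ is the initial object $([0];\varnothing,\dots)$, but the root is not removable (the level-$0$ ordinal of every tree is $[0]$), so for the initial tree $\mathcal{S}^\vee_T$ is empty rather than a singleton — consistently, Proposition \ref{reedytottheta}(3) treats this case separately. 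This does not damage your argument, since the initial tree fails the hypothesis and is in any case linear with $k=0$, but the bijection between objects of $\mathcal{S}^\vee_T$ and leaves-plus-pairs should be stated for leaves at levels $\ge 1$ only.
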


It is clear.

\qed

\subsection{\sc Standard system of $d$-cells in $\Top$ and $C^\udot(\mathbb{Z})$, a realization and a totalization of a $d$-(co)cellular space}\label{sectionrt}

\subsubsection{\sc General theory}\label{sectionbem}
It is well-known that for a simplicial topological space $X$ its topological realization 
$$
|X|=X_n\otimes_{\Delta}\Delta_n^\top
$$
has the following homotopy invariance: for two {\it Reedy cofibrant} simplicial topological spaces $X,Y$ and a map of simplicial topological spaces $f\colon X\to Y$ such that each component $f_n\colon X_n\to Y_n$ is a weak equivalence, the induced map of realizations 
$$
|f|\colon |X|\to |Y|
$$
is a weak equivalence as well, see [Ree].

Recall that the totalization $\Tot X$ of a cosimplicial topological space $X^\udot$ is defined as
$$
\Tot X=\underline{\Hom}_{\Delta}(\Delta^n_\top,X)
$$
where $\underline{\Hom}_\Delta$ denotes the $\Top$-enriched natural transformations. 
Assume that $X,Y$ are {\it Reedy fibrant} cosimplicial topological spaces, and $f\colon X\to Y$ is a component-wise weak equivalence. Then the corresponding map
$$
\Tot(f)\colon \Tot X\to\Tot Y
$$
is a weak equivalence as well.

A key point in the statements like these two is that the assignment $[n]\mapsto \Delta^n_\top$ is a Reedy cofibrant cosimplicial space. It simply the fact that for each $n\ge 0$ the embedding $\partial\Delta^n\to\to\Delta^n_\top$ is a topological cofibration. 

Similar statements hold when one considers the model category $C^\udot(\mathbb{Z})$ with projective model structure instead of $\Top$. In fact, when we work with (co)simplicial objects, the target model category $\mathcal{M}$ has to be a simplicial model category. This point of view is adopted in [Ree], [Hi]. 

However, when one works with diagrams indexes by a general Reedy category, it is natural to require that $\mathcal{M}$ is a monoidal model category in the sense of Hovey [Ho]. The corresponding formalism on homotopy properties of a realization (resp., a totalization) in this setting was developed in [BeM, Sect. 9]. Here we briefly recall the main statements. 

Let $\mathcal{M}$ be a monoidal model category, $\mathcal{R}$ a Reedy category, $C\colon \mathcal{R}\to \mathcal{M}$ a functor, for $a\in\mathcal{R}$ denote by $C^a$ the value of $C$ at $a$. For any functor $X\colon\mathcal{R}^\op\to\mathcal{M}$ denote by $X_a$ the value of $X$ at $a\in\mathcal{R}$. Define the realization $|X|_C$ as
\begin{equation}\label{realbem}
|X|_C=X\otimes_{\mathcal{R}}C=\mathrm{coeq}\big(\coprod_{a\to b\in\mathcal{R}}X_b\otimes C^a\rightrightarrows \coprod_{a\in \mathcal{R}}X_a\otimes C^a\big)
\end{equation}
where $\otimes$ in the r.h.s. denote the monoidal product at $\mathcal{M}$. 

Similarly, for $Y\colon\mathcal{R}\to\mathcal{M}$, define the totalization $\Tot_CY$ as
\begin{equation}\label{totbem}
\Tot_CY=\underline{\Hom}_\mathcal{R}(C,Y)=\mathrm{eq}\big( \prod_{a\in\mathcal{R}}\underline{\Hom}(C^a,Y^a)\rightrightarrows \prod_{a\to b\in\mathcal{R}}\underline{\Hom}(C^a,Y^b)  \big)
\end{equation}
where $\underline{\Hom}$ stands for the internal Hom which takes values in $\mathcal{M}$. 

Berger and Moerdijk [BeM, Sect.9] give the following definition.
\begin{defn}\label{defbem}{\rm
Let $\mathcal{R}$ be a Reedy category, $\mathcal{M}$ a monoidal model category, $C\colon \mathcal{R}\to\mathcal{M}$. One says that $C$ is strong monoidal (resp., $h$-monoidal) if the realization functor $X\mapsto |X|_C$ is strong symmetric monoidal (resp. $h$-monoidal, that is it is a symmetric monoidal functor which defines an isomorphism on the monoidal units, and for any two Reedy cofibrant objects $X,Y\in\mathcal{M}^{\mathcal{R}^\op}$ the map $|X|_C\otimes |Y|_C\to |X\otimes Y|_C$ is a weak equivalence.}
\end{defn}

For a presheaf $X\colon \mathcal{R}^\op\to\Sets$ denote by $X_\mathcal{M}\colon \mathcal{R}^\op\to\mathcal{M}$ the functor defined component-wise on sets, and for a set $S$ as $\amalg_{s\in S}I$ where $I$ is a monoidal unit. Assume that the unit $I$ is cofibrant, then the functor $X\mapsto X_\mathcal{M}$ is strong monoidal. 
One has:
\begin{prop}\label{propbem}
The following statements are true:
\begin{itemize}
\item[(1)] Assume that $C\colon \mathcal{R}\to\mathcal{M}$ is Reedy cofibrant. Then there is a Quillen adjunction 
$$
|-|_C\colon \mathcal{M}^{\mathcal{R}^\op}\rightleftarrows\mathcal{M}\colon \underline{\Hom}(C,-)
$$
with $|-|_C$ the left adjoint. Similarly, there is a Quillen adjunction
$$
-\otimes_\mathcal{R}C\colon\mathcal{M}\rightleftarrows \mathcal{M}^{\mathcal{R}}\colon \Tot_C(-)
$$
with $\Tot_C(-)$ the right adjoint.
\item[(2)] Assume $C$ is Reedy cofibrant and strong monoidal (resp., $h$-monoidal). Then the functor $|-|_C\colon\mathcal{M}^{\mathcal{R}^\op}\to\mathcal{M}$ is strong monoidal (resp., $h$-monoidal) left Quillen.
\item[(3)] Assume the monoidal unit is cofibrant. Then $C$ is strong monoidal (resp., $h$-monoidal) if and only if for any two representable sheaves $h_a=\mathcal{R}(-,a)$, $h_b=\mathcal{R}(-,b)$, $a,b\in\mathcal{R}$ there are associative and symmetric compatible isomorphisms (resp. weak equivalences)
$$
|(h_a)_\mathcal{M}|_C\otimes |(h_b)_\mathcal{M}|_C\to |(h_a)_\mathcal{M}\otimes (h_b)_\mathcal{M}|_C
$$
\end{itemize}
\end{prop}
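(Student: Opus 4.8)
The plan is to treat the three parts in turn, with the real work concentrated in (3). For (1) I would first obtain the adjunction by pure coend/end formalism: for $X\colon\mathcal{R}^\op\to\mathcal{M}$ and $M\in\mathcal{M}$, the universal property of the coend \eqref{realbem} combined with the tensor--hom adjunction of the closed monoidal category $\mathcal{M}$ yields a natural bijection $\mathcal{M}(X\otimes_\mathcal{R}C,M)\cong\mathcal{M}^{\mathcal{R}^\op}(X,\underline{\Hom}(C,M))$, and dually for the totalization side from \eqref{totbem}. To upgrade this to a Quillen adjunction it suffices to show that $|-|_C=-\otimes_\mathcal{R}C$ is left Quillen, i.e. preserves cofibrations and trivial cofibrations. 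I would deduce this from the stronger assertion that the coend pairing $\otimes_\mathcal{R}\colon\mathcal{M}^{\mathcal{R}^\op}\times\mathcal{M}^{\mathcal{R}}\to\mathcal{M}$ is a left Quillen bifunctor for the Reedy model structures, which is the Reedy analogue of the pushout-product axiom. This is proved by induction on Reedy degree: the skeletal filtration expresses $|X|_C$ as an iterated pushout whose attaching maps are pushout-products of the relative latching maps of a Reedy cofibration $X\to Y$ (cofibrations by definition of Reedy cofibration) with the latching maps of $C$ (cofibrations, since $C$ is Reedy cofibrant), and the pushout-product axiom in $\mathcal{M}$ closes the induction. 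Specialising the bifunctor to the fixed Reedy cofibrant $C$ produces the left Quillen functor $|-|_C$.

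Part (2) is then essentially a bookkeeping step. By Definition \ref{defbem}, the hypothesis that $C$ is strong monoidal (resp. $h$-monoidal) says \emph{precisely} that the functor $|-|_C$ is strong symmetric monoidal (resp. $h$-monoidal), and part (1) supplies the remaining assertion that $|-|_C$ is left Quillen. The only point worth recording is that left Quillen-ness is what makes the $h$-monoidal weak-equivalence condition homotopically meaningful, since it guarantees that the comparison maps appearing in the definition are maps between cofibrant objects; beyond this there is nothing to check.

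Part (3) is where the content lies. The key computation is that the realization of a represented presheaf recovers the value of $C$: by the co-Yoneda lemma the coend $(h_a)_\mathcal{M}\otimes_\mathcal{R}C$ collapses to $C^a$, and under the Day convolution monoidal structure on $\mathcal{M}^{\mathcal{R}^\op}$ induced by the monoidal structure of $\mathcal{R}$ one has $(h_a)_\mathcal{M}\otimes(h_b)_\mathcal{M}\cong(h_{a\otimes b})_\mathcal{M}$, using that the cofibrancy of $I$ makes $X\mapsto X_\mathcal{M}$ strong monoidal and that Yoneda is strong monoidal into the Day structure. Thus the comparison map restricted to representables is identified with the structure map $C^a\otimes C^b\to C^{a\otimes b}$ of $C$ itself. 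The forward implication is then immediate by restriction. For the converse in the strong monoidal case I would invoke density: every presheaf is a canonical colimit of representables, and since $|-|_C$ is a left adjoint and the Day tensor preserves colimits in each variable, the comparison isomorphism extends from representables to arbitrary presheaves, the associativity and symmetry coherences propagating by naturality from the assumed compatible constraints on representables.

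The hard part will be the $h$-monoidal converse, where one has only weak equivalences on representables and colimits need not preserve them. Here I would restrict, as Definition \ref{defbem} permits, to Reedy cofibrant $X,Y$ and replace the naive colimit presentation by the skeletal filtration: a Reedy cofibrant presheaf is built as a sequential colimit of pushouts attaching, in degree $n$, cells of the form $(\partial h_a\to h_a)$ paired with the nondegenerate part of $X$. Both $|-|_C$ and the Day tensor carry this filtration to filtrations of the two sides of the comparison map, and on the successive cells the map reduces to the representable case, hence is a weak equivalence by hypothesis. To conclude I must verify that these filtrations are by cofibrations between cofibrant objects, so that the colimits are genuine homotopy colimits and cellwise weak equivalences assemble into a global one; this is exactly where Reedy cofibrancy of $C$ re-enters, through the left Quillen bifunctor property of $\otimes_\mathcal{R}$ established in (1), which guarantees that the Day tensor of Reedy cofibrant presheaves is again Reedy cofibrant and that each attaching map is a cofibration. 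The main obstacle is precisely this homotopy-colimit bookkeeping --- controlling the interaction of the skeletal filtration with the Day convolution and applying a gluing/cube lemma for weak equivalences --- rather than any single sharp estimate.
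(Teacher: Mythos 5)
Your parts (1) and (2) are correct, and they essentially reconstruct what the paper delegates to the literature: the paper's proof of (1) is a bare citation of [BeM, Lemma 9.8], whose content is exactly your latching-induction/Reedy pushout-product argument (the coend pairing $\otimes_{\mathcal{R}}\colon \mathcal{M}^{\mathcal{R}^\op}\times\mathcal{M}^{\mathcal{R}}\to\mathcal{M}$ being a left Quillen bifunctor for the Reedy structures), and (2) is, as you say, immediate from Definition \ref{defbem} together with (1). The paper proves nothing more there, so for these two items your proposal is a legitimate expansion of the cited proof.

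Part (3), which the paper cites as [BeM, Prop.\ 9.3], contains a genuine error in your setup. You take the monoidal structure on $\mathcal{M}^{\mathcal{R}^\op}$ to be Day convolution induced by a monoidal structure on $\mathcal{R}$, identify $(h_a)_\mathcal{M}\otimes(h_b)_\mathcal{M}\cong (h_{a\otimes b})_\mathcal{M}$, and conclude that the comparison map on representables is ``the structure map $C^a\otimes C^b\to C^{a\otimes b}$ of $C$ itself''. But no monoidal structure on $\mathcal{R}$ is assumed or available ($\mathcal{R}=\Theta_d$ in the application, which is not monoidal in any way used here); the tensor on $\mathcal{M}^{\mathcal{R}^\op}$ in Definition \ref{defbem} is the \emph{pointwise} one, $(X\otimes Y)(r)=X(r)\otimes Y(r)$. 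This is visible in the paper itself: Proposition \ref{propberger2}(3) verifies the condition of (3) for $\Theta_d$ in the form $|\Theta_d[T_1]|_C\times|\Theta_d[T_2]|_C\simeq|\Theta_d[T_1]\times\Theta_d[T_2]|_C$, with the \emph{product presheaf} on the right. With the correct tensor, $h_a\otimes h_b$ is not representable (already $\Delta[1]\times\Delta[1]$ is not), $C$ is not being asked to be a monoidal functor, and the representable-level hypothesis is a genuine Eilenberg--Zilber-type condition $C^a\otimes C^b\to|(h_a\times h_b)_\mathcal{M}|_C$ --- this is precisely why (3) has content. Your co-Yoneda collapse $|(h_a)_\mathcal{M}|_C\cong C^a$ is fine, and your machinery for the converse --- cocontinuity in each variable for the strong case, and for the $h$-monoidal case a skeletal bifiltration whose cells are of the form $h_a\times h_b$, controlled by the Quillen bifunctor property from (1) and a gluing argument --- does go through once restated for the pointwise tensor, and then it matches the actual argument behind [BeM, Prop.\ 9.3]. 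But as written, the identification of the target as $C^{a\otimes b}$ and the propagation of coherence constraints via $h_{a\otimes b}$ are wrong and must be replaced by the product-of-representables formulation before the rest of your proof can be run.
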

\begin{proof}
(1): see [BeM], Lemma 9.8. (2): it is a direct consequence of Definition \ref{defbem} and of (1). (3): see [BeM], prop. 9.3.
\end{proof}

\begin{remark}\label{rembem}{\rm
In fact, [BeM] gives also a definition of an $h$-constant cosimplicial object $C\colon\mathcal{R}\to\mathcal{M}$ as an object on which the cosimplicial operators act as weak equivalence. They call $C$ {\it a standard system of cells} a cosimplicial object $C$ as above which is Reedy cofibrant, $h$-monoidal, and $h$-constant. Also the system of cells in $\Top$ for the Reedy category $\Theta_d$, considered below, is standard, we do not its full consequence, see [BeM, Prop. 9.13]. 
}
\end{remark}

\subsubsection{\sc The case of the Reedy category $\Theta_d$, $d\ge1$}\label{sectionjoyalcells}
Now we introduce the system of cells $C\colon \Theta_d\to\Top$ which we in definition of the topological realization of objects in $\Sets^{\Theta_d^\op}$ and of the topological totalization of objects in $\Sets^{\Theta_d}$. We show that this system of cells is Reedy cofibrant and strongly monoidal, so all statement of Proposition \ref{propbem} are applied. 

Let $T\in \Theta_d$, we recall a convex contractible topological subspace $C_T$ of a Euclidean space, such that $T\mapsto C_T$ gives rise to a functor $C\colon \Theta_d\to\Top$. The construction is due to Joyal [J] and was further studied by Berger [Be1,2]. 

We consider $T$ as a $d$-level tree, to which one associates a $d$-disk $\overline{T}$ in $\Sets$, see Section \ref{section11}.
By Proposition \ref{bergerprop}, $\Disk_d\simeq \Theta_d^\op$. Recall also a topological $\omega$-disk $B$, see Section \ref{section11}, which we may consider as a topological $d$-disk by truncating the higher levels, which we also denote by $B$.

Define $$C_T=\underline{\Hom}_{\Disk_d}(\overline{T},B)$$
where $\underline{\Hom}$ stands for $\Top$-enriched $\Hom$, so $C_T$ can be interpreted as enriched natural transformations.
By definition, $C_T$ gives rise to a functor $\Disk_d^\op\to\Top$, so by Proposition \ref{bergerprop}, it gives rise to a functor $C\colon \Theta_d\to \Top$.

We consider the realization $|X|_C$ of a $d$-cellular set $X$, defined via the system of cells $C$, see \eqref{realbem}.

One can describe cells $C_T$ explicitly [Be2, Prop. 2.6, Ex.2.7], as follows. 

For a $d$-level graph $T$, denote by $e(T)$ the set of edges of $T$ and by $v(T)$ the set of vertices of $T$ excluding the root.  One can consider a point of $[-1,1]^{|e(T)|}$ as a map 
$$
t\colon e(T)\to [-1,1]
$$
For $\alpha\in e(T)$ denote $t(\alpha)$ by $t_\alpha$. For $\alpha,\beta\in e(T)$ we say that $\alpha$ precedes $\beta$ and denote it by $\alpha<\beta$, if $\alpha,\beta$ belong to the same level, and $\alpha<\beta$. For a vertex $x$ of $T$ denote by $[x]\subset e(T)$ the linear set of edges which are below $x$, denote by $\height(x)$ the number of elements in $[x]$.
Then $C_T$ can be seen to be the following subspace of the cube $[-1,1]^{|e(T)|}$:
\begin{equation}
C_T=\{t\colon e(T)\to [-1,1]|\ t_\alpha\le t_\beta \text{  if  }\alpha<\beta, \ (t_\alpha)_{\alpha\in [x]}\in B^{\height(x)}={
\{\sum_{\alpha\in [x]}t_\alpha^2\le 1\} }\text{  for any  }x\in v(T)\}
\end{equation}
\begin{example}\label{examplecell}{\rm
\begin{itemize}
\item[1.]
For the $d$-level tree $T$ given by the chain of maps of ordinals
$$
[n-1]\to [0]\to\dots\to [0]
$$
one has 
$$
C_T=\{(t_\alpha)\in  [-1,1]^{n+d-1}|\  t_1\le t_2\dots\le t_n, t_i^2+t_{n+1}^2+\dots+t_{n+d-1}^2\le 1, 1\le i\le n\}
$$
\item[2.] For $d=2$ and $T$ as in Figure \ref{figex} one has
$$
C_T=\{(t_\alpha)\in [-1,1]^4|\ t_1\le t_2, t_3\le t_4, t_1^2+t_3^2\le 1, t_2^2+t_3^2\le 1\}
$$
\sevafigc{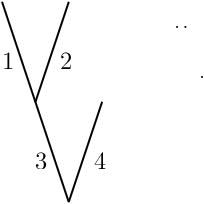}{30mm}{0}{\label{figex}}

\end{itemize}
}
\end{example}
It follows easily that for a representable presheaf $\Theta_d[T]:=\Theta_d(-, T)$ one has 
\begin{equation}\label{realyon}
|\Theta_d[T]|_C=C_T
\end{equation}

The following Proposition is due to C.Berger [Be2, Prop. 2.6, Prop. 2.8]:
\begin{prop}\label{propberger2}
The following statements are true:
\begin{itemize}
\item[(1)] For $T\in \Theta_d$, the cell $C_T$ is a convex closed subspace of $[-1,1]^{|e(T)|}$ with non-empty interior and thus is homeomorphic to a closed disc $B^{|e(T)|}$. There is a CW complex structure on $C_T$ which cells of codimension $i$ are in 1-to-1 correspondence with objects in $\Theta_d^+/T$ shifting the degree by $i$. In particular, the boundary $\partial C_T\simeq  S^{|e(T)|-1}$ has a structure of a CW-complex, whose cells of dimension $|e(T)|-i$ are in 1-to-1 correspondence which objects in the category $\partial(\Theta_d^+/T)$ raising the degree by $i$. A cell $\sigma_1$ is a subset of a cell $\sigma_2$ iff there is a morphism in $\partial(\Theta_d^+/T)$ between to the corresponding objects. For any two objects of $\partial(\Theta_d^+/T)$ there is at most one morphism. Thus, $\Theta_d^+/T$ is a poset, and the realization of the poset $\partial(\Theta_d^+/T)$ is homeomorphic to $S^{|e(T)-1|}$. Any morphism in $\Theta_d^+/T$ is a composition of morphisms raising degree by 1. 
\item[(2)] The $d$-cocellular topological space $T\mapsto C_T$ is Reedy cofibrant.
\item[(3)] For $T_1,T_2\in \Theta_d$, one has associative and commutative isomorphisms 
$$
|\Theta_d[T_1]|_C\times |\Theta_d[T_2]|_C\simeq |\Theta_d[T_1]\times \Theta_d[T_2]|_C
$$
In particular, the system of cells $C$ is strong monoidal, in the sense of Definition \ref{defbem}.
\end{itemize}
\end{prop}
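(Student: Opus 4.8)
The plan is to extract all three statements from the explicit coordinate description of $C_T$ recalled just above the statement, treating part (1) as the combinatorial heart and deducing (2) and (3) from it. Unwinding $C_T=\underline{\Hom}_{\Disk_d}(\overline{T},B)$, a $\Top$-enriched natural transformation $\overline{T}\to B$ is determined by the images of the edges of $T$, and the interval/globular structure of the two disks forces exactly the displayed constraints: the linear order constraints $t_\alpha\le t_\beta$ for comparable same-level edges, and the ball constraints $(t_\alpha)_{\alpha\in[x]}\in B^{\height(x)}$ at each vertex $x$. First I would record that $C_T$ is a closed convex subset of $[-1,1]^{|e(T)|}$: the order constraints are linear, the ball constraints convex, and all are non-strict. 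Non-emptiness of the interior follows by exhibiting a single interior point, e.g. coordinates spread strictly within each order-chain and small enough to lie strictly inside every ball; a compact convex body with non-empty interior in a Euclidean space is homeomorphic to a closed disc with boundary a sphere, which gives the opening assertions of (1).

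The hard part will be the identification of the face lattice of $C_T$ with the poset $\Theta_d^+/T$. I would read off the faces of $C_T$ as the loci where a prescribed subset of the defining inequalities becomes tight, and match each such face with a degree-raising morphism $S\to T$ in $\Theta_d^+$. The key technical point is that the joint-injectivity condition defining $\Theta_d^+$ (Section \ref{sectionreedy}) is precisely what guarantees the corresponding locus is a genuine face whose codimension equals the degree shift; face containment then corresponds to composition of $\Theta_d^+$-morphisms, and the fact that $\Theta_d^+/T$ carries at most one morphism between any two objects yields the poset property and the claimed CW-structure on $\partial C_T\simeq S^{|e(T)|-1}$. For general $d$ this bijection is best organized inductively along the wreath-product description $\Theta_d=\Delta\wr\Theta_{d-1}$. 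None of the topological facts (convexity, disc-homeomorphism) are problematic; this combinatorial bookkeeping is where the real work lies.

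For (2), Reedy cofibrancy of the covariant diagram $T\mapsto C_T$ amounts to showing each latching map $L_TC\to C_T$ is a cofibration in $\Top$, where $L_TC=\colim_{S\to T\in\partial(\Theta_d^+/T)}C_S$. Using the CW-structure from (1), I would identify this colimit with the boundary $\partial C_T$: the proper faces $C_S$ glue along their pairwise intersections, which are again lower faces, and the poset property of $\Theta_d^+/T$ ensures the colimit is computed without over-counting, so $L_TC\cong\partial C_T$. The latching map then becomes the inclusion $\partial C_T\hookrightarrow C_T$, i.e. $S^{|e(T)|-1}\hookrightarrow B^{|e(T)|}$, a closed cofibration; hence the cocellular space is Reedy cofibrant.

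For (3), by Proposition \ref{propbem}(3), since the monoidal unit is a point and hence cofibrant, it suffices to produce associative and symmetric compatible isomorphisms $|\Theta_d[T_1]|_C\times|\Theta_d[T_2]|_C\to|\Theta_d[T_1]\times\Theta_d[T_2]|_C$, whose source is $C_{T_1}\times C_{T_2}$ by \eqref{realyon}. Rather than comparing the (different) cell structures of the product body and the product presheaf, I would restrict along $\Delta^{\times d}\hookrightarrow\Theta_d$: by Berger's comparison theorem [Be3, Prop. 3.9] the realization of any $\Theta_d$-presheaf is homeomorphic to the multisimplicial realization of its restriction to $\Delta^{\times d}$; restriction commutes with cartesian products of presheaves; and multisimplicial realization into $\Top$ preserves finite products by Eilenberg--Zilber. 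Chaining these homeomorphisms gives the desired isomorphism, with associativity and symmetry inherited from the cartesian product of spaces and of presheaves, and strong monoidality then follows from Definition \ref{defbem}.
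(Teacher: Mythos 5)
Your parts (2) and (3) are sound, but part (1) — which you yourself identify as the combinatorial heart — is where your proposal has a genuine gap, and it is precisely the gap the paper's proof is designed to avoid. You propose to read off the faces of the convex body $C_T$ as tight loci of the defining inequalities and to match them by hand with degree-raising morphisms $S\to T$ in $\Theta_d^+$, asserting that joint injectivity ``is precisely what guarantees'' the codimension-equals-degree-shift correspondence. This assertion is the entire content of the statement, and nothing in your sketch establishes it: the boundary strata cut out by the quadratic ball constraints are curved and their incidence pattern is not the face lattice of a polytope (compare Example \ref{examplecell}), and the subsidiary claims — that $\Theta_d^+/T$ has at most one morphism between any two objects, and that every morphism in $\Theta_d^+/T$ factors into degree-$1$ raises — are outputs of the correspondence, not inputs you can assume while building it. The paper explicitly remarks that this direct computation of relations among elementary face maps ``would be really heavy'' and instead argues via the \emph{elegance} of the Reedy structure on $\Theta_d$ (the generalized Eilenberg--Zilber lemma of [BR]): elegance gives, for \emph{any} presheaf $X$, a canonical stratification of $|X|_C$ by pairs (non-degenerate element, interior cell point); specializing to $X=\Theta_d[T]$, the non-degenerate elements of $\Theta_d[T](T')$ are exactly the $\Theta_d^+$-morphisms $T'\to T$, and the cell decomposition, the incidence relations, and the poset property all drop out simultaneously. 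Your wreath-product induction could in principle be carried through (it is close to Berger's original [Be2, Prop.\ 2.6]), but as written the key step is deferred rather than proved.

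On (2), your argument is essentially the paper's — latching object $\cong \partial C_T \simeq S^{|e(T)|-1}$, latching map the sphere-into-ball inclusion — except that your direct gluing of faces ``without over-counting'' is hand-waved; the paper sidesteps this by computing the colimit at the presheaf level and using that $|-|_C$ is a left adjoint (Proposition \ref{propbem}(1)), so that $L_TC=|\colim_{\partial(\Theta_d^+/T)}\Theta_d[T']|_C$ and (1) applies directly. On (3), your route is genuinely different from the paper's: the paper simply cites Berger's computation [Be2, Prop.\ 2.8] for the isomorphism on representables, whereas you chain restriction along $\Delta^{\times d}\to\Theta_d$, Proposition \ref{propberger3}, and the multisimplicial Eilenberg--Zilber product formula. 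That chain is formally valid given Proposition \ref{propberger3} as a black box, and it buys you an argument requiring no new computation with the cells $C_T$; the caveat you should flag is potential circularity, since Berger's comparison theorem in [Be3] is itself proved using the structure of the cells $C_T$ established in [Be2], so within Berger's papers the logical order runs opposite to yours.
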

\begin{proof}
We sketch the arguments, due to C.Berger. 

(1): The method employed in the proof is beautiful as it does not rely on any computation of relations between the ``elementary face maps" for $\Theta_d$, mimicking the ones for the case of $\Delta=\Theta_1$, which would be really heavy (for $d=2$, we provided such relations in [PS]). Contrary, the proof mixes up categorical and geometrical considerations. 

First of all, the statement that $C_T$ is a convex subset of the cube is clear, so the boundary is indeed homeomorphic to a sphere $S^{|e(T)|-1}$. Next, $C_T=|\Theta_d(T)|_C$. The further argument essentially uses the fact that $\Theta_d$ is an elegant Reedy category, which is proven by an analogue of the Eilenberg-Zilber lemma [GZ, II.3], [BR, Prop. 4.2, Prop. 4.4]. 
It follows that for any presheaf $X$ on $\Theta_d$, and for any element $x\in X(T)$  there is a {\it unique} morphism $\phi\colon T\to T_1$ in $\Theta_d^-$, and a unique $y\in X(T_1)$ which is non-degenerate and such $x=\phi^*y$. It follows from this property that  for any $X$ and for any $C$ such that each $C(T)$ is a closed subset homeomorphic to a disk, such that any point of $\partial C_T$  is the image $\phi_*y$ by some operator of $\phi\colon T^\prime\to T\in \Theta_d^+$ and $y\in C_{T^\prime}$, any point in the realization $|X|_C$ has a unique representative $(x\in X(T), \omega\in C(T))$ such that $x$ is non-degenerate and $\omega$ belongs to the interior. It gives a stratification of $X_T$ by open strata corresponded to the non-degenerate-interior points. 

Consider this stratification for $X=\Theta_d[T]$. The non-degenerate elements in $X(T^\prime)$ are in 1-to-1 correspondence with morphisms $\phi\colon T^\prime\to T$ in $\Theta_d^+$. It corresponds to an open stratum homeomorphic to $\Int(C_{T^\prime})$ of codimension $s$ if the morphism $\phi$ raises the degree by $s$. The incidence of the (closed) cells are in 1-to-1 correspondence with the morphisms between the corresponding objects in $\Theta_d^+/T$. It follows that for any two objects in $\Theta_d^+/T$ there is at most one morphism. The boundary $\partial C_T\simeq S^{|e(T)-1|}$ is corresponded to the morphisms in $\partial(\Theta_d^+/T)$. 

For (2), the one has to prove that for any $T\in \Theta_d$ the latching map at $T$ 
$$
\colim_{T^\prime\to T\in\partial(\Theta_d^+/T)}|\Theta_d[T^\prime]|_C\to |\Theta_d[T]|_C
$$
is a cofibration. As the realisation $|-|_C$ is a left adjoint by Proposition \ref{propbem}(1), it commutes with all colimits and the source object is 
$$
|\colim_{T^\prime\to T\in \partial(\Theta_d^+/T)}\Theta_d[T^\prime]|_C=|\partial(\Theta_d^+/T)|_C
$$
which is homeomorphic $S^{|e(T)|-1}$ by (1). The latching map is thus the embedding $\partial C_T\to C_T$, which is a closed embedding of $CW$-complexes. 

(3) follows from Proposition \ref{propbem} (3) and from [Be2, Prop. 2.8].

\end{proof}

\comment

From Proposition \ref{propberger2}(i) one easily deduces
\begin{coroll}\label{corthetalatching}
Let $T\in\Theta_d$. The following statements are true:
\begin{itemize}
\item[(1)] Consider the full subcategory $i\colon \mathcal{S}_T\subset \partial(\Theta_d^+/T)$ whose objects are given by morphisms $T^\prime\to T$ raising the degree by 1 or 2. Then it is a final subcategory. Consequently, for any functor $Y\colon \partial(\Theta_d^+/T)\to \mathcal{E}$ where $\mathcal{E}$ is a finitely cocomplete category, the canonical map
$$
\colim_{\mathcal{S}_T}i^*Y\to \colim_{\partial(\Theta_d^+/T)} Y
$$
is an isomorphism.
\item[(2)] Let $Y\colon \Theta_d\to \mathcal{M}$ be an objects of $\mathcal{M}^{\Theta_d}$. Then the latching object $L_TY$
is identified with the coequalizer
$$
\mathrm{coeq}\left(\coprod_{\substack{{T^{\pprime}\to T\in \Theta_d^+}\\ {\codim_TT^\pprime=2}}}Y_{T^\pprime}\rightrightarrows
\coprod_{\substack{{T^{\prime}\to T\in \Theta_d^+}\\ {\codim_TT^\prime=1}}}Y_{T^\prime}\right)
$$
where the two arrows are corresponded to the two cells of $\codim=1$ which are incident with a given cell of $\codim=2$, and the latching map
$$
L_TY\to Y
$$
is defined via the $\codim=1$ maps $T^\prime\to T$ by the universal property of colimit. 
\end{itemize}
\end{coroll}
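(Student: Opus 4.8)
The plan is to reduce both parts to the combinatorics of the face poset of the Joyal cell $C_T$ as furnished by Proposition \ref{propberger2}(1), running the argument dual to the one of Proposition \ref{propmatchingcofinal}.

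For (1), I would show that the inclusion $i\colon\mathcal{S}_T\hookrightarrow\partial(\Theta_d^+/T)$ is a final functor; by the colimit form of the cofinality criterion (the dual of [ML, IX.3] used in Proposition \ref{propmatchingcofinal}) this reduces to checking that for every object $A=(\alpha\colon T'\to T)$ of $\partial(\Theta_d^+/T)$ the comma category $A\downarrow i$ is non-empty and connected. Non-emptiness is immediate: by Proposition \ref{propberger2}(1) every morphism of $\Theta_d^+/T$ is a composite of degree-raising-by-one morphisms, so $\alpha$ (which raises the $\Theta_d$-degree by some $s\ge 1$) factors through a codimension-one face $\beta\colon T''\to T$, and this factorization is an object of $A\downarrow i$. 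When $A$ itself has codimension $1$ or $2$, the pair $(A,\id_A)$ is an initial object of $A\downarrow i$ and connectedness is trivial; in general I would invoke the geometric content of Proposition \ref{propberger2}(1): the objects of $A\downarrow i$ are exactly the codimension-one and codimension-two faces of $C_T$ whose closure contains the face $\sigma_{T'}$, together with the (unique, since $\Theta_d^+/T$ is a poset) morphisms between them. These are the cells of the link of $\sigma_{T'}$ in the regular CW decomposition of the boundary sphere $\partial C_T\simeq S^{|e(T)|-1}$; since this link is itself a sphere and its top cells are joined through its codimension-one cells, the codimension-two faces connect all the codimension-one faces around $\sigma_{T'}$, whence $A\downarrow i$ is connected.

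For (2), I would combine (1) with the local structure of $\partial C_T$ as a closed manifold. By (1) the latching object $L_TY$ is the colimit of $Y$ over $\mathcal{S}_T$. Inside $\mathcal{S}_T$ there are no non-identity morphisms between two codimension-one objects, nor between two codimension-two objects (by the degree inequalities for $\Theta_d^+$ together with the uniqueness of morphisms), so every non-identity morphism points from a codimension-two object to a codimension-one object; and by the manifold property of $\partial C_T$ each codimension-two face is incident to exactly two codimension-one faces, so each codimension-two object of $\mathcal{S}_T$ has exactly two morphisms out. A colimit over such a bipartite poset is computed by the coequalizer displayed in the statement, the two parallel arrows being induced by the two incidences of each codimension-two face, and the latching map $L_TY\to Y_T$ is the one induced by the codimension-one structure maps, by the universal property of the colimit.

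The main obstacle I anticipate is the connectedness in (1): it is the one place where one genuinely needs the regular CW/ball structure of $C_T$ rather than bare category theory, in order to know that the codimension-two cells suffice to connect the codimension-one cells surrounding a fixed face. All of this is already packaged in Proposition \ref{propberger2}(1), so the remaining work is the routine translation between face incidences of $C_T$ and morphisms in $\Theta_d^+/T$. Alternatively, one may note that, under the Reedy duality exchanging $\Theta_d^+$ with $(\Theta_d^{\op})^-$, part (1) is precisely Proposition \ref{propmatchingcofinal} applied to the elegant Reedy category $\Theta_d^{\op}$, and deduce it from there.
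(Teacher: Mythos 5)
Your main argument is essentially the paper's own: the paper also proves (1) by checking that the comma categories $t/\mathcal{S}_T$ are non-empty and connected, interpreting their objects as the codimension $1$ and $2$ faces of $\partial C_T$ whose closure contains the given cell, using from Proposition \ref{propberger2}(1) that every codimension-$2$ cell bounds exactly two codimension-$1$ cells, and then deduces (2) directly from (1) exactly as you do. In fact you go slightly further in the right place: where the paper's treatment of the case $\codim_T T'>2$ only states the required zig-zag connectivity of the codimension-$2$-through-codimension-$1$ chain, you supply the reason (the upper interval above $\sigma_{T'}$ is the face poset of the link, a sphere, hence strongly connected), and your handling of the degenerate codimension-$2$ case via the initial object $(A,\id_A)$ is correct.

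One caveat: drop the closing ``alternatively'' remark. The Reedy duality sends $\Theta_d^-$ to $(\Theta_d^-)^{\op}$ and $\Theta_d^+$ to $(\Theta_d^+)^{\op}$, so part (1) for $\partial(\Theta_d^+/T)$ does \emph{not} become an instance of Proposition \ref{propmatchingcofinal}, whose statement and elementary proof are specific to the leaf-removal description of morphisms in $\Theta_d^-$; the degree-raising morphisms in $\Theta_d^+$ (jointly injective families) have no such description, which is precisely why — as you yourself observe earlier — the latching side genuinely requires the regular CW structure of $C_T$ rather than bare category theory.
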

\begin{proof}
Prove (1). According to ???, it is enough to prove that for any $t\colon T^\prime\to T\in \partial(\Theta_d^+/T)$ the comma-category $t/\mathcal{S}_T$ is non-empty and connected. 

We interpret this comma-category as follows.
Its object is an embedding of the cell $T^\prime\to T$ in $\partial C_T$ to a cell $Q\to T$ of $\partial C_T$ of codimension 1 or 2. 
A morphism $Q_1\to Q_2$ over $T$ is a map of cells of codimension 1 or 2 under the cell $T^\prime\to T$.

From Proposition \ref{propberger2}(1) and from the fact that $C_T$ has a single top dimension cell it follows that any cell of codimension 2 is a boundary of exactly two cells of codimension 1. 

Consider three cases: (i) $\codim_TT^\prime=1$, (ii) $\codim_TT^\prime=2$, (iii) $\codim_TT^\prime>2$.

In case (i) it is clear that the comma category $t/\mathcal{S}_T$ has a single object and only identity morphism of it. In case (ii), 
$T^\prime$ is a cell of codimension 2 and thus it is a boundary of exactly two cells of codimension 1. The graph of the poset $t/\mathcal{S}_T$ looks like 
$$
\xymatrix{\mb &\mb\ar[l]\ar[r]&\mb}
$$
where the object in the center is $t$, which shows that it is non-empty and connected.

In case (iii), consider all cells $Q\to T$ of codimension 2 for which there exists a map $T^\prime\to Q$ over $T$. It is clear that the set of such cells is non-empty. We have to prove that any two such maps $Q_1\to Q_2$ over $T$ and under $T^\prime$ of cells of codimension 2 they can be connected by a zig-zag of maps each of which is an embedding of a cell of $\codim=2$ into a cell of $\codim=1$. 

(2) is a direct consequence of (1). 
\end{proof}

???

Similar statements holds for matching objects as well, but the argument is easier:
\begin{prop}\label{propthetamatching}
The following statements are true:
\begin{itemize}
\item[(1)]
Let $T\in \Theta_d$, consider the full subcategory  $i\colon \mathcal{S}_T^\vee\subset \partial(T/\Theta_d^-)$ whose objects are morphisms $T\to T^\prime$ which decrease the degree by 1 or by 2. Then the embedding $i$ is cofinal. Consequently, for any functor $X\colon \partial(T/\Theta_d^-)\to \mathcal{E}$, where $\mathcal{E}$ is a finitely complete category, the canonical map
$$
\lim_{\partial(T/\Theta_d^-)}X\to \lim_{\mathcal{S}_T^\vee}i^*X
$$
is an isomorphism.
\item[(2)] For any functor $X\colon \Theta_d\to\mathcal{M}$ the matching object $M_TX$ is isomorphic to the equalizer
$$
\mathrm{eq}\left(\prod_{\substack{{T\to T^\prime\in\Theta_d^-}\\{\codim_TT^\prime=1}}}X_{T^\prime}\rightrightarrows 
\prod_{\substack{{T\to T^\prime\in\Theta_d^-}\\{\codim_TT^\prime=2}}}X_{T^\prime}\right)
$$
and the matching map $X\to M_TX$ is defined via the universal limit property. 
\end{itemize}
\end{prop}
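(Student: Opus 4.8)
The plan is to deduce both parts from the cofinality of the inclusion $i\colon \mathcal{S}_T^\vee \hookrightarrow \partial(T/\Theta_d^-)$, which for $\mathcal{M}$-valued functors is exactly Proposition \ref{propmatchingcofinal}. Indeed, part (1) is the same assertion with the target $\mathcal{M}$ replaced by an arbitrary finitely complete $\mathcal{E}$, and cofinality is a property of the functor $i$ alone, insensitive to the target category. So the first task is to verify the criterion of [ML, IX.3]: for every object $A=(T\to T^\prime)\in\partial(T/\Theta_d^-)$ the comma category $\mathcal{S}_T^\vee/A$ is non-empty and connected.

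First I would fix the combinatorial bookkeeping. A degree-lowering morphism $T\to T^\prime$ in $\Theta_d^-$ corresponds, in the tree picture, to the removal of a non-empty set $S$ of (original) leaves of $T$, i.e.\ contraction of the corresponding elementary intervals in the dual disk; by elegancy such factorizations in $\Theta_d^-$ are unique, so an object of $\mathcal{S}_T^\vee/A$ is the same as a factorization $T\to T^{\prime\prime}\to T^\prime$ in which $T\to T^{\prime\prime}$ removes one or two original leaves of $T$ lying in $S$. Non-emptiness is then immediate: choosing any single $a\in S$ yields the object $B_a=(T\to T_a\to T^\prime)$ where $T_a$ removes only $a$. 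For connectedness I would use the degree-$2$ objects as connecting nodes: for any two leaves $a\ne a^\prime$ in $S$, removing the pair $\{a,a^\prime\}$ (both leaves of the \emph{original} $T$) gives $B_{a,a^\prime}\in\mathcal{S}_T^\vee/A$, together with morphisms $B_a\to B_{a,a^\prime}$ (remove $a^\prime$ from $T_a$) and $B_{a^\prime}\to B_{a,a^\prime}$ (remove $a$ from $T_{a^\prime}$). This produces a zig-zag $B_a\to B_{a,a^\prime}\leftarrow B_{a^\prime}$ joining any two single-removal objects, and since every object is some $B_a$ or some $B_{a,a^\prime}$ with each $B_{a,a^\prime}$ joined to $B_a$, the comma category is connected.

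The main obstacle is precisely the point flagged in the Remark following Proposition \ref{propmatchingcofinal}: the degree-$2$ objects must be removals of two leaves of the \emph{original} $T$, not a removal of $a$ followed by a removal carried out inside $T\setminus\{a\}$. If one allowed the latter, the pair $\{a,a^\prime\}$ need not index an object of $\mathcal{S}_T^\vee/A$, the connecting node $B_{a,a^\prime}$ would disappear, and connectedness could genuinely fail; so getting this definition exactly right is where the care lies.

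Granting part (1), part (2) follows by computing the limit over the small category $\mathcal{S}_T^\vee$ explicitly. Its only non-identity morphisms run from a codimension-$1$ object (removal of one leaf $a$, so $\codim_T T^\prime=1$) to a codimension-$2$ object (removal of a pair $\{a,a^\prime\}$), and each codimension-$2$ object receives exactly the two arrows from $T_a$ and $T_{a^\prime}$. A cone over this diagram is therefore a family indexed by the codimension-$1$ objects whose two induced maps into each codimension-$2$ object agree, which is exactly the equalizer $\mathrm{eq}\big(\prod_{\codim_T T^\prime=1}X_{T^\prime}\rightrightarrows\prod_{\codim_T T^\prime=2}X_{T^\prime}\big)$, the two parallel arrows being given by the two incident codimension-one faces of each codimension-two face. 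Identifying $M_TX=\lim_{\partial(T/\Theta_d^-)}X$ with $\lim_{\mathcal{S}_T^\vee}i^*X$ via part (1) then yields the stated formula, the matching map being the canonical map into the limit.
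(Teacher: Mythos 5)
Your route is the paper's route: the paper also reduces part (1) to the criterion of [ML, IX.3] and then simply declares the verification that each comma category $\mathcal{S}_T^\vee/A$ is non-empty and connected ``elementary''; you attempt to supply exactly those details, and part (2) by direct computation of the limit over $\mathcal{S}_T^\vee$. But your combinatorial premise is false, and it breaks the non-emptiness step as written. A degree-lowering morphism $T\to T^\prime$ in $\Theta_d^-$ does \emph{not} in general correspond to removing a set of leaves of the original $T$: the removed set $S$ of vertices is only closed upwards (if a vertex is removed, so is every vertex above it), and for $d\ge 2$ it may contain non-leaves. For the $2$-level tree with a single level-$1$ vertex $b$ carrying a single level-$2$ leaf $a$, the composite ``remove $a$, then remove $b$'' is a morphism in $\Theta_2^-$ lowering the degree by $2$, with $S=\{a,b\}$ and $b$ not a leaf of $T$. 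Consequently ``choosing any single $a\in S$ yields the object $B_a$'' fails whenever the chosen element is not an original leaf, since removal of a single non-leaf vertex is not a morphism in $\Theta_d^-$ (the singleton is not up-closed). The step is salvageable: an up-closed, finite, non-empty $S$ always contains at least one leaf of $T$ (climb upwards from any element of $S$), and those are the only legitimate choices; with this correction your identification of the comma objects as the $B_a$ and $B_{a,a^\prime}$ with $a,a^\prime\in S$ leaves of the original $T$, and your zig-zag $B_a\to B_{a,a^\prime}\leftarrow B_{a^\prime}$, do establish non-emptiness and connectedness, and your part (2) computation is then correct, since under the careful definition each codimension-$2$ object $B_{a,a^\prime}$ receives exactly the two arrows from $B_a$ and $B_{a^\prime}$.

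Relatedly, your gloss on the Remark is backwards, and this matters because the statement you were given uses the permissive wording (``all morphisms which decrease the degree by $1$ or by $2$''), while your proof silently substitutes the paper's careful definition of $\mathcal{S}_T^\vee$ in which degree-$2$ objects remove two leaves of the \emph{original} $T$. Enlarging the class of degree-$2$ objects cannot make the connecting node $B_{a,a^\prime}$ ``disappear'': removing two original leaves still lowers the degree by $2$, so those objects persist in the enlarged category $\tilde{S}^\vee_T$. What the enlargement adds are the lollipop-type objects ``remove a leaf $a$, then the vertex $b$ that becomes a leaf only after $a$ is gone'' --- degree-$2$ morphisms admitting a \emph{unique} codimension-$1$ factorization. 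These are precisely what the paper's definition excludes, they are what wrecks the ``two incident codimension-$1$ faces of each codimension-$2$ face'' pattern on which your equalizer in part (2) relies (under the literal permissive reading, the product over $\codim_T T^\prime=2$ acquires factors receiving only one arrow, so the two parallel maps of the displayed equalizer are not even both defined there), and they are what the paper's Remark blames for the failure of the permissive version. So you correctly sensed where the delicacy lies, but you mislocated it: the danger is not that $B_{a,a^\prime}$ vanishes, it is that extra one-faced degree-$2$ objects appear. A smaller point: the uniqueness of the factorizations you need is not really ``elegancy'' but the fact that a morphism in $\Theta_d^-$ under $T$ is determined by its removed vertex set; this should be stated and used explicitly, since every commutativity check in your zig-zags silently invokes it.
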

\begin{proof}
We only need to prove (1). 

As we said, the argument is more straightforward than for the case of latching maps, Corollary \ref{corthetalatching}, because the morphisms in $\Theta_d^-$ are defined via any multi-morphisms and the joint injectivity of them; they are defined directly in terms of degeneracy maps in $\Delta$. 
\end{proof}

\endcomment

Now we consider dg realization of $d$-cellular sets, for which we need a system of dg cells $C^\dg\colon \Theta_d\to\C^\udot(\mathbb{Z})$. We define it as follows. 

Let $T\in \Theta_d$. Define $C^\dg_T=C_\ldot^{\mathrm{CW}}(C_T,\mathbb{Z})$ be the $CW$ chain complex of $C_T$. 

We want to make $T\rightsquigarrow C_T^\dg$ a functor $C^\dg\colon \Theta_d\to C^\udot(\mathbb{Z})$.
We know from Proposition \ref{propberger2}(1) that the $i$-cells of $C_T$ are maps $\rho\colon T^\prime\rightarrowtail T$ in $\Theta_d^+$ such that $\deg T^\prime=i$. 
Such $\rho$ has degree $-i$ in $C_T^\dg$ (recall the by the convention we adopt all differentials have degree +1).

Assume $\alpha\colon T\to T_1$ is a morphism in $\Theta_d$. As $\Theta_d$ is an elegant Reedy category, Section \ref{sectionreedy}, see also [Be2, Sect. 2], [BR, Sect. 3.4], the composition 
$$
T^\prime \overset{\rho}{\rightarrowtail}T\xrightarrow{\alpha}T_1
$$
can be decomposed as 
$$
T^\prime \overset{s}{\twoheadrightarrow} T^\prime_1\overset{\rho_1}{\rightarrowtail} T_1
$$
Define $\alpha_*\colon C_T^\dg\to C_{T_1}^\dg$ by mapping $\rho\colon T^\prime \rightarrowtail T$ to
$\rho_1\colon T^\prime_1\to T_1$ if $s$ is the identity morphism, and to 0 otherwise. One has to check that this assignment gives a map of complexes. Indeed, $T^{\pprime}$ is a summand in $\partial(T^\prime)$ (taken with an appropriate sign) 
if there is a map $T^\pprime\rightarrowtail T^\prime$ in $\Theta_d^+$ and $\deg T^\pprime=\deg T^\prime-1$. But if $s=\id$ it gives a map $T^\pprime\rightarrowtail T^\prime_1=T^\prime$, which shows that $\alpha_*$ is a map of complexes.

\begin{prop}\label{propcofthetadg}
The cocellular object $C^\dg$ in $C^\udot(\mathbb{Z})$ is Reedy cofibrant.
\end{prop}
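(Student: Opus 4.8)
The plan is to verify Reedy cofibrancy straight from the definition: for each $T\in\Theta_d$ I must show that the latching map $L_TC^\dg\to C^\dg_T$ is a cofibration in the projective model structure on $C^\udot(\mathbb{Z})$. Recall that in this model structure the cofibrations are exactly the monomorphisms with cofibrant (dg-projective) cokernel; such maps are automatically degreewise split, and any bounded complex of free abelian groups is cofibrant. So it will suffice to identify the latching map with a degreewise split inclusion whose cokernel is a bounded complex of free $\mathbb{Z}$-modules.

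First I would record the cellular description of source and target. By Proposition \ref{propberger2}(1) the complex $C^\dg_T=C_\ldot^{\mathrm{CW}}(C_T,\mathbb{Z})$ is free over $\mathbb{Z}$ on the cells of $C_T$, i.e.\ on the morphisms $\rho\colon T'\rightarrowtail T$ in $\Theta_d^+$, with $\rho$ in cohomological degree $-\deg T'$ and CW differential preserving this basis. The non-identity morphisms span a subcomplex $B_T\subseteq C^\dg_T$, which is precisely $C_\ldot^{\mathrm{CW}}(\partial C_T,\mathbb{Z})$ for the boundary sphere $\partial C_T$. Since $C_T$ and $\partial C_T$ carry the same cells in every dimension $<\deg T$, and $C_T$ has exactly one extra cell (the top cell $\id_T$) in dimension $\deg T$, the inclusion $B_T\hookrightarrow C^\dg_T$ is an isomorphism in degrees $>-\deg T$ and is $0\hookrightarrow\mathbb{Z}$ in degree $-\deg T$. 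In particular it is a degreewise split monomorphism of bounded complexes of free abelian groups whose cokernel is $\mathbb{Z}$ concentrated in degree $-\deg T$, hence cofibrant.

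The \emph{heart} of the argument is the identification $L_TC^\dg\cong B_T$, with latching map the inclusion just described. Here I would use that, by its very construction, $C^\dg=C_\ldot^{\mathrm{CW}}\circ C$ is the composite of Berger's topological system of cells $C\colon\Theta_d\to\Top$ with the cellular chain functor: the combinatorial maps $\alpha_*$ defined before the statement are exactly the cellular chain maps of the continuous maps $C_\alpha$. The proof of Proposition \ref{propberger2}(2) already shows that the corresponding topological latching colimit is $\colim_{\partial(\Theta_d^+/T)}C_{T'}=\partial C_T$, a colimit of inclusions of CW-subcomplexes indexed by the poset $\partial(\Theta_d^+/T)$. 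The cellular chain functor carries such a poset-indexed diagram of subcomplex inclusions to the corresponding colimit of chain complexes, as one checks degreewise: in degree $-j$ it is the free abelian group on the colimit of the sets of $j$-cells, and that colimit of cell-sets is just the union, namely the set of $j$-cells of $\partial C_T$. Hence $L_TC^\dg=C_\ldot^{\mathrm{CW}}(\partial C_T)=B_T$ and the latching map is the inclusion $B_T\hookrightarrow C^\dg_T$.

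Combining these, the latching map at $T$ is a degreewise split monomorphism with cofibrant cokernel, hence a cofibration; for $\deg T=0$ the indexing category $\partial(\Theta_d^+/T)$ is empty, so $L_TC^\dg=0$ and the latching map is $0\to\mathbb{Z}$, consistent with the above. As $T$ is arbitrary, $C^\dg$ is Reedy cofibrant. The only genuinely delicate point is the commutation of the cellular chain functor with the latching colimit in the third paragraph; this relies on the elegance of $\Theta_d$ and the regular-CW/poset structure of $\Theta_d^+/T$ from Proposition \ref{propberger2}(1), which guarantees that the diagram consists of injective subcomplex inclusions over a poset whose relevant comma categories are connected, so that passing to free modules degreewise really does commute with the colimit.
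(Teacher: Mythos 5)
Your proof is correct and follows essentially the same route as the paper: identify the latching map at $T$ with the inclusion $C^{\mathrm{CW}}_\ldot(\partial C_T,\mathbb{Z})\hookrightarrow C^{\mathrm{CW}}_\ldot(C_T,\mathbb{Z})$, a degreewise split monomorphism whose cokernel is $\mathbb{Z}$ concentrated in degree $-\deg T$, hence a cofibration in the projective model structure. The only difference is that you spell out the step the paper dismisses as ``easily identified'' --- the commutation of cellular chains with the latching colimit over the poset $\partial(\Theta_d^+/T)$ --- and your justification (each cell $\rho\colon T'\rightarrowtail T$ is itself an initial object of the subposet of cells containing it, so the colimit of cell-sets is the union) is sound.
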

\begin{proof}
We have to show that for any $T\in\Theta_d$ the latching map
\begin{equation}\label{latchingthetadg}
\colim_{T^\prime\to T\in \partial(\Theta_d^+/T)}C^\dg_{T^\prime}\to C_T^\dg
\end{equation}
is a cofibration in the projective model structure on $C^\udot(\mathbb{Z})$. Recall [Ho, 2.3] that a cofibration is a map of complexes which is term-wise injective with projective cokernels in each degree. 

The map \eqref{latchingthetadg} is easily identified with the map
$$
C^{\mathrm{CW}}_\ldot(\partial C_T)\to C^{\mathrm{CW}}_\ldot (C_T)
$$
for which the statement is clear: it is a term-wise embedding, and the cokernel is $\ne 0$ only in degree $-\deg T$, in which it is $\mathbb{Z}[T\xrightarrow{\id}T]$. 

\end{proof}

For $X\colon \Theta_d^\op\to\Sets$ define 
$$
|X|_{\Theta_d,\dg}:=C_T^\dg\otimes_{T\in \Theta_d}X_T
$$
We will need
\begin{lemma}\label{lemmarealthetadgtop}
Let $X$ be as above. Then $|X|_{\Theta_d,\dg}$ is isomorphic to the CW chain complex $C_\ldot^{\mathrm{CW}}(|X|_{\Theta_d,\top})$ of the topological realization of $X$. 
\end{lemma}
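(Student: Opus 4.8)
The plan is to identify both complexes with the normalized cellular chain complex of the presheaf $X$ built from its non-degenerate elements, using that $\Theta_d$ is an elegant Reedy category (Section \ref{sectionreedy}). First I would pin down the CW structure of $|X|_{\Theta_d,\top}$. By elegance together with the stratification established in the proof of Proposition \ref{propberger2}(1), every point of $|X|_{\Theta_d,\top}$ has a \emph{unique} representative $(x,\omega)$ with $x\in X(T)$ non-degenerate and $\omega$ in the interior of $C_T$. Hence the open cells of $|X|_{\Theta_d,\top}$ are indexed by pairs $(T,x)$ with $x\in X(T)$ non-degenerate, the cell of $(T,x)$ having dimension $\deg T=|e(T)|$. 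Consequently $C_\ldot^{\mathrm{CW}}(|X|_{\Theta_d,\top})$ is, in homological degree $n$ (algebraic degree $-n$), the free abelian group on those $(T,x)$ with $\deg T=n$, and its differential is the cellular boundary: it sends $(T,x)$ to the signed sum, over the codimension-one faces $\rho\colon T'\rightarrowtail T$ in $\Theta_d^+$, of the class of $\rho^*x$, with the convention that $\rho^*x$ contributes $0$ when it is degenerate.

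Second I would compute $|X|_{\Theta_d,\dg}=\int^{T}X_T\otimes C_T^\dg$ by reducing the coend \eqref{realbem} to non-degenerate generators. A generator $x\otimes[\rho]$, with $x\in X_T$ and $\rho\colon T'\rightarrowtail T$ a cell of $C_T$ of dimension $\deg T'$, equals $x\otimes\rho_*[\id_{T'}]$, and the coend relation for $\rho\in\Theta_d^+$ rewrites it as $\rho^*x\otimes[\id_{T'}]$; so the coend is generated by the elements $y\otimes[\id_S]$, $y\in X_S$. The key computation is that for a non-identity degeneracy $\alpha\colon S\to S'$ in $\Theta_d^-$ one has $\alpha_*[\id_S]=0$ in $C_{S'}^\dg$, immediately from the definition of $\alpha_*$ on $C^\dg$ (in the factorization of $\alpha\circ\id_S$ the $\Theta_d^+$-part is $\id_{S'}$ and the $\twoheadrightarrow$-part is the non-identity $\alpha$, so the rule returns $0$). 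The corresponding coend relation then forces every degenerate generator $w\otimes[\id_S]$ to vanish, while the generalised Eilenberg--Zilber uniqueness furnished by elegance shows that the non-degenerate generators $y\otimes[\id_S]$ remain a basis. This exhibits $|X|_{\Theta_d,\dg}$ with the same underlying graded group as $C_\ldot^{\mathrm{CW}}(|X|_{\Theta_d,\top})$.

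Finally I would match the differentials. Under the reduction above, the differential of $|X|_{\Theta_d,\dg}$ induced by the boundary of $C_T^\dg$ sends $y\otimes[\id_S]$ to $\sum_\rho\pm\,y\otimes[\rho]$ over the codimension-one faces $\rho\colon S'\rightarrowtail S$, and each summand rewrites as $\pm\,\rho^*y\otimes[\id_{S'}]$; this is verbatim the cellular boundary recorded in the first step, and naturality in $X$ is automatic. I expect the main obstacle to be precisely this last comparison, namely checking that the signs produced by the cellular attaching maps of $C_T$ (Proposition \ref{propberger2}(1)) agree with the signs in the differential of $C_T^\dg$; both are governed by the incidence numbers of the CW disk $C_T$, so the agreement is structural, but it must be verified against a fixed orientation convention. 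Should the sign bookkeeping become unwieldy, I would instead argue abstractly: the functor $X\mapsto|X|_{\Theta_d,\dg}$ is cocontinuous by Proposition \ref{propbem}(1), with $|\Theta_d[T]|_{\Theta_d,\dg}=C_T^\dg$ (the dg analogue of \eqref{realyon}, holding by co-Yoneda since $\int^{S}\Theta_d(S,T)\otimes C_S^\dg\cong C_T^\dg$), while $X\mapsto C_\ldot^{\mathrm{CW}}(|X|_{\Theta_d,\top})$ takes the same value $C_T^\dg$ on representables by definition; the co-Yoneda presentation of $X$ as a colimit of representables then yields the isomorphism and its naturality simultaneously.
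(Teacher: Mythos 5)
Your proposal is correct, and your closing ``fallback'' paragraph is, almost verbatim, the paper's entire proof: the paper simply observes that both $X\mapsto |X|_{\Theta_d,\dg}$ and $X\mapsto C_\ldot^{\mathrm{CW}}(|X|_{\Theta_d,\top})$ commute with colimits, that both take the value $C_T^\dg$ on a representable $\Theta_d[T]$ (for the dg side this is the dg analogue of \eqref{realyon}), and concludes via the presentation of $X$ as a colimit of representables. Your main argument is a genuinely different, more explicit route, and it is sound: the reductions $x\otimes[\rho]=x\otimes\rho_*[\id_{T'}]\sim\rho^*x\otimes[\id_{T'}]$ for $\rho\in\Theta_d^+$, and $\alpha_*[\id_S]=0$ for a non-identity $\alpha\in\Theta_d^-$ (which kills the degenerate generators), follow immediately from the definition of the functoriality of $C^\dg$ given just before the lemma, and the identification of the open cells of $|X|_{\Theta_d,\top}$ with non-degenerate pairs $(T,x)$ is exactly the elegance-based stratification from the proof of Proposition \ref{propberger2}(1). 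What your explicit route buys is an honest basis and an explicit description of both differentials as the same signed sum over codimension-one faces with degenerate faces contributing zero; what it costs is precisely what you flag, namely the orientation/incidence-number bookkeeping needed to match signs, together with the (standard but nontrivial) generalized Eilenberg--Zilber argument showing the non-degenerate generators stay linearly independent after imposing the coend relations. The abstract argument sidesteps both, though it quietly relies on the cocontinuity of $C_\ldot^{\mathrm{CW}}(|-|_{\Theta_d,\top})$ in $X$ --- a point the paper also glosses with ``commutativity with colimits in both formulas'', and which itself ultimately rests on the same elegance-based cell description; so the two approaches are not as independent as they first appear.
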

\begin{proof}
It simply follows from presentation of $X$ as a colimit of Yoneda presheaves $\Theta_d[T]=\Theta_d(?,T)$ and commutativity with colimits in both formulas; for $X=\Theta_d[T]$ both sides give $C_T^\dg$. 
\end{proof}
We often denote the topological realization (resp., the topological totalization) of a $d$-cellular (resp., $d$-cocellular) set $X$ by $|X|_{\Theta_d,\top}$ (resp., $\Tot_{\Theta_d,\top}(X)$). Similarly, we denote the dg realization (resp., totalization) by $|X|_{\Theta_d,\dg}$ (resp., $\Tot_{\Theta_d,\dg}(X)$). The same notations are used for (co)cellular topological spaces and complexes. 

Now we can state for the case of $\mathcal{R}=\Theta_d$ the general statement of Proposition \ref{propbem}.
\begin{prop}\label{propbemtheta}
The following statements are true:
\begin{itemize}
\item[(1)] There is a Quillen adjunction 
$$
|-|_{\Theta_d,\top}\colon \Top^{\Theta_d^\op}\rightleftarrows \Top\colon \underline{\Hom}(C_-,=)
$$
where $|-|_{\Theta_d,\top}$ is the left adjoint. Similarly, there is a Quillen adjunction 
$$
-\otimes_{\Theta_d}C_-\colon \Top\rightleftarrows \Top^{\Theta_d}\colon \Tot_{\Theta_d,\top}(=)
$$
where $\Tot_{\Theta_d,\top}$ is the right adjoint.
\item[(2)] Endow $C^\udot(\mathbb{Z})$ with the projective model structure. There is a Quillen adjunction 
$$
|-|_{\Theta_d,\dg}\colon (C^\udot(\mathbb{Z}))^{\Theta_d^\op}\rightleftarrows C^\udot(\mathbb{Z})\colon \underline{\Hom}(C^\dg_-,=)
$$
where $|-|_{\Theta_d,\dg}$ is the left adjoint. 
Similarly, there is a Quillen adjunction 
$$
-\otimes_{\Theta_d}C^\dg_-\colon C^\udot(\mathbb{Z})\rightleftarrows (C^\udot(\mathbb{Z}))^{\Theta_d}\colon \Tot_{\Theta_d,\dg}(=)
$$
where $\Tot_{\Theta_d,\dg}(=)$ is the right adjoint.
\item[(3)] The functor $|-|_{\Theta_d,\top}$ is strong monoidal. 
\end{itemize}
\end{prop}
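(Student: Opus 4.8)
The plan is to recognise Proposition \ref{propbemtheta} as a direct specialisation of the general Berger--Moerdijk formalism recalled in Proposition \ref{propbem} to the Reedy category $\mathcal{R}=\Theta_d$, applied once with the monoidal model category $\mathcal{M}=\Top$ and once with $\mathcal{M}=C^\udot(\mathbb{Z})$ under its projective model structure. All the genuinely nontrivial input has already been assembled: that $\Theta_d$ is a Reedy category (Section \ref{sectionreedy}); that the topological system of cells $C\colon\Theta_d\to\Top$ is Reedy cofibrant and strong monoidal (Proposition \ref{propberger2}(2),(3)); and that the dg system of cells $C^\dg\colon\Theta_d\to C^\udot(\mathbb{Z})$ is Reedy cofibrant (Proposition \ref{propcofthetadg}). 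So the work reduces to checking that the hypotheses of Proposition \ref{propbem} are met in each case and invoking it, while identifying the abstract realization/totalization $|-|_C,\Tot_C$ of \eqref{realbem}--\eqref{totbem} with the notation fixed earlier in this section.

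First I would establish (1). Since $\Top$ is a monoidal model category (see Notations) whose monoidal unit, the one-point space, is cofibrant, and since $C$ is Reedy cofibrant by Proposition \ref{propberger2}(2), Proposition \ref{propbem}(1) applies with $\mathcal{R}=\Theta_d$ and $\mathcal{M}=\Top$, yielding the Quillen adjunctions
\[
|-|_{\Theta_d,\top}\colon \Top^{\Theta_d^\op}\rightleftarrows\Top\colon\underline{\Hom}(C_-,=),\qquad
-\otimes_{\Theta_d}C_-\colon \Top\rightleftarrows\Top^{\Theta_d}\colon\Tot_{\Theta_d,\top}(=),
\]
where $|-|_{\Theta_d,\top}=|-|_C$ and $\Tot_{\Theta_d,\top}=\Tot_C$ for this choice of $C$. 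For (2) I would repeat the argument with $\mathcal{M}=C^\udot(\mathbb{Z})$ in the projective model structure, which is a monoidal model category under the tensor product of complexes and whose unit $\mathbb{Z}[0]$ is cofibrant. With $C^\dg$ Reedy cofibrant by Proposition \ref{propcofthetadg}, Proposition \ref{propbem}(1) again delivers the two stated Quillen adjunctions, with $|-|_{\Theta_d,\dg}=|-|_{C^\dg}$ and $\Tot_{\Theta_d,\dg}=\Tot_{C^\dg}$.

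Finally, (3) follows by combining (1) with Proposition \ref{propbem}(2): since $C$ is both Reedy cofibrant and strong monoidal (Proposition \ref{propberger2}(2),(3)), that statement asserts that $|-|_C=|-|_{\Theta_d,\top}$ is a strong monoidal left Quillen functor. Equivalently, one may invoke Proposition \ref{propbem}(3), whose hypothesis---the associative, symmetric, compatible isomorphisms $|\Theta_d[T_1]|_C\otimes|\Theta_d[T_2]|_C\simeq|\Theta_d[T_1]\times\Theta_d[T_2]|_C$ on realizations of representable presheaves---is exactly Proposition \ref{propberger2}(3), together with the cofibrancy of the unit of $\Top$.

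There is no serious obstacle here; the statement is a packaging result, and its substance lives entirely in Propositions \ref{propberger2} and \ref{propcofthetadg}. The only points requiring a moment's care are the (standard) verification that both target categories are monoidal model categories with cofibrant unit, and the bookkeeping identification of the coend/end expressions \eqref{realbem}--\eqref{totbem} with the realization and totalization notation $|-|_{\Theta_d,\top},\Tot_{\Theta_d,\top}$ and their dg analogues. This is why the proposition can be disposed of by direct citation of the preceding results.
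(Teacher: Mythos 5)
Your proposal is correct and coincides with the paper's own proof: both deduce (1) and (2) by specialising Proposition \ref{propbem}(1) to $\mathcal{R}=\Theta_d$ using the Reedy cofibrancy results of Propositions \ref{propberger2}(2) and \ref{propcofthetadg}, and both obtain (3) from the strong monoidality statement of Proposition \ref{propberger2}(3) (the paper citing the underlying computation in [Be2, Prop.\ 2.8]). Your added remarks on the cofibrancy of the monoidal units and the identification of the coend/end formulas with the notation $|-|_{\Theta_d,\top}$, $\Tot_{\Theta_d,\top}$ are exactly the routine bookkeeping the paper leaves implicit.
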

\begin{proof}
(1) and (2) follow from Proposition \ref{propbem} and Propositions \ref{propberger2}(2) and \ref{propcofthetadg}.
(3) follows from Propostion \ref{propberger2}(3) and the computation in [Be2, Prop. 2.8].
\end{proof}

\vspace{2mm}

We will need the following surprising statement, proven in [Be3, Prop. 3.9]. It provides an alternative way to compute the realization $|-|_{\Theta_d,\top}$. 

For any category $\mathcal{A}$, there is a functor 
$$
\delta_{\mathcal{A}}\colon \Delta\times \mathcal{A}\to \Delta\wr\mathcal{A}
$$
defined by
$$
\delta_{\mathcal{A}}([n], A)=([n]; \underset{n \text{ times }}{\underbrace{A,\dots,A}})
$$
$$
\delta_\mathcal{A}(\phi\colon [m]\to [n], f\colon A\to B)=(\phi, \phi_i^j)
$$
where for any $1\le i\le n$, $\phi_i^j=f\colon A\to B$, $j=\phi(i-1)+1\dots\phi(i)$, unless $\phi(i)\ne\phi(i+1)$.

As $\Theta_d=\Delta\wr\Theta_{d-1}$ and $\Theta_1=\Delta$, one gets a functor
\begin{equation}\label{wreathdiag}
\delta_d^\wr\colon \Delta^{\times d}\to\Theta_d
\end{equation}s
The following statement is a direct corollary of [Be3, Prop. 3.9], we state it in the cases we use, see loc. cit. for the general statement.
\begin{prop}\label{propberger3}
Let $X\colon \Theta_d^\op\to\mathcal{E}$ where $\mathcal{E}=\Sets$ or $\mathcal{E}=\Top$. Then the realization $|(\delta_d^\wr)^*X|_{\Delta^{\times d}}$ of the polysimplicial set $(\delta_d^\wr)^*X$ is homeomorphic to the cellular realization $|X|_{\Theta_d}$. Both realizations commute with all colimits and with finite limits. 
\end{prop}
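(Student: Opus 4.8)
The plan is to exhibit both sides of the asserted homeomorphism as colimit-preserving functors on $\Sets^{\Theta_d^\op}$ (resp.\ on $\Top^{\Theta_d^\op}$) and then to reduce to representable presheaves. Write $F(X)=|X|_{\Theta_d,\top}=X\otimes_{\Theta_d}C$ for the $\Theta_d$-cellular realisation and $G(X)=|(\delta_d^\wr)^*X|_{\Delta^{\times d}}$ for the polysimplicial realisation of the restriction, the latter computed with the standard polysimplicial system of cells $P\colon\Delta^{\times d}\to\Top$, $P([n_1],\dots,[n_d])=\Delta^{n_1}\times\cdots\times\Delta^{n_d}$. The functor $F$ preserves all colimits, being a left adjoint by Proposition \ref{propbemtheta}(1). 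The functor $G$ preserves all colimits as well: the restriction $(\delta_d^\wr)^*$ is computed pointwise and has both a left adjoint $(\delta_d^\wr)_!$ and a right adjoint $(\delta_d^\wr)_*$ (the Kan extensions), so it preserves limits and colimits, and polysimplicial realisation is a left adjoint. Since every $X$ is canonically a colimit of representables $\Theta_d[T]$, it then suffices to build a natural transformation $\eta\colon G\Rightarrow F$ and to verify that $\eta_{\Theta_d[T]}$ is a homeomorphism for every $T$; a colimit of homeomorphisms is a homeomorphism.

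To construct $\eta$ I would first identify the pulled-back cell system. Using the explicit convex description of the Joyal cells (Example \ref{examplecell}) together with the wreath structure $\Theta_d=\Delta\wr\Theta_{d-1}$ and strong monoidality (Proposition \ref{propberger2}(3)), one verifies by induction on $d$ that there is a natural isomorphism $\theta\colon P\eqto C\circ\delta_d^\wr$; concretely this rests on a product decomposition $C_{([n];A,\dots,A)}\simeq\Delta^{n}\times C_A$, with base case $\delta_1^\wr=\id_\Delta$ and $C_{[n]}\simeq\Delta^n$. Feeding $\theta$ into the coends and composing with the canonical comparison of a $\Delta^{\times d}$-coend with a $\Theta_d$-coend along $\delta_d^\wr$ yields, naturally in $X$,
\[
\eta_X\colon (\delta_d^\wr)^*X\otimes_{\Delta^{\times d}}P \xrightarrow{\ \theta\ } (\delta_d^\wr)^*X\otimes_{\Delta^{\times d}}(C\circ\delta_d^\wr)\longrightarrow X\otimes_{\Theta_d}C .
\]
As $\theta$ is an isomorphism the first arrow is a homeomorphism, so $\eta_{\Theta_d[T]}$ is a homeomorphism exactly when the second, canonical arrow is; and on a representable the target is $C_T$ by \eqref{realyon}.

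This reduces the entire statement to the single geometric fact that, for $X=\Theta_d(-,T)$, the canonical map
\[
\int^{\underline{n}\in\Delta^{\times d}}\Theta_d(\delta_d^\wr(\underline{n}),T)\otimes\left(\Delta^{n_1}\times\cdots\times\Delta^{n_d}\right)\longrightarrow C_T
\]
is a homeomorphism; equivalently, that the polysimplicial set $\underline{n}\mapsto\Theta_d(\delta_d^\wr(\underline{n}),T)$ is a triangulation of the Joyal cell $C_T$. I expect this to be the main obstacle. The natural route is to match cell structures: by Proposition \ref{propberger2}(1) the cell $C_T$ carries a CW structure whose cells correspond to objects of $\Theta_d^+/T$, while the left-hand realisation carries the cell structure of a polysimplicial set, and one checks that the non-degenerate simplices of $\underline{n}\mapsto\Theta_d(\delta_d^\wr(\underline{n}),T)$ index a subdivision of the former, so that $\eta_{\Theta_d[T]}$ is a cellular homeomorphism. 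This is precisely [Be3, Prop. 3.9], which I would invoke at this point.

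Finally, the compatibility with finite limits is read off from the right-hand description, and this is one of the payoffs of the comparison. Polysimplicial realisation $|-|_{\Delta^{\times d}}$ preserves finite limits (geometric realisation into compactly generated spaces is left exact, cf.\ [May, Th.14.3], and $|A\times B|\simeq|A|\times|B|$ propagates this to the $d$-fold case), while $(\delta_d^\wr)^*$ preserves all limits; hence $G$ preserves finite limits, and transporting along the natural homeomorphism $\eta$ shows that $F=|-|_{\Theta_d,\top}$ preserves finite limits as well. The argument is identical for $\mathcal{E}=\Sets$, where the cells act by copowers, and — using Lemma \ref{lemmarealthetadgtop} and Proposition \ref{propcofthetadg} — it transfers verbatim to the dg setting with $C^\dg$ in place of $C$.
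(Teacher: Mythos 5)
Your overall architecture (reduce to representables via colimit-preservation, then compare the two realizations cell by cell) is reasonable, but it hinges on a genuinely false step: the claimed natural isomorphism $\theta\colon P\eqto C\circ\delta_d^\wr$, which you ground in the decomposition $C_{([n];A,\dots,A)}\simeq\Delta^n\times C_A$. This fails already on dimension count. By Proposition \ref{propberger2}(1), $C_T$ is a disk of dimension $|e(T)|$; for $T=\delta_2^\wr([n],[m])=([n];[m],\dots,[m])$ the tree has $n$ edges at level $1$ and $nm$ edges at level $2$, so $C_T\cong B^{n(m+1)}$, whereas $P([n],[m])=\Delta^n\times\Delta^m$ is only $(n+m)$-dimensional: already $\underline{n}=([2],[1])$ gives $B^4$ against a three-dimensional prism. (The correct count for $([n];A,\dots,A)$ is $n(1+\dim C_A)$, not $n+\dim C_A$; your formula is dimensionally consistent only for $n=1$, which is why the induction cannot start from the wreath structure in the way you propose.) Consequently the pulled-back cell system $C\circ\delta_d^\wr$ is \emph{not} isomorphic to the standard polysimplicial system $P$, the map $\eta$ as you construct it does not exist, and the representable case cannot be settled by matching cells one-to-one: the actual content of [Be3, Prop. 3.9] is precisely that the polysimplicial set $\underline{n}\mapsto\Theta_d(\delta_d^\wr(\underline{n}),T)$ realizes to a nontrivial \emph{prismatic subdivision} of the single Joyal cell $C_T$, with many nondegenerate multisimplices landing in one cell, so that the coends agree even though the cosimplicial cell systems do not.

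For comparison: the paper offers no proof at all — it states the proposition as a direct corollary of [Be3, Prop. 3.9], whose statement already covers arbitrary cellular sets (not only representables), so your colimit reduction is redundant once that result is invoked; and the paper extracts finite-limit preservation of the cellular realization exactly as you do, by transporting along the homeomorphism from the polysimplicial side (see item (a) in the proof of Proposition \ref{propreedyfibranttheta}). So with the $\theta$-step deleted, what survives of your argument essentially coincides with the paper's citation. If you do want an honest reduction to representables, you must replace $\theta$ by Berger's actual comparison map, natural in $T$, after which density of representables gives the general case — but constructing that comparison and proving it is a homeomorphism is the entire difficulty, and it is a subdivision statement, not an isomorphism of cell systems.
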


\qed

Clearly the two realizations have, by their definitions, different combinatorial (CW complex) structures, but Proposition states that as topological spaces they are {\it homeomorphic}.

\comment
\vspace{2mm}

Finally, we will use the result [Be2, Th. 3.9] on Quillen model structure on the category $d$-cellular sets.
\begin{prop}\label{propbergermodel}
There is a closed model structure on the category of $d$-cellular sets $\hat{\Theta}_d$, whose weak equivalences are realization weak equivalences and whose cofibrations are monomorphisms, and a Quillen pair
$$
L\colon \hat{\Theta}_d\rightleftarrows \Top\colon R
$$
where $L(X)=|X|_{\Theta_d}$ and $R(Y)(T)=\Hom(C_T,Y)\in \Sets$. Moreover, this Quillen pair is a Quillen equivalence.
\end{prop}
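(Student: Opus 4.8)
The plan is to realize this as an instance of the Grothendieck--Cisinski theory of test categories, using the comparison with multisimplicial sets (Proposition \ref{propberger3}) as the bridge to the classical homotopy theory of $\SSets$. First I would record the adjunction: since $L=|-|_{\Theta_d}\colon\hat{\Theta}_d\to\Top$ is the cocontinuous extension of the cocellular object $C\colon\Theta_d\to\Top$, it is a left adjoint, with right adjoint the cellular singular functor $R(Y)(T)=\Top(C_T,Y)$ (the $\Theta_d$-analogue of the classical $\SSets/\Top$ pair, cf.\ Proposition \ref{propbemtheta}(1)). Declaring the cofibrations of $\hat{\Theta}_d$ to be the monomorphisms and the weak equivalences to be the \emph{realization weak equivalences} $W=\{f\mid Lf\text{ is a weak homotopy equivalence}\}$, the task splits into (i) producing a model structure with these two classes, and (ii) showing that $L\dashv R$ is a Quillen equivalence.

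For (i), the key point is that $W$ is an \emph{accessible localizer} on the presheaf topos $\hat{\Theta}_d$: it contains the isomorphisms and is closed under two-out-of-three and retracts, and it is accessible because $L$ preserves all colimits (in particular filtered ones) while the weak equivalences of $\Top$ are accessible. By Cisinski's existence theorem, an accessible localizer together with the monomorphisms as cofibrations determines a cofibrantly generated model structure on $\hat{\Theta}_d$, with generating cofibrations the boundary inclusions $\partial\Theta_d[T]\hookrightarrow\Theta_d[T]$; this is exactly the asserted structure. For the Quillen pair, $L$ sends each such generating cofibration to the closed $CW$-inclusion $\partial C_T\hookrightarrow C_T$ of a sphere into a disc (Proposition \ref{propberger2}(1)), which is a cofibration in $\Top$; being cocontinuous, $L$ therefore carries every monomorphism to a cofibration, and it carries trivial cofibrations to weak equivalences tautologically, since $W$ is defined through $L$. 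Hence $L$ is left Quillen and $R$ is right Quillen.

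For (ii) I would reduce everything to the classical equivalence $|-|\colon\SSets\rightleftarrows\Top$ by means of Proposition \ref{propberger3}, which gives a natural homeomorphism $|X|_{\Theta_d}\cong|(\delta_d^\wr)^*X|_{\Delta^{\times d}}$. Thus $f\in W$ if and only if $(\delta_d^\wr)^*f$ is a diagonal weak equivalence of multisimplicial sets, and the homotopy theory of $\widehat{\Delta^{\times d}}$ is Quillen equivalent to $\SSets$, hence to $\Top$. Since cofibrations are the monomorphisms every object of $\hat{\Theta}_d$ is cofibrant, and every space is fibrant, so it suffices to check that the unit $X\to RLX$ and the counit $LRY\to Y$ lie in $W$. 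Applying $L$ and invoking the comparison identifies these with the unit and counit of the multisimplicial singular/realization adjunction, for which the relevant statements are the classical facts that $|\Sing\,Y|\to Y$ and $X\to\Sing|X|$ are weak equivalences. What legitimizes this identification is that the system of cells $C$ is \emph{strong monoidal} (Proposition \ref{propberger2}(3), Proposition \ref{propbemtheta}(3)): this is precisely the product-compatibility making $\Theta_d$ a \emph{strict} test category, which upgrades a homotopy surjection to an equivalence of homotopy categories.

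The main obstacle I anticipate lies in (ii), namely verifying the strict test category conditions rather than merely exhibiting the adjunction. One must show that realization commutes with finite products up to weak equivalence and that the terminal presheaf is aspherical, so that the derived adjunction is a genuine equivalence and not just a Bousfield localization. Strong monoidality of $C$ supplies the product compatibility and the convexity (hence contractibility) of each cell $C_T$ in Proposition \ref{propberger2}(1) supplies the asphericity; the delicate work is assembling these inputs into the test-category axioms and matching the two adjunctions across the functor $\delta_d^\wr$ so that the classical $\SSets$-theory transports cleanly.
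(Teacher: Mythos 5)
You should know at the outset that the paper contains no proof of this statement: it is quoted from Berger, [Be2, Th.\ 3.9] (and where the result is actually needed, e.g.\ in the proof of Proposition \ref{prophirsch}(1), the paper simply cites Berger or sidesteps via Proposition \ref{propberger3}), so your proposal is effectively measured against Berger's argument. Your part (i) is the right modern reconstruction: Cisinski's existence theorem for accessible localizers on a presheaf topos does produce the model structure with monomorphisms as cofibrations, and left-Quillenness of $L$ follows as you say from $L(\partial\Theta_d[T]\hookrightarrow\Theta_d[T])=(\partial C_T\hookrightarrow C_T)$. But two steps are looser than they look. First, ``the weak equivalences of $\Top$ are accessible'' does not typecheck: $\Top$ is not locally presentable, so accessibility of $W=L^{-1}(\text{w.e.})$ needs another source; the clean fix is to observe that by Proposition \ref{propberger3} your $W$ coincides with the preimage under the restriction functor $(\delta_d^\wr)^*$ of the diagonal weak equivalences of multisimplicial sets, which do form an accessible localizer. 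Second, verifying that $W$ is a localizer at all requires that pushouts and transfinite compositions of trivial cofibrations stay in $W$, which uses that $L$ sends monomorphisms to closed relative CW-inclusions (this is where elegance of the Reedy structure and Proposition \ref{propberger2}(1) enter) together with the gluing lemma in $\Top$; this should be said, not absorbed into ``cocontinuity''.

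The genuine gap is in part (ii). Proposition \ref{propberger3} compares the \emph{left} adjoints applied to one and the same presheaf; it does not intertwine the right adjoints, and your claimed identification of the unit and counit with those of the multisimplicial singular/realization adjunction is false. Concretely, $(\delta_d^\wr)^*R(Y)$ sends $([n_1],\dots,[n_d])$ to $\Top\big(C_{\delta_d^\wr([n_1],\dots,[n_d])},Y\big)$, and by Proposition \ref{propberger2}(1) the cell $C_{\delta_d^\wr([n_1],\dots,[n_d])}$ is a ball of dimension $n_1+n_1n_2+\cdots+n_1n_2\cdots n_d$, not the prism $\Delta^{n_1}\times\cdots\times\Delta^{n_d}$ of dimension $n_1+\cdots+n_d$: already for $([2];[1],[1])\in\Theta_2$ you get $B^4$ against the $3$-dimensional $\Delta^2\times\Delta^1$. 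So the counit $LRY\to Y$ does not reduce by restriction to the classical $|\Sing Y|\to Y$, and this counit is precisely the hard content of the theorem. A correct route (close to Berger's, and to Cisinski's test-category machinery that you invoke by name): since $T\mapsto C_T$ is Reedy cofibrant with contractible values (Proposition \ref{propberger2}(1),(2)), $|X|_{\Theta_d}$ is naturally weakly equivalent to $\hocolim_{\Theta_d/X}\,{*}\simeq |N(\Theta_d/X)|$; then one must prove that $\Theta_d$ is a (strict) test category, so that $X\mapsto N(\Theta_d/X)$ induces an equivalence of homotopy categories, and identify the derived functor of $L$ with this equivalence --- whence the Quillen equivalence. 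Note also that strong monoidality of the cells governs compatibility with products (strictness of the test category), not the equivalence itself; what drives the equivalence is asphericity, i.e.\ exactly the nerve comparison above, which your sketch never actually establishes.
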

Proposition implies that $\Theta_d$ is a {\it test category}. We denote $R(Y)=\Sing_{\Theta_d}(Y)$. 

We will need the following consequence from the statement:
\begin{coroll}\label{corolthetareal1}
Let $X\in \hat{\Theta}_d$. Then the adjunction morphism $\epsilon\colon X\to \Sing_{\Theta_d}(|X|_{\Theta_d})$ is a weak equivalence of $d$-cellular sets.
\end{coroll}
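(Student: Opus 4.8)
The plan is to deduce this as a formal consequence of the Quillen equivalence $L=|-|_{\Theta_d}\dashv R=\Sing_{\Theta_d}$ recorded in the preceding Proposition, exploiting two special features of the present setup. First I would note that, since the cofibrations of $\hat{\Theta}_d$ are exactly the monomorphisms, the map $\emptyset\hookrightarrow X$ is a cofibration for \emph{every} $d$-cellular set $X$; hence every object of $\hat{\Theta}_d$ is cofibrant. Dually, every object of $\Top$ is fibrant, so in particular $LX=|X|_{\Theta_d}$ is fibrant. Together these two remarks guarantee that no cofibrant replacement of $X$ and no fibrant replacement of $|X|_{\Theta_d}$ is required when one passes to derived functors.

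With that in place, the direct argument is immediate from the defining property of a Quillen equivalence (Hovey, Def.~1.3.12): for a cofibrant object $c\in\hat{\Theta}_d$ and a fibrant object $e\in\Top$, a map $Lc\to e$ is a weak equivalence if and only if its adjoint transpose $c\to Re$ is a weak equivalence of $d$-cellular sets. I would apply this with $c=X$, $e=|X|_{\Theta_d}$ (fibrant), and the map $\id_{|X|_{\Theta_d}}$, which is trivially a weak equivalence in $\Top$. Its transpose under the adjunction is, by definition, the adjunction unit $\epsilon\colon X\to RLX=\Sing_{\Theta_d}(|X|_{\Theta_d})$. The criterion then forces $\epsilon$ to be a weak equivalence, which is exactly the claim.

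As a hands-on alternative kept in reserve, I would use that the weak equivalences of $\hat{\Theta}_d$ are by definition the realization weak equivalences, so $L$ both preserves and reflects them. Writing $\theta_Y\colon LRY\to Y$ for the counit, the derived-counit half of the Quillen equivalence gives that $\theta_Y$ is a weak homotopy equivalence for every fibrant $Y$ (and all spaces are fibrant), in particular for $Y=LX$. The triangle identity $\theta_{LX}\circ L\epsilon=\id_{LX}$ together with $2$-out-of-$3$ then shows $L\epsilon$ is a weak equivalence, whence $\epsilon$ is a weak equivalence because $L$ reflects them. There is no genuine obstacle here; the entire content sits in the cited Quillen equivalence. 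The only point I would verify carefully is that the \emph{point-set} unit $\epsilon$ coincides with the \emph{derived} unit, and this coincidence is precisely what the universal cofibrancy of $X$ and fibrancy of $|X|_{\Theta_d}$ supply — without them one would obtain only that $X\to R\bigl((LX)^{\mathrm{fib}}\bigr)$ is a weak equivalence.
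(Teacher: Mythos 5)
Your proposal is correct and takes essentially the same route as the paper: the paper's proof likewise observes that every $d$-cellular set is cofibrant and every topological space is fibrant, and then invokes the Quillen-equivalence criterion for the pair $(|-|_{\Theta_d},\Sing_{\Theta_d})$ to transpose the identity map of $|X|_{\Theta_d}$ (trivially a weak equivalence) into the unit $\epsilon$. Your reserve argument via the counit and the triangle identity is just a repackaging of the same content, so no comparison is needed.
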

\begin{proof}
Any $d$-cellular set is cofibrant and any topological space is fibrant. Then the statement is equivalent to to the statement that the  adjoint to $\epsilon$ map $|X|_{\Theta_d}\xrightarrow{\id}|X|_{\Theta_d}$ is a weak equivalence of topological spaces. 
\end{proof}

\begin{coroll}\label{corolthetareal2}
The map of $d$-cellular sets $X\to \Sing_{\Theta_d}|X|_{\Theta_d, \top}$ induces a quasi-isomorphism on their dg realizations. 
\end{coroll}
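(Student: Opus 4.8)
The plan is to reduce the statement to the homotopy invariance of cellular homology, using the two results already at hand: Corollary \ref{corolthetareal1}, which says that the unit $\epsilon\colon X\to\Sing_{\Theta_d}|X|_{\Theta_d,\top}$ is a weak equivalence of $d$-cellular sets (equivalently, its topological realization is a weak homotopy equivalence), and Lemma \ref{lemmarealthetadgtop}, which identifies the dg realization with the $CW$ chain complex of the topological realization. The whole argument is then essentially ``pass through topology and apply Whitehead''.

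First I would record that the isomorphism of Lemma \ref{lemmarealthetadgtop} is \emph{natural} in the $d$-cellular set. Indeed, both $|-|_{\Theta_d,\dg}$ and $C_\ldot^{\mathrm{CW}}(|-|_{\Theta_d,\top})$ commute with colimits and agree on the representables $\Theta_d[T]$ (both produce $C^\dg_T$), so, writing any $d$-cellular set as a colimit of Yoneda presheaves, they are naturally isomorphic. Applying this naturality to the map $\epsilon$ yields a commuting square that identifies $|\epsilon|_{\Theta_d,\dg}$ with $C_\ldot^{\mathrm{CW}}(|\epsilon|_{\Theta_d,\top})$. Thus it suffices to show the latter is a quasi-isomorphism.

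Next I would invoke Corollary \ref{corolthetareal1}: the realization $|\epsilon|_{\Theta_d,\top}$ is a weak homotopy equivalence of $CW$-complexes. Since the realization of a map of $d$-cellular sets is a cellular map for the Joyal $CW$-structures of Proposition \ref{propberger2}(1), the morphism $C_\ldot^{\mathrm{CW}}(|\epsilon|_{\Theta_d,\top})$ is precisely the induced map of cellular chain complexes. The cohomology of $C_\ldot^{\mathrm{CW}}(Z)$ computes the integral (cellular $=$ singular) homology of $Z$, naturally in cellular maps, and a weak homotopy equivalence induces isomorphisms on integral singular homology. Hence $C_\ldot^{\mathrm{CW}}(|\epsilon|_{\Theta_d,\top})$, and therefore $|\epsilon|_{\Theta_d,\dg}$, is a quasi-isomorphism in $C^\udot(\mathbb{Z})$.

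The main obstacle I expect is purely bookkeeping rather than conceptual: making sure the identification of Lemma \ref{lemmarealthetadgtop} is genuinely natural (so that it transports the \emph{map} $\epsilon$, not merely the objects at its source and target), and that $|\epsilon|_{\Theta_d,\top}$ is a cellular map so that the cellular-versus-singular comparison is functorial along it. Both are standard once the $CW$-structure of Proposition \ref{propberger2}(1) is in place, after which homotopy invariance of homology closes the argument. There are no coefficient complications, since everything runs over $\mathbb{Z}$ (and the identical argument works over any ring $R$, as remarked for the dg condensation throughout).
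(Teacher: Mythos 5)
Your proposal is correct and follows essentially the same route as the paper's own proof: use Corollary \ref{corolthetareal1} to conclude that the topological realizations are weakly equivalent, then conclude via Lemma \ref{lemmarealthetadgtop} and the homotopy invariance of CW ($=$ singular) homology that the dg realizations are quasi-isomorphic. The naturality of the identification in Lemma \ref{lemmarealthetadgtop} and the cellularity of $|\epsilon|_{\Theta_d,\top}$, which you verify explicitly, are precisely the bookkeeping steps the paper's two-line proof leaves implicit.
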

\begin{proof}
The statement of  Corollary \ref{corolthetareal1} means that the topological realizations $$|X|_{\Theta_d,\top}\sim |\Sing_{\Theta_d}|X|_{\Theta_d,\top}|_{\Theta_d,\top}$$ are homotopy equivalent. Then the statement follows from Lemma \ref{lemmarealthetadgtop}.
\end{proof}

\endcomment

\subsection{\sc Proofs of Theorems \ref{propthetatop} and \ref{propthetadg}}
We apply the same strategy which was employed for the case $d=1$ (of the category $\Delta$) in Section \ref{sectioncontr1}.
However, the proofs are technically more involved, and rely on the preliminary results stated or proven in Sections \ref{sectionreedy}-\ref{sectionrt} above. 

\subsubsection{\sc The topological condensation}
Let $d\ge 1$, $n\ge 1$ be fixed, $(\bmu,\bsigma)\in \mathcal{K}(k)^{\times d}$,

\begin{equation}\label{eqbmu}
(\bmu,\bsigma)=((\mu^1,\sigma_1),(\mu^2,\sigma_2),\dots,(\mu^d,\sigma_d))
\end{equation}
(see Section \ref{sectiongenlp2} for notations). 

The following statement is Theorem \ref{propthetatop}(i). 
\begin{prop}\label{propmsd}
For any $T\in \Theta_d$, the topological realization $|\mathcal{L}_{(\bmu,\bsigma)}(T)|$ is contractible.
\end{prop}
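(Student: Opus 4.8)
The plan is to carry out the contraction argument of Proposition \ref{propms}, now one level at a time. First I would apply Proposition \ref{propberger3} in each of the $k$ arguments, replacing the $\Theta_d$-realization by the associated polysimplicial realization along $\delta_d^\wr$; this identifies $|\mathcal{L}_{(\bmu,\bsigma)}(T)|$ with the topological realization of a $dk$-fold polysimplicial set whose simplicial directions are indexed by pairs $(\text{color }i,\ \text{level }\ell)$, $1\le i\le k$, $1\le\ell\le d$. After permuting the $k$ colors I may assume that the level-$1$ block permutation satisfies $\sigma_1=\id$; such a permutation acts simultaneously on the remaining data $\sigma_2,\dots,\sigma_d$ and $\mu^1,\dots,\mu^d$, producing an isomorphic block, exactly as in the reduction to $\sigma=\id$ used in Proposition \ref{propms}.

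Next I would reproduce the contraction $c$ of Proposition \ref{propms} in the $(\text{color }1,\text{level }1)$ direction. Given a generalised lattice path $\omega$, let $c(\omega)$ be obtained from $\omega$ by adjoining one new interval at the start of the level-$1$ ordinal of $\overline{T}_1$, carrying the degenerate $(d-1)$-disk, and letting the path traverse this interval as its first level-$1$ movement. The key point is that $c$ preserves the block $\mathcal{L}_{(\bmu,\bsigma)}$: since $\sigma_1=\id$, prepending a first level-$1$ movement along the first coordinate raises none of the level-$1$ complexities, and since the adjoined interval carries a degenerate higher disk, none of the higher-level paths $\omega^\ell_a$ is altered, so every constraint $(\mu^\ell_a(\omega),\sigma_{\ell,a}(\omega))\le(\mu^\ell,\sigma_\ell)$ is preserved. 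As in the case $d=1$, the assignment $\omega\rightsquigarrow c(\omega)$ is natural and satisfies $\partial_0^{(1)}\circ c=\id$, where $\partial_0^{(1)}$ is the $0$-th level-$1$ coface acting on the first color.

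Using the endofunctor $C$ of $\Theta_d$ that extends the level-$1$ ordinal of its argument (attaching a degenerate higher disk to the new interval) and the natural transformation $\alpha\colon\Id\Rightarrow C$ given level-wise by $\partial_0$, the pair $(\alpha,c)$ produces, precisely as in Proposition \ref{propms}, a retraction exhibiting $|\mathcal{L}_{(\bmu,\bsigma)}(T)|$ as a retract of the realization obtained by applying $\pi_0$ to the $(\text{color }1,\text{level }1)$ direction; this step uses the analogue of Lemma \ref{lemmac} for a single simplicial factor of a polysimplicial set. The $\pi_0$-reduced object is again a block of the same kind, now for a configuration in which that direction has been removed. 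An induction then applies: exhausting the level-$1$ directions of all colors and invoking the wreath decomposition $\Theta_d=\Delta\wr\Theta_{d-1}$ reduces the problem to blocks governed by $\Theta_{d-1}$, contractible by induction on $d$, the base case $d=1$ being Proposition \ref{propms}.

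The hard part will be the bookkeeping in the inductive step. One must pin down precisely the class of ``polysimplicial blocks'' that is stable under the $\pi_0$-reduction (removing a single $(\text{color},\text{level})$ direction destroys the symmetric $\delta_d^\wr$-form, so one cannot remain within $\Theta_d$-blocks), and one must verify that the contraction $c$ respects \emph{all} the level-$\ell$ Berger-poset constraints $(\mu^\ell,\sigma_\ell)$ at once, controlling the interaction between the prepended level-$1$ move and the first-order movements at higher levels. This simultaneous multi-level compatibility, vacuous when $d=1$, is the technical core of the argument, and it is here that the elegance of the Reedy category $\Theta_d$ (Section \ref{sectionreedy}) and Berger's comparison theorem (Proposition \ref{propberger3}) are indispensable.
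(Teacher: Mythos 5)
Your mechanics are the paper's: normalizing one permutation to the identity by a color permutation, passing to the polysimplicial realization via Proposition \ref{propberger3}, the prepending operator $c$ with $\partial_0^{(1)}\circ c=\id$, and the retract-onto-$\pi_0$ step via (the one-factor version of) Lemma \ref{lemmac}. But the paper contracts from the \emph{top} level down --- it normalizes $\sigma_d=\id$ and first collapses the level-$d$ directions, color by color --- whereas you contract from level $1$ up, and this order is where the plan breaks. The higher-level cells of a color are fibered over the internal points of its level-$1$ ordinal: in the restricted form, $\delta_d^\wr([n_1],\dots,[n_d])=([n_1];A,\dots,A)$ has \emph{no} fibers at all when $n_1=0$. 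Since $\pi_0$ along the (color $1$, level $1$) direction is the coequalizer of the degree-$1$ and degree-$0$ pieces, the $\pi_0$-reduced object is a quotient of a functor that is constant in all the (color $1$, level $\ge 2$) directions: collapsing level $1$ annihilates the entire higher-level data of that color rather than leaving a $\Theta_{d-1}$-residue. Consequently your concluding step --- ``exhausting the level-$1$ directions of all colors and invoking $\Theta_d=\Delta\wr\Theta_{d-1}$ reduces to $\Theta_{d-1}$-blocks'' --- is not available: one cannot truncate a level tree from the bottom. The paper's top-down order is exactly what makes the induction close: a tree with empty top level is still an object of $\Theta_d$, so collapsing the level-$d$ direction of one color stays within the block class, and after all $k$ colors are handled one lands on the genuine $\Theta_{d-1}$-block $\mathcal{L}_{(\mu^1,\sigma_1),\dots,(\mu^{d-1},\sigma_{d-1})}(\tau_{\le d-1}T)$ of the top truncation, to which induction on $d$ applies. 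What you flagged as ``hard bookkeeping'' is thus not a technical residue to be pinned down later but the failure point of the bottom-up route.

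Two smaller points. First, having restricted along $(\delta_d^\wr)^{\times k}$, you cannot adjoin a new level-$1$ interval ``carrying the degenerate $(d-1)$-disk'': every interval of $\delta_d^\wr([n_1],\dots,[n_d])$ carries the same fiber $A=\delta_{d-1}^\wr([n_2],\dots,[n_d])$, so $c(\omega)$ must take values at $\delta_d^\wr(C[n_1],[n_2],\dots,[n_d])$, whose new interval carries $A$, not a point. This slip is repairable --- precisely because all fibers are equal, the old higher-level data can be reused verbatim, which is the actual reason $c$ preserves the higher-level constraints (compare Remark \ref{remcruciald}: the parameters $(\mu^\ell,\sigma_\ell)$ are imposed globally per level, uniformly in $a\in S_\ell$). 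Second, prepending a first move can in fact raise some complexity $\mu_{1j}$ by $1$; the block is preserved not because complexities never rise, but because of the strict inequality built into \eqref{bergerposet} when $\sigma_{1j}(\omega)$ disagrees with $\id$, exactly as in Proposition \ref{propms}. If you invert the order of contraction --- normalize $\sigma_d$, contract the top level color by color, then truncate --- your outline becomes the paper's proof.
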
 
\begin{proof}
We prove contractibility level by level, starting with the highest levels, using Lemma \ref{lemmac} and the argument similar to the one in Proposition \ref{propms}. Thus, we employ induction on the number $d$ of levels, inside which there is another induction by the number $k$ of arguments. 

We can assume that $\sigma_d=1$ in \eqref{eqbmu}. Indeed, one has an isomorphism of functors
$$
\mathcal{L}_{(\bmu,\bsigma)}\simeq\mathcal{L}_{(\bmu^\prime,\bsigma^\prime)}
$$
where $(\bmu^\prime,\bsigma^\prime)=((\sigma_d^{-1}\mu^1,\sigma_d^{-1}\sigma_1),(\sigma_d^{-1}\mu^2,\sigma_d^{-1}\sigma_2),\dots, (\sigma_d^{-1}\mu^d,\id))$.

Next, by Proposition \ref{bergerprop3}, the realization $|\mathcal{L}_{(\bmu,\bsigma)}(T)|$ is homeomorphic to the standard topological realization of the
polysimplicial set $(\delta_d^{\times k})^*\mathcal{L}_{(\bmu,\bsigma)}(T)$, which is the restriction along the functor $(\delta_d)^{\times k}\colon ((\Delta^\op)^{\times d})^{\times k}\to(\Theta_d^\op)^{\times k}$. 

We start with contraction along the factor $(\Delta^\op)^{\times k}$ corresponded to the highest level of the disks. Here the argument just repeats the one in Proposition \ref{propms}. 
The only difference is in computation of $\pi_0(-)$, it is reduced, by induction on $k$, to the elementary computation $\pi_0(-)=*$ for the following simplicial set: 
$\Delta(?, [a_1])\times \Delta(?,[a_2])\times \dots\times\Delta(?,[a_\ell])$, where $a_1,\dots, a_\ell$ correspond to the 1-ordinals at the highest level $d$. 

The same speculation, by induction on the number $k$ of arguments, reduces the statement to the contractibility of the realization
$|\mathcal{L}_{(\mu^1,\sigma_1),\dots, (\mu^{d-1}, \sigma_{d-1})}(\tau_{\le d-1}T)|$
of $\Theta_{d-1}$-generalised lattice path, where $\tau_{\le d-1}T$ is the $(d-1)$-level tree obtained by the truncation of the highest level. Then we reduces to the case $\sigma_{d-1}=\id$, as above, and induction by $d$ completes the proof. 
\end{proof}
\begin{remark}\label{remcruciald}{\rm
A crucial point in the proof is that the assignment $\omega\rightsquigarrow c(\omega)$ defined as in the proof of Proposition \ref{propms}, leaves the generalised lattice path inside the block $\mathcal{L}_{(\bmu,\bsigma)}(T)$, and thus defines a natural transformation $\beta\colon \mathcal{L}_{(\bmu,\bsigma)}(T)\to \mathcal{L}_{(\bmu,\bsigma)}(T)$. The preservation of $(\bmu,\bsigma)$ is clear, because we defined $\mu^i$ and $\sigma_i$ ``globally'' at level $i$, that is, as the same parameters for all lattice paths ``in the fibres'' at this level. The requirement of preservation of a cell by $c$ dictates us, in fact, how the cells should be defined. 
}
\end{remark}

Next, we prove Theorem \ref{propthetatop}(ii). We need to compute the totalization of the $d$-cocellular topological space 
$$
T\mapsto |\mathcal{L}_{(\bmu,\bsigma)}(T)|
$$
and prove its contractibility.

We know from Proposition \ref{propmsd} that for each $T$ the corresponding topological space is contractible. Then, by Proposition \ref{propbemtheta}(1), it is enough to prove
\begin{prop}\label{propreedyfibranttheta}
The $d$-cocellular topological space $T\mapsto |\mathcal{L}_{(\bmu,\bsigma)}(T)|$ is Reedy fibrant, with respect to the Reedy category structure on $\Theta_d$ defined in Section \ref{sectionreedy}.
\end{prop}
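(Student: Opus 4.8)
The plan is to reduce the Reedy fibrancy of the $d$-cocellular topological space $T\mapsto|\mathcal{L}_{(\bmu,\bsigma)}(T)|$ to a combinatorial statement about matching maps of the underlying $d$-cocellular \emph{set} $T\mapsto\mathcal{L}_{(\bmu,\bsigma)}(T)$, exactly mirroring the strategy of Section \ref{topcond1}. Since $|\mathcal{L}_{(\bmu,\bsigma)}(\cdot)|$ is the cellular realization $|-|_{\Theta_d,\top}$ of the $d$-cellular set, and by Proposition \ref{propberger3} this coincides with the polysimplicial realization of the restriction $(\delta_d^\wr)^*\mathcal{L}_{(\bmu,\bsigma)}(T)$, I would first record that this realization commutes with finite limits and sends (Kan-type) fibrations of $d$-cellular sets to Serre fibrations. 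Granting these facts, it suffices to analyze the matching maps of the cocellular \emph{set} $T\mapsto\mathcal{L}_{(\bmu,\bsigma)}(T)$ before realization. The key simplification is Proposition \ref{propmatchingcofinal}: the matching object $M_T$ is computed over the cofinal subcategory $\mathcal{S}_T^\vee\subset\partial(T/\Theta_d^-)$ consisting of face maps that remove one or two leaves, so I only need to control these elementary matching maps.

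First I would classify the relative matching map $\mathcal{L}_{(\bmu,\bsigma)}(T)\to M_T\mathcal{L}_{(\bmu,\bsigma)}(\cdot)$ according to the structure of $\mathcal{S}_T^\vee$. By Lemma \ref{lemmaoneobject}, when $\mathcal{S}_T^\vee$ has a single object the tree $T$ is linear with trivial ordinals up to some level and empty above; these are the degenerate cases (the analogues of $m_{-1}$ and $m_0$ in Proposition \ref{reedytot}) and must be checked by hand. In the generic case, where $T$ has at least two leaves that can be independently removed, I expect the matching map to be an \emph{isomorphism}, for precisely the reason established in Proposition \ref{reedytot}(3): the cocellular operators of $\Theta_d$ act on a generalised lattice path only by inserting or deleting ``marked points'' (the elementary intervals coming from the degeneracy structure), leaving the underlying geometric lattice path unchanged, and a compatible family of such markings over $\mathcal{S}_T^\vee$ glues uniquely to a single path. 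The block condition defining $\mathcal{L}_{(\bmu,\bsigma)}$ is preserved because the parameters $(\mu^\ell,\sigma_\ell)$ are imposed \emph{globally} at each level (cf. Remark \ref{remcruciald}), so reconstructing the glued path does not raise any complexity or alter any first-order movement. For the remaining boundary cases corresponding to leaf-removal at the lowest nontrivial level, the matching map is a surjection of cellular sets (forgetting a single marked point), which realizes to a Serre fibration by the same lifting argument as in Proposition \ref{reedytot}(2); the projections onto the $k$ tensor factors reduce this to the one-argument problem, and the one-argument lifting is the purely ordinal-combinatorial fact about extending compatible maps to $[1]$ proved there.

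The main obstacle I anticipate is the bookkeeping forced by the multi-level structure of $\Theta_d$: unlike the case $d=1$, removing a leaf of $T$ may occur at any of the $d$ levels, and the decomposition $\mathcal{S}_T^\vee$ into degree-$1$ and degree-$2$ face maps mixes leaves sitting at different levels. I would need to verify that, for every pair of distinct removable leaves $a,a'$ (possibly at different levels), the two ways of reaching the depth-$2$ stratum commute and impose the same constraint on markings, so that the equalizer defining $M_T$ really produces a unique glued path. The content here is that the marked-point description of cocellular operators is compatible across levels of the wreath product $\Delta\wr\Theta_{d-1}$; this should follow by induction on $d$ together with the wreath-product description of $\Theta_d^-$ from Section \ref{sectionreedy}, but it is where the argument genuinely goes beyond the $\Delta$-case. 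Once the matching maps are identified as isomorphisms (generically), trivial projections (the degenerate cases), or surjections realizing to Serre fibrations (the single boundary case), Reedy fibrancy of $T\mapsto|\mathcal{L}_{(\bmu,\bsigma)}(T)|$ follows, and combined with Proposition \ref{propmsd} and Proposition \ref{propbemtheta}(1) yields the contractibility of the totalization, completing Theorem \ref{propthetatop}(ii).
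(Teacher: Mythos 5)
Your proposal is correct and follows essentially the same route as the paper's proof: you reduce to the matching maps of the cocellular set before realization via the cofinal subcategory $\mathcal{S}^\vee_T$ of Proposition \ref{propmatchingcofinal}, identify them as isomorphisms when $T$ has at least two leaves (the marked-points argument, including the level-$(\ell+1)$ subtlety you correctly flag) and as a product of identities, one Kan fibration and a projection to a point in the linear-tree case of Lemma \ref{lemmaoneobject}, and then transfer to $\Top$ via Proposition \ref{propberger3} and the preservation of finite limits and fibrations by realization. This is precisely the structure of Proposition \ref{reedytottheta} together with the concluding facts (a)--(f) in the paper's proof, so nothing essential is missing.
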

\begin{proof}
We use the statement of Proposition \ref{propmatchingcofinal} which provides a cofinal subcategory $\mathcal{S}^\vee_T\subset 
\partial(T/\Theta_d^-)$. We will see that the proof goes rather similarly to the proof of the case $d=1$ in Proposition \ref{propreedyfibrantdelta}. 

Let $T$ be an object of $\Theta_d$. We have to prove that the matching map 
$$
X_T\to M_TX=\lim_{T\to T^\prime\in \mathcal{S}^\vee_T}X_{T^\prime}
$$
is a fibration in $\Top$, where $X_T=|\mathcal{L}_{(\bmu,\bsigma)}(T)|$. 

\comment
Let $T$ is empty above the level $j$, and is non-empty at level $j$. Let 
$[\ell_1],\dots, [\ell_N]$ be the 1-ordinals at level $j$ of $T$ (which are preimages of $N$ vertices at level $j-1$). By assumption, not all $\ell_i=0$. Denote by $s_i^aT$ the tree equal to $T$ at levels $\le j-1$, and obtained from $T$ by action of the elementary codegeneracy $s^a$ acting on $\ell_i$, $0\le a\le \ell_i-1$ (the other 1-ordinals $[\ell_s]$ remain unchanged). By Proposition \ref{propmatchingcofinal} and its proof, the matching object $M_TX$ is equal to 
\begin{equation}\label{matchingexpl}
M_TX=\mathrm{eq}\left(\prod_{a,i}X_{s_i^aT}\rightrightarrows \prod_{i_1\ne i_2}X_{s_{i_1}^as_{i_2}^bT}\ \times\ \prod_{i; a<b}X_{s_{i}^as_{i}^bT}\right)
\end{equation}
The two arrows are given by suitable operators $s_i^a$, likewise for the case of $\Delta$. The new feature for the case $d>1$ is that in the second product pairs such operators may act on different 1-ordinals $[\ell_i]$ as well as on the same $[\ell_i]$. 
\endcomment

Like in Proposition \ref{reedytot}, we analyse the matching map for $T\mapsto \mathcal{L}_{(\bmu,\bsigma)}(T)$ {\it before} the realization. Recall that we denote by $\delta_d^{\wr}\colon \Delta^{\times d}\to\Theta_d$ the map defined in \eqref{wreathdiag}; denote by $\delta_{k}^{\Theta_d}\colon \Theta_d\to\Theta_d^{\times k}$ the diagonal embedding. 
\begin{prop}\label{reedytottheta}
Let $T\in\Theta_d$, $(\bmu,\bsigma)\in\mathcal{K}(k)^{\times d}$. The following statements are true.
\begin{itemize}
\item[(1)] If $T$ has $\ge$ 2 leaves (possibly at different levels), the matching map
$$
\mathcal{L}_{(\bmu,\bsigma)}(T)\to M_T\mathcal{L}_{(\bmu,\bsigma)}(-)
$$
is an isomorphism.
\item[(2)] If $T$ has a single leaf, that is, $T$ is a linear $d$-level tree truncated at level $\ell\le d$,
the restriction of the matching map to 
$$
(\delta_d^{\wr})^*(\delta_k^{\Theta_d})^*\mathcal{L}_{(\bmu,\bsigma)}(T)\to (\delta_d^{\wr})^*(\delta_k^{\Theta_d})^*M_T\mathcal{L}_{(\bmu,\bsigma)}(-)
$$
is a product $p_1\times\dots\times p_d$ of $d$ maps of simplicial sets $p_i\colon B_i\to A_i$, where $p_i$ corresponds to the level $i$. 
The maps $p_i$ are identity maps for $i<\ell$, the map $p_\ell$ is a Kan fibration of simplicial sets, (for the case $\ell<d$) the map $p_{\ell+1}$ is a projection to a point, (for the case $\ell<d-1$) the maps $p_i$, $i>\ell+1$ are empty. 
\item[(3)] If $T=([0];\varnothing,\dots)$ is the initial object of $\Theta_d$, the matching map is a projection to a point (this case may be considered as the case (2) for $\ell=0$).
\end{itemize}
\end{prop}
\begin{proof}
(1): the argument is similar to the one in the proof of Proposition \ref{reedytot}(3), in particular, we use the interpretation of the action of the codegeneracy operators in a lattice path by ``removing the marked points'', see [BBM, 2.4-2.5]. However, for the case $d>1$ there is a difference (which though does not affect the argument). The difference is that, assuming a leaf $a$ of $T$ is at level $\ell<d$, there is also a ``geometric'' lattice path at level $\ell+1$ without marked points ``growing out of $a$''. The removal of the market points corresponded to $a$ results in two things: (a) removal of the marked point at the corresponding lattice path at level $\ell$, (b) removal of the geometric lattice path without marked points at level $\ell+1$ growing out of the marked point corresponded to $a$.

\comment
As $T$ is empty above level $j$, a generalised lattice path $\omega\colon \overline{T}\to \overline{T}_1\boxx\dots\boxx\overline{T}_k$ is as well non-empty up to level $j$. The operators in \eqref{matchingexpl} act on the level $j$ and do not affect the levels $<j$, that is, each map in \eqref{matchingexpl} is the identity map when restricted to the truncation $\tau_{\le (j-1)}T$. We consider the ``fibre'' lattice paths at level $j$. The cosimplicial operators do not affect the ``geometric lattice paths'' (see the proof of Proposition \ref{reedytot}(3) for the terminology), and the action results in removing a marked point at some of the  lattice paths at level $j$ (or n decreasing its multiplicity by 1 if it was greater than 1). The equalizer condition says that we count only that elements in 
$$
\prod_{a,i}\mathcal{L}_{(\bmu,\bsigma)}(s_i^aT)
$$
which agree when we remove two points (which may be at the same lattice path at level $j$, or at distinct lattice paths at level $j$). 
If the total number of marked points (counted with their multiplicities) is $\ge 2$, one reconstructs all marked points at all lattice paths, so an element in 
$
\prod_{a,i}\mathcal{L}_{(\bmu,\bsigma)}(s_i^aT)
$
obeying the equalizer condition lifts uniquely to a point in $\mathcal{L}_{(\bmu,\bsigma)}(T)$.
\endcomment

If $T$ has at least 2 leaves, the description of the category $S^\vee_T$ shows that the matching map 
$\mathcal{L}_{(\bmu,\bsigma)}(T)\to M_T\mathcal{L}_{(\bmu,\bsigma)}(-)$ for $T$ has at least two factors each of which forgets 1  point corresponded to a leaf $a$ at the  lattice path at level $\ell$, and forgets the geometric lattice path without marked at level $\ell+1$ growing out of this marked point, in compatible way (when we remove two leaves of $T$. If $T$ has at least two leaves, the generalised lattice path is uniquely reconstructed from these projections. 

(2): By Lemma \ref{lemmaoneobject}, we know that $T$ is a linear $d$-level tree truncated at some level $\ell\le d$. 
The statement about factorization of the restriction of thematching map into a product of simplicial maps is clear. Namely,
if $(\bmu,\bsigma)=((\mu_1,\sigma_1),\dots,(\mu_d,\sigma_d))$,  the map $p_i$ are
$$
\begin{cases}
p_i\colon \mathcal{L}_{(\mu_i,\sigma_i)}[1]\to \mathcal{L}_{(\mu_i,\sigma_i)}[1]&\text{  if  }i<\ell\\
p_\ell\colon\mathcal{L}_{\mu_\ell,\sigma_\ell}[1]\to\mathcal{L}_{(\mu_\ell,\sigma_\ell)}[0]&\text{  if  }i=\ell\\
p_{\ell+1}\colon \mathcal{L}_{(\mu_{\ell+1},\sigma_{\ell+1})}[0]\to *&\text{  if  }i=\ell+1
\end{cases}
$$
The statement about $p_\ell$ is the same as in Proposition \ref{reedytot}(2), the statements about other $p_i$ are clear. 
\comment
 We assume that the tree $T\in \Theta_d$ is empty above the level $j$. It follows that any generalized lattice path parametrized by $T$ is empty above level $j$ as well. In fact, we can assume that $j=d$ and $T$ has the ordinal $[0]$ at level $j=d$ (then the only non-trivial fibre of $\overline{T}$ at level $d$ is $[[1]]$). Denote by $T_{(d-1)}$ the restriction of $T$ to the levels up to $d-1$.
\endcomment

\comment
The matching map $p\colon \mathcal{L}_{(\bmu,\bsigma)}(T)\to M_T\mathcal{L}_{(\bmu,\bsigma)}(-)$ is identified, by \eqref{matchingexpl}, with the projection 

\begin{equation}\label{mapprop2}
p\colon \mathcal{L}_{(\bmu,\bsigma)}(T)\to \mathcal{L}_{(\bmu,\bsigma)}(T_{(d-1)})
\end{equation}
It is a map of functors $(\Theta_d^\op)^{\times k}\to\Sets$. First of all, we restrict both sides to $\Theta_d^\op$ by the diagonal embedding $\Delta_k^{\Theta_d}$, and then to $(\Delta^\op)^{\times d}$. We write $(\Delta^\op)^{\times d}=(\Delta^\op)^{\times (d-1)}\times \Delta^\op$, where the first factor is corresponded to the levels from 1 to $d-1$, and the last factor is corresponded to level $d$. 

We prefer to use the Moerdijk model structure on the category of $d$-polysimplicial sets (transferred from the Quillen-Kan model structure on $\Sets^{\Delta^\op}$ via the diagonal embedding, see [M, Sect.1] for $d=2$ case, the general case is similar) for the fibration property. Logically it is the same as checking the Kan fibration property for the restriction to $\Delta^\op$ via $\delta_d^{\Delta}$, but in author's opinion this way of presentation makes the argument more clear.

Recall that the generating acyclic cofibrations for the Moerdijk model structure on $\Sets^{(\Delta^\op)^{\times d}}$.
Denote by $A^{k_1,\dots,k_d}[n]$ the $d$-polysimplicial set whose $(\ell_1,\dots,\ell_d)$-components is
$$
A^{k}[n]_{(\ell_1,\dots,\ell_d)}=\{\alpha_1\colon [\ell_1]\to [n],\dots,\alpha_d\colon [\ell_d]\to [n]|\text{  there is  }j\ne k, \text{  such that  }j\not\in \mathrm{Im}\alpha_i\text{  for any  }i\}
$$
where $k\in [n]$. 

The Moerdijk model structure on $d$-polysimplicial sets is cofibrantly generated, with the set of generating acyclic cofibrations given by the imbeddings $i_{k,n}\colon A^k[n]\to \Delta^{\times d}(-,[n]\times \dots\times [n])$ for all $k, n$. 

We have to show that there is a filler in the following diagram
$$
\xymatrix{
A^k[n]\ar[d]\ar[r]&\mathcal{L}_{(\bmu,\bsigma)}(T)\ar[d]^{p}\\
\Delta^{\times d}(-,[n]\times \dots\times [n])\ar[r]\ar@{.>}[ur]&\mathcal{L}_{(\bmu,\bsigma)}(T_{(d-1)})
}
$$
Now up to level $d-1$ the two $d$-polysimplicial sets on the right are equal, that means, their restrictions to $(\Delta^{\op})^{\times (d-1)}\hookrightarrow(\Delta^\op)^{\times d}$, corresponded to the levels from 1 to $d-1$, are equal. That is, the filler tautologically exists up to level $d-1$. It turns our that we have to find a filler at level $d$ only, which is the diagram
$$
\xymatrix{
\Lambda^n_k\ar[d]\ar[r]&\Delta(-,[1])\ar[d]\\
\Delta(-,[n])\ar[r]\ar@{.>}[ur]&\Delta(-,[0])
}
$$
But this case was considered in the proof of Proposition \ref{reedytot}(2).

\endcomment

(3): it is clear. 

\end{proof}
Now the statement of Proposition \ref{propreedyfibranttheta} follows from Proposition \ref{reedytottheta}, because
\begin{itemize}
\item[(a)] the cellular topological realization functor is strong monoidal and, more generally, preserves finite limits (it follows from Proposition \ref{propbemtheta}(1),(3), Proposition \ref{propberger3}, and from the fact that (poly)simplicial realization preserves finite limits). It already proves Proposition 
\ref{propreedyfibranttheta} for the case when $T$ has at least two leaves. 
\item[(b)] For a polycellular set $X\colon (\Theta_d^\op)^{\times k}\to\Sets$ the topological polyrealization $|X|_{(\Theta_d^{\op})^{\times k}}$ is homeomorphic to the topological realization of the restriction $|(\delta_k^{\Theta_d})^*X|_{\Theta_d^\op}$; it follows from Proposition \ref{propberger2}(3) by the standard argument of representing any presheaf as a colimit of the Yoneda presheaves and the fact that the realization is left adjoint and thus commutes with all colimits).
\item[(c)] For a polysimplicial set $Y\colon (\Delta^\op)^{\times d}\to\Sets$ which is the external direct product of $d$ simplicial sets $Y_i\colon \Delta^\op\to \Sets$, $Y=Y_1\times\dots\times Y_d$, the realisation $|Y|_{(\Delta^\op)^{\times d}}$ is homeomorphic to the product $|Y_1|_{\Delta^\op}\times\dots |Y_d|_{\Delta^\op}$ of realizations.

\comment
the topological polyrealization $|Y|_{\Delta^{\times d}}$ is homeomorphic to the topological realization $|(\diag_k^\Delta)^*Y|_{\Delta^\op}$, it is well-known and is proven similarly to (b), using $|\Delta(-,[n])\times\Delta(-,[m])|\simeq |\Delta(-,[n])|\times |\Delta(-,[m])|$. 
\endcomment
\item[(d)] the topological realization of a Kan fibration of simplicial sets is a Serre fibration in $\Top$.
\item[(e)] the topological realization $|Z|_{\Theta_d}$ of a $d$-cellular set $X\colon \Theta_d^\op\to\Sets$ is homeomorphic to the topological reallization $|(\delta^\wr)^*Z|_{\Delta^{\times k}}$ of the polysimplicial set $(\delta^\wr)^*Z\colon (\Delta^\op)^{\times d}\to\Sets$, it is the statement of Proposition \ref{propberger3}.
\item[(f)] any topological space is fibrant. 
\end{itemize}

\end{proof} 

Now we prove Proposition \ref{propcelltop}(ii) as follows. By Proposition \ref{propbem}(1) for a Reedy cofibrant $C$ the totalization $\Tot_C(-)$ is a right Quillen for the Reedy model structure. It implies that a term-wise weak equivalence $X_1\simeqto X_2$ of two Reedy fibrant objects $X_1,X_2$ induces a weak equivalence $\Tot_C(X_1)\simeqto\Tot_C(X_2)$ of the totalizations (by the Ken Brown lemma). We know that $T\mapsto C_T$ is Reedy cofibrant for $\mathcal{R}=\Theta_d$, by Proposition \ref{propberger2}(2). 
We take $X_1=|\mathcal{L}_{(\bmu,\bsigma)}(-)|$, which is Reedy fibrant by Proposition \ref{propreedyfibranttheta}, and $X_2=*$ the constant cosimplicial space. 

\qed

\subsubsection{\sc The dg condensation}
Here we prove Theorem \ref{propthetadg}.

The statement of Theorem \ref{propthetadg}(i) follows from Proposition \ref{propmsd} and from Lemma \ref{lemmarealthetadgtop},
as $$|\mathcal{L}_{(\bmu,\bsigma)}(T)|_{\Theta_d,\dg}\overset{\text{ by Lemma \ref{lemmarealthetadgtop}}}{\simeq} C_\ldot^{\mathrm{CW}}(|\mathcal{L}_{(\bmu,\bsigma)}(T)|_{\Theta_d,\top},\mathbb{Z})\sim \mathbb{Z}[0]$$
where the last quasi-isomorphism follows from homotopy invariance of CW-homology and Theorem \ref{propthetatop}(i).

Next, we prove Theorem \ref{propthetadg}(ii). We have to compute $\Tot_{\Theta_d,\dg}(|\mathcal{L}_{(\bmu,\bsigma)}(-)|_{\Theta_d,\dg})$ and to prove it is quasi-isomorphic to $\mathbb{Z}[0]$. By Theorem \ref{propthetadg}(i), each complex 
$|\mathcal{L}_{(\bmu,\bsigma)}(T)|_{\Theta_d,\dg}$ is quasi-isomorphic to $\mathbb{Z}[0]$. If we knew that the cosimplicial complex $T\mapsto |\mathcal{L}_{(\bmu,\bsigma)}(T)|_{\Theta_d,\dg}$ is Reedy fibrant, Proposition \ref{propbemtheta}(2) would imply that 
the map on dg realizations induced by the natural map $\mathcal{L}_{(\bmu,\bsigma)}(T)\to *$ induces a quasi-isomorphism 
$\Tot_{\Theta_d,\dg}(|\mathcal{L}_{(\bmu,\bsigma)}(-)|_{\Theta_d,\dg})\simeqto \Tot_{\Theta_d,\dg}(\mathbb{Z}[0])$. The r.h.s. is quasi-isomorphic to $\mathbb{Z}[0]$ by Lemma \ref{lemmatotthetaconst} below.

Thus, it remains to show 

\begin{prop}\label{propreedyfibrantthetadg}
The cosimplicial complex $T\mapsto |\mathcal{L}_{(\bmu,\bsigma)}(T)|_{\Theta_d,\dg}$ is Reedy fibrant with respect to the projective model structure on $C^\udot(\mathbb{Z})$. 
\end{prop}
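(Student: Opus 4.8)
The plan is to mirror the argument of Corollary \ref{corfibdg} (the case $d=1$), replacing its combinatorial input Proposition \ref{reedytot} by the $\Theta_d$-counterpart Proposition \ref{reedytottheta}. Recall that in the projective model structure on $C^\udot(\mathbb{Z})$ a fibration is precisely a degreewise surjection, and that for the terminal cocellular object $*$ one has $M_T(*)=*$, so the relative matching map coincides with the matching map itself. Writing $X_T=|\mathcal{L}_{(\bmu,\bsigma)}(T)|_{\Theta_d,\dg}$, it therefore suffices to show that for every $T\in\Theta_d$ the matching map $X_T\to M_TX$ is surjective in each cohomological degree.

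First I would reduce the computation of $M_TX$ to the cellular (set-valued) level. By Proposition \ref{propmatchingcofinal} the matching object is a finite limit --- an equalizer of finite products indexed by the finite cofinal category $\mathcal{S}^\vee_T$ --- so, since $|-|_{\Theta_d,\dg}$ is a left adjoint (Proposition \ref{propbemtheta}(2)), there is a canonical comparison map $|M_T\mathcal{L}_{(\bmu,\bsigma)}(-)|_{\Theta_d,\dg}\to M_TX$. The crucial step is to show this comparison is an isomorphism, i.e.\ that the dg realization commutes with these particular finite limits. This is the $\Theta_d$-analogue of the finiteness argument in Corollary \ref{corfibdg}: the dg realization is computed as a total complex which, on each fixed total-degree diagonal, involves only finitely many cells and is concentrated in nonpositive cohomological degrees, so the total sum complex coincides with the total product complex and hence (being a right adjoint) commutes with finite limits. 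Granting this, the dg matching map is identified with the dg realization of the cellular matching map, whose structure is described by Proposition \ref{reedytottheta}.

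It then remains to run the case analysis of Proposition \ref{reedytottheta}. When $T$ has at least two leaves, the cellular matching map is an isomorphism (part (1)), so its dg realization is an isomorphism, a fortiori a degreewise surjection. When $T$ has a single leaf --- a linear tree truncated at some level $\ell$ by Lemma \ref{lemmaoneobject} --- or is the initial object, parts (2)--(3) factor the restriction of the matching map along $(\delta_d^\wr)^*(\delta_k^{\Theta_d})^*$ as an external product $p_1\times\cdots\times p_d$ of maps of simplicial sets, with $p_i=\id$ for $i<\ell$, $p_\ell$ the surjective forget-the-marked-point map $\mathcal{L}_{(\mu_\ell,\sigma_\ell)}[1]\to\mathcal{L}_{(\mu_\ell,\sigma_\ell)}[0]$, and $p_{\ell+1}$ the projection of the nonempty $\mathcal{L}_{(\mu_{\ell+1},\sigma_{\ell+1})}[0]$ to a point (the remaining factors being empty or trivial). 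Each factor is a levelwise surjection of simplicial sets, hence induces a degreewise surjection of dg realizations; since the dg realization of an external product is the tensor product of the factors, and tensor products of degreewise surjections of free complexes are again degreewise surjective, the whole product map is a degreewise surjection. Using the reduction to the polysimplicial realization (Lemma \ref{lemmarealthetadgtop} together with Proposition \ref{propberger3}), this checks surjectivity of the genuine dg matching map.

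The hard part will be the commutation of the dg $\Theta_d$-realization with the finite limits building $M_T$, carried out in the first reduction. Unlike the topological case --- where one may simply invoke that $|-|_{\Theta_d,\top}$ preserves finite limits (Proposition \ref{propberger3}) and carries Kan fibrations to Serre fibrations --- here one must make the total-sum$\,=\,$total-product identification precise in each polydegree, keeping careful track of the finiteness and boundedness of the cells, and reconcile the genuinely different CW structures on the cellular $\Theta_d$-realization and on its polysimplicial model. Once this exactness input is secured, the surjectivity checks are immediate from Proposition \ref{reedytottheta}, exactly as in the case $d=1$.
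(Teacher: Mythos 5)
Your proposal is correct and follows the paper's own proof essentially verbatim: both arguments reduce the dg matching map to the cellular one via the observation that in each polydegree the realization is a total sum complex of finitely generated free, nonpositively graded pieces, hence coincides with the total product complex and (as a right adjoint) commutes with the finite limits defining $M_T$, and then both run the case analysis of Proposition \ref{reedytottheta} --- an isomorphism when $T$ has at least two leaves, and a degreewise surjection (hence a projective fibration) in the one-leaf and initial cases. The only deviation is cosmetic: where you factor cases (2)--(3) through the external product $p_1\times\cdots\times p_d$ and a tensor-product argument, the paper concludes surjectivity directly from the component-wise surjectivity of the cellular map $\mathcal{L}_{(\bmu,\bsigma)}(T)\to M_T\mathcal{L}_{(\bmu,\bsigma)}(-)$, a slightly shorter route to the same point.
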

\begin{proof}
The argument is similar to the one in the case of $\Delta$ (Corollary \ref{corfibdg}). The components $ |\mathcal{L}_{(\bmu,\bsigma)}(T)|_{\Theta_d,\dg}$ for fixed $T$ are total sum complexes of a polycomplex $L^{i_1,\dots,i_k}$. Moreover, it is also the total product complex, because the ``polydiagonal'' spaces $\oplus_{i_1+\dots+i_k=N}L^{i_1,\dots,i_k}$ are finite-dimensional. Therefore, the total complex functor is in fact right adjoint, and, as such, commutes with (finite, because we need to maintain finite dimension of the ``polydiagonal'' components)  limits. It implies that the matching map for $|\mathcal{L}_{(\bmu,\bsigma)}(T)|_{\Theta_d,\dg}$ for the case of Proposition \ref{reedytottheta}(1) is an isomorphism. For the cases of Proposition \ref{reedytottheta}(2), (3), the map $p\colon \mathcal{L}_{(\bmu,\bsigma)}(T)\to M_T\mathcal{L}_{(\bmu,\bsigma)}(-)$ is a component-wise surjective 
map of poly-$d$-cellular sets, thus it remains surjective after dg realization, and thus is a fibration for the projective model structure 
on $C^\udot(\mathbb{Z})$.
\end{proof}

\begin{lemma}\label{lemmatotthetaconst}
The totalization $\Tot_{\Theta_d,\dg}(\underline{\mathbb{Z}})$ is quasi-isomorphic to $\mathbb{Z}[0]$.
\end{lemma}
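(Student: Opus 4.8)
The plan is to reduce the totalization of the constant cocellular complex to a single $\Hom$-complex out of a colimit, and then to recognise that colimit as the dg realization of the terminal $d$-cellular set. First I would unwind the defining equalizer \eqref{totbem} for the cocellular object $Y=\underline{\mathbb{Z}}$, for which $Y_T=\mathbb{Z}[0]$ and every structure map $Y_\alpha$ is the identity. An element of the equalizer is then a family of chain maps $f_T\colon C^\dg_T\to\mathbb{Z}[0]$ satisfying $f_T=f_{T'}\circ C^\dg_\alpha$ for every morphism $\alpha\colon T\to T'$ of $\Theta_d$; this is exactly a cocone under the diagram $C^\dg\colon\Theta_d\to C^\udot(\mathbb{Z})$, so the universal property of the colimit identifies the equalizer, naturally as a complex, with
\[
\Tot_{\Theta_d,\dg}(\underline{\mathbb{Z}})\cong\underline{\Hom}\big(\colim_{T\in\Theta_d}C^\dg_T,\ \mathbb{Z}[0]\big).
\]

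The second step identifies this colimit with a realization. Feeding the terminal $d$-cellular set $*\colon\Theta_d^\op\to\Sets$ (constant at a one-point set) into the coend \eqref{realbem}, each factor $*(T)\otimes C^\dg_T$ collapses to $C^\dg_T$ and the structure maps of $*$ act as identities on points, so the coequalizer computing $|*|_{\Theta_d,\dg}$ becomes precisely the coequalizer computing $\colim_{T\in\Theta_d}C^\dg_T$. Hence $\colim_{\Theta_d}C^\dg\cong|*|_{\Theta_d,\dg}$, and the whole problem is reduced to computing the dg realization of the terminal cellular set.

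Third, I would evaluate $|*|_{\Theta_d,\dg}$ through Lemma \ref{lemmarealthetadgtop}, which gives $|*|_{\Theta_d,\dg}\cong C^{\mathrm{CW}}_\ldot(|*|_{\Theta_d,\top})$. By elegance of the Reedy category $\Theta_d$ (Section \ref{sectionreedy} and Proposition \ref{propberger2}(1)), the cells of the realization of a cellular set are indexed by its non-degenerate elements; for $*$ the unique non-degenerate element is the point over the initial object $([0];\varnothing,\dots)$, since any object of positive $\Theta_d$-degree carries a nontrivial $\Theta_d^-$-degeneracy and its point is therefore degenerate. Thus $|*|_{\Theta_d,\top}$ is a single $0$-cell and $C^{\mathrm{CW}}_\ldot(|*|_{\Theta_d,\top})=\mathbb{Z}[0]$. (As an independent check, Proposition \ref{propberger3} presents $|*|_{\Theta_d,\top}$ as homeomorphic to the polysimplicial realization of the restriction of the terminal presheaf along $\delta_d^\wr$, namely the terminal polysimplicial set, whose realization is a point.) Combining the three steps gives $\colim_{\Theta_d}C^\dg\cong\mathbb{Z}[0]$ and hence $\Tot_{\Theta_d,\dg}(\underline{\mathbb{Z}})\cong\underline{\Hom}(\mathbb{Z}[0],\mathbb{Z}[0])=\mathbb{Z}[0]$.

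The only genuinely delicate points are the variance bookkeeping in the first two steps and the elegance input pinning down the single non-degenerate cell; once those are in place the statement is an identity rather than merely a quasi-isomorphism. Should one prefer to know $|*|_{\Theta_d,\top}$ only as a contractible space (via Proposition \ref{propberger3}), it is enough to observe that $\colim_{\Theta_d}C^\dg$ is a bounded-above complex of free abelian groups quasi-isomorphic to $\mathbb{Z}[0]$, hence chain-homotopy equivalent to it, and that the additive functor $\underline{\Hom}(-,\mathbb{Z})$ preserves chain-homotopy equivalences; this again yields $\Tot_{\Theta_d,\dg}(\underline{\mathbb{Z}})\simeqto\mathbb{Z}[0]$.
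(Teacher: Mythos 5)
Your proof is correct and takes essentially the same route as the paper's: the paper's one-line argument declares the statement ``dual'' to the realization computation $|\underline{\mathbb{Z}}|_{\Theta_d,\dg}=|*|_{\Theta_d,\dg}=C_\ldot^{\mathrm{CW}}(|*|_{\Theta_d,\top})=\mathbb{Z}[0]$ (via Lemma \ref{lemmarealthetadgtop}), and your first two steps are precisely the content of that duality, namely $\Tot_{\Theta_d,\dg}(\underline{\mathbb{Z}})\cong\underline{\Hom}\big(\colim_{\Theta_d}C^\dg,\mathbb{Z}\big)$ together with $\colim_{\Theta_d}C^\dg\cong|*|_{\Theta_d,\dg}$. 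Your additions --- the elegance argument pinning down the unique non-degenerate cell over the initial object, and the chain-homotopy-equivalence fallback --- are careful bookkeeping the paper leaves implicit, not a different method.
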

\begin{proof}
\comment
The statement is trivial and well-known for $d=1$ (the case of category $\Delta$), where it can be shown by a simple direct computation. 
However, for $d>1$ a direct computation is non-trivial, so we provide the following argument. We only sketch the proof, a detailed account will appear in [Sh4]. 

The idea is to make use of the following fact: let $C$ be a small category, $A\colon C\to \Ab$ a functor which is an isomorphism on all arrows of $C$ (e.g. a constant functor), then $A$ defines a local system $\tilde{A}$ on $BC$, and
$H_\ldot(\hocolim_C A)=H_\ldot(BC,\tilde{A})$ and similarly $H^\udot(\holim_C A)=H^\udot(BC,\tilde{A})$. An advantage of this formula is that the r.h.s. is homotopy invariant, so it is 0 in non-zero degrees if $C$ is contractible. In our example, $\Theta_d$ has a final object and thus is contractible. So we will be done if we prove that there is a weak equivalence (in $C^\udot(\Ab)$)
\begin{equation}\label{eqbk}
\Tot_{\Theta_d}\mathbb{Z}\sim \holim_{\Theta_d}\mathbb{Z}
\end{equation}
Recall that for $d=1$ (the case of category $\Delta$) the Bousfield-Kan map [BK, XI, 2.6 ???], [Hi, 19.8.5], is a map of (Reedy cofibrant) cosimplicial simplicial sets
\begin{equation}\label{eqbk}
N(\Delta/[n])\to \Delta(?,[n])
\end{equation}
which associates to a $k$-chain $[n_0]\to[n_1]\to\dots\to [n_k]\to [n]$ the images of the maximal elements $n_0\in [n_0],\dots, n_k\in [n_k]$ in $[n]$, considered as a map $[k]\to [n]$. For any standard system of simplices in a monoidal model category $\mathcal{M}$, see ???, the realization of cosimplicial objects in $\SSets$ to cosimplicial objects in $\mathcal{M}$ is a left Quillen functor, so the realizations of both sides of \eqref{eqbk} remain Reedy cofibrant cosimplical objects. Therefore, for any Reedy fibrant $X\colon \Delta\to \mathcal{M}$ one has a {\it weak equivalence} induced by the Bousfield-Kan map:
\begin{equation}\label{eqbk2}
\holim_\Delta X=\underline{\Hom}(|N(\Delta/[n])|,X)\to \underline{\Hom}(|\Delta(?,[n])|,X)=\Tot_\Delta X
\end{equation}
Here we are interested in the case $\mathcal{M}=C^\udot(\mathbb{Z})$. As a constant cosimplicial abelian group is Reedy fibrant, we get:
\begin{equation}\label{eqbk2}
\Tot_\Delta\mathbb{Z}\sim\holim_\Delta\mathbb{Z}\sim H^\udot(B\Delta, \mathbb{Z})=\mathbb{Z}[0]
\end{equation}
This statement is trivial, but the method can be generalised for $\Theta_d$. 

We need an analogue of the Bousfield-Kan map for $\Theta_d$. 

A naive generalisation would be a map $N(\Theta_d/T)\to \Theta_d(?/T)$. It is not really nice as the lhs is a functor $\Delta^\op\times\Theta_d\to \Sets$ and the rhs is a functor $\Theta_d^\op\times\Theta_d\to\Sets$. So we have to upgrade the lhs. 

The idea is to upgrade the comma-category $\Theta_d/T$ to a $d$-category, and the nerve to Joyal nerve $N_d(-)$ of a strict $d$-category, which is a functor $\Theta_d^\op\to\Sets$. 

For instance, for $d=2$, its objects are maps $\phi_1\colon T_1\to T$ in $\Theta_2$. 1-morphisms from $T_1\to T$ to $T_2\to T$ are pairs $(f,\alpha)$ where $f\colon T_1\to T_2$ is a map of $\Theta_2$, and $\alpha\colon \phi_1\to \phi_2\circ f$ a strict natural transformation between the corresponding strict 2-functors. Finally, a 2-morphism 
$\theta\colon (f,\alpha)\to (g,\beta)$ is a natural transformation $\theta\colon f\to g$ such that the composition (pasting)
$( \theta\circ_h \id_{\phi_2})\circ_v \alpha=\beta$. The compositions are clear. 

A similar definition exists for any $d$ and gives rise to a strict $d$-category, which we denote by $(\Theta_d/T)_d$. 

We can construct a map 
\begin{equation}\label{eqbk3}
N_d((\Theta_d/T)_d)\to\Theta_d(?/T)
\end{equation}
Both sides are Reedy cofibrant objects in the category $\Fun(\Theta_d, \hat{\Theta}_d)$ of $d$-cocellular $d$-cellular objects.

On the other hand, there is a functor of $d$-categories $(\Theta_d/T)_d\to\Theta_d/T$, where in the r.h.s. all $i$-morphisms for $i>1$ are the identity morphisms. It gives rise to map
$$
N_d((\Theta_/T)_d)\to N_d(\Theta_d/T)
$$
The r.h.s. nerve has many degenerate cells, and one easily shows that the $\Theta_d$-dg realisation of $N_d(\Theta_d/T)$ is isomorphic to the $\Delta$-dg realisation of $N(\Theta_d/T)$ (of the usual nerve of 1-category).

Let $\mathcal{M}$ be a monoidal model category. 
For $X\colon \Theta_d\to\mathcal{M}$ denote by 
$$X_{\mathrm{lim}}=\underline{\Hom}_{\Theta_d}(|N_d((\theta_d/T)_d)|,X)$$ Then for a fibrant $X$ one gets a zig-zag of quasi-isomorphisms
\begin{equation}
\begin{aligned}
\ &\holim_{\Theta_d}X=\underline{\Hom}_{\Theta_d}(|N(\Theta_d/T)|,X)\sim X_{\mathrm{lim}}\eqto
\underline{\Hom}_{\Theta_d}(|\Theta_d/T|,X)=\Tot_{\Theta_d}X
\end{aligned}
\end{equation}
Finally, $$\Tot_{\Theta_d}\mathbb{Z}=\holim_{\Theta_d}\mathbb{Z}=H^\udot(B\Theta_d,\mathbb{Z})=\mathbb{Z}[0]$$
because $\Theta_d$ has a final object. 
\endcomment
The statement is dual to the statement on dg realization: 
$|\underline{\mathbb{Z}}|_{\Theta_d,\dg}=\mathbb{Z}[0]$. On the other hand,
$|\underline{\mathbb{Z}}|_{\Theta_d,\dg}=|*|_{\Theta_d,dg}\overset{\text{Lemma \ref{lemmarealthetadgtop}}}{=}C_\ldot^{CW}(|*|_{\Theta_d,\top})=\mathbb{Z}[0]$

\end{proof}

Theorem \ref{propthetadg}(ii) is proved. 

\qed

\section{\sc The $\Theta_d$-colored $(d+1)$-operad $\mathbf{seq}_d$ and its contractibility}\label{sectioncontr3}

\subsection{\sc How to detect $n$-operads inside the generalised lattice paths}
Recall our notation $\mathcal{L}^d$, see \eqref{latticepath2}. We further denote by $\mathcal{L}^d(k)$ all generalised lattice paths 
with $k$ arguments. We consider $\mathcal{L}^d(k)$ as a functor 
$$
\mathcal{L}^d(k)\colon (\Theta_d^\op)^{\times k}\times \Theta_d\to \Sets
$$
Fix $n\ge 2$. We want to describe a rather general way define $(n-1)$-terminal $n$-operads whose arity $S$ components are {\it subfunctors }
of $\mathcal{L}^d(k)$, where $S$ is a pruned $n$-tree with $k$ leaves. 

We know some subfunctors in $\mathcal{L}^d(k)$, namely, $\mathcal{L}_{(\bmu,\bsigma)}^d(k)$, for any element $(\bmu,\bsigma)\in \mathcal{K}(k)^{\times d}$, see Section \ref{sectionposet}, which we refer to as ``single blocks". So the first guess was would be to associate a single block to each pruned $n$-level tree $S$, such that all together they form a $\Theta_d$-colored $n$-operad. 

It is indeed the case for the Tamarkin 2-operad $\mathbf{seq}=\mathcal{L}_{2,B}$ (that is, $d=1$, $n=2$), and its higher cousins $\mathcal{L}_{n,B}$, which we recalled in Section \ref{rembb}, see Proposition \ref{proptamop}. 
On the other hand, the higher operads in $\mathcal{L}^d$ we define in this Section are {\it not}
of this type: for $d>1$ their arity components are ``generated by more than one block'' $\mathcal{L}^d_{(\bmu,\bsigma)}$.

In fact, a rough picture is as follows: the arity components $\mathbf{seq}_d(S)\subset \mathcal{L}^d(|S|)$ consist of a finite number of single blocks, organised in a poset by inclusion. This poset is contractible, as well as any of single blocks. A slight modification of the proofs in Section \ref{sectioncontr2} shows then the contractibility of the operads $\mathbf{seq}_d$ in the topological and in the dg condensations.

So an immediate general question is as follows. Assume we want to define an $n$-operad $\mathcal{O}$ whose arity components are subfunctors of $\mathcal{L}^d$. Let $T$ be a pruned $n$-level graph, and assume we are given several elements 
$$
(\bmu_1(T),\bsigma_1(T)),\dots, (\bmu_s(T),\bsigma_s(T))\in \mathcal{K}^{\times d}(|T|)
$$
(the number $s$ also depends on $T$).

For each arity $T$, define $\mathcal{O}(T)$ as follows:
\begin{equation}\label{manyblocks}
\mathcal{O}(T)=\{\omega\in \mathcal{L}^d(|T|)|\ \exists 1\le i\le s, (\bmu,\bsigma)(\omega)\le (\bmu_i(T),\bsigma_i(T))\}
\end{equation}
(It is a precise expression of what we mean saying that the ``arity components consist of several blocks''). Clearly the components $\mathcal{O}(T)$ in \eqref{manyblocks} are subfunctors. 

Now the question is: which conditions on the {\it labels} $(\bmu_1(T),\bsigma_1(T)),\dots, (\bmu_s(T),\bsigma_s(T))$ guarantee that $\{\mathcal{O}(T)\}$ are components of an $n$-operad?

A simple but very important result of Proposition \ref{proptwoleaves} below says that one has to fix labels only for pruned $n$-trees with 2 leaves, which are subject to some compatibility. Out of this data we define labels for all pruned $n$-trees with arbitrary number of leaves such that they are components of an $n$-operad.

Let $T_1,T_2\in \Tree_n$ be pruned $n$-trees.
Recall that a morphism $\phi\colon T_1\to T_2$ in the category $\Tree_n$ is called {\it a quasi-bijection} if $\phi$ defines an isomorphism on the sets of leaves (in particular, $|T_1|=|T_2|$). If we consider pruned $n$-trees whose leaves are labelled from 1 to $|T|=\ell$, the quasi-bijections act on such labelled $n$-trees. We get a poset $\mathcal{M}^n_\ell$ defined as follows: its objects are pruned $n$-trees with $\ell$ leaves, whose leaves have additional numbering from 1 to $\ell$ not necessarily the standard one, as a part of the data, and a morphism from $T_1\to T_2$ is a quasi-bijection of the underlying $n$-trees. One easily sees that the opposite poset $(\mathcal{M}^n_\ell)^\op$ is canonically a subposet of the Berger poset $\mathcal{K}(\ell)$. In fact, if we consider the filtration component $\mathcal{K}^n(\ell)$, formed by objects $(\mu,\sigma)$ with $\mu\le \ell-1$, the natural embedding $(\mathcal{M}^n_\ell)^\op\to \mathcal{K}^n(\ell)$ is a homotopy equivalence, as it follows from results of [Be1] (though we will not use this fact). 

\sevafigc{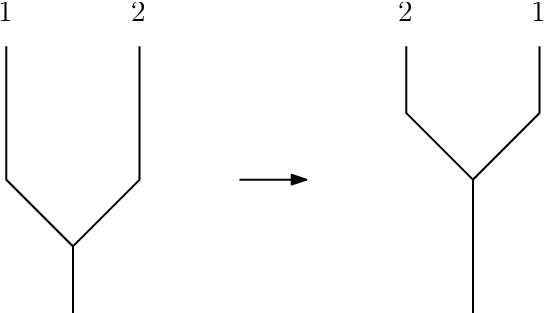}{50mm}{0}{A quasi-bijection $(1,\id)\to (2,(12))$ in $\mathcal{M}^4_2$\label{figquasi}}

In the statement below, we need only the poset $\mathcal{M}^n_2$, which is very simple, see Figure \ref{figquasi}.
Its objects are pruned $n$-trees with two leaves, whose leaves are labelled 1 and 2 in any of the two ways.
Denote a pruned $n$-tree with two leaves and vertices $1$ and $2$ such that $1<_a2$ by $T^n_a$, $0\le a\le n-1$. 
There are morphisms $(T^n_a,\sigma_1)\to (T^n_b,\sigma_2)$ if either $\sigma_1=\sigma_2$ and $a\le b$, or $\sigma_1\ne\sigma_2$ and $a<b$. The reader easily sees that the realisation of this poset is the sphere $S^{n-1}$. 

In the Proposition below we consider the {\it diagonal} action on $\Sigma_2$ on elements of $(\mathcal{K}^n(2))^{\times d}$, denoted by $(\bmu,\bsigma)\mapsto (\rho^{-1}\bmu,\rho^{-1}\bsigma)$.

\begin{prop}\label{proptwoleaves}
Let $d\ge 1$, $n\ge 2$. Assume for each element $((T^n_a,\id)\in \mathcal{M}^n_2$, $0\le a\le n-1$ we are given a set of elements 
$\{(\bmu_1(a),\bsigma_1(a)),\dots,(\bmu_{s(a)}(a),\bsigma_{s(a)}(a))\}$ of the poset $(\mathcal{K}^n(2))^{\times d}$ such that the following condition holds:
\begin{equation}\label{maincondition}
\parbox{5,5in}{\rm 
For any quasi-bijection $(T_a^n,\id)\to (T^n_b,\rho)$ in $\mathcal{M}^n_2$ and any $(\bmu_i(b),\bsigma_i(b))$ there exists $1\le j\le s(a)$ such that $(\bmu_j(a),\bsigma_j(a))\ge (\rho^{-1}\bmu_i(b),\rho^{-1}\bsigma_i(b))$, where $\ge$ is the dominance relation in the poset 
$(\mathcal{K}^n(2))^{\times d}$. 
}
\end{equation}

Define for each $n$-level tree $T$ the subfunctor $\mathcal{O}(T)$ as follows:
\begin{equation}\label{eqlatticenop}
\mathcal{O}(T)=\{\omega\in \mathcal{L}^d(|T|)|\ \forall i,j\in |T|, i<_cj\Rightarrow  (\bmu,\bsigma)(\omega_{ij}) \le (\bmu_q(c),\bsigma_q(c)) \text{  for some }1\le q\le s(c)\}
\end{equation}
Then $\{\mathcal{O}(T)\}$ are the components of a $\Theta_d$-colored $(n-1)$-terminal pruned $n$-operad.
\end{prop}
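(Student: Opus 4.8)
The plan is to adapt the single-block argument of Proposition \ref{proptamop} to the present multi-block setting, exploiting that the defining formula \eqref{eqlatticenop} imposes its constraint \emph{separately on each pairwise projection} $\omega_{ij}$. Thus the whole verification reduces to pairwise bookkeeping. First I would check that each $\mathcal{O}(T)$ is a subfunctor of $\mathcal{L}^d(|T|)$ and is closed under the unary ($\Theta_d$-color) operations. This is immediate from the principle used throughout Section \ref{sectionlp} and in Proposition \ref{symglpop}: the cellular operators can only preserve or lower the per-level complexity and act compatibly on the first order movement, so each parameter $(\bmu,\bsigma)(\omega_{ij})$ is non-increasing under the action and the inequalities ``$\le(\bmu_q(c),\bsigma_q(c))$ for some $q$'' persist.

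The core step is closure under the $n$-operadic composition associated with a morphism $\phi\colon T\to S$ of pruned $n$-trees. Let $\omega\in\mathcal{O}(S)$ and $\omega_s\in\mathcal{O}(\phi^{-1}(s))$ for $s\in|S|$, and let $\tilde\omega$ be their superposition; fixing $a<_c b$ in $T$, I must find $q$ with $(\bmu,\bsigma)(\tilde\omega_{ab})\le(\bmu_q(c),\bsigma_q(c))$. As in Proposition \ref{proptamop} there are two cases. If $a,b$ lie in the same fibre $\phi^{-1}(s)$, then $\tilde\omega_{ab}$ coincides with the projection of $\omega_s$ to the corresponding pair of leaves, and the branching level of $a,b$ in $T$ equals that of their images in $\phi^{-1}(s)$; the required inequality is then exactly the hypothesis $\omega_s\in\mathcal{O}(\phi^{-1}(s))$.

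The interesting case is $a\in\phi^{-1}(s_1)$, $b\in\phi^{-1}(s_2)$ with $s_1\ne s_2$. The superposition rule shows that $\tilde\omega_{ab}$ has the same per-level complexities as $\omega_{s_1s_2}$, while its first order movement is read off from that of $\omega_{s_1s_2}$ through the reordering $\rho\in\Sigma_2$ recording whether $\phi$ preserves or reverses the order of the pair, exactly as in the subcases (2.1), (2.2) of Proposition \ref{proptamop} and governed by \eqref{nordinals2}. In poset terms this says $(\bmu,\bsigma)(\tilde\omega_{ab})=(\rho^{-1}\bmu,\rho^{-1}\bsigma)$ applied to $(\bmu,\bsigma)(\omega_{s_1s_2})$ under the diagonal $\Sigma_2$-action. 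Crucially, the restriction of $\phi$ to the two-leaf subtrees through $a,b$ and through $s_1,s_2$ is a quasi-bijection $(T^n_c,\id)\to(T^n_{c'},\rho)$ in $\mathcal{M}^n_2$, where $c'$ is the branching level of $s_1,s_2$ in $S$. Since $\omega\in\mathcal{O}(S)$ gives $(\bmu,\bsigma)(\omega_{s_1s_2})\le(\bmu_i(c'),\bsigma_i(c'))$ for some $i$, and the diagonal action is an order-isomorphism of $(\mathcal{K}^n(2))^{\times d}$, we obtain $(\bmu,\bsigma)(\tilde\omega_{ab})\le(\rho^{-1}\bmu_i(c'),\rho^{-1}\bsigma_i(c'))$; the compatibility condition \eqref{maincondition} then furnishes $j$ with $(\bmu_j(c),\bsigma_j(c))\ge(\rho^{-1}\bmu_i(c'),\rho^{-1}\bsigma_i(c'))$, and transitivity closes the case.

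Finally, associativity and unitality are inherited from the symmetric operad $\mathcal{L}^d$ of Proposition \ref{symglpop}, since $\mathcal{O}$ is a subfunctor-valued assignment shown to be stable under all structure maps; prunedness holds by construction; and $(n-1)$-terminality is checked on $n$-trees factoring through an $(n-1)$-tree, where the relevant generalised lattice paths are pinned down by their first order movements so that $\mathcal{O}(T)$ collapses to a single element. The main obstacle is the bookkeeping in the second case: one must verify cleanly that superposition transports the entire tuple $(\bmu,\bsigma)$ by exactly $\rho^{-1}$ \emph{at every level simultaneously}, and that the induced map of two-leaf subtrees is precisely the quasi-bijection to which \eqref{maincondition} applies. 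This is the point at which the multi-level, multi-block structure genuinely generalises the single-block computation of Proposition \ref{proptamop}, and it is exactly what condition \eqref{maincondition} was engineered to handle.
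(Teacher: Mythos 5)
Your proposal is correct and follows essentially the same route as the paper's own proof: reduce to closure under the $n$-operadic composition $m_\phi$, split into the same-fibre and different-fibre cases, observe that in the latter case $(\bmu,\bsigma)(\tilde\omega_{ab})=\rho^{-1}(\bmu,\bsigma)(\omega_{s_1s_2})$ with $\rho$ determined via \eqref{nordinals2}, recognise the restriction of $\phi$ to the two-leaf subtrees as a quasi-bijection in $\mathcal{M}^n_2$, and close with \eqref{maincondition} and transitivity. Your additional explicit remarks --- that the diagonal $\Sigma_2$-action is a poset automorphism of $(\mathcal{K}^n(2))^{\times d}$, and that stability under the unary $\Theta_d$-operations follows since they preserve or lower the parameters --- are points the paper leaves implicit, and they only strengthen the write-up.
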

We consider the case $d=2$ in Section \ref{sectiond=2} as an illustration how this statement works, and the case $d=3$ in Section \ref{sectiond=3}. Then we discuss the case of general $d$ in Section \ref{sectiongen}. Unfortunately, for general $d>3$ we do not complete the story here, we reduced everything to two combinatorial conjectures, called Conjecture 1 \eqref{conj1} and Conjecture 2 \eqref{conj2}. Their validity implies that the desired $(d+1)$-operad $\mathbf{seq}_d$ exists. On the other hand, we present the proof of its contractibility, povided Conjectures 1 and 2 holds, for general $d$. 

For further use, denote by $P_{\mathcal{O}}(T)$ the subposet of $\mathcal{K}(|T|)^{\times d}$ defined as
\begin{equation}\label{posett}
P_{\mathcal{O}}(T)=\{(\bmu,\bsigma)| (\bmu,\bsigma)_{ij}\le  (\bmu_q(c),\bsigma_q(c)) \text{  for some }1\le q\le s(c)\text{  if  }i<_cj\}
\end{equation}

Denote also by $P_{\mathcal{O}}(T)_{ij}$ the following subposet of $\mathcal{K}(2)^{\times d}$ defined as
\begin{equation}\label{posettij}
P_{\mathcal{O}}(T)_{ij}=\{(\bmu,\bsigma)\le (\bmu_q(c),\bsigma_q(c)) \text{  for some }1\le q\le s(c)\text{  if  }i<_cj\}
\end{equation}
and by $P_{(\bmu,\bsigma)}$ an ``elementary'' subposet of $\mathcal{K}(-)^{\times d}$
\begin{equation}\label{posetsimple}
P_{(\bmu,\bsigma)}=\{(\bmu^\prime,\bsigma^\prime)| \ (\bmu^\prime,\bsigma^\prime)\ge (\bmu,\bsigma)\}
\end{equation}

We see from \eqref{eqlatticenop} that, as an abstract poset,
\begin{equation}\label{posetgent}
P_{\mathcal{O}}(T)=\prod_{(i,j)\in |T|, i<j}P_{\mathcal{O}}(T)_{ij}
\end{equation}

\begin{proof}
One has to prove that the conditions we impose on $\mathcal{O}(T)$ are preserved under $n$-operadic compositions. In $n$-operads, operadic compositions are associated to maps of $n$-trees. Let $T,S$ be pruned $n$-trees, $\phi\colon T\to S$ a morphism in $\Tree_n$. The operadic composition associated to $\phi$ is
$$
m_\phi\colon \mathcal{O}(S)\otimes \bigotimes_{i\in |S|}\mathcal{O}(P(\phi^{-1}(i)))\to\mathcal{O}(T)
$$
Here the fibres $\phi^{-1}(i)$ are typically not pruned $n$-trees, and $P(-)$ denotes the prunisation, see Definition \ref{prunedop}. 

The operadic composition is clear from the generalised lattice path definition \eqref{latticepath2}.
Let $\omega^S$ and $\{\omega^{P(\phi^{-1}(i))}\}_{i\in |S|}$ be generalised lattice paths we like to compose by $\phi$. Denote by $\omega$ the composition generalised lattice path. Let $s,t\in |T|, s\ne t$. Assume that $s<_at$ for some $a$. 
We easily see that 
\begin{equation}
(\bmu,\bsigma)(\omega_{st})=\begin{cases}
(\bmu,\bsigma)(\omega^{P(\phi^{-1}(i))}_{st})&\text{  if  }\phi(s)=\phi(t)=i\\
\rho^{-1}(\bmu,\bsigma)(\omega^S_{\phi(s),\phi(t)})&\text{  if  }\phi(s)\ne \phi(t), \rho=\id\text{  if  }\phi(s)<_b\phi(t), \rho=(21) \text{  if  }\phi(t)<_b\phi(s)
\end{cases}
\end{equation}
Assume $s<_at$ in $T$, then $s<_at$ in $\phi^{-1}(i)$ as well (here we denote by $s$ and $t$ the leaves of $\phi^{-1}(S)$ whose images under the natural inclusion are $s,t$), so in the first case $\omega_{st}$ satisfies \eqref{eqlatticenop} because $\omega_{st}^{\phi^{-1}(i)}$ does by assumption.

For the second case, the restriction of $\phi$ on the $n$-tree with two leaves $s,t$ gives a quasi-bijection to its image.
Thus, we have a quasi-bijection $(T^n_a,\id)\to (T^n_b,\rho)$. It implies that either $\rho=\id$ and $b\le a$, or $\rho=(21)$ and $b<a$. 
In both cases $\omega_{st}$ satisfies \eqref{eqlatticenop}, because $\omega_{\phi(s),\phi(t)}^S$ does by assumption, and due to the fact that our systems of labels for two-leaves graphs satisfy \eqref{maincondition}.

\end{proof}

\subsection{\sc The case $d=2$}\label{sectiond=2}
It is worth to start with revising the case $d=1$, considered in Section \ref{rembb}, from the point of view of Proposition \ref{proptwoleaves}.

In the sequel, we use notation $\underset{a\text{  symbols  }}{\underbrace{12121\dots}}$ for $(a,\id)$, and 
$\underset{a\text{  symbols  }}{\underbrace{21212\dots}}$ for $(a, (21))$.

Our conditions for the operad $\mathcal{L}_{n,B}$ are: $i<_a j\Rightarrow \omega_{ij}\le (n-a-1, \id)$ (recall that by convention the $(i,j)$-complexity $\mu_{ij}$ corresponds to $\mu_{ij}-1$ in the Berger poset).

The condition one has to check is: for a quasi-bijection $(T^2_a,\id)\to (T^2_b,\rho)$ of 2-level trees one has
$(n-a-1,\id)\ge (n-b-1,\rho)$. Definitely $a\ge b$ and $a>b$ if $\rho=(21)$.  Therefore, $n-a-1\ge n-b-1$, and the inequality is strict if $\rho=(21)$. But it is exactly the relation in the Berger poset $\mathcal{K}(2)$, see \eqref{bergerposet}. 

In our symbolic notation it reads
$$
\underset{n-a-1\text{  symbols  }}{\underbrace{12121\dots}}\ge \underset{n-b-1\text{  symbols  }}{\underbrace{12121\dots}}
$$
for $a\ge b$, and
$$
\underset{n-a-1\text{  symbols  }}{\underbrace{12121\dots}}\ge \underset{n-b-1\text{  symbols  }}{\underbrace{21212\dots}}
$$
for $a>b$. 

So this case is really trivial. 

Now we turn the case $d=2$. We are interested in a particular 3-operad denoted by $\mathbf{seq}_2$, so $n=3$.

We postulate the following conditions defining $\mathbf{seq}_2(T)$ for 3-graphs with 2 leaves:

\begin{equation}\label{conditionsd=2}
\begin{aligned}
\ & i<_2j\Rightarrow (\bmu,\bsigma)(\omega_{ij})\le (12)|(12)\\
&i<_1j\Rightarrow (\bmu,\bsigma)(\omega_{ij})\le (121)|(12)\\
&i<_0j\Rightarrow (\bmu,\bsigma)(\omega_{ij})\le (121)|(121)\text{  or  } (\bmu,\bsigma)(\omega_{ij})\le (1212)|(21)
\end{aligned}
\end{equation}
Let us explain our notations. By bar symbol we separate the elements $(\mu,\sigma)$ at different levels, from the level 1 to $d$ rightwards. Thus our condition for $i<_0j$ can be rewritten as 
$(\bmu,\bsigma)(\omega_{ij})\le (\bmu,\bsigma)_1$ or $(\bmu,\bsigma)(\omega_{ij})\le (\bmu,\bsigma)_2$ where
$(\bmu,\bsigma)_1=[(121)|(121)]=((1,\id),(1,\id))$ and $(\bmu,\bsigma)_2=[(1212)|(21)]=((2,\id), (0,(12)))$. (Recall that by convention  $(\mu_{ij},\sigma)$ corresponds to $(\mu_{ij}-1,\sigma)$ in the Berger poset. 

The following remark is important.
\begin{remark}\label{remexpl}
{\rm
Assume at some level $i$ we have $\mu_i=0$, that is, the corresponding element is $(12,\sigma)$ or $(21,\sigma)$. Then there can not be any lattice paths at levels greater or equal than $i+1$, as can be easily seen from definitions. 
In particular, we can rewrite the condition for $i<_2j$ in \eqref{conditionsd=2} as $i<_2j\Rightarrow (\bmu,\bsigma)(\omega_{ij})\le (12)|-$. Here $-$ is considered as an initial object of the poset at the corresponding level. 
}
\end{remark} 
\begin{lemma}\label{lemmad=2}
The system \eqref{conditionsd=2} satisfies \eqref{maincondition}. 
\end{lemma}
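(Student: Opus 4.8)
My plan is to verify \eqref{maincondition} in the case $n=3$, $d=2$ as a finite check over all quasi-bijections of $\mathcal{M}^3_2$. First I would enumerate them: by the description of $\mathcal{M}^3_2$ preceding the Proposition, a quasi-bijection $(T^3_a,\id)\to(T^3_b,\rho)$ exists exactly when $\rho=\id$ and $a\le b$, or $\rho=(12)$ (the nontrivial element of $\Sigma_2$) and $a<b$, with $a,b\in\{0,1,2\}$. Discarding the three identities, six genuine cases remain: $(0,1),(0,2),(1,2)$ with $\rho=\id$, and the same three pairs with $\rho=(12)$.

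Next I would translate the strings of \eqref{conditionsd=2} into coordinates of $\mathcal{K}(2)^{\times 2}$, reading the complexity as the number of direction changes and the first movement as the permutation: this gives $(12)=(0,\id)$, $(121)=(1,\id)$, $(1212)=(2,\id)$, $(21)=(0,(12))$ and $(212)=(1,(12))$, so that $L(2)=\{(12)|(12)\}$, $L(1)=\{(121)|(12)\}$ and $L(0)=\{(121)|(121),\,(1212)|(21)\}$. I would also record the two facts that drive the computation: the order on $\mathcal{K}(2)$, namely $(\mu,\sigma)\le(\mu',\sigma')$ iff either $\mu<\mu'$, or $\mu=\mu'$ and $\sigma=\sigma'$; and the diagonal action, under which $\rho=(12)$ fixes every complexity and swaps $\id\leftrightarrow(12)$ in every permutation slot. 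Each case then reduces to transporting a label of the target by $\rho^{-1}$ and exhibiting a source label that dominates it level by level.

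The three $\rho=\id$ cases are monotone: lowering $a$ only enlarges coordinates, so $(121)|(121)$ dominates both $(121)|(12)$ and $(12)|(12)$, and $(121)|(12)$ dominates $(12)|(12)$, exactly as in the $d=1$ discussion above. The crux is the quasi-bijection $(T^3_0,\id)\to(T^3_1,(12))$: applying $\rho^{-1}$ turns the target label $(121)|(12)$ into $(212)|(21)$, and the first block $(121)|(121)$ \emph{fails} to dominate it, because at level $1$ the entries $(121)=(1,\id)$ and $(212)=(1,(12))$ have equal complexity but opposite first movements. It is the second block $(1212)|(21)=((2,\id),(0,(12)))$ that works, since $(2,\id)\ge(1,(12))$ by strict inequality of complexities while $(0,(12))\ge(0,(12))$. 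I expect this to be the main point: it is exactly what forces $L(0)$ to consist of two blocks, and hence what makes $\mathbf{seq}_2(T^3_0)$ a genuine union of cells rather than a single one.

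Finally I would dispose of the cases with target $T^3_2$, where Remark \ref{remexpl} is indispensable. Since $(12)|(12)$ has level-$1$ complexity $0$, its level-$2$ datum is vacuous and must be read as $(12)|-$ with $-$ the initial object; otherwise the case $(T^3_1,\id)\to(T^3_2,(12))$ would fail, as the formal comparison $(0,\id)\ge(0,(12))$ at level $2$ is false. With this convention $\rho^{-1}$ sends $(12)|-$ to $(21)|-$, the level-$2$ comparison becomes automatic, and $(121)|(12)$ dominates $(21)|-$; the companion target-$T^3_2$ cases $(T^3_0,\id)\to(T^3_2,\id)$ and $(T^3_0,\id)\to(T^3_2,(12))$ are handled identically via $(121)|(121)$. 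Assembling the six affirmative checks establishes \eqref{maincondition}.
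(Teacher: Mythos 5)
Your proof is correct and follows essentially the same route as the paper: the paper's own argument likewise handles the maps into $T^3_2$ via the convention of Remark \ref{remexpl} (reading $(12)|(12)$ as $(12)|-$), and its crux is exactly your computation that $(121)|(121)$ fails to dominate $\rho^{-1}[(121)|(12)]=(212)|(21)$ while $(1212)|(21)$ does, which is why $L(0)$ needs two blocks. The only difference is bookkeeping: you write out all six non-identity quasi-bijections explicitly, whereas the paper leaves the easy monotone $\rho=\id$ cases and the $(T^3_0,\id)\to(T^3_2,\rho)$ cases implicit.
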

\begin{proof}
Due to Remark \ref{remexpl}(2) the statement is clear for maps of trees $\phi\colon(1,\id)\to (2,\rho)$. Indeed, it holds as 
$[(121)|(12)]>\rho[(12)|-]$, the r.h.s. is equal to $[(12)|-]$ or $[(21)|-]$ dependently on $\rho$.

Assume that for the case $i<_0j$ in \eqref{conditionsd=2} we took only one inequality, namely $(\omega_{ij})\le (121)|(121)$. This block is considered as ``leading'' one, in applications it corresponds to ``two-dimensional braces''. We claim that in this case \eqref{maincondition} {\it fails}. Indeed, consider the quasi-bijection $\phi\colon (0,\id)\to (1,\id )$ which switches the leaves. The inequality one has to check is 
$$
[(121)|(121)]\overset{?}{\ge} \sigma_{12}[(121)|(12)]=[(212)|(21)]
$$
Clearly it fails in $(\mathcal{K}(2))^{\times 2}$ (recall that we consider $\mathcal{K}^{\times d}$ as the cartesian product of posets, thus $\ge$ relation means $\ge$ relation for all components).

This example is crucial: it shows that one has to consider more than 1 block $\mathcal{L}^2_{(\bmu,\bsigma)}$ for the case $i<_0j$. 

Now turn back to the definition \eqref{conditionsd=2}. One has
$$
[(1212)|(21)]\ge \sigma_{12}[(121)|(12)]=[(212)|(21)]
$$
as $(1212)\ge (212)$ and $(21)\ge (21)$. This computation shows that \eqref{maincondition} holds for \eqref{conditionsd=2}.

\end{proof}

By Proposition \ref{proptwoleaves}, \eqref{eqlatticenop} defines a $\Theta_2$-colored 3-operad, which we denote by $\mathbf{seq}_2$. We have to prove its contractibility, in the topological and in the dg condensations. Here we discuss the first step of the proof, namely, the contractibility of the posets $P_{\mathbf{seq}_d}(T)$, see \eqref{posett}, for any 3-level tree $T$. 

We use the contractibility of the posets $P_{\mathbf{seq}_2}(T)$ in the proof the contractibility of $\mathbf{seq}_2$ in the topological and in the dg condensations, see Section \ref{condtopfinal}.

We strart with 3-trees with two leaves. 
The only $T$ for which the poset $P_{\mathbf{seq}_2}(T)$ is distinct from elementary $P_{(\bmu,\bsigma)}$ (see \eqref{posetsimple}) is the 3-level tree $T^3_0$ with two leaves 1 and 2 such that $1<_02$. The posets $P_{(\bmu,\bsigma)}$ contain a final object and thus are contractible (have contractible nerves).

The idea is to ``cover'' $P_{\mathbf{seq}_2}(T_0^3)$ by contractible $P_{(\bmu,\bsigma)}$ such that the elements of this cover form a contractible (finite) poset. 

The poset for $P_{\mathbf{seq}_2}(T_0^3)$ is the opposite to the one shown in \eqref{posetd=2}. (The only reason for us to work with the poset opposite to $P_{\mathbf{seq}_2}(T_0^3)$ is that it is more natural to visualise, on the other hand, $|N(\mathcal{C})|=|N(\mathcal{C}^\op)|$ for any small category $\\mathcal{C}$). 

\begin{equation}\label{posetd=2}
\xymatrix{
&&\boxed{(121)|(12)}\ar[drr]\ar[dddrr]\\
\boxed{(121)|(121)}\ar[urr]\ar[drr]&&&&\boxed{(12)}\\
&&\boxed{(121)|(21)}\ar[urr]\ar[drr]\\
\boxed{(1212)|(21)}\ar[urr]\ar[drr]&&&&\boxed{(21)}\\
&&\boxed{(212)|(21)}\ar[urr]\ar[uuurr]
}
\end{equation}

\begin{lemma}\label{colimd=2}
The poset \eqref{posetd=2} is contractible.
\end{lemma}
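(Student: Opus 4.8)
The statement to prove is that the finite poset $P:=P_{\mathbf{seq}_2}(T^3_0)$, displayed in opposite form in \eqref{posetd=2}, has contractible nerve. The plan is to use the \emph{cover} idea announced just before the Lemma. Since $T^3_0$ has a single pair of leaves $1<_0 2$, we have $P_{\mathbf{seq}_2}(T^3_0)=P_{\mathbf{seq}_2}(T^3_0)_{12}$ by \eqref{posetgent}, and the label set attached to the relation $1<_0 2$ consists of exactly the two elements $(121)|(121)$ and $(1212)|(21)$ (see \eqref{conditionsd=2}). Hence the defining formula \eqref{posettij} presents $P$ as a union of two principal order ideals
\[
P=I_1\cup I_2,\qquad I_1=\{x\le (121)|(121)\},\quad I_2=\{x\le (1212)|(21)\},
\]
each of which is an ``elementary'' piece of the form \eqref{posetsimple} (read in $P^\op$). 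The first thing I would record is that each $I_s$ has a greatest element, namely its generating label; thus each $N(I_s)$ is a cone and is contractible.

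Next I would compute the intersection $I_1\cap I_2$, i.e. the set of common lower bounds of the two generators, working componentwise with the Berger order \eqref{bergerposet} and keeping track of the degeneracy convention of Remark \ref{remexpl} (a vanishing level-$1$ complexity forbids any higher level). This gives
\[
I_1\cap I_2=\{(121)|(21),\,(12),\,(21)\},
\]
and here $(12)\le (121)|(21)$ and $(21)\le(121)|(21)$, so $(121)|(21)$ is the greatest element of $I_1\cap I_2$. Consequently $I_1\cap I_2$ again has a cone point and $N(I_1\cap I_2)$ is contractible.

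Finally I would glue. As $I_1,I_2$ are full subposets of $P$, one has $N(I_1)\cap N(I_2)=N(I_1\cap I_2)$ and $N(P)=N(I_1)\cup N(I_2)$; the inclusion $N(I_1\cap I_2)\hookrightarrow N(I_s)$ is a cofibration of simplicial sets, so $N(P)$ is the homotopy pushout of $N(I_1)\leftarrow N(I_1\cap I_2)\to N(I_2)$. Since all three spaces are contractible, this homotopy pushout is contractible, which proves the Lemma. The only genuine work is the bookkeeping in the middle step: reading off the order relations of $P$ correctly under the two simultaneous conventions (the Berger ``$-1$'' shift and Remark \ref{remexpl}) and checking that the intersection collapses onto a single cone point. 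As a cross-check, or as a fully self-contained alternative, one can verify directly from \eqref{posetd=2} that $P$ is dismantlable: $(121)|(12)$ and $(212)|(21)$ are beat points, and after removing them $P$ reduces, by successive removals of beat points, to the single vertex $(121)|(21)$; a dismantlable finite poset has contractible nerve.
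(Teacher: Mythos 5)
Your proof is correct. It uses the same decomposition as the paper --- your $I_1$, $I_2$, $I_1\cap I_2$ are exactly the paper's $\overline{P}_1$, $\overline{P}_2$, $\overline{P}_{12}$ read in the opposite poset (harmless, since $|N(\mathcal{C})|\cong|N(\mathcal{C}^\op)|$) --- but it replaces the paper's homotopy-theoretic machinery by an elementary one. The paper forms the zigzag $\overline{P}_1\leftarrow\overline{P}_{12}\rightarrow\overline{P}_2$ of inclusions, passes to the Grothendieck construction, invokes the Thomason homotopy colimit theorem to show the latter is contractible, and then uses Quillen's Theorem~A to compare it with the colimit $\overline{P}$; you instead observe that $N(P)=N(I_1)\cup N(I_2)$ with $N(I_1)\cap N(I_2)=N(I_1\cap I_2)$ (valid because each $I_s$ is a downward-closed full subposet, so every chain lands in one of them), and conclude by the gluing lemma for a pushout of simplicial sets along cofibrations, all three pieces being cones. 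Your bookkeeping of the order relations and of the intersection $\{(121)|(21),\,(12),\,(21)\}$ with greatest element $(121)|(21)$ is accurate, and your dismantlability cross-check is also right: $(121)|(12)$ and $(212)|(21)$ are up-beat points (each has a unique element strictly above it), after which the two maximal elements become down-beat and the poset collapses to $(121)|(21)$. The trade-off is worth noting: the paper itself remarks that a direct proof is possible, and deliberately sets up the Thomason/Quillen-A argument because it is reused verbatim for $d=3$ (the seven-term zigzag \eqref{eqdsecond}) and for general $d$ (Proposition \ref{proppropcontr}), where the cover consists of $N=\sharp V_d$ principal ideals; there, the identification of the union with the colimit over the zigzag is exactly the content of Conjecture~2 via Lemma \ref{lemmaconj2}, whereas for two pieces --- your situation --- it is automatic. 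So your argument is shorter and self-contained at $d=2$, while the paper's buys a uniform template whose only $d$-dependent input is the combinatorial Conjecture~2.
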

\begin{proof}
The statement is rather elementary and certainly can be proved directly. We provide a more general approach, which will be employed below for higher $d$. 

Denote by $\overline{P}$ the poset \eqref{posetd=2} (opposite to $P_{\mathbf{seq}_2}(T^3_0)$). Denote by $\overline{P}_1$ the subposet of $\overline{P}$ of elements $\ge [(121)|(121)]$, by $\overline{P}_2$ the subposet of elements $\ge [(1212)|(21)]$, and by $\overline{P}_{12}$ the subposet of elements $\ge [(121)|(21)]$. The poset $\overline{P}$ is a colimit in $\Cat$ of the diagram indexed by the category $I$ of inclusions of posets $\overline{P}_1\leftarrow  \overline{P}_{12}\rightarrow \overline{P}_2$. The posets $\overline{P}_1,\overline{P}_2,\overline{P}_{12}$ are contractible as they have initial elements. We want to deduce from here that $\overline{P}$ is contractible.

Denote by $\tilde{\overline{P}}$ the Grothendieck construction of the above functor $I\to \Cat$. 
We use here the fibred Grothendieck construction, which is fibred over $I$. By the Thomason homotopy colimit theorem [Th],
$$
N(\tilde{\overline{P}})=\hocolim_I N(\overline{P}_d)
$$
where we denote by $d$ an object of $D$, that is, $1$, $2$, or $12$. 
The category $I$ is clearly contractible. It follows that 
$$
N(\tilde{\overline{P}})\sim \hocolim_I N(\overline{P}_d)\sim \hocolim_D *\sim N(D)\sim *
$$
is contractible.

On the other hand, there is a natural projection $p\colon \tilde{\overline{P}}\to \overline{P}=\colim_I \overline{P}_d$. We claim that $p$ is a homotopy equivalence. Indeed, by Quillen Theorem A it is enough to show that the homotopy fibres $\tilde{\overline{P}}/a$ are contractible, for any $a\in \overline{P}$, which is clear. 
\end{proof}

We can now describe the poset  $P_{\mathbf{seq}_2}(T)$ for any pruned 3-level tree $T$. As the reader expects, it is reduced to the case of threes with two leaves. 

From definition \eqref{posetgent} we see that the contractibility of the posets $P_{\mathbf{seq}_2}(T)$ for 3-level trees $T$ with 2 leaves implies the contractibility of the posets $P_{\mathbf{seq}_2}(T)$ in general. (The same reduction works for general $n$-operad $\mathcal{O}$, constructed as in Proposition \ref{proptwoleaves}). 

We have proved
\begin{prop}\label{propposetd=2}
For any pruned 3-level graph $T$, the poset $P_{\mathbf{seq}_2}(T)$ is contractible. 
\end{prop}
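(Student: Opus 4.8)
The plan is to reduce the contractibility of $P_{\mathbf{seq}_2}(T)$ for an arbitrary pruned $3$-level tree $T$ to the case of trees with exactly two leaves, which has already been settled above. The key structural input is the product decomposition \eqref{posetgent}, which presents $P_{\mathbf{seq}_2}(T)$ as a finite cartesian product $\prod_{i<j} P_{\mathbf{seq}_2}(T)_{ij}$ of posets indexed by the pairs of leaves of $T$. First I would record the standard fact that the nerve sends a finite product of posets to the product of their nerves, and that geometric realization commutes with finite products in a convenient category of spaces, so that $|N(P_{\mathbf{seq}_2}(T))| \cong \prod_{i<j} |N(P_{\mathbf{seq}_2}(T)_{ij})|$. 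Since a finite product of contractible spaces is contractible, it then suffices to prove that each factor $P_{\mathbf{seq}_2}(T)_{ij}$ is contractible.

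Next I would dispatch the ``generic'' factors. By \eqref{posettij} the factor $P_{\mathbf{seq}_2}(T)_{ij}$ depends only on the level $c$ for which $i <_c j$, and it is canonically identified with the two-leaf poset $P_{\mathbf{seq}_2}(T^3_c)$. For $c = 1$ and $c = 2$ the defining conditions in \eqref{conditionsd=2} involve a single block, so the factor is a principal down-set of $\mathcal{K}(2)^{\times 2}$ cut out by one bound $(\bmu,\bsigma)$. Such a poset has $(\bmu,\bsigma)$ as a maximum element; viewed as a category it has a terminal object, hence its nerve is contractible. This handles every factor except the one coming from the deepest relation.

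The only nontrivial factor is attached to $i <_0 j$, where $s(0) = 2$ and $P_{\mathbf{seq}_2}(T)_{ij}$ is exactly the two-block poset $P_{\mathbf{seq}_2}(T^3_0)$ whose opposite is drawn in \eqref{posetd=2}. Its contractibility is precisely Lemma \ref{colimd=2}; since $|N(\mathcal{C})| = |N(\mathcal{C}^{\op})|$ for any small category, contractibility of the opposite poset \eqref{posetd=2} transfers back to $P_{\mathbf{seq}_2}(T^3_0)$. Substituting the contractibility of all factors into the product decomposition would then finish the proof.

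I expect no real obstacle at this stage: the genuine work is already absorbed into Lemma \ref{colimd=2}, where the two-block poset is exhibited as a Thomason homotopy colimit over the contractible indexing shape $\overline{P}_1 \leftarrow \overline{P}_{12} \rightarrow \overline{P}_2$ and compared with the strict colimit via Quillen's Theorem A. The present statement is a formal consequence of that lemma together with the multiplicativity \eqref{posetgent}. The only point that demands a little care is the bookkeeping identifying each factor $P_{\mathbf{seq}_2}(T)_{ij}$ with the correct two-leaf poset $P_{\mathbf{seq}_2}(T^3_c)$, and the remark that any factor cut out by a single block is automatically contractible because it has a maximum.
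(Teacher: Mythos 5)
Your proposal is correct and follows essentially the same route as the paper: reduction to the two-leaf case via the product decomposition \eqref{posetgent}, contractibility of the single-block factors because they have a final object, and Lemma \ref{colimd=2} (Thomason's homotopy colimit theorem plus Quillen's Theorem A) for the one genuinely non-trivial factor $P_{\mathbf{seq}_2}(T^3_0)$. You merely make explicit the standard facts the paper leaves implicit (nerve and realization commute with finite products, $|N(\mathcal{C})|=|N(\mathcal{C}^{\op})|$), which is fine.
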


\qed

\subsection{\sc The case $d=3$}\label{sectiond=3}
Here we consider  the case of 4-operad $\mathbf{seq}_3$. 

The 3-terminal $\Theta_3$-colored pruned 4-operad $\mathbf{seq}_3$ is defined by the following conditions for the graphs with two leaves:
\begin{equation}\label{conditionsd=3}
\begin{aligned}
\ & i<_3j\Rightarrow (\bmu,\bsigma)(\omega_{ij})\le (12)\\
&i<_2j\Rightarrow (\bmu,\bsigma)(\omega_{ij})\le (121)|(12)\\
&i<_1j\Rightarrow (\bmu,\bsigma)(\omega_{ij})\le (121)|(121)|(12)\text{  or  } (\bmu,\bsigma)(\omega_{ij})\le (1212)|(21)\\
&i<_0j\Rightarrow (\bmu,\bsigma)(\omega_{ij})\le (121)|(121)|(121)\text{  or  }(\bmu,\bsigma)(\omega_{ij})\le (121)|(1212)|(21)\text{  or  }\\
&(\bmu,\bsigma)(\omega_{ij})\le (1212)|(212)|(21)\text{  or  }(\bmu,\bsigma)(\omega_{ij})\le(12121)|(12)
\end{aligned}
\end{equation}
We see that the the poset for $i<_aj$ for $a=1,2,3$ for $d=3$ is the same as the poset for $i<_{a-1}j$ for $d=2$ (see \eqref{conditionsd=2}). 
The only new poset is $P_{\mathbf{seq}_3}(T^4_0)$. The poset of the opposite to it is drawn in \eqref{bigdiagram}. 

\begin{lemma}\label{lemmad=3}
The system \eqref{conditionsd=3} satisfies \eqref{maincondition}.
\end{lemma}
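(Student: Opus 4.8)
The plan is to verify the combinatorial inequality \eqref{maincondition} by a finite, case-by-case comparison organized according to the quasi-bijections in $\mathcal{M}^4_2$. First I would record that a quasi-bijection $(T^4_a,\id)\to (T^4_b,\rho)$ exists precisely when $\rho=\id$ and $a\le b$, or $\rho=(21)$ and $a<b$, and that in $\Sigma_2$ one has $\rho^{-1}=\rho$, so $\rho^{-1}$ acts on a level-word simply by interchanging the symbols $1$ and $2$ (flipping the word), preserving its length and toggling the associated permutation. Writing $L(c)$ for the list of labels attached to $(T^4_c,\id)$ in \eqref{conditionsd=3}, condition \eqref{maincondition} then amounts to: for every admissible pair $(a,b)$ as above, each label of $L(b)$, after flipping its words when $\rho=(21)$, is dominated in $(\mathcal{K}^4(2))^{\times 3}$ by some label of $L(a)$. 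I would also record the elementary word-comparison rule induced by \eqref{bergerposet}: at a fixed level a word $v$ satisfies $v\le w$ iff either they begin with the same symbol and $|v|\le|w|$, or they begin with different symbols and $|v|<|w|$; and, by Remark~\ref{remexpl}, a level left empty is the initial object $-$, which lies below everything and hence never obstructs domination.

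Next I would dispose of all pairs with $a\ge 1$ by reduction to the already established Lemma~\ref{lemmad=2}. The point is that whenever $a\ge 1$, the morphisms with $\rho=(21)$ force $b\ge 2$, and every label of $L(2)$ and $L(3)$ is empty at level $3$; the same holds for $\rho=\id$ with $b\ge 2$, while $b=1$ occurs only through the identity. Thus level $3$ is never constraining in these cases, and upon discarding it the level-$1$ and level-$2$ data of \eqref{conditionsd=3} coincide, under the index shift $c\mapsto c-1$, with the two-level data of \eqref{conditionsd=2}: concretely $L(1),L(2),L(3)$ restrict to $L^{d=2}(0),L^{d=2}(1),L^{d=2}(2)$. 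Hence the required dominations for $a\ge1$ are exactly those verified in Lemma~\ref{lemmad=2}.

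It remains to treat the genuinely new pairs, those with $a=0$, which is where I expect the only real content. The map $(T^4_0,\id)\to(T^4_0,\id)$ is trivial, so I would check $(T^4_0,\id)\to(T^4_b,\rho)$ for $b\in\{1,2,3\}$. For $\rho=\id$ each label of $L(b)$ is dominated by an appropriate block of $L(0)$: the leading block $(121)|(121)|(121)$ catches every label whose words all begin with $1$ (padding empty levels by $-$), while $(1212)|(21)$ is caught by $(1212)|(212)|(21)$. The crux is $\rho=(21)$: flipping the label $(1212)|(21)$ of $L(1)$ yields $(2121)|(12)$, which is \emph{not} dominated by the leading block and is dominated precisely by the extra block $(12121)|(12)$ of $L(0)$; likewise the flip $(212)|(212)|(21)$ of $(121)|(121)|(12)$ is caught by $(1212)|(212)|(21)$. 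This mirrors the phenomenon already isolated in Lemma~\ref{lemmad=2}, and it is exactly why $L(0)$ must contain all four blocks listed in \eqref{conditionsd=3} rather than the single leading one. Exhibiting, for each admissible $(a,b,\rho)$, an explicit dominating block completes the verification of \eqref{maincondition}, and hence of the lemma via Proposition~\ref{proptwoleaves}. The main obstacle is purely bookkeeping — confirming that the four blocks of $L(0)$ jointly dominate all flipped labels — which I would present as a short table rather than in prose.
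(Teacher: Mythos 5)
Your proof is correct, and since the paper's own ``proof'' of this lemma is just ``straightforward and left to the reader,'' your write-up supplies precisely the intended argument: the reduction of all cases with $a\ge 1$ to Lemma \ref{lemmad=2} via the index shift $c\mapsto c-1$ (the level-$3$ entries of the target labels being empty, hence never obstructing domination --- this is the induction on $d$ the paper alludes to in Section \ref{sectiongen}), together with the genuinely new checks at $a=0$, whose crux cases you identify correctly, namely $(2121)|(12)\le(12121)|(12)$ and $(212)|(212)|(21)\le(1212)|(212)|(21)$ in $(\mathcal{K}(2))^{\times 3}$. I have verified the dominations you relegate to the bookkeeping table as well --- e.g.\ for $b=2$, $\rho=(21)$ the flip $(212)|(21)$ is caught by $(1212)|(212)|(21)$, and for $b=3$, $\rho=(21)$ the flip $(21)$ is caught by $(121)|(121)|(121)$ --- so the case analysis is exhaustive and the proof is complete.
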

The proof is straightforward and is left to the reader.

\qed

\begin{equation}\label{bigdiagram}
{\tiny
\xymatrix{
&&&\boxed{(121)|(121)|(12)}\ar[dddrrr]\ar[dddddrrr]\\
\\
&&&\boxed{(121)|(121)|(21)}\ar[drrr]\ar[dddrrr]\\
\boxed{(121)|(121)|(121)}\ar[uuurrr]\ar[urrr]&&&&&&\boxed{(121)|(12)}\ar[rrrdd]\ar[rrrdddd]\\
&&&\boxed{(121)|(212)|(21)}\ar[urrr]\ar[drrr]\\
\boxed{(121)|(1212)|(21)}\ar[uuurrr]\ar[urrr]&&&&&&\boxed{(121)|(21)}\ar[rrr]\ar[rrrdd]&&&\boxed{(12)}\\
&&&\boxed{(1212)|(21)}\ar[drrr]\ar[urrr]\\
\boxed{(1212)|(212)|(21)}\ar[drrr]\ar[dddrrr]\ar[rrruuu]\ar[rrru]&&&&&&\boxed{(212)|(21)}\ar[rrr]\ar[uurrr]&&&\boxed{(21)}\\
&&&\boxed{(212)|(212)|(21)}\ar[urrr]\ar[drrr]\\
\boxed{(12121)|(12)}\ar[drrr]\ar[dddrrr]&&&&&&\boxed{(212)|(12)}\ar[uurrr]\ar[uuuurrr]\\
&&&\boxed{(1212)|(12)}\ar[urrr]\ar[uuuuuuurrr]\\
\\
&&&\boxed{(2121)|(12)}\ar[uuuuuuuuurrr]\ar[uuurrr]
}
}
\end{equation}
Denote this poset by $\overline{P}$. For an object $a$ of $\overline{P}$, denote by $\overline{P}_a$ the subposet of $\overline{P}$ of elements $\ge a$.
Consider the poset $I$ whose graph is shown below:
$$
\mb\leftarrow\mb\rightarrow\mb\leftarrow\mb\rightarrow\mb\leftarrow\mb\rightarrow\mb
$$
Then the diagram below defines a functor $X\colon I\to \Cat$:

\begin{equation}\label{eqdfirst}
\begin{tikzpicture}[baseline=(current bounding box.center),scale=1]
{
\node (p1) at (0,0) {$\overline{P}_{(121)|(121)|(121)}$};
\node (p12) at (2,-1.5) {$\overline{P}_{(121)|(121)|(21)}$};
\node (p2) at (4,0) {$ \overline{P}_{(121)|(1212)|(21)}$};
\node (p23) at (6,-1.5) {$\overline{P}_{(121)|(212)|(21)}$};
\node (p3) at (8,0) {$\overline{P}_{(1212)|(212)|(21)}$};
\node (p34) at (10, -1.5) {$\overline{P}_{(1212)|(12)}$};
\node (p4) at (12,0) {$\overline{P}_{(12121)|(12)}$};

    \path[->]

  (p12)edge(p1)(p12)edge(p2)(p23)edge(p2)(p23)edge(p3)(p34)edge(p3)(p34)edge(p4);

}
\end{tikzpicture}
\end{equation}

Introduce shorter notations for the posets $\overline{P}_a$ standing at the vertices of $I$:
\begin{equation}\label{eqdsecond}
\begin{tikzpicture}[baseline=(current bounding box.center),scale=1]
{
\node (p1) at (0,0) {$\overline{P}_{1}$};
\node (p12) at (2,-1.5) {$\overline{P}_{12}$};
\node (p2) at (4,0) {$ \overline{P}_{2}$};
\node (p23) at (6,-1.5) {$\overline{P}_{23}$};
\node (p3) at (8,0) {$\overline{P}_{3}$};
\node (p34) at (10, -1.5) {$\overline{P}_{34}$};
\node (p4) at (12,0) {$\overline{P}_{4}$};

    \path[->]

  (p12)edge(p1)(p12)edge(p2)(p23)edge(p2)(p23)edge(p3)(p34)edge(p3)(p34)edge(p4);

}
\end{tikzpicture}
\end{equation}

\begin{lemma}\label{colimd=3}
One has $\colim_{a\in I} \overline{P}_a=\overline{P}$.
\end{lemma}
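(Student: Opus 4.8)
The plan is to identify the colimit with the union of the seven principal filters appearing in \eqref{eqdsecond}, realised as full subposets of $\overline{P}$, and to check that the zig-zag diagram \eqref{eqdfirst} glues them correctly. Throughout I regard each $\overline{P}_a=\{x\in\overline{P}\mid x\ge a\}$ as a full subposet of $\overline{P}$; since $\overline{P}$ is the opposite of $P_{\mathbf{seq}_3}(T^4_0)$, equivalently $\overline{P}_a$ is the principal ideal $\{x\le_{\mathcal{K}}a\}$ in the dominance order \eqref{bergerposet} on $\mathcal{K}(2)^{\times 3}$. All seven maps in \eqref{eqdfirst} are inclusions of such full subposets, so the canonical cocone $\overline{P}_a\hookrightarrow\overline{P}$ induces a comparison functor $\phi\colon\colim_{a\in I}\overline{P}_a\to\overline{P}$. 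The goal is to show $\phi$ is an isomorphism of posets, which I do by proving $\phi$ is a bijection on objects and that the order on the colimit agrees with that of $\overline{P}$.

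Write $g_1,g_2,g_3,g_4$ for the four generators $(121)|(121)|(121)$, $(121)|(1212)|(21)$, $(1212)|(212)|(21)$, $(12121)|(12)$ standing at the sources $p_1,p_2,p_3,p_4$ of \eqref{eqdsecond}. Surjectivity of $\phi$ on objects is immediate: by the very definition of $\overline{P}$ through the condition for $i<_0 j$ in \eqref{conditionsd=3}, an element of $\overline{P}$ is exactly a $(\bmu,\bsigma)$ with $(\bmu,\bsigma)\le_{\mathcal{K}}g_q$ for some $q$, that is $\overline{P}=\overline{P}_1\cup\overline{P}_2\cup\overline{P}_3\cup\overline{P}_4$.

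For injectivity on objects I first record that the three overlap pieces are the intersections of their neighbours, $\overline{P}_{12}=\overline{P}_1\cap\overline{P}_2$, $\overline{P}_{23}=\overline{P}_2\cap\overline{P}_3$, $\overline{P}_{34}=\overline{P}_3\cap\overline{P}_4$. Because $\downarrow g\cap\downarrow g'=\downarrow(g\wedge g')$ whenever the meet exists, this reduces to three component-wise meet computations in $\mathcal{K}(2)^{\times 3}$ carried out from \eqref{bergerposet} (recording that in $\mathcal{K}(2)$ an alternating string is determined by its length and its initial symbol, and that a string of length $2$ kills all higher levels by Remark \ref{remexpl}): these yield $g_1\wedge g_2=(121)|(121)|(21)$, $g_2\wedge g_3=(121)|(212)|(21)$ and $g_3\wedge g_4=(1212)|(12)$, matching the labels at $p_{12},p_{23},p_{34}$. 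I then verify the \emph{interval property}: for every $x\in\overline{P}$ the set of indices $i$ with $x\in\overline{P}_i$ is contiguous in $\{1,2,3,4\}$; concretely $\overline{P}_1\cap\overline{P}_3\subseteq\overline{P}_2$, $\overline{P}_2\cap\overline{P}_4\subseteq\overline{P}_3$ and $\overline{P}_1\cap\overline{P}_4\subseteq\overline{P}_2\cap\overline{P}_3$, each by a short component-wise check against the dominance relation. Granting these two facts, any two copies of a point $x$ living in pieces $\overline{P}_i$ and $\overline{P}_j$ are identified in the colimit: by the interval property $x$ lies in every intermediate $\overline{P}_k$ and in each adjacent overlap $\overline{P}_{k,k+1}=\overline{P}_k\cap\overline{P}_{k+1}$, so the copies are linked through the chain of overlap maps, and the copies sitting in the overlap sources $p_{12},p_{23},p_{34}$ are handled the same way. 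Hence $\phi$ is injective on objects.

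Finally, the order statement is essentially free: each $\overline{P}_a$ is an \emph{up-set} of $\overline{P}$, so if $x\in\overline{P}_i$ and $x\le_{\overline{P}}y$ then $y\in\overline{P}_i$ as well; thus any comparable pair of $\overline{P}$ already lies in a single piece, where the order is exactly that of $\overline{P}$ (the pieces being full subposets). Consequently the order generated on the colimit coincides with the order of $\overline{P}$, and $\phi$ is an isomorphism, giving $\colim_{a\in I}\overline{P}_a=\overline{P}$. The one genuinely content-bearing step — and the place where the argument could a priori fail, in the same spirit as the single ``leading block'' fails in Lemma \ref{lemmad=2} — is the interval property of the previous paragraph: it is precisely what prevents a point shared by two non-adjacent pieces from acquiring two distinct images in the colimit, and it is the only part that uses the specific arithmetic of the four blocks in \eqref{conditionsd=3} rather than formal nonsense.
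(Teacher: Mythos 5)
Your proof is correct and takes essentially the same approach as the paper: the paper's proof reduces the lemma to precisely your interval property (if $\alpha\in\overline{P}_a\cap\overline{P}_b$ with $a<b$, then $\alpha\in\overline{P}_{i,i+1}$ for all $a\le i\le b-1$) and checks it directly for this particular poset. Your extra verifications — that the overlap pieces $\overline{P}_{12},\overline{P}_{23},\overline{P}_{34}$ are the pairwise intersections, computed as the meets $g_i\wedge g_{i+1}$, together with the comparison-functor bookkeeping for the colimit in $\Cat$ — are exactly the details the paper leaves to the reader, and they check out.
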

\begin{proof}
In notations of \eqref{eqdsecond}, it is enough to prove the following statement (which is specific for the poset $\overline{P}$):

Let $\alpha\in \overline{P}$ be an object. Assume $\alpha\in \overline{P}_a,\overline{P}_b$ for $a<b$. Then $\alpha\in \overline{P}_{i,i+1}$ for any $a\le i\le b-1$.

For this particular poset one checks it directly. 
\end{proof}

The posets $\overline{P}_a$ are contractible as they have initial objects, and the category $I$ is contractible as well.

Denote by $\tilde{\overline{P}}$ the Grothendieck construction of the functor $X\colon I\to\Cat$. 

From the Thomason homotopy colimit theorem [Th] one has
$$
N(\tilde{\overline{P}})=\hocolim_{a\in I}N(\overline{P}_a)\sim\hocolim_I*\sim N(I)\sim *
$$
Therefore, $\tilde{\overline{P}}$ is contractible. 

There is a natural functor $\tilde{\overline{P}}\to \overline{P}$ (just as generally there exists a functor $\hocolim F\to\colim F$), which is a homotopy equivalence. Indeed, by Quillen Theorem A, it is enough to show that the comma-categories $\tilde{\overline{P}}/a$ are contractible, for any $a\in \overline{P}$. It is clear.

We have proved the contractibility of the poset $P_{\mathbf{seq}_3}(T^4_0)$. The only non-trivial among the other posets $P_{\mathbf{seq}_3}(T^4_a)$ is the poset for $T^4_1$, which is, by our inductive definition, is the same as the poset $P_{\mathbf{seq}_2}(T^3_0)$, whose contractibility we proved in Lemma \ref{colimd=2}. 

By \eqref{posetgent}, we have proved

\begin{prop}\label{propposetd=3}
The poset $P_{\mathbf{seq}_3}(T)$ is contractible, for any pruned 4-level tree $T$. 
\end{prop}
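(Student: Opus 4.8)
The plan is to reduce the contractibility of $P_{\mathbf{seq}_3}(T)$ for an arbitrary pruned 4-level tree $T$ to the case of trees with exactly two leaves, and then to dispatch the four two-leaf trees $T^4_a$, $0\le a\le 3$, one at a time. The reduction is immediate from the product decomposition \eqref{posetgent}: one has $P_{\mathbf{seq}_3}(T)=\prod_{i<j\in|T|}P_{\mathbf{seq}_3}(T)_{ij}$, and since the nerve carries a product of posets to a product of spaces, it suffices to prove that each factor is contractible. Every factor $P_{\mathbf{seq}_3}(T)_{ij}$ depends only on the level $c$ with $i<_cj$, hence equals the two-leaf poset $P_{\mathbf{seq}_3}(T^4_c)$. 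Thus the whole statement follows once the four posets $P_{\mathbf{seq}_3}(T^4_a)$ are shown to be contractible.

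Three of these four cases are easy. For $a=2$ and $a=3$ the conditions \eqref{conditionsd=3} involve a single block, so $P_{\mathbf{seq}_3}(T^4_a)$ is an elementary poset of the form \eqref{posetsimple}; such a poset has an initial object and is therefore contractible. For $a=1$ the poset coincides, by the inductive definition of the labels, with the poset $P_{\mathbf{seq}_2}(T^3_0)$ already treated in Lemma \ref{colimd=2}, so its contractibility is known.

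The only substantial case is $a=0$, the contractibility of the big poset $\overline{P}$ opposite to $P_{\mathbf{seq}_3}(T^4_0)$ displayed in \eqref{bigdiagram}. I would argue exactly as in Lemma \ref{colimd=3}: for each of the four blocks appearing in the last line of \eqref{conditionsd=3} let $\overline{P}_a$ denote the up-set of elements $\ge a$ in $\overline{P}$, and together with the three intermediate up-sets recording the pairwise overlaps assemble them into the zig-zag diagram $X\colon I\to\Cat$ of \eqref{eqdfirst}--\eqref{eqdsecond}, indexed by the contractible category $I$. Each $\overline{P}_a$ has an initial object and so is contractible, and Lemma \ref{colimd=3} identifies $\overline{P}$ with $\colim_{a\in I}\overline{P}_a$. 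Passing to the Grothendieck construction $\tilde{\overline{P}}$ of $X$, the Thomason homotopy colimit theorem [Th] gives $N(\tilde{\overline{P}})\simeq\hocolim_I N(\overline{P}_a)\simeq\hocolim_I *\simeq N(I)\simeq *$, so $\tilde{\overline{P}}$ is contractible; and the canonical projection $\tilde{\overline{P}}\to\overline{P}$ is a homotopy equivalence by Quillen's Theorem A, since every homotopy fibre $\tilde{\overline{P}}/\alpha$ is easily checked to be contractible. Hence $\overline{P}$, and with it $P_{\mathbf{seq}_3}(T^4_0)$, is contractible.

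The main obstacle is the combinatorial heart of Lemma \ref{colimd=3}, namely verifying that the seven up-sets really do cover $\overline{P}$ with only two-fold overlaps arranged linearly: concretely, that whenever an element $\alpha\in\overline{P}$ lies in two principal up-sets $\overline{P}_a$ and $\overline{P}_b$ with $a<b$, it already lies in each intermediate $\overline{P}_{i,i+1}$ for $a\le i\le b-1$. This is exactly what makes the indexing category the linear zig-zag $I$ (so that $I$ is contractible and the overlaps are precisely the $\overline{P}_{i,i+1}$), and it must be confirmed by direct inspection of \eqref{bigdiagram}; for general $d$ this is the very content that the conjectures of Section \ref{sectiongen} are designed to guarantee.
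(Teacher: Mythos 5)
Your proof is correct and follows essentially the same route as the paper: the product reduction via \eqref{posetgent} to the two-leaf posets, the disposal of $T^4_a$ for $a=2,3$ (single blocks) and $a=1$ (identification with $P_{\mathbf{seq}_2}(T^3_0)$ from Lemma \ref{colimd=2}), and, for $T^4_0$, the zig-zag of principal up-sets with the Grothendieck construction, Thomason's homotopy colimit theorem, and Quillen's Theorem A exactly as in Lemma \ref{colimd=3}. The interval-covering property you single out as the combinatorial heart is indeed what the paper verifies by direct inspection in Lemma \ref{colimd=3}; it is the $d=3$ instance of Conjecture 2 of Section \ref{sectiongen}.
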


\qed

\subsection{\sc The general case}\label{sectiongen}
Here we state two combinatorial conjectures, Conjecture 1 \eqref{conj1} and Conjecture 2 \eqref{conj2},  which lead, as we explain, 
to definition of a  $\Theta_d$-colored $(d+1)$-operad $\mathbf{seq}_d$, for general $d$. Conjecture 1 is proven for $d\le 4$, Conjecture 2 for $d\le 3$. Thus, for $d\ge 4$, the status of the operads $\mathbf{seq}_d$ is conjectural (although it seems that working with computer one could easily check validity of these Conjecture for a big range of values $d$). 

First of all, define the following subset $V_d$ of objects of $\mathcal{K}(2)^{\times d}$.
It will be a totally ordered set (with respect to some ``external'' ordering, not the one of the poset $\mathcal{K}(2)^{\times d}$). 

The maximal element in $V_d$ is $$w_d=\underset{d \text{  factors  }(121)}{\underbrace{(121)|(121)|\dots|(121)}}$$
The other elements of $V^d$ are obtained from $w_d$ by the application of the  following procedure. 
At each step, called {\it an elementary move}, we can move the leftmost digit (1 or 2) from some factor to the factor next to the left, as its rightmost digit.
Here is an example for $d=3$:

\begin{equation}\label{moveex}
(121)|(121)|(\underline{1}21)\to (121)|(\underline{1}21\overline{2})|(21)\to (121\overline{2})|(\underline{2}12)|(21)\to
(1212\overline{1})|(12)
\end{equation}
In \eqref{moveex}, the digit we move is underlined at the source expression, and it is overlined at the target expression. In such an ``elementary move'', a digit 1 may become a digit 2, and vice versa, according to the rule that no two equal digits stand in turn in a single factor. For example, the first arrow in \eqref{moveex} takes (underlined) 1, moves it leftward, and replaces by 2 (otherwise, we would have two equal 1's in (1211)). The following three rules uniquely determine the entire process.

\begin{itemize}
\item[Rule 1.] When the leftmost digit 1 from some factor is moved to the next factor leftwards and becomes the rightmost digit in this factor, it becomes digit 2 if its neighbour digit is 1. Similarly 2 becomes 1.

\item[Rule 2.] If at some place we get a factor (12) or (21), all factors rightward are removed, and such operation is allowed only if the only rightward factor is (12) or (21). 

\item[Rule 3.] This rule specifies which elementary move is the next after the performed one. We take the leftmost digit from some factor $A$ and move it leftwards so this digit (after appropriate switch according to Rule 1) becomes the rightmost digit in the next to $A$ factor to the left. This operation is uniquely defined by the factor $A$. The factor $A$ is determined as follows. It is the rightmost factor not equal to (12) or (21), and such that after the move we don't get two 2-element factors in turn, unless there are other options. In other words, it is allowed to get two 2-element factors in turn (followed by removing the right of them, according to Rule 2) only if there are no other factors $B$ for which the elementary move doesn't cause removal of 2-element factors; if such $B$ exist, we take the rightmost among them. 
\end{itemize}

The necessity of Rule 3 is not visible for the examples for $d=2$ and $d=3$ considered above. 
It firstly emerges for the case $d=4$. Consider the following diagram of elementary moves. 

\begin{equation}\label{eqd4}
\begin{tikzpicture}[baseline=(current bounding box.center),scale=1]
{

\node (p1) at (0,0) {$(121)|(121)|(121)|(121)$};
\node (p2) at (0,-1) {$(121)|(121)|(1212)|(21)$};
\node (p3) at (0,-2) {$(121)|(1212)|(212)|(21)$};
\node (p4l) at (-2,-3) {$(121)|(12121)|(12)$};
\node (p4r) at (2,-3) {$(1212)|(212)|(212)|(21)$};
\node (p5) at (0,-4) {$(1212)|(2121)|(12)$};
\node (p6) at (0,-5) {$(12121)|(121)|(12)$};
\node (p7) at (0,-6) {$(121212)|(21)$};

    \path[->]
    
    (p1)edge(p2)(p2)edge(p3)(p3)edge(p4l)(p3)edge(p4r)(p4l)edge(p5)(p4r)edge(p5)(p5)edge(p6)(p6)edge(p7);

}
\end{tikzpicture}
\end{equation}
At $a=(121)|(1212)|(212)|(21)$ there were, without Rule 3, two options for the next elementary move, going to the left and to the right in 
\eqref{eqd4}. The left-hand moving path results in removing $\dots |(21)$ at the end of $a$, but the right-hand moving path shows an option without removal of a two-element factor. That is, {\it Rule 3 predicts that we go along the right-hand path}. (At the same time, the left-hand element, $(121)|(12121)|(12)$, does not belong to the path, we just take it out of $V_4$, as well as its incoming and outgoing edges). On the other hand, if we chose the left-hand path, the main conditions \eqref{maincondition} would fail, for the map of graphs $T^5_0\to T^5_1$ switching the leaves. 

The process is stopped when any elementary move is impossible. In this way, we get a set $V_d$, starting with $w_d$, of elements of $\mathcal{K}(2)^{\times d}$ ordered in some way. We call this ordering {\it canonical}. Denote all these expressions by $\{\omega_i^d\}_{i\in V^d}$. We will assume that all $\omega^d_i$ are ordered according to the canonical order, so that $\omega^d_1=w_d$.

Note also that what we get in \eqref{moveex} are exactly the expressions at the leftmost column of \eqref{bigdiagram}, where they stand in the canonical order downwards.

By some reason, we introduce another ordered set $\tilde{V}^\ell$ which has the same number of elements as $V^{\ell}$, with $w_{\ell}$ replaced by $w_{\ell}|(12)$. The other elements remain the same. The elements of the set $\tilde{V}^\ell$ are denoted by $\{\tilde{\omega}^\ell_a\}$.

\vspace{2mm}

We define ``labels'' for the $(d+1)$-operad $\mathbf{seq}_d$ as follows:
\begin{equation}\label{conditionsd=gen}
\begin{aligned}
\ & i<_{d}j\Rightarrow (\bmu,\bsigma)(\omega_{ij})\le (12)\\
&i<_{d-1}j\Rightarrow (\bmu,\bsigma)(\omega_{ij})\le (121)|(12)\\
&\dots\\
&i<_\ell j\Rightarrow (\bmu,\bsigma)(\omega_{ij})\le \text{  some  }\tilde{\omega}^\ell_a\in \tilde{V}^{d-\ell}\\
&\dots\\
&i<_1j\Rightarrow (\bmu,\bsigma)(\omega_{ij})\le \text{  some  }\tilde{\omega}_a^{d-1}\in \tilde{V}^{d-1}\\
&i<_0j\Rightarrow (\bmu,\bsigma)(\omega_{ij})\le \text{  some  } \omega^d_a\in V^d
\end{aligned}
\end{equation}
We have:\\
\begin{equation}\label{conj1}
\parbox{6.3in}{
\noindent {\sc Conjecture 1.} 
The system \eqref{conditionsd=gen} satisfies \eqref{maincondition}.
}
\end{equation}

\vspace{2mm}

This conjecture is checked for $d=2,3,4$.

Note that, using an induction on $d$, one has to check \eqref{maincondition} only for the two maps $\id,\sigma_{12}\colon T^{d+1}_0\rightrightarrows T^{d+1}_1$. Then \eqref{maincondition} for other maps in the poset $\mathcal{M}^{d+1}_2$ follows by induction on $d$ and our definition \eqref{conditionsd=gen}.

Conjecture 1 guarantees that $\mathbf{seq}_d$ is a $(d+1)$-operad, by Proposition \ref{proptwoleaves}. 

Conjecture 2 below implies that the posets $P_{\mathbf{seq}_d}(T)$ are contractible, see Lemma \ref{lemmaconj2} and Proposition 
\ref{proppropcontr}. 

The contractibility of the posets $P_{\mathbf{seq}_d}(T)$ is the only what we need to know from combinatorics of $\mathbf{seq}_d$ to prove the contractibility of $\mathbf{seq}_d$ in the topological and in the dg condensations, see Section \ref{sectionveryfinal}.

Denote by ${P}^{(d)}$ the subposet in $\mathcal{K}(2)^{\times d}$ which consists of elements $\le$ than one of $\omega^d_i\in V_d$. We want to know that the poset ${P}^{(d)}$ is contractible. 

We know that the elements $\{\omega^d_i\in V^d\}$ are totally ordered. Each subposet ${P}^{(d)}_i:={P}_{\omega^d_i}$, which consists of all objects of $\mathcal{K}(2)^{\times d}$ which are $\le \omega^d_i$, has a final object and thus is contractible. Consider two ``neighbour'' posets ${P}^{(d)}_i, P^{(d)}_{i+1}$. Their intersection ${P}^{(d)}_{i,i+1}$ has as well a terminal objects which is easy to describe: one just removes from $\omega_i^d$ that digit which is moved leftwards so $\omega_i^d$ becomes $\omega^d_{i+1}$, denote $\omega_i^d$ with this digit removed by $a$; the image of the digit in $\omega^d_{i+1}$ can be removed as well, and the result of this operation is the same object $a$. Thus the object $a$ belongs to $\overline{P}^{(d)}_{i,i+1}$, and it is clear that any element in $P^{(d)}_{i,i+1}$ is $\le a$.

From now on, we discuss the posets {\it opposite} to ${P}^{(d)}$, ${P}^{(d)}_i$, ${P}^{(d)}_{i,i+1}$, for which we use notations $\overline{P}^{(d)}$, $\overline{P}^{(d)}_i$, $\overline{P}^{(d)}_{i,i+1}$. Thus, the posets $\overline{P}^{(d)}_i$ and $\overline{P}^{(d)}_{i,i+1}$ have initial objects. 

There is a diagram of posets in which all maps are inclusions:
\begin{equation}\label{eqdgen}
\begin{tikzpicture}[baseline=(current bounding box.center),scale=1]
{
\node (p1) at (0,0) {$\overline{P}^{(d)}_{1}$};
\node (p12) at (2,-1.5) {$\overline{P}^{(d)}_{12}$};
\node (p2) at (4,0) {$ \overline{P}^{(d)}_{2}$};
\node (pinf) at (6, -1.5) {$\dots$};
\node (pN-1) at (8,0) {$\overline{P}^{(d)}_{N-1}$};
\node (pNN-1) at (10,-1.5) {$\overline{P}^{(d)}_{N-1,N}$};
\node (pN) at (12, 0) {$\overline{P}^{(d)}_N$};

    \path[->]

  (p12)edge(p1)(p12)edge(p2)(pNN-1)edge(pN-1)(pNN-1)edge(pN);

}
\end{tikzpicture}
\end{equation}
where $N=\sharp V_d$. Denote the diagram \eqref{eqdgen} by $X\colon I^{(d)}\to\Cat$. Clearly $I^{(d)}$ is contractible. (Now the reader sees the reason we removed the left-hand branch in \eqref{eqd4}). 
\\
\begin{equation}\label{conj2}
\parbox{6.3in}{
\noindent {\sc Conjecture 2.} Let $d\ge 2$, $\alpha\in \overline{P}^{(d)}$ be an element. Assume $\alpha\in \overline{P}^{(d)}_i, \alpha\in \overline{P}^{(d)}_j$, $i<j$. Then $\alpha\in \overline{P}^{(d)}_{a,a+1}$ for any $i\le a\le j-1$. }
\end{equation}

We know that  Conjecture 2 is true for $d=2,3$. 

\begin{lemma}\label{lemmaconj2}
Assume Conjecture 2 holds for some $d$. Then 
$$
\colim_{\alpha\in I^{(d)}}\overline{P}^{(d)}_\alpha=\overline{P}^{(d)}
$$
\end{lemma}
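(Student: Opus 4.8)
The plan is to verify the universal property of the colimit directly, showing that the tautological cocone formed by the inclusions $\iota_\alpha\colon \overline{P}^{(d)}_\alpha\hookrightarrow\overline{P}^{(d)}$ is initial among all cocones over the diagram $X\colon I^{(d)}\to\Cat$ of \eqref{eqdgen}. Two structural facts about the pieces will be used throughout. First, each $\overline{P}^{(d)}_i$ is the up-set $\{x\in\overline{P}^{(d)}\mid \omega^d_i\le x\}$, since by construction it has $\omega^d_i$ as its initial object; hence it is closed upwards, i.e. if $x\in\overline{P}^{(d)}_i$ and $x\le y$ in $\overline{P}^{(d)}$ then $y\in\overline{P}^{(d)}_i$ as well. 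Second, by the definition of $P^{(d)}$ every element of $\overline{P}^{(d)}$ dominates some $\omega^d_i$, so $\overline{P}^{(d)}=\bigcup_i\overline{P}^{(d)}_i$, and we recall $\overline{P}^{(d)}_{i,i+1}=\overline{P}^{(d)}_i\cap\overline{P}^{(d)}_{i+1}$.

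First I would record that every morphism of $\overline{P}^{(d)}$ already lives inside a single piece: if $x\le y$ and $x\in\overline{P}^{(d)}_i$, then upward closure forces $y\in\overline{P}^{(d)}_i$, so the relation $x\le y$ is witnessed inside $\overline{P}^{(d)}_i$. In particular the cocone $(\iota_\alpha)$ is jointly surjective on objects and on morphisms, which will later yield uniqueness of the comparison map.

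Then, given an arbitrary target category $\mathcal{C}$ and a cocone $(g_\alpha\colon\overline{P}^{(d)}_\alpha\to\mathcal{C})$ compatible with the maps of $X$, I would define $g\colon\overline{P}^{(d)}\to\mathcal{C}$ by choosing, for each object $x$, some index $i$ with $x\in\overline{P}^{(d)}_i$ and setting $g(x):=g_i(x)$, and likewise $g(x\le y):=g_i(x\le y)$ for a piece $\overline{P}^{(d)}_i$ containing both $x$ and $y$ (which exists by the previous paragraph). This is the step where Conjecture 2 enters, and it is the crux of the whole argument: if $x\in\overline{P}^{(d)}_i\cap\overline{P}^{(d)}_j$ with $i<j$, then Conjecture 2 guarantees $x\in\overline{P}^{(d)}_{a,a+1}$ for every $i\le a\le j-1$; since the cocone maps from the two pieces meeting at $\overline{P}^{(d)}_{a,a+1}$ agree on that common subposet, a telescoping over $a=i,\dots,j-1$ gives $g_i(x)=g_j(x)$. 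As membership of a morphism $x\le y$ in a piece is equivalent to membership of its source $x$, the same telescoping makes the value on morphisms independent of the chosen piece. Hence $g$ is well defined.

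Finally I would check that $g$ is a functor and is unique. Functoriality is immediate from upward closure: for a composable pair $x\le y\le z$, pick $i$ with $x\in\overline{P}^{(d)}_i$; then $y,z\in\overline{P}^{(d)}_i$ too, so all three morphisms and their composite are computed inside the single functor $g_i$, and identities are treated identically. Uniqueness is forced by the constraints $g\circ\iota_\alpha=g_\alpha$ together with the joint surjectivity noted above. This exhibits $(\iota_\alpha)$ as a colimit cocone, giving $\overline{P}^{(d)}=\colim_{\alpha\in I^{(d)}}\overline{P}^{(d)}_\alpha$. The only genuinely nontrivial input is the well-definedness of $g$, which is precisely the content of Conjecture 2; the remaining steps are formal bookkeeping about up-sets in $\overline{P}^{(d)}$.
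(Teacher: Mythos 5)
Your proof is correct and matches the paper's intent: the paper dismisses this lemma with ``It is clear,'' and its proof of the $d=3$ analogue (Lemma \ref{colimd=3}) reduces to exactly the interval statement you invoke, so your argument is simply the careful verification of the universal property that the paper leaves implicit. The two points you rightly make explicit --- that upward closure of the up-sets $\overline{P}^{(d)}_i$ forces every morphism of $\overline{P}^{(d)}$ to lie in a single piece (so no freely composed morphisms appear in the colimit in $\Cat$), and that Conjecture 2 is precisely what lets the telescoping through the intersections $\overline{P}^{(d)}_{a,a+1}$ identify the possibly distinct copies of an object --- are exactly where the lemma could fail without the hypothesis, and your treatment of both is sound.
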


It is clear.

\qed

\begin{prop}\label{proppropcontr}
Assume Conjecture 2 is true for some $d$. Then the poset $\overline{P}^{(d)}$ is contractible.
\end{prop}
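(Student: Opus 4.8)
The plan is to run, for general $d$, the same homotopy-colimit argument already used for $d=2$ in Lemma \ref{colimd=2} and for $d=3$ in the discussion preceding Proposition \ref{propposetd=3}. The homotopy-theoretic scaffolding is uniform in $d$; the only genuinely $d$-dependent input is Conjecture 2, whose job is precisely to supply the two facts that in the low cases were checked by inspection: the colimit identity of Lemma \ref{lemmaconj2} and the contractibility of the comma categories appearing below.

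First I would form the Grothendieck construction $\tilde{\overline{P}}^{(d)}$ of the functor $X\colon I^{(d)}\to\Cat$ of \eqref{eqdgen}, fibred over $I^{(d)}$. Each vertex poset $\overline{P}^{(d)}_i$ and each edge poset $\overline{P}^{(d)}_{i,i+1}$ has an initial object, so every nerve $N(\overline{P}^{(d)}_\alpha)$ is contractible; applying the Thomason homotopy colimit theorem [Th] and then contracting the fibres gives
$$
N(\tilde{\overline{P}}^{(d)})=\hocolim_{\alpha\in I^{(d)}}N(\overline{P}^{(d)}_\alpha)\sim\hocolim_{I^{(d)}}\ast\sim N(I^{(d)})\sim\ast,
$$
the last step because the finite zig-zag $I^{(d)}$ is contractible. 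Hence $\tilde{\overline{P}}^{(d)}$ is contractible. Next, Lemma \ref{lemmaconj2} (the first place where Conjecture 2 is consumed) identifies $\colim_{\alpha\in I^{(d)}}\overline{P}^{(d)}_\alpha=\overline{P}^{(d)}$ and hence furnishes the canonical comparison functor $p\colon\tilde{\overline{P}}^{(d)}\to\overline{P}^{(d)}$ from the homotopy colimit to the strict colimit. I would then apply Quillen's Theorem A: it suffices to prove that for every $a\in\overline{P}^{(d)}$ the comma category $p/a$ is contractible, and combined with the previous display this yields $\overline{P}^{(d)}\sim\tilde{\overline{P}}^{(d)}\sim\ast$, which is the assertion.

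The heart of the argument — the step dismissed as \emph{clear} for $d=2,3$ but which must now be justified in general — is the contractibility of $p/a$. An object of $p/a$ is a pair $(\alpha,x)$ with $\alpha\in I^{(d)}$ and $x\in\overline{P}^{(d)}_\alpha$ satisfying $x\le a$ in $\overline{P}^{(d)}$; projecting to $\alpha$ exhibits $p/a$ as fibred over the full subposet $J_a=\{\alpha\in I^{(d)}:a\in\overline{P}^{(d)}_\alpha\}$ of $I^{(d)}$. A short check shows the fibre over $\alpha$ is the subposet $\{x\in\overline{P}^{(d)}_\alpha:x\le a\}$, which is empty unless $a\in\overline{P}^{(d)}_\alpha$ and otherwise has $a$ as a terminal object, hence is contractible; moreover $J_a$ is closed under the morphisms of $I^{(d)}$ (an edge-node contained in $J_a$ forces both adjacent vertex-nodes into $J_a$), so $p/a$ is itself the Grothendieck construction of a functor $J_a\to\Cat$ with contractible fibres, whence $p/a\sim N(J_a)$ by Thomason once more.

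It then remains to see that $J_a$ is contractible, and this is exactly what Conjecture 2 \eqref{conj2} delivers: it forces the set of vertex indices $i$ with $a\in\overline{P}^{(d)}_i$ to be an \emph{interval} of consecutive vertices of the zig-zag, and then all the intervening edge-nodes lie in $J_a$ as well, so that $J_a$ is a connected sub-zig-zag and hence contractible. The main obstacle is thus purely combinatorial and is entirely localized in \eqref{conj2}; the only care needed on the homotopy-theoretic side is the bookkeeping that identifies $p/a$ with a Grothendieck construction over the interval $J_a$, after which the conclusion $\overline{P}^{(d)}\sim\ast$ is automatic.
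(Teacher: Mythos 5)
Your proof is correct and follows essentially the same route as the paper's: Thomason's homotopy colimit theorem applied to the Grothendieck construction of the diagram \eqref{eqdgen}, the colimit identification of Lemma \ref{lemmaconj2} (the paper's stated use of Conjecture 2), and Quillen's Theorem A for the comparison map $\tilde{\overline{P}}^{(d)}\to\overline{P}^{(d)}$. The one place you go beyond the paper is the comma-category step, which the paper dispatches as ``star-like and thus contractible'': your identification of $p/a$ as a Grothendieck construction over the sub-zig-zag $J_a=\{\alpha\in I^{(d)}: a\in\overline{P}^{(d)}_\alpha\}$, with fibres having $a$ as terminal object and with $J_a$ contractible precisely because Conjecture 2 forces it to be an interval of the zig-zag, is a correct and worthwhile filling-in of that detail, and it rightly makes explicit that Conjecture 2 is consumed at this step as well, not only in Lemma \ref{lemmaconj2}.
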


The proof repeats the argument used in the proofs of Lemma \ref{colimd=2} and Lemma \ref{colimd=3}. It uses the Thomason homotopy colimit theorem, by which $\hocolim_{\alpha\in I^{(d)}}N(\overline{P}^{(d)}_\alpha)$ is contractible. Denote by $\wtilde{\overline{P}}^{(d)}$ the fibred Grothendieck construction of the functor $X\colon I^{(d)}\to \Cat$. Then Quillen Theorem A applies to the canonical map $\wtilde{\overline{P}}^{(d)}\to\colim_{\alpha\in I^{(d)}}\overline{P}^{(d)}_\alpha\overset{\text{  by Lemma  }\eqref{lemmaconj2}}{ =}\overline{P}^{(d)}$. Here the crucial point is to know that $\colim_{\alpha \in I^{(d)}}\overline{P}^{(d)}_\alpha=\overline{P}^{(d)}$, whose proof relies on Conjecture 2. The comma-categories $\tilde{\overline{P}}^{(d)}/a$, $a\in \overline{P}^{(d)}$, are ``star-like'' and thus are contractible. We conclude that the poset $\overline{P}^{(d)}$ is contractible. 

\qed

\begin{prop}\label{propdoperad}
Assume Conjectures 1 and 2 are true for all $d^\prime \le d$. Define for each pruned $(d+1)$-level tree $T$ the arity components $\mathbf{seq}_d(T)$ by \ref{eqlatticenop}. Then the following statements are true:
\begin{itemize}
\item[(1)] The functors $\{\mathbf{seq}_d(T)\colon ((\Theta_d)^{\op})^{|T|}\times\Theta_d\to\Sets\}$ are arity components of a $\Theta_d$-colored $(d+1)$-operad $\mathbf{seq}_d$.
\item[(2)] For a pruned $(d+1)$-level tree $T$ the subposet $P_{\mathbf{seq}_d}(T)$ of $\mathcal{K}(|T|)^{\times d}$ $($defined in \eqref{posett}$)$ is contractible. 
\end{itemize}
\end{prop}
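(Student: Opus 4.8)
The plan is to deduce assertion (1) directly from the two-leaf criterion of Proposition \ref{proptwoleaves}, and assertion (2) from the product decomposition \eqref{posetgent} together with Proposition \ref{proppropcontr}.

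For (1), everything reduces to checking that the two-leaf labels prescribed in \eqref{conditionsd=gen} satisfy the compatibility condition \eqref{maincondition}, which is exactly the statement of Conjecture 1. By the remark following \eqref{conj1}, one only has to verify \eqref{maincondition} for the two quasi-bijections $\id,\sigma_{12}\colon T^{d+1}_0\rightrightarrows T^{d+1}_1$; the remaining morphisms of $\mathcal{M}^{d+1}_2$ are handled by induction on $d$ using the recursive shape of \eqref{conditionsd=gen}, which is where Conjecture 1 for the smaller values $d'<d$ enters. Granting \eqref{maincondition}, Proposition \ref{proptwoleaves} with $n=d+1$ turns the labels \eqref{conditionsd=gen} into a $\Theta_d$-colored $d$-terminal pruned $(d+1)$-operad whose arity components are precisely the $\mathbf{seq}_d(T)$ of \eqref{eqlatticenop}; this is (1), with no further work.

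For (2), I would first use the isomorphism of posets \eqref{posetgent}, namely $P_{\mathbf{seq}_d}(T)=\prod_{i<j}P_{\mathbf{seq}_d}(T)_{ij}$. Since forming the classifying space carries a finite product of posets to a product of spaces, and a finite product of contractible spaces is contractible, it suffices to show each factor $P_{\mathbf{seq}_d}(T)_{ij}$ is contractible. By \eqref{posettij} this factor depends only on the level $c$ with $i<_cj$, and equals the set of elements of $\mathcal{K}(2)^{\times d}$ below one of the labels of level $c$ in \eqref{conditionsd=gen}. Two regimes are immediate: when $c\ge d-1$ the label set $\tilde{V}^{d-c}$ is a singleton, so the factor has a largest element and is contractible; when $c=0$ the factor is exactly the poset $P^{(d)}$ attached to $V^d$, contractible by Proposition \ref{proppropcontr} (which uses Conjecture 2 for $d$) together with $|N(\mathcal{C})|=|N(\mathcal{C}^{\op})|$.

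The remaining intermediate range $1\le c\le d-2$ is governed by the sets $\tilde{V}^{d-c}$ with $d-c\ge 2$, and here I would rerun the proof of Proposition \ref{proppropcontr} line by line for the analogues $\overline{P}_i,\overline{P}_{i,i+1},\overline{P}$ built from $\tilde{V}^{d-c}$: this set is totally ordered by the elementary-move procedure, each opposite piece $\overline{P}_i$ and each consecutive intersection $\overline{P}_{i,i+1}$ still carries an initial object via the same ``delete the moved digit'' description, so the covering identity $\colim_I\overline{P}_\alpha=\overline{P}$ plus Thomason's theorem and Quillen's Theorem A yield contractibility exactly as for $d=2,3$. The one step that is not purely formal, and which I expect to be the main obstacle, is to justify the Conjecture-2 input for the modified families $\tilde{V}^{d-c}$, since Conjecture 2 is stated only for the plain families $V_{d'}$. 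The point I would establish is that replacing the top label $w_{d-c}$ by $w_{d-c}\,|\,(12)$ only shrinks the topmost cover piece, $\overline{P}_1^{\tilde{V}}\subseteq\overline{P}_1^{V}$, while leaving $\overline{P}_i$ and the move structure unchanged for $i\ge 2$; since $w_{d-c}\,|\,(12)>w_{d-c}$ in the augmented poset, any $\alpha\in\overline{P}_i^{\tilde{V}}\cap\overline{P}_j^{\tilde{V}}$ also lies in $\overline{P}_i^{V}\cap\overline{P}_j^{V}$, so the covering property required for $\tilde{V}^{d-c}$ is inherited from Conjecture 2 at the level $d-c\le d-1$. Making this inheritance precise, and confirming that the consecutive intersections keep their initial objects after the tilde modification, is the only genuinely combinatorial verification; the rest is the homotopy-colimit machinery already in place.
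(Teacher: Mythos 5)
Your treatment of (1) coincides with the paper's: the paper proves (1) by citing Proposition \ref{proptwoleaves}, with Conjecture 1 supplying \eqref{maincondition} for the labels \eqref{conditionsd=gen}, exactly as you say. For (2), your reduction via \eqref{posetgent} to two-leaf trees, the singleton cases $c\ge d-1$, and the case $c=0$ via Proposition \ref{proppropcontr} all match the paper. The divergence is in the intermediate range $1\le c\le d-2$, and this is where your argument is both heavier than necessary and, as written, flawed. The paper disposes of this range in one line: as an \emph{abstract} poset, $P_{\mathbf{seq}_d}(T^{d+1}_a)=P_{\mathbf{seq}_{d-a}}(T^{d-a+1}_0)$, which is clear from \eqref{conditionsd=gen}, so Proposition \ref{proppropcontr} applies at $d'=d-a$ --- this is precisely why the hypothesis assumes the Conjectures for \emph{all} $d'\le d$, and no ``tilde version'' of Conjecture 2 is ever needed. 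The isomorphism is elementary: an element strictly below $w_m|(12)$ cannot have length $m+1$, since each of its first $m$ factors would be a $\mu\ge 1$ element below $(121)$, hence equal to $(121)$, forcing the element to be $w_m|(12)$ itself; so the below-poset of $\tilde{V}^{m}$ is the below-poset of $V^{m}$ with only the top label renamed $w_m\leftrightarrow w_m|(12)$.

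Within your own route, two concrete problems. First, your inclusion is backwards: since $w_{d-c}|(12)\ge w_{d-c}$, replacing the top label \emph{enlarges} the first cover piece (in $P$-terms, $\{x\le w\}\subseteq\{x\le w|(12)\}$), it does not shrink it; your assertion $\overline{P}_1^{\tilde{V}}\subseteq\overline{P}_1^{V}$ is false, and it is what your ``inheritance'' of Conjecture 2 officially rests on. The argument is repairable --- the unique new element $w_{d-c}|(12)$ lies below no other label, so it belongs to no piece $\overline{P}_j^{\tilde V}$, $j\ge 2$, whence $\overline{P}_{1,2}^{\tilde V}=\overline{P}_{1,2}^{V}$ and the covering identity of \eqref{conj2} transfers --- but you did not carry this out, and instead flagged it as the ``main obstacle.'' Second, even granting the repair, you would be re-running the Thomason/Quillen-A machinery of Proposition \ref{proppropcontr} a second time for the tilde families, when the abstract-poset identification above shows the tilde configuration is literally isomorphic to the plain one a dimension down, so the already-proved proposition applies verbatim. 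So: a correct skeleton, but the one step you isolate as nontrivial is left open and your sketch of it contains a reversed inclusion; the paper's proof closes that step by the poset isomorphism and induction on $d$.
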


\begin{proof}
(1): it follows from Proposition \ref{proptwoleaves}. (2): By \eqref{posetgent}, as an abstract poset, $P_{\mathbf{seq}_d}(T)$ is the direct product 
$$
\prod_{(i,j)\in |T|, i<j}P_{\mathbf{seq}_d}(T)_{ij}
$$
Thus, the contractibility of the posets $P_{\mathbf{seq}_d}(T)$ reduces to the contractibility of such posets for the pruned $(d+1)$-level trees with two leaves. For the tree $T^{d+1}_0$ it is Proposition \ref{propropcontr}. For $T_a^{d+1}$ with $a>0$ it follows by induction on $d$, as an abstract poset, $P_{\mathbf{seq}_d}(T^{d+1}_a)=P_{\mathbf{seq}_{d-a}}T^{d-a+1}_0$, which is clear from definition \eqref{conditionsd=gen}.
\end{proof}

\subsection{\sc A proof of contractibility of $\mathbf{seq}_d$}
We prove the following Theorem:

\begin{theorem}\label{maintheorem}
Assume Conjectures 1 and 2 are true for all $d^\prime\le d$. Then the $d$-terminal $\Theta_d$-colored $(d+1)$-operad $\mathbf{seq}_d$ is contractible in topological and in dg condensations. 
\end{theorem}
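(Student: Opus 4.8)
The plan is to deduce the contractibility of $\mathbf{seq}_d$ as an operad from the arity-wise contractibility of its condensation, reducing everything to the already-established contractibility of the single blocks (Theorems \ref{propthetatop} and \ref{propthetadg}) together with the contractibility of the posets $P_{\mathbf{seq}_d}(T)$ (Proposition \ref{propdoperad}(2)). By Proposition \ref{propdoperad}(1), under Conjectures 1 and 2 the components $\mathbf{seq}_d(T)$ do assemble into a $\Theta_d$-colored $(d+1)$-operad, so there is a canonical map of operads $\mathbf{seq}_d\to\ast$ to the terminal operad. Since condensation of the terminal operad is again the terminal operad, it suffices to prove that for each pruned $(d+1)$-level tree $T$ this map induces a weak equivalence (resp. a quasi-isomorphism) on the topological (resp. dg) condensation; compatibility with the operadic composition is then automatic, as the realization is strong monoidal by Proposition \ref{propbemtheta}(3) and the comparison maps to the point are natural. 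In the dg case one upgrades the arity-wise quasi-isomorphisms to a genuine zig-zag of quasi-isomorphisms of dg operads by passing through the truncation $\tau_{\le 0}$, exactly as in the Corollary following Proposition \ref{proptamop}.

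Fix $T$ and an output color $U\in\Theta_d$. The first step (Step A) is to show that the realization over the input colors, $|\mathbf{seq}_d(T)(U)|_{\Theta_d,\top}$, is contractible. By \eqref{eqlatticenop} and \eqref{posett} we may write
$$
\mathbf{seq}_d(T)(U)=\colim_{(\bmu,\bsigma)\in P_{\mathbf{seq}_d}(T)}\mathcal{L}^d_{(\bmu,\bsigma)}(T)(U),
$$
the colimit over the poset $P_{\mathbf{seq}_d}(T)$ of the inclusion diagram of blocks. Every finite poset is a direct (Reedy) category via the height function, and since each block inclusion is a monomorphism of $d$-cellular sets the latching maps are monomorphisms, so this diagram is Reedy cofibrant. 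As the realization commutes with colimits and preserves weak equivalences between cofibrant objects (it is left Quillen by Proposition \ref{propbemtheta}(1)), we obtain
$$
|\mathbf{seq}_d(T)(U)|_{\Theta_d,\top}\simeq\hocolim_{(\bmu,\bsigma)\in P_{\mathbf{seq}_d}(T)}|\mathcal{L}^d_{(\bmu,\bsigma)}(T)(U)|_{\Theta_d,\top}.
$$
Each term on the right is contractible by Proposition \ref{propmsd}, so the homotopy colimit is weakly equivalent to $N(P_{\mathbf{seq}_d}(T))$, which is contractible by Proposition \ref{propdoperad}(2). (Alternatively one may run the Thomason homotopy colimit theorem and Quillen's Theorem A exactly as in the proof of Proposition \ref{proppropcontr}.)

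The second step (Step B) is to totalize the $d$-cocellular space $U\mapsto|\mathbf{seq}_d(T)(U)|_{\Theta_d,\top}$; by Proposition \ref{propbemtheta}(1) and the contractibility just established it suffices to prove Reedy fibrancy. Here I would repeat the matching-map analysis of Proposition \ref{reedytottheta} with the union $\mathbf{seq}_d(T)$ in place of a single block. The decisive point is that the codegeneracy operators in the output color $U$ act only by removing (or lowering the multiplicity of) the marked points of the generalised lattice path, leaving the underlying geometric lattice path, and hence all the parameters $(\bmu,\bsigma)(\omega_{ij})$, unchanged; consequently they preserve membership in each block and therefore in the union. Thus the reconstruction argument of Proposition \ref{reedytottheta}(1) shows the matching map is an isomorphism whenever $U$ has at least two leaves, while for $U$ a linear tree truncated at level $\ell$ the restricted matching map factors as a product of simplicial maps of which only $p_\ell$ is non-trivial, a Kan fibration by the same lifting argument; the requisite surjectivity persists for the union. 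Hence $U\mapsto|\mathbf{seq}_d(T)(U)|_{\Theta_d,\top}$ is Reedy fibrant, and totalizing against the constant cocellular point shows $\Tot_{\Theta_d,\top}$ is contractible. The dg statement follows verbatim, replacing the topological realization by its $CW$ chain complex via Lemma \ref{lemmarealthetadgtop} and using that the matching maps are component-wise surjective (hence projective fibrations), exactly as in Proposition \ref{propreedyfibrantthetadg}.

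The main obstacle is the passage from single blocks to their union, that is, ensuring that the operation ``take the union of blocks over $P_{\mathbf{seq}_d}(T)$'' commutes, up to weak equivalence, with both halves of the condensation. At the realization stage this is the identification $\colim\simeq\hocolim$, which rests on the Reedy cofibrancy of the block diagram and, crucially, on the contractibility of $P_{\mathbf{seq}_d}(T)$ --- the only place where Conjecture 2 enters (Conjecture 1 being needed beforehand merely to guarantee that $\mathbf{seq}_d$ is an operad). At the totalization stage it is the verification that the union inherits the matching-map structure of Proposition \ref{reedytottheta}; this is the delicate point, since Reedy fibrancy concerns limits, which do not commute with the colimit defining $\mathbf{seq}_d(T)$, and it is resolved only because the codegeneracies preserve the block parameters and hence act block-wise on the union.
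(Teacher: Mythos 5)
Your proposal is correct and follows the paper's own proof essentially step for step: the decomposition of $\mathbf{seq}_d(T)$ as a colimit of single blocks over the contractible poset $P_{\mathbf{seq}_d}(T)$, the identification of this colimit with the homotopy colimit via Reedy cofibrancy over the directed poset (Proposition \ref{prophirsch}), the contractibility of each block (Theorems \ref{propthetatop} and \ref{propthetadg}) combined with Proposition \ref{propdoperad}(2) as in \eqref{posetcontrfin}, and the matching-map analysis establishing Reedy fibrancy of $D\mapsto|\mathbf{seq}_d(T)[D]|$ (Propositions \ref{proptottopfinal} and \ref{propreedyfibrantthetadg}) are exactly the contents of Sections \ref{condtopfinal} and \ref{sectionveryfinal}. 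Your added remarks on the operad-level conclusion (strong monoidality of the realization and the $\tau_{\le 0}$ zig-zag for the dg case) merely make explicit what the paper handles implicitly, so the route is the same.
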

We prove the topological condensation part of Theorem \ref{maintheorem} in Section \ref{condtopfinal}, and the dg condensation part in Section \ref{conddgfinal}.

The general idea is to use Proposition \ref{propdoperad}(2), and Theorems \ref{propthetatop} and \ref{propthetadg} saying that a ``single block'' $\mathcal{L}^d_{(\bmu,\bsigma)}$ is contractible. 
However, one rather has to use the scheme of proofs of Theorems \ref{propthetatop} and \ref{propthetadg} than their statements, as we will see below.

\subsubsection{\sc The topological condensation}\label{condtopfinal}
One has, for a fixed cocellular argument $D\in \Theta_d$, 
\begin{equation}\label{colimoverposet}
\mathbf{seq}_d(T)[D]=\colim_{(\bmu,\bsigma)\in P_{\mathbf{seq}_d(T)}}\mathcal{L}^d_{(\bmu,\bsigma)}[D]
\end{equation}
\begin{prop}\label{prophirsch}
Consider the poset $P_{\mathbf{seq}_d}(T)$ as a directed Reedy category.
The following statements are true:
\begin{itemize}
\item[(1)] The functor $P_{\mathbf{seq}_d}(T)\to \Top$, $(\bmu,\bsigma)\mapsto |\mathcal{L}^d_{(\bmu,\bsigma)}[D]|_{\Theta_d,\top}$ is Reedy cofibrant.
\item[(2)] For any Reedy cofibrant functor $F\colon P_{\mathbf{seq}_d}(T)\to \Top$, the map $\hocolim_{P_{\mathbf{seq}_d}(T)}F\to\colim_{P_{\mathbf{seq}_d}(T)}F$ is a weak equivalence.
\end{itemize}
\end{prop}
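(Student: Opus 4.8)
The plan is to regard $P:=P_{\mathbf{seq}_d}(T)$ as a \emph{directed} Reedy category, with $P^{+}=P$, with $P^{-}$ consisting of the identities, and with degree function $\deg(\bmu,\bsigma)$ equal to the length of a maximal chain of $P$ terminating at $(\bmu,\bsigma)$. Since $P$ is a poset, every non-identity arrow strictly raises the degree, so this is a legitimate Reedy structure. The decisive structural observation I would record at the outset is that for such a directed Reedy category all matching categories $\partial\big((\bmu,\bsigma)/P^{-}\big)$ are empty, so every matching object is terminal; dually, the latching category at $(\bmu,\bsigma)$ is the subposet of elements strictly below it.

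For part (1) I would first identify the latching object. Along each arrow the structure map of our functor is the inclusion of blocks $\mathcal{L}^d_{(\bmu',\bsigma')}[D]\hookrightarrow\mathcal{L}^d_{(\bmu,\bsigma)}[D]$ coming from the description of a block as the down-set $\{\omega:(\bmu,\bsigma)(\omega)\le(\bmu,\bsigma)\}$. Because these are genuine subfunctors of $\mathcal{L}^d(k)$, and because any $\omega$ lying in some strictly smaller block already lies in the block indexed by its own parameter $(\bmu,\bsigma)(\omega)$, which is itself strictly below $(\bmu,\bsigma)$, the colimit of this diagram of subobjects is exactly the union $\bigcup_{(\bmu',\bsigma')<(\bmu,\bsigma)}\mathcal{L}^d_{(\bmu',\bsigma')}[D]$, a sub-poly-cellular set of $\mathcal{L}^d_{(\bmu,\bsigma)}[D]$, and the latching map is its inclusion, hence a monomorphism. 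Since $|-|_{\Theta_d,\top}$ is a left adjoint (Proposition \ref{propbemtheta}(1)) it commutes with this colimit, and since $\Theta_d$ is an elegant Reedy category the realization carries monomorphisms to inclusions of $CW$-subcomplexes (Propositions \ref{propberger2} and \ref{propberger3}), which are cofibrations in $\Top$. Thus each latching map of $(\bmu,\bsigma)\mapsto|\mathcal{L}^d_{(\bmu,\bsigma)}[D]|_{\Theta_d,\top}$ is a cofibration, which is precisely Reedy cofibrancy over $P$.

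For part (2) the point I would exploit is that for a directed Reedy category the constant-diagram functor $c\colon\Top\to\Top^{P}$ is right Quillen: as all matching objects are terminal, the relative matching map of $c(f)$ is $f$ itself, so $c$ preserves Reedy fibrations and acyclic Reedy fibrations. Equivalently, $P$ trivially has fibrant constants, so its left adjoint $\colim\colon\Top^{P}_{\mathrm{Reedy}}\to\Top$ is a left Quillen functor. By Ken Brown's lemma $\colim$ then preserves weak equivalences between Reedy cofibrant objects, so its total left derived functor is computed on Reedy cofibrant diagrams by $\colim$ itself; since the homotopy colimit computes this derived functor, the canonical comparison map $\hocolim_{P}F\to\colim_{P}F$ is a weak equivalence for every Reedy cofibrant $F$.

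I expect the only genuinely delicate step to be the verification, inside part (1), that the latching maps are \emph{cofibrations} and not merely maps: this rests both on recognizing the latching object as an honest union of nested blocks, so that the map is a monomorphism, and on the fact that realization over the elegant Reedy category $\Theta_d$ sends monomorphisms to $CW$-inclusions. Part (2), by contrast, is formal once one notices that for a directed Reedy category the fibrant-constants condition is vacuous, so that no connectivity hypothesis on the (in general disconnected) latching categories of $P$ is required.
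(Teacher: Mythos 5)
Your proposal is correct and follows essentially the same route as the paper: both verify Reedy cofibrancy before realization --- your identification of the latching object as the union $\bigcup_{(\bmu',\bsigma')<(\bmu,\bsigma)}\mathcal{L}^d_{(\bmu',\bsigma')}[D]$ (using that $P_{\mathbf{seq}_d}(T)$ is down-closed, so the block indexed by $(\bmu,\bsigma)(\omega)$ mediates any double occurrence of $\omega$) makes precise what the paper asserts briefly as the latching maps being ``clearly embeddings'' --- and then transport it to $\Top$ via the left Quillen realization over $\Theta_d$ (restricting to the diagonal and to $\Delta$, as in Proposition \ref{propberger3}). For part (2) your argument that the empty matching categories make the constant functor right Quillen, hence $\colim$ left Quillen, followed by Ken Brown's lemma, is exactly the proof of the fibrant-constants theorem [Hi, Theorem 19.9.1(1)] that the paper cites directly.
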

\begin{proof}
(1): We show that the functor $Q: P_{\mathbf{seq}_d}(T)\to \hat{\Theta}_d^{\times |T|}$, $(\bmu,\bsigma)\mapsto \mathcal{L}^d_{(\bmu,\bsigma)}[T3]$ is Reedy cofibrant. Then the claim will follow because the realisation functor $\hat{\Theta}_d^{\times |T|}\to\Top$ is left Quillen, by Propositions \ref{propbemtheta}(1) and \ref{propberger3}. 

For the functor $Q$ the latching maps are clearly embeddings in $\hat{\Theta}_d^{\times |T|}$, thus cofibrations. (More precisely, one has to restrict to the diagonal $\Theta_d^\op\to (\Theta_d^\op)^{\times |T|}$, then one can refer to the Berger model structure on $\hat{\Theta}_d$ [Be2, Th.3.9], or further restrict to $\Delta^\op$ and refer to the Quillen model structure on simplicial sets and Proposition \ref{propberger3}).

(2): Any directed Reedy category $\mathcal{R}$ is a Reedy category with fibrant constants [Hi, Def. 15.10.1], just because $\mathcal{R}^-$ contains only the identity morphisms, so the matching categories are empty (see [Hi, Prop. 15.10.2]). Then [Hi, Theorem 19.9.1 (1)] and (1) proves the claim. 
\end{proof}

We have from \eqref{colimoverposet}:
\begin{equation}\label{posetcontrfin}
\begin{aligned}
\ &|\mathbf{seq}_d(T)[D]|_{\Theta_d,\top}=\colim_{(\bmu,\bsigma)\in P_{\mathbf{seq}_d(T)}}|\mathcal{L}^d_{(\bmu,\bsigma)}[D]|_{\Theta_d,\top}\overset{\text{  Prop. \ref{prophirsch} }}{\overset{\sim}{\leftarrow}} \\
&\hocolim_{(\bmu,\bsigma)\in P_{\mathbf{seq}_d(T)}}|\mathcal{L}^d_{(\bmu,\bsigma)}[D]|_{\Theta_d,\top}
\overset{\text{  Th. \ref{propthetatop}  }}{\sim}\hocolim_{(\bmu,\bsigma)\in P_{\mathbf{seq}_d(T)}}*=N(P_{\mathbf{seq}_d(T)})\overset{\text{  Prop. \ref{propdoperad}(2)}}{\sim }*
\end{aligned}
\end{equation}
\begin{remark}{\rm
Note that the computation \eqref{posetcontrfin} was inspired by the computation in [Be1, Lemme 1.8] and has a similar flavour.
}
\end{remark}

Now we have to compute the totalization $\Tot_{D\in \Theta_d}|\mathbf{seq}_d(T)[D]|$ and prove that it is contractible, for any pruned $(d+1)$-level tree $T$. We know from the discussion above that, for a fixed $D\in\Theta_d$, the realization $|\mathbf{seq}_d(T)[D]|$ is a contractible topological space. 

By Proposition \ref{propbemtheta}(1), it is enough to prove
\begin{prop}\label{proptottopfinal}
The $d$-cocellular topological space $D\mapsto |\mathbf{seq}_d(T)[D]|$ is Reedy fibrant. 
\end{prop}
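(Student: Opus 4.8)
The plan is to follow verbatim the strategy of the single-block case, Proposition~\ref{propreedyfibranttheta}, replacing the block $\mathcal{L}^d_{(\bmu,\bsigma)}$ by the subfunctor $\mathbf{seq}_d(T)$. By definition of Reedy fibrancy I must show that for every $D\in\Theta_d$ the matching map $|\mathbf{seq}_d(T)[D]|\to M_D|\mathbf{seq}_d(T)[-]|$ is a fibration in $\Top$. As there, I would first pass to the level of $d$-cellular (polysimplicial) sets, analysing the matching map for $D\mapsto\mathbf{seq}_d(T)[D]$ \emph{before} realization, and then invoke the facts (a)--(f) listed in that proof (realization preserves finite limits by Propositions~\ref{propberger2}(3) and \ref{propberger3}, carries Kan fibrations to Serre fibrations, and every space is fibrant) to descend to the topological statement. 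The cofinal subcategory $\mathcal{S}^\vee_D\subset\partial(D/\Theta_d^-)$ of Proposition~\ref{propmatchingcofinal} again reduces the matching object to the finite limit built from the face maps lowering $\Theta_d$-degree by $1$ and $2$.

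The observation that makes this reduction work is that, as a \emph{set}, $\mathbf{seq}_d(T)[D]$ is literally the subset of $\mathcal{L}^d(|T|)(-;D)$ cut out by the inequalities \eqref{eqlatticenop}, and these are conditions on the parameters $(\bmu,\bsigma)(\omega_{ij})$ of the projections whose bounding labels $(\bmu_q(c),\bsigma_q(c))$ depend only on the arity $T$ (through the level $c$ at which the leaves $i,j$ of $T$ meet) and \emph{not} on the color $D$. Since the cellular operators in the color variable act by adding or removing marked points and leave the underlying geometric lattice path unchanged (see [BBM, 2.4--2.5]), they preserve complexity and first-order movement, hence preserve $(\bmu,\bsigma)(\omega_{ij})$. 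Thus the defining conditions of $\mathbf{seq}_d(T)$ are stable under all color-operators, and the case analysis of Proposition~\ref{reedytottheta} carries over unchanged: when $D$ has at least two leaves the matching map is an isomorphism of polysimplicial sets (a lattice path is reconstructed uniquely from the data forgetting the marked point at each leaf, and by parameter preservation the reconstruction again satisfies \eqref{eqlatticenop}); when $D=([0];\varnothing,\dots)$ it is the projection to a point; and when $D$ is a linear tree truncated at level $\ell$, the restriction $(\delta_d^\wr)^*(\delta_k^{\Theta_d})^*$ of the matching map factors as a product $p_1\times\dots\times p_d$ with $p_i$ an identity for $i<\ell$, $p_{\ell+1}$ a projection to a point, and $p_\ell$ the ``forget the marked point'' map.

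The main point to verify is the remaining case, that $p_\ell$ is a Kan fibration. Here I would run exactly the lifting argument of Proposition~\ref{reedytot}(2): the projections $\omega_s$ reduce a horn-filling problem $\Lambda^n_i\to p_\ell$ to the case of a single argument, where in the Joyal-dual language it becomes the elementary problem of extending a compatible family of ordinal maps $\alpha_a\colon[n]\setminus\{a\}\to[1]$ to a map $\hat\beta\colon[n]\to[1]$, which always admits a solution. The only new feature relative to the single-block situation is that the filler must lie in the \emph{union} $\mathbf{seq}_d(T)[D]$ of blocks rather than in one fixed block; but since filling only inserts marked points and hence does not alter the geometric lattice path, the parameters of the filler coincide with those of its restriction to $\Lambda^n_i$, so the filler automatically satisfies \eqref{eqlatticenop}. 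This is where I expect the only genuine subtlety to lie: confirming that the colimit-of-blocks description \eqref{colimoverposet} does not obstruct the reconstruction, which is resolved precisely because the bounding labels are color-independent and the operators preserve parameters. With $p_\ell$ a Kan fibration and the other factors trivial, facts (a)--(f) of Proposition~\ref{propreedyfibranttheta} show that each matching map of $D\mapsto|\mathbf{seq}_d(T)[D]|$ is a Serre fibration, completing the proof.
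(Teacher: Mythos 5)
Your proof is correct and follows essentially the same route as the paper's: the paper likewise analyses the matching map of $D\mapsto\mathbf{seq}_d(T)[D]$ before realization, splits into the case of $D$ with at least two leaves (matching map an isomorphism) and the case of a linear tree truncated at level $\ell$ (a product of maps $p_i$ with $p_\ell$ a Kan fibration as in Proposition~\ref{reedytot}(2)), and then descends to $\Top$ via the facts assembled in Proposition~\ref{propreedyfibranttheta}. Your explicit check that the color-operators preserve the parameters $(\bmu,\bsigma)(\omega_{ij})$, so that reconstructions and horn-fillers automatically remain in the union of blocks, is exactly the point the paper leaves implicit in saying that ``still the same argument works'' for a colimit of several blocks.
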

\begin{proof}
The proof is parallel to the proof of Proposition \ref{reedytottheta}. We consider the map $D\mapsto \mathbf{seq}_d(T)[D]$ and consider its matchning map before the realization. We consider $D$ as a $d$-level tree and distinguish the two cases: (1) $D$ has 
at least two leaves, (2) $D$ has a single leaf and thus is a truncated at level $\ell\le d$ linear $d$-tree (including the case $\ell=0$).
We describe the matching map $\mathbf{seq}_d(T)[D]\to M_D\mathbf{seq}_d(T)[-]$ in both cases. In case (1) the matching map is an isomorphism, the argument is the same as in Proposition \ref{reedytottheta}. Case (2) is slightly different from the case of $\mathcal{L}_{(\bmu,\bsigma)}[-]$, considered in Proposition \ref{reedytottheta}, as now we have a colimit of several blocks $\mathcal{L}_{(\bmu,\bsigma)}[-]$. Still the same argument works: the matching map becomes a product of maps $p_i$ ($0\le i\le \ell+1$) where $p_i$ is the identity map for $i<\ell$, $p_{\ell+1}$ is the projection to a point. The map $p_\ell$, the most non-trivial one, gives rise to a Kan fibration, the proof is similar to the one given in Proposition \ref{reedytot}(2).
\end{proof}

By \eqref{posetcontrfin} and Proposition \ref{proptottopfinal},  for each $D$ the map $|\mathbf{seq}_d(T)[D]|\to *$ is a weak equivalence of fibrant $d$-cocellular topological spaces. Then Proposition \ref{propbemtheta}(1), this degree-wise projection gives rise to a weak equivalence of the totalizations, which proves the topological part of Theorem \ref{maintheorem}. 

\subsubsection{\sc The dg condensation}\label{sectionveryfinal}

It remains to prove the contractibility of the operad $\mathbf{seq}_d$ in the dg condensation. 
The argument here is very similar to one in the proof of Theorem \ref{propthetadg}. Namely, it follows from \eqref{posetcontrfin} and Lemma \ref{lemmarealthetadgtop} that, for fixed $T$ and $D$,
$$
|\mathbf{seq}_d(T)[D]|_{\Theta_d,\dg}\simeq C_\ldot^{\mathrm{CW}}(|\mathbf{seq}_d(T)[D]|_{\Theta_d,\top},\mathbb{Z})\sim \mathbb{Z}[0]
$$

Then one shows that $\Tot_{D\in \Theta_d,\dg}(|\mathbf{seq}_d(T)[D]|_{\Theta_d,\dg})\simeq \mathbb{Z}[0]$.
In virtue of Lemma \ref{lemmatotthetaconst}, it is enough to prove that $D\mapsto |\mathbf{seq}_d(T)[D]|_{\Theta_d,\dg}$ is Reedy fibrant object in the Reedy model structure on the category of diagrams $\Theta_d\to C^\udot(\mathbb{Z})$. The argument is similar to the one in Proposition \ref{propreedyfibrantthetadg}, which completes the proof of Theorem \ref{maintheorem} for the dg condensation.

Theorem \ref{maintheorem} is proved. 

\qed

\appendix

\section{\sc Reminder on Batanin higher operads}\label{appendixa}

\subsection{\sc Level trees and $n$-ordinals}
Recall the definition of the category $\Tree_n$ of $n$-level trees, see Definition \ref{defntree}.
An $n$-level tree is called {\it pruned} if all its leaves are at the highest level. An $n$-tree is called {\it degenerate} if the level $n$ ordinal is empty. By $|T|$ is denoted the set of leaves of an $n$-level tree $T$. 
\comment
Denote by $\{k\}$ the underlying finite set of the ordinal $[k]$. 
Denote by $\Delta_+$ the category of possibly empty ordinals $[n]$, $n\ge -1$, $[n]=\{0<1<\dots<n\}$.
The category $\Tree_n$ is defined as follows. Its object $T$ is an $n$-string of maps in $\Delta_+$:
$$
T=[k_n-1]\xrightarrow{\rho_{n-1}} [k_{n-1}-1]\xrightarrow{\rho_{n-2}}\dots\xrightarrow{\rho_0}[0]
$$
Such $T$ is visualised as a $n$-level tree. In general, it may be {\it degenerate} if $k_i=0$ for $i\ge a$, $a\le n$. An $n$-tree is called {\it pruned} if all ordinals are non-empty (all $k_i\ne 0$) and if all maps $\rho_i$ are surjective. For a pruned $n$-tree, all its leaves are at the highest level $n$. The finite set of leaves of an $n$-tree $T$ is denoted by $|T|$.
The maps $\rho_i$ are referred to as the {\it structure maps} of an $n$-tree.

A morphism $F\colon T\to S$, where 
$$
S=[\ell_n-1]\xrightarrow{\xi_{n-1}} [\ell_{n-1}-1]\xrightarrow{\xi_{n-2}}\dots\xrightarrow{\xi_0}[0]
$$
is defined as a sequence of maps $f_i\colon \{k_i-1\}\to \{\ell_i-1\}$, $i=0,1,\dots,n$ (not monotonous, in general), which commute with the structure maps, and such that  
for each $0\le i\le n$ and each $j\in [k_{i-1}-1]$ the restriction of $f_{i}$ on $\rho_{i-1}^{-1}(j)$ is monotonous. That is, $f_i$ has to be monotonous when restricted to the fibers of the structure map $\rho_{i-1}$. 
It is clear that a map of $n$-level trees is uniquely defined by the map $f_n$. Conversely, any map $f_n$ which is a map of {\it $n$-ordered sets}, associated with $n$-trees $S$ and $T$, defines a map of $n$-trees (see [Ba3, Lemma 2.3]). 
\endcomment

Let $F\colon T\to S$ be a map of $n$-level trees, with components $f_k$ (see Definition \ref{defntree}), $a\in |S|$.
The fiber $F^{-1}(a)$ for a morphism $F\colon T\to S$, $a\in |S|$, is defined as the set-theoretical preimage of the linear subtree $\Out(a)$ of $S$ spanned by $a$. This linear subtree $\Out(a)$ 
is formed by all (uniqele defined) descendants of $a$ at the lower levels than the level of $a$. It is an $n$-level tree, a subtree of $T$, possibly degenerate. Note that the fiber of a leaf of a pruned $n$-tree, for a map of pruned $n$-trees, is not necessarily pruned, even if all components $\{f_i\}$ of the map $F$ are surjective, see Remark \ref{remprune}.

\begin{example}{\rm Consider the case $n=2$, $T=[3]\xrightarrow{\rho_1} [1] \to [0]$, $S=[1]\to [0]\to [0]$. Denote by $0<1$ the leaves of $S$. Define maps $F_1, F_2\colon T\to S$ as follows: $F_1(0)=F_1(1)=0, F_1(2)=F_1(3)=1$, and $F_2(0)=F_2(2)=0, F_2(1)=F_2(3)=1$. Both $F_1,F_2$ are maps of level trees. Note that the map $F_2$ is not defined via an ordinal map $f\colon [3]\to [1]$, as $f(1)>f(2)$. At the same time, the restriction of $f$ on each fiber $f^{-1}(i)$, $i=0,1$, is a map of ordinals. }
\end{example}

\begin{remark}\label{remprune}{\rm
(1) Let $T,S$ be pruned $n$-trees, $\sigma\colon T\to S$ a map of $n$-trees. Note that the fibers $F^{-1}(a)$, $a\in |S|$, needn't be pruned $n$-trees, even if the components $f_i$ are surjective. 
For a possibly non-pruned $n$-tree $T$, denote by $P(T)$ the maximal pruned $n$-subtree of $T$. By definition, it is the pruned $n$-tree generated by all level $n$ leaves of $T$, by ignoring the leaves at levels $<n$ as well as their descendants. We call $P(T)$ the {\it prunisation} of $T$.


(2) Recall that an $n$-ordinal structure on a set $X$ is given by $n$ complementary orders on $X$, denoted by  $<_0,\dots,<_{n-1}$ (the complementarity means that for any two elements $a,b\in X$ there exists {\it a unique} $i$ from 0 to $n-1$ such that either $a<_i b$ or $b<_i a$), such that for any three elements $a,b,c\in X$ one has
\begin{equation}\label{nordinals}
a<_i b \text{  and  }  b<_j c\Rightarrow a<_{\min(i,j)}c
\end{equation}
A map of $n$-ordinals $\phi\colon X\to Y$ is a map of the underlying sets such that 
\begin{equation}\label{nordinals2}
a<_i b\Rightarrow\ 
\phi(a)<_j \phi(b)\text{  for  }j\ge i\text{  or  }
\phi(b)>_j\phi(a)\text{  for  } j>i
\end{equation}

(3) The set of leaves $|T|$ of a pruned $n$-tree is an $n$-ordinal, in the sense of [Ba2] Def. 2.2. Indeed, for two leaves $a,b\in |T|$, $a\ne b$,  we say $a<_i b$, $0\le i\le n-1$, if $i$ is the maximal level at which $\Out(a)$ and $\Out(b)$ meet (recall the the leaves of $T$ are at level $n$ and the root is at level 0). One checks that it makes the set $|T|$ an $n$-ordinal. Vice versa, an $n$-ordinal structure on a finite set $X$ gives rise to a pruned $n$-tree $T_X$ with $|T_X|=X$, such that the $n$-ordinal structure on $X$ coming from the pruned $n$-tree $T_X$ coincides with the original one. Moreover, a map of pruned trees is the same that the map of corresponding $n$-ordinals, in the sense of \eqref{nordinals}. The reader is referred to [Ba3, Th. 2.1] for proofs. 

(4) As categories, the pruned $n$-trees and $n$-ordinals $\mathbf{Ord}_n$  are isomorphic. On the other hand, as {\it operadic categories}
$\Tree_n$ and $\Ord_n$ are different: for a morphism $\sigma \colon T\to S$ of pruned $n$-trees, a fiber $\sigma_{\Ord_n}^{-1}(i)$ in $\Ord_n$ is defined as the prunisation $P(\sigma_{\Tree_n}^{-1}(i))$. 
}
\end{remark}

The linear pruned $n$-level tree $U_n$ (having a single element at each level) is the final object in both categories $\Tree_n$ and $\Ord_n$. 

\subsection{}
We recall here the definition of a {\it pruned} {\it reduced} $n$-operad. In terminology of [Ba3], the operads we consider here are all {\it $(n-1)$-terminal} $n$-operads, for some $n$. The $(n-1)$-terminality makes us possible to restrict ourselves with $n$-operads taking values in a symmetric monoidal globular category $\Sigma^nV$, where $V$ is a closed symmetric monoidal category, see [Ba2], Sect. 5. By a slight abuse of terminology, we say that an operad takes values in the closed symmetric monoidal category $V$ (not indicating $\Sigma^nV$).

\begin{defn}\label{prunedop}{\rm

A pruned reduced $(n-1)$-terminal $n$-operad $\mathcal{O}$ in a symmetric monoidal category $V$ is given by an assignment $T\rightsquigarrow \mathcal{O}(T)\in V$, for a pruned $n$-tree $T$, so that for any {\it surjective} map $\sigma\colon T\to S$ of pruned $n$-trees, one is given the composition
\begin{equation}\label{opcompgen}
m_\sigma\colon \mathcal{O}(S)\otimes \mathcal{O}(P(\sigma^{-1}(1)))\otimes\dots\otimes \mathcal{O}(P(\sigma^{-1}(k)))\to \mathcal{O}(T)
\end{equation}
where $k=|S|$ is the number of leaves of $S$, and $P(-)$ is the prunisation (which cuts all non-pruned branches, see Remark \ref{remprune}). It is subject to the following conditions (in which we assume that $V=C^\udot(\k)$ is the category of complexes of $\k$-vector spaces):
\begin{itemize}
\item[(i)] $\mathcal{O}(U_n)=\k$, and $1\in \k$ is the operadic unit,
\item[(ii)] the associativity for the composition of two surjective morphisms
$T\xrightarrow{\sigma}S\xrightarrow{\rho} Q$ of pruned $n$-trees, see [Ba2] Def. 5.1,
\item[(iii)] the two unit axioms, see [Ba2], Def. 5.1.
\end{itemize}

The category of pruned reduced $(n-1)$-terminal $n$-operads in a symmetric monoidal category $V$ is denoted by $\Op_n(V)$, or simply by $\Op_n$. 
}
\end{defn}

\begin{remark}{\rm
The idea behind the definition of pruned reduced operad is that algebras over such operads should be {\it strictly unital}. The fact that we can cut off all not pruned branches means that these redundant pieces act by (whiskering with) the identity morphism. When we deal with algebras with weak units, we have to consider more general $n$-operads. 
}
\end{remark}

\subsection{\sc Batanin Theorem}
Denote the category of symmetric operads (in a given symmetric monoidal category) by $\Op_\Sigma$.

Batanin [Ba2], Sect. 6 and 8, constructs a pair of functors relating symmetric operads and $n$-operads:

$$
\Symm\colon \Op_n^{n-1}\rightleftarrows   \Op_\Sigma \colon \Des
$$
The right adjoint functor of desymmetrisation $\Des$ associates to each pruned $n$-tree $T$ its set of leaves $|T|$ (which are all at the level $n$):
$$
\Des(\mathcal{O})(T)=\mathcal{O}(|T|)
$$
and for a map $\sigma\colon T\to S$ of $n$-trees, the $n$-operadic composition associated with $\sigma$ is defined as the corresponding composition for $|\sigma|=|\sigma_n|\colon |T|\to |S|$, twisted by the shuffle permutation $\pi(\sigma_n)$ of the map $|\sigma_n|\colon |T|\to |S|$ defined by the condition that the composition of $\pi(\sigma)$ followed by an order preserving map of finite sets is $\sigma_n$ (see [Ba2], Sect. 6). 

The symmetrisation functor $\Symm$  is defined as the left adjoint to $\Des$, its existence is established in [Ba2], Sect. 8.

The main theorem on $(n-1)$-terminal reduced $n$-operads was proven in [Ba3] Th.8.6 for topological spaces and in [Ba3] Th.8.7 for complexes of vector spaces. We provide below the statement for $C^\udot(\k)$, where $\k$ is any commutative ring. 
Denote by $\underline{\k}$ the constant $n$-operad, $\underline{\k}(T)=\k$, with evident operadic compositions. We say that an $n$-operad in $C^\udot(\k)$ is {\it augmented} by $\underline{\k}$ if there is a map of $n$-operads $p\colon \mathcal{O}\to\underline{\k}$, called the augmentation map. 

\begin{theorem}\label{theoremm}{\rm [Batanin]}
Let $\mathcal{O}$ be reduced pruned $(n-1)$-terminal $n$ operad in the symmetric monoidal category $C^\udot(\k)$. 
Assume $\mathcal{O}$ is augmented to the constant $n$-operad $\underline{\k}$, and that for any arity $T$ the augmentation map $p(T)\colon \mathcal{O}(T)\to \k$ is a quasi-isomorphis of complexes. 
Then there is a morphism of $\Sigma$-operads $C_\udot(E_n;\k)\to\Sym(\mathcal{O})$, thus making any $\mathcal{O}$-algebra a $C_\udot(E_n;\k)$-algebra.
\end{theorem}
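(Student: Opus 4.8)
The plan is to deduce the statement from the formal properties of the symmetrisation–desymmetrisation adjunction $\Sym\dashv\Des$ together with one genuinely geometric input: the homotopical identification of the derived symmetrisation of the terminal $n$-operad with the chain operad $C_\udot(E_n;\k)$. Throughout I would work in $V=C^\udot(\k)$, equipped with the transferred model structures on $\Op_n$ and $\Op_\Sigma$ for which $(\Sym,\Des)$ is a Quillen pair (Batanin). The first move is to separate the purely formal part (producing a $C_\udot(E_n;\k)$-structure on algebras once a morphism of symmetric operads is available) from the hard part (producing that morphism).

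First I would reduce the algebra assertion to the existence of the operad morphism. Since $\mathcal{O}$ is reduced and $(n-1)$-terminal, an $\mathcal{O}$-algebra in $V$ is a single object of $V$ carrying operations indexed by the pruned $n$-trees, and Batanin's comparison of algebras identifies the category of $\mathcal{O}$-algebras in $V$ with the category of $\Sym(\mathcal{O})$-algebras in $V$ (this is exactly the universal property making $\Sym$ the left adjoint of $\Des$, specialised to the $(n-1)$-terminal reduced setting). Consequently any morphism of symmetric operads $C_\udot(E_n;\k)\to\Sym(\mathcal{O})$ restricts a $\Sym(\mathcal{O})$-algebra, hence an $\mathcal{O}$-algebra, to a $C_\udot(E_n;\k)$-algebra. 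Thus it suffices to construct this morphism.

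The construction then proceeds in two steps. The homotopy-invariance step: choose a cofibrant resolution $q\colon\mathcal{O}^c\simeqto\mathcal{O}$ in $\Op_n$; since the augmentation $p\colon\mathcal{O}\to\underline{\k}$ is an arity-wise quasi-isomorphism, $\mathcal{O}^c$ is simultaneously a cofibrant resolution of the terminal $n$-operad $\underline{\k}$, and because $\Sym$ is left Quillen, Ken Brown's lemma yields $\Sym(\mathcal{O}^c)\simeq L\Sym(\underline{\k})$ together with a weak equivalence $\Sym(\mathcal{O}^c)\to\Sym(\mathcal{O})$. The geometric step, which is Batanin's theorem proper, is the weak equivalence $L\Sym(\underline{\k})\simeq C_\udot(E_n;\k)$: this rests on the dictionary between $n$-ordinals (equivalently pruned $n$-trees) and the strata of the Fulton–MacPherson compactification of configuration spaces of points in $\mathbb{R}^n$, under which the $n$-operadic composition matches the stratified gluing of compactified configurations and the symmetrisation introduces precisely the free $\Sigma_k$-action, so that the bar/coend computing $L\Sym(\underline{\k})$ assembles the cellular chains of the $F_n$-operad. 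Finally, taking a cofibrant model of $C_\udot(E_n;\k)$ (cofibrant, with $\Sym(\mathcal{O})$ fibrant) converts the zig-zag $C_\udot(E_n;\k)\simeq L\Sym(\underline{\k})\simeq\Sym(\mathcal{O}^c)\to\Sym(\mathcal{O})$ in $\Ho(\Op_\Sigma)$ into an honest morphism $C_\udot(E_n;\k)\to\Sym(\mathcal{O})$.

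The main obstacle is the geometric step $L\Sym(\underline{\k})\simeq C_\udot(E_n;\k)$. The adjunction, the Quillen-pair property, the cofibrant-resolution argument and the final lifting are all formal; what is deep is the identification of the derived symmetrisation of the terminal $n$-operad with the little $n$-disks chain operad, which requires the cellular model of compactified configuration spaces and a delicate filtration argument matching the $n$-ordinal combinatorics to the strata. This is precisely the content of Batanin's symmetrisation theorem, and for the complete argument I would cite [Ba2], [Ba3].
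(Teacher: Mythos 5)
Your proposal is correct in outline and takes essentially the same route as the paper, which offers no independent proof of Theorem \ref{theoremm} but cites [Ba3, Th.\ 8.6, 8.7], with the remark following the theorem sketching exactly your scheme: the Quillen pair $(\Symm,\Des)$ for the transferred model structures, cofibrant resolution of the contractible operad, and the identification of the (derived) symmetrisation of the terminal pruned reduced $(n-1)$-terminal $n$-operad with $C_\ldot(E_n;\k)$ via the compactified configuration spaces. The only caveat, immaterial here because $E_n$ is fixed only up to weak equivalence, is that your final lifting step yields an honest morphism only out of a cofibrant model of $C_\ldot(E_n;\k)$ (e.g.\ chains on the Fulton--MacPherson operad, which is what Batanin's construction actually produces), not out of an arbitrary chain model of the little $n$-disks operad.
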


\begin{remark}{\rm
There are closed model structures on the categories of $\Sigma$-operads and $n$-operads, constructed in [BB2]. Within these model structures, $(\Symm, \Des)$ is a Quillen pair, with $\Symm$ the left adjoint. The stronger version of this theorem [Ba3] actually says that the symmetrisation of a {\it cofibrant} contractible pruned, reduced, $(n-1)$-terminal is weakly equivalent to the symmetric operad $C^\udot(E_n;\k)$. 
}
\end{remark}

An advantage of the approach of Theorem \ref{theoremm} to $n$-algebras via contractible $n$-operads is that the latter is much simpler and more ``linear'' object than the symmetric operads $E_n$ and $e_n$. At the same time, it links higher category theory and $E_n$-algebras in a very explicit way.

\bigskip

\noindent{\small
 {\sc Euler International Mathematical Institute\\
10 Pesochnaya Embankment, St. Petersburg, 197376 Russia }}

\bigskip

\noindent{{\it e-mail}: {\tt shoikhet@pdmi.ras.ru}}


\begin{thebibliography}{999}
{\footnotesize

\bibitem[Be1]{Be1} C.~Berger, Op\'{e}rades cellulaires et espaces de lacets it\'{e}r\'{e}s, {\it Ann. Inst. Fourier, Grenoble}, {\bf 46}, 4 (1996), 1125-1157

\bibitem[Be2]{Be2} C.~Berger, A Cellular nerve for higher categories, {\it Advances in Math.} {\bf 169} (2002), 118-175

\bibitem[Be3]{Be3} C.~Berger, Iterated wreath product of the simplex category and iterated loop spaces, {\it Advances in Math.\ }{\bf 213}(1) (2007), 230-270




\bibitem[Ba1]{Ba1} M.A.~Batanin, Monoidal globular categories as a natural environment for the theory of weak n-categories,  {\it Adv. Math.} {\bf 136}(1) (1998), 39-103


\bibitem[Ba2]{Ba2} M.A.~Batanin, The Eckmann-Hilton argument and higher operads, {\it Advances in Math.}, {\bf 217}(2008), 334-385


\bibitem[Ba3]{Ba3} M.A.~Batanin, The symmetrisation of $n$-operads and compactification of real configuration spaces, {\it Advances in Math.} {\bf 211}(2007), 684-725


\bibitem[BB]{BB} M.A.~Batanin, C.~Berger, The lattice path operad and Hochschild cochains, {\it Contemp. Math.} {\bf 504}(2009), 23-52


\bibitem[BBM]{BBM} M.A.~Batanin, C.~Berger, M.~Markl, Operads of natural operations, I: lattice paths, braces and Hochschild cochains, {\it S\'{e}minaires en Congr\`{e}s}, {\bf 26} (2013), 1-33

\bibitem[BD]{BD} M.~Batanin, A.~Davydov, Cosimplicial monoids and deformation theory of tensor categories, {\it J. Noncommut. Geom.} {\bf 17}, No. 4, 1167-1229 (2023)


\bibitem[BFSV]{BFSV} C.~Baltenu, Z.~Fiedorowicz, R.~Schw\"{a}nzl, R.~Vogt, Iterated Monoidal Categories, {\it Advances in Math.} {\bf 176}(2003), 277--349


\bibitem[BM1]{BM1} M.A.~Batanin, M.~Markl, Centers and homotopy centers in enriched monoidal categories, {\it Adv. Math.} {\bf 230} (2012), no. 4-6, 1811-1858

\bibitem[BM2]{BM2} M.A.~Batanin, M.~Markl, Operadic categories and Duoidal Deligne's conjecture, {\it Advances in Math.} {\bf 285}(2015), 1630-1687

\bibitem[BR]{BR} J.Bergner, C.Rezk, Reedy categories and the $\Theta$-construction, {\it Math. Z.} {\bf 274} (2013), 499-514

\bibitem[BeM]{BeM} C.Berger, I.Moerdijk, The Boardmann-Vogt resolution of operads in monoidal model categories, {\it Topology}, {\bf 45}(5), (2006), 807-849




\bibitem[CL]{CL} E.Cheng, A.Lauda, {\it Higher-dimensional categories: an illustrated guide book}, available at E.Cheng's webpage

\bibitem[Dr]{Dr} V.~Drinfeld, DG quotients of DG categories, {\it J. Algebra} {\bf 272}(2),  (2004), 643-691



\bibitem[GJ]{GJ} E.~Getzler, J.D.S.~Jones, Operads, homotopy algebra and iterated integrals for double loop spaces, archive preprint hep-th/9403055

\bibitem[GoJa]{GoJa} P.~Goerss, J.~Jardine, {\it Simplicial homotopy theory}, Progress in Math. vol. 174, Birkh\"{a}user, 1999

\bibitem[GZ]{GZ} P.~Gabriel, M.~Zisman, {\it Calculus of fractions and homotopy theory}, Springer-Verlag, Berlin, Heidelberg, New York, 1967

\bibitem[Hi]{Hi} P.~Hirschhorn, {\it Model categories and their localizations}, Mathematical Surveys and Monographs, vol. 99, American Mathematical Society, Providence, RI, 2002

\bibitem[Ho]{Ho} M.~Hovey, {\it Model categories}, Mathematical Surveys and Monographs, vol. 63, American Mathematical Society, Providence, RI, 1998


\bibitem[J]{J} A.~Joyal, Disks, duality, and $\Theta$-categories, preprint September 1997

\bibitem[K]{K} M.~Kontsevich, Deformation quantization of Poisson manifolds, I, {\it Lett. Math. Phys.}
{\bf 66}(3) (2003), 157–216

\bibitem[KS]{KS} M.~Kontsevich, Y.~Soibelman, Deformations of algebras over operads and Deligne's conjecture,  in {\it Conference Moshe Flato 1999, Quantization, Deformations, and Symmetries}, vol. I, Ed. G. Dito and D. Sternheimer, Kluwer Academic Publishers, 2000, 255-307











\bibitem[L]{L} J.Lurie, {\it Higher Algebra}, available at J.Lurie's webpage

\bibitem[M]{M} I.Moerdijk, Bisimplicial sets and the group-completion theorem,  in {\it Algebraic K-Theory: Connections with Geometry and Topology}, 225–240, Kluwer, Dordrecht (1989)

\bibitem[May]{May} J.P.May, {\it Simplicial objects in Algebraic topology}, University of Chicago Press, 1967

\bibitem[ML]{ML} S.~MacLane, {\it Categories for working mathematician}, Graduate Text in Mathematics 5, 2nd edition, Springel-Verlag, 1998


\bibitem[MS1]{MS1} J.~McClure, J.~Smith, A solution of Deligne's conjecture, In {\it Recent progress in homotopy theory (Baltimore, MD, 2000)}, {\it Contemp. Math.}, {\bf 293}(2002), 153-193 

\bibitem[MS2]{MS2} J.~McClure, J.~Smith, Multivariable cochain operations and little n-cubes, {\it J. Amer. Math. Soc.}, {\bf 16}(3)(2003), 681-704 

\bibitem[MS3]{MS3} J.~McClure, J.~Smith, Cosimplicial objects and little $n$-cubes, I, {\it Amer. J. of Math.}
{\bf 126}(5), (2004), 1109-1153


\bibitem[PS]{PS} P.Panero, B.Shoikhet, The category $\Theta_2$, derived modifications, and deformation theory of monoidal categories, {\it J. Noncommut. Geom.} {\bf 19}, No. 1, 153-209 (2025)

\bibitem[Q]{Q} D.~Quillen, {\it Homotopical algebra}, Springer LNM 43, 1967

\bibitem[Ree]{Ree} C.L.Reedy, Homotopy theory of model categories, unpublished manuscript


\bibitem[Seg]{Seg} G.~Segal, Categories and cohomology theories, {\it Topology} {\bf 13}(1974), 293-312




\bibitem[S]{S} R.~Street, Skew-closed categories, {\it Journal of Pure and Applied Algebra}, {\bf 217}(2013), 973-988

\bibitem[Sh1]{Sh1} B.~Shoikhet, On the twisted tensor product of small dg categories, {\it J. Noncommut. Geom.} {\bf 14}(2020), No. 2, 789–820

\bibitem[Sh2]{Sh2} B.~Shoikhet, The twisted tensor product of dg categories and a contractible 2-operad, {\it Adv. Math.} {\bf 434}(2023), Article ID 109310, 42 p. 

\bibitem[Sh3]{Sh3} B.~Shoikhet, A proof of the contractibility of the 2-operad defined via the twisted tensor product, arXiv:1910.12371






\bibitem[T1]{T1}  D.~Tamarkin, Another proof of M.Kontsevich formality theorem for $\mathbb{R}^n$, preprint math.QA.9803025

\bibitem[T2]{T2} D.~Tamarkin, Formality of chain operad of little discs, {\it Lett. Math. Phys.} {\bf 66}(1-2)
(2003), pp. 65–72

\bibitem[T3]{T3} D.~Tamarkin, What do dg categories form? {\it Compos. Math.} {\bf 143}(5),  (2007), 1335-1358


\bibitem[Th]{Th} R.W.Thomason, Homotopy colimits of small categories, {\it Math. Proc. of the Cambridge Phil. Soc.} , {\bf 85}(1)  (1979), 91 - 109



\bibitem[W]{W} C.~Weibel, {\it An introduction to homological algebra}, Cambridge University Press, 1994

}
\end{thebibliography}
\end{document}